  \def\SvZfontcode{8} 
  \def\SvZslantedGreekCapitals{1}
\def\SvZrequireslantedRedef{0}
\def\SvZrequireslantedRedef{1}
\def\SvZrequireslantedRedef{1}
\def\SvZrequireslantedRedef{1}
\DeclareMathAlphabet{\bm}{OT1}{ptm}{b}{it} 
\def\SvZrequireslantedRedef{1}
\def\SvZrequireslantedRedef{1}
\renewcommand{\Gamma}{\varGamma}
\renewcommand{\Delta}{\varDelta}
\renewcommand{\Theta}{\varTheta}
\renewcommand{\Lambda}{\varLambda}
\renewcommand{\Xi}{\varXi}
\renewcommand{\Pi}{\varPi}
\renewcommand{\Sigma}{\varSigma}
\renewcommand{\Upsilon}{\varUpsilon}
\renewcommand{\Phi}{\varPhi}
\renewcommand{\Psi}{\varPsi}
\renewcommand{\Omega}{\varOmega}
\renewcommand{\phi}{\varphi}
\newcommand{\mathds}{\mathbb}
\DeclareMathOperator{\rank}{rk}
\DeclareMathOperator{\si}{si}
\DeclareMathOperator{\co}{co}
\newcommand{\N}{\mathds{N}}
\newcommand{\field}{\mathds{F}}
\newcommand{\group}{G}
\DeclareMathOperator{\GF}{GF}
\newcommand{\ring}{R}
\newcommand{\parf}{\ensuremath{\mathbb{P}}} 
\newcommand{\pf}{\parf}
\newcommand{\nreg}{\uniform_1}
\newcommand{\psru}{\mathds{S}}
\newcommand{\sru}{\ensuremath{\sqrt[6]{1}}}
\newcommand{\uniform}{\mathds{U}}
\newcommand{\ignore}[1]{}
\DeclareMathOperator{\AG}{AG} 
\newcommand{\whirl}[1]{\mathcal{W}^{#1}}
\let\Oldsetminus\setminus
\renewcommand{\setminus}{\ensuremath{-}}
\newcommand{\delete}{\ensuremath{\!\Oldsetminus\!}}
\newcommand{\contract}{\ensuremath{\!/}}
\newcommand{\symdiff}{\triangle}
\newcommand{\minorof}{\ensuremath{\preceq}}
\DeclareMathOperator{\bw}{bw}
\newcommand{\delset}{\mathbf{D}}
\newcommand{\conset}{\mathbf{C}}
\newcommand{\essset}{\mathbf{E}}
\DeclareMathOperator{\matset}{\mathcal{M}}
\newcommand{\bip}{G}
\newtheorem{theorem}{Theorem}[section]
\newtheorem{theorem}{Theorem}
\newtheorem{lemma}[theorem]{Lemma}
\newtheorem{proposition}[theorem]{Proposition}
\newtheorem{definition}[theorem]{Definition}
\newtheorem{corollary}[theorem]{Corollary}
\newtheorem{claim}{Claim}[theorem] 
\newtheorem{assumption}[claim]{Assumption} 
\newenvironment{claimenv}{\list{}{\rightmargin0pt\leftmargin10pt\topsep0pt}\item[]}{\endlist}
\newenvironment{subproof}{\begin{claimenv}\begin{proof}}{\end{proof}\end{claimenv}}
\newtheorem{conjecture}[theorem]{Conjecture}
\newcommand{\Dutchvon}[2]{#2}
\newcommand{\sepbound}[1]{\ensuremath{2^{#1+1}}}
\begin{document}
\title{Stability, fragility, and Rota's Conjecture\footnote{Parts of this paper were previously published in the third author's PhD thesis \cite{vZ09}. The research of all authors was partially supported by a grant from the Marsden Fund of New Zealand. The first author was also supported by a FRST Science \& Technology post-doctoral fellowship. The third author was also supported by the Netherlands Organisation for Scientific Research (NWO).}}
\author{Dillon Mayhew\thanks{School of Mathematics, Statistics and Operations Research, Victoria University of Wellington, New Zealand. E-mail: \url{Dillon.Mayhew@msor.vuw.ac.nz}, \url{Geoff.Whittle@msor.vuw.ac.nz}} \and Geoff Whittle\footnotemark[2]  \and Stefan H. M. van Zwam\thanks{Centrum Wiskunde en Informatica, Postbus 94079, 1090 GB Amsterdam, The Netherlands. E-mail: \url{Stefan.van.Zwam@cwi.nl}}}

\maketitle


\abstract{
Fix a matroid $N$. A matroid $M$ is $N$-\emph{fragile} if, for each element $e$ of $M$, at least one of $M\delete e$ and $M\contract e$ has no $N$-minor. The Bounded Canopy Conjecture is that all $\GF(q)$-representable matroids $M$ that have an $N$-minor and are $N$-fragile have branch width bounded by a constant depending only on $q$ and $N$.

A matroid $N$ \emph{stabilizes} a class of matroids over a field $\field$ if, for every matroid $M$ in the class with an $N$-minor, every $\field$-representation of $N$ extends to at most one $\field$-representation of $M$.

We prove that, if Rota's conjecture is false for $\GF(q)$, then either the Bounded Canopy Conjecture is false for $\GF(q)$ or there is an infinite chain of $\GF(q)$-representable matroids, each not stabilized by the previous, each of which can be extended to an excluded minor.

Our result implies the previously known result that Rota's Conjecture holds for $\GF(4)$, and that the classes of near-regular and sixth-roots-of-unity have a finite number of excluded minors. However, the bound that we obtain on the size of such excluded minors is considerably larger than that obtained in previous proofs. For $\GF(5)$ we show that Rota's Conjecture reduces to the Bounded Canopy Conjecture.
}


\section{Introduction}\label{sec:intro}
Rota's Conjecture, widely regarded as the most important open problem in matroid theory, is as follows.

\begin{conjecture}[\citet{Rot71}]\label{con:rota1}
  For all prime powers $q$, the class of matroids representable over $\GF(q)$ can be characterized by a finite set of excluded minors.
\end{conjecture}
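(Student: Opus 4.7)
The plan is to argue by contradiction: suppose for some prime power $q$ there is an infinite antichain $(M_i)_{i \in \N}$ of excluded minors for $\GF(q)$-representability. Each $M_i$ has all proper minors $\GF(q)$-representable while $M_i$ itself is not. My goal is to show this antichain admits a bounded structural description and therefore cannot in fact be infinite.

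First I would attach to each $M_i$ a small ``template'' minor $N$ that is known to stabilize the $\GF(q)$-representable matroids containing an $N$-minor. Ideally one fixed $N$, or at worst a finite list, works for cofinitely many $M_i$. With such an $N$ in hand, the next step is to show that $M_i$ must be $N$-fragile: if some $e \in E(M_i)$ had the property that both $M_i\delete e$ and $M_i\contract e$ retain an $N$-minor, then---using that $N$ stabilizes the class---the $\GF(q)$-representations of $M_i\delete e$ and $M_i\contract e$ would be forced to agree on their common $N$-minor, and one would try to paste these into a $\GF(q)$-representation of $M_i$, contradicting that $M_i$ is an excluded minor.

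Next, granted $N$-fragility, I would invoke the Bounded Canopy Conjecture to conclude that each such $M_i$ has branch width at most some constant $c(q, N)$. Combined with results on matroids of bounded branch width over a finite field (well-quasi-ordering plus decidability of representability on such a class), this would force only finitely many excluded minors of bounded branch width, giving the desired contradiction.

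The main obstacle will be the failure modes of the first step: if no fixed stabilizer template $N$ eventually works, then the argument degrades into building an infinite strict chain $N_1, N_2, \ldots$ of $\GF(q)$-representable matroids in which $N_i$ fails to stabilize the class containing $N_{i+1}$, and each $N_{i+1}$ extends to an excluded minor. Ruling out such a chain---on structural grounds or via a field-specific analysis---and proving the Bounded Canopy Conjecture for $\GF(q)$ are each substantial problems on their own. For small fields (say $q \in \{2,3,4\}$), explicit stabilizer and inequivalence theory should make the first step tractable and reduce Rota's Conjecture to the Bounded Canopy Conjecture; but I do not see how to avoid paying for those two ingredients in general, and I expect the ultimate theorem of this paper to be precisely such a conditional reduction rather than an unconditional proof.
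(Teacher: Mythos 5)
The statement you were asked about is Rota's Conjecture itself, which the paper states only as a conjecture (attributed to Rota, 1971) and does not prove; its actual contributions are precisely the kind of conditional reduction you anticipate at the end of your proposal, namely Theorem~\ref{thm:rotapf} (finitely many excluded minors containing a stabilizer $N$, assuming bounded canopy over a suitable well-closed class), Corollary~\ref{cor:infichain} (otherwise an infinite chain of matroids each not stabilized by the previous, each extending to an excluded minor), and Theorem~\ref{thm:gf5bcc} (Rota's Conjecture for $\GF(5)$ follows from the Bounded Canopy Conjecture). So your proposal is not, and correctly does not claim to be, a proof of the conjecture: the two ingredients you identify as ``substantial problems on their own'' --- the Bounded Canopy Conjecture and the exclusion of an infinite chain of non-stabilized matroids --- are exactly what remain unproven, and the paper leaves them open as well. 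In that sense your self-assessment matches the paper.

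One concrete technical inaccuracy in your middle step deserves flagging: you propose to show that each excluded minor $M_i$ is itself $N$-fragile, by ``pasting'' representations of $M_i\delete e$ and $M_i\contract e$ when both retain an $N$-minor. This is not established in the paper and is not how the argument runs; there is no reason an excluded minor must be $N$-fragile, and representations of a deletion and a contraction agreeing on an $N$-minor do not straightforwardly paste into a representation of $M_i$ (that is precisely the difficulty that stabilizer theory is designed to control, and even then only for the class $\matset$, not for the excluded minor). The paper instead takes a \emph{deletion pair} $\{u,v\}$, builds a candidate matrix $A$ whose failure is localized in a four-element incriminating set $\{a,b,u,v\}$, and locates inside $M$ a minimal strictly $\mathcal{N}$-fragile minor $M_X[C]$; bounded canopy is applied to that fragile minor, and the branch width of $M$ itself is then bounded by reattaching $a,b,u,v$ and a bounded number of blocking sequences to repair $3$-connectivity, after which Theorem~\ref{thm:rotabw} finishes. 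So if you pursue your outline, replace ``the excluded minor is fragile, hence has bounded branch width'' by ``the excluded minor contains a fragile minor of bounded branch width and is a boundedly small and boundedly connected enlargement of it''; as stated, your fragility claim is a genuine gap even within the conditional reduction.
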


Progress on this conjecture has been intermittent. It has been settled completely only for $q \leq 4$ \citep{Tut65,Bix79,Sey79,GGK}. \citet{GGW06} showed that an excluded minor contains no large projective geometry. Another partial result towards Rota's Conjecture is the following:

\begin{theorem}[\citet{GW02}]\label{thm:rotabw}
  Let $\field$ be a finite field and $k \in \N$. Let $\mathcal{M}$ be a minor-closed class of $\field$-representable matroids. Then finitely many excluded minors for $\mathcal{M}$ have branch width $k$.
\end{theorem}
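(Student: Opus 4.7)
The plan is to prove the contrapositive: assume there is an infinite sequence of excluded minors of branch width at most $k$, and derive a contradiction using a well-quasi-ordering (WQO) theorem for matroids of bounded branch width representable over a finite field.

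The essential tool is the theorem of Geelen, Gerards, and Whittle that the class of $\field$-representable matroids of branch width at most $k$ is well-quasi-ordered under the minor order. I would aim to reduce the problem to an application of this tool.

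So suppose $M_1, M_2, \ldots$ are pairwise non-isomorphic excluded minors for $\mathcal{M}$, each of branch width at most $k$. Every proper minor of each $M_i$ lies in $\mathcal{M}$, hence is $\field$-representable and of branch width at most $k$. For each $M_i$, pick an element $e_i$; by passing to an infinite subsequence (and invoking matroid duality, noting that branch width and minor-closedness are duality-invariant) I may assume that $N_i := M_i \delete e_i$ lies in $\mathcal{M}$ for every $i$. Each $N_i$ is then $\field$-representable of branch width at most $k$, so WQO supplies indices $i<j$ with $N_i$ isomorphic to a minor of $N_j$.

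The main technical step, and the principal obstacle, is to promote this to an embedding of $M_i$ as a minor of $M_j$, which would contradict $M_j$ being an excluded minor. The difficulty is that $M_i$ is a one-element extension of $N_i$ that need not even be $\field$-representable, so we cannot simply transport representation data along the minor embedding. The standard way around this is to use a labelled or enriched version of WQO that records, alongside each $N_i$, the extension data describing how $e_i$ is attached (for example, the modular cut of $N_i$ corresponding to $M_i$, coarsened via some finite-valued invariant). One shows, using finiteness of $\field$ and of branch decompositions of width at most $k$, that only boundedly many such labels matter in the relevant part of $N_i$, so that labelled WQO still applies. After passing to a further subsequence so that these labels match under the embedding, the embedding of $N_i$ into $N_j$ lifts to a minor embedding of $M_i$ into $M_j$, the desired contradiction.
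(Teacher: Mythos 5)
There is a genuine gap, and it sits exactly where you flag it. First, note that this theorem is not proved in the paper at all: it is quoted from Geelen and Whittle [GW02], whose argument is quite different from yours --- they bound the \emph{size} of an excluded minor of branch width at most $k$ directly, using linked branch decompositions together with the fact that a displayed separation of bounded order in a matroid representable over a finite field admits only boundedly many ``boundary types'', and then a pigeonhole/compression argument; no well-quasi-ordering theorem is invoked. Your reduction to WQO handles only part of the problem. (Two small remarks on the easy part: every single-element deletion of an excluded minor automatically lies in $\mathcal{M}$, so no subsequence or duality is needed to arrange $N_i\in\mathcal{M}$; and if the $M_i$ themselves happened to be $\field$-representable you could apply WQO to them directly and be done, since $M_i\minorof M_j$ with $M_i\notin\mathcal{M}$ forces $M_i\cong M_j$. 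The hard excluded minors are the non-representable ones, which are precisely excluded minors for $\field$-representability itself --- and for those WQO does not apply.)

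The ``main technical step'' you describe --- promoting $N_i\minorof N_j$ to $M_i\minorof M_j$ via a labelled or enriched WQO in which the attachment of $e_i$ is recorded by a finite-valued coarsening of a modular cut --- is not an available tool, and the assertion that ``only boundedly many such labels matter'' is unsupported; this claim is where the entire content of the theorem lives. The extension $M_i$ of $N_i$ is determined by a modular cut, a global structure on $N_i$, not a bounded quantity of data attached to elements, and a minor embedding of $N_i$ into $N_j$ (which involves arbitrary deletions and contractions and ignores $e_j$ entirely) need not interact correctly with the modular cut defining $M_j$; there is no known minor-compatible finite invariant that makes such a lift possible. That some essential, localized use of the finiteness of $\field$ is unavoidable is shown by the fact that the statement fails for minor-closed classes of arbitrary matroids: the free spikes form an infinite antichain of matroids of branch width $3$, and each is an excluded minor for the class of matroids with no free-spike minor. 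So any correct proof must exploit representability over the finite field in the way [GW02] does (bounding boundary information across low-order separations and compressing), rather than through a generic labelled-WQO lifting, and as it stands your argument does not close this gap.
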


In 1996, \citet{SW96} introduced matroids representable over \emph{partial fields}. Anticipating some of the definitions in Section~\ref{sec:partialfields}, we say a partial field $\parf$ is \emph{finitary} if there exists a homomorphism $\phi:\parf\rightarrow\GF(q)$ for some prime power $q$. We denote by $\matset(\parf)$ the set of $\parf$-representable matroids. Since homomorphisms preserve representability, $\matset(\parf)\subseteq\matset(\GF(q))$ for some prime power $q$ if $\pf$ is finitary. Conjecture~\ref{con:rota1} can then be generalized as follows:

\begin{conjecture}\label{con:rota2}
  For every finitary partial field $\parf$, $\matset(\parf)$ can be characterized by a finite set of excluded minors.
\end{conjecture}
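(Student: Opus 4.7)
The plan is to reduce the problem to bounding the branch width of excluded minors, and then to invoke the Bounded Canopy Conjecture together with a stabilizer analysis. Let $\parf$ be a finitary partial field, so $\matset(\parf)\subseteq\matset(\GF(q))$ for some prime power $q$. By Theorem~\ref{thm:rotabw} applied to $\field=\GF(q)$, only finitely many excluded minors for $\matset(\parf)$ have any fixed branch width, so it suffices to show that all excluded minors for $\matset(\parf)$ have branch width bounded by a constant depending only on $\parf$ and $q$. I would argue by contradiction, supposing there exists a sequence of excluded minors of unbounded branch width.

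The centrepiece of the argument is a stabilizer. First I would seek a matroid $N\in\matset(\parf)$ that stabilizes $\matset(\parf)$ inside $\matset(\GF(q))$; that is, every $\parf$-representation of $N$ extends in at most one way to a $\parf$-representation of any $M\in\matset(\parf)$ containing an $N$-minor. Now let $M$ be an excluded minor that contains an $N$-minor. Then $M$ has a $\GF(q)$-representation but no $\parf$-representation, while every proper minor does have a $\parf$-representation. The stabilizer property should imply that, for each element $e\in E(M)$, the unique $\parf$-representation of $M\delete e$ and the unique $\parf$-representation of $M\contract e$ cannot both see an $N$-minor, for otherwise they would glue together to give a $\parf$-representation of $M$. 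Pushing this through carefully should force at least one of $M\delete e$ and $M\contract e$ to lose every $N$-minor, which is precisely $N$-fragility. Once $M$ is shown to be $N$-fragile, the Bounded Canopy Conjecture bounds its branch width in terms of $q$ and $N$, giving a contradiction.

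This leaves the excluded minors that contain no $N$-minor. These are excluded minors for the smaller minor-closed class $\matset(\parf)\cap\{M:M\text{ has no }N\text{-minor}\}$, which is again a class of $\GF(q)$-representable matroids. I would recurse: choose a stabilizer $N'$ for this smaller class that is strictly larger and not stabilised by $N$ (since the excluded minors in question are not $N$-fragile candidates), and repeat. Either the recursion terminates after finitely many steps, completing the proof of Conjecture~\ref{con:rota2}, or it produces an infinite ascending chain of matroids, each not stabilized by the previous and each a minor of some excluded minor, which is precisely the alternative flagged in the abstract.

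The principal obstacle I foresee is the Bounded Canopy Conjecture itself: controlling the branch width of $N$-fragile $\GF(q)$-representable matroids demands a detailed structural theory of fragile matroids, presumably through a well-quasi-ordering argument specialised to the fragile setting, and is by no means automatic. A secondary difficulty lies in the stability-to-fragility step: extracting genuine $N$-fragility (rather than some weaker local obstruction) from the failure of a $\GF(q)$-representation to descend to $\parf$ requires a careful analysis of how two compatible partial representations of $M\delete e$ and $M\contract e$ can be glued, and the unique-extension hypothesis provided by the stabilizer must be leveraged in a quantitative, element-wise fashion.
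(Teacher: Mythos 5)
There is a genuine gap, and it starts with the status of the statement itself: Conjecture~\ref{con:rota2} is left open by the paper. The paper proves only a conditional reduction (Theorem~\ref{thm:rotapf}, and Corollary~\ref{cor:infichain}): \emph{if} the relevant matroids have bounded canopy and \emph{if} one never runs into an infinite chain of matroids each not stabilized by the previous, then the excluded minors with an $N$-minor are finite in number. Your proposal has exactly the same conditional shape --- it invokes the Bounded Canopy Conjecture (Conjecture~\ref{con:boundcanopy}), assumes a suitable stabilizer $N$ exists, and concedes that the recursion may fail to terminate, producing the infinite chain of non-stabilized matroids --- so at best it re-derives the paper's reduction; it is not a proof of the conjecture, and no argument you give closes either of the two open alternatives.

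Even read as a sketch of that reduction, two central steps would fail as stated. First, your ``stability-to-fragility'' step claims that an excluded minor $M$ with an $N$-minor is itself $N$-fragile, so that Bounded Canopy bounds $\bw(M)$ directly. The paper never establishes this and it is not to be expected: fragility is instead used for a carefully chosen \emph{minor} $M_X[C]$ of $M$ (a minimal set $C$ with $M_X[C]$ having an $\mathcal{N}$-minor is automatically strictly fragile), and the branch width of $M$ is then controlled by rebuilding $M$ from $M_X[C]$ via a deletion pair $\{u,v\}$, an incriminating set $\{a,b,u,v\}$, and blocking sequences bridging a bounded number of $2$-separations (Theorem~\ref{thm:rotapfN}). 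Second, you speak of ``the unique $\parf$-representation of $M\delete e$'' and of gluing representations of $M\delete e$ and $M\contract e$. Proper minors of an excluded minor need not be uniquely $\parf$-representable; a stabilizer only guarantees that a \emph{fixed} representation of $N$ extends in at most one way to a $3$-connected matroid, and $N$ itself may have many inequivalent representations. This is precisely why the paper needs strong stabilizers, the gluing of representations of $M\delete u$ and $M\delete v$ over a connected common part (Lemma~\ref{lem:delpairscale}, Theorem~\ref{thm:uniquematrix}), and the induction on the number of algebraically inequivalent representations in the proof of Theorem~\ref{thm:rotapf}. Without these ingredients your element-wise gluing argument does not go through, and the recursion over classes with no $N$-minor is not the mechanism the paper uses either --- its recursion is over stabilizers with strictly fewer inequivalent representations, respectively over matroids not stabilized by the previous one (Corollary~\ref{cor:infichain}).
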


Like Rota's Conjecture, this conjecture has been settled for only a handful of partial fields. In particular, it is known for the regular, sixth-roots-of-unity, and near-regular partial fields  \citep{Tut65,GGK,HMZ11}.

At the moment Geelen, Gerards, and Whittle are carrying out a project aimed at proving that $\matset(\GF(q))$ is well-quasi-ordered with respect to the minor-order (see, for instance, \citet{GGW06b}). That result, when combined with a proof of Conjecture~\ref{con:rota1}, would imply Conjecture~\ref{con:rota2}, since proper minor-closed classes of $\matset(\GF(q))$ would be characterized by a finite set of excluded minors. In this paper we set the stage for a proof of Rota's Conjecture for $q = 5$, by reducing it to a conjecture that should be a consequence of the structure theory being developed for the matroid minors project.

To state our main result we need to introduce a few concepts. We say that a matroid $N$ \emph{stabilizes} a matroid $M$ over a partial field $\pf$ if, for each minor $M'$ of $M$ isomorphic to $N$, each $\pf$-representation of $M'$ extends to at most one $\pf$-representation of $M$. A matroid $N$ is a \emph{stabilizer} for a class of matroids $\matset$ if $N$ stabilizes each 3-connected member of $\matset$. We will be more precise in Definition \ref{def:stab}. Stabilizers were introduced by Whittle \cite{Whi96b}, who proved that checking if a matroid is a stabilizer requires a finite amount of work.

A second concept we need is \emph{fragility}. Let $N$, $M$ be matroids. Then $M$ is \emph{$N$-fragile} if, for all $e \in E(M)$, at least one of $M\delete e, M\contract e$ has no minor isomorphic to $N$. If $M$ is $N$-fragile and $N$ is a minor of $M$ then $M$ is \emph{strictly $N$-fragile}. A slightly more general definition will be given in Section~\ref{sec:fragility}. Note that fragility has been studied previously under a different name. If $\mathcal{M}$ is a minor-closed class of matroids, then a matroid $M$ is \emph{almost-$\mathcal{M}$} if, for each $e \in E(M)$, at least one of $M\delete e$ and $M\contract e$ is in $\mathcal{M}$. See, for instance, \cite{Oxl90, KL02}. 

A third concept, already mentioned in Theorem \ref{thm:rotabw}, is branch width. Roughly speaking, a matroid with high branch width cannot be decomposed into small pieces along low-order separations. It is closely related to the notion of tree width in graphs. We will define the branch width of a matroid, denoted by $\bw(M)$, in Section~\ref{sec:conn}.

\begin{definition}
  Let $\matset$ be a class of matroids. Then $N$ has \emph{bounded canopy} over $\matset$ if there exists an integer $l$ such that, for all strictly $N$-fragile matroids $M \in \matset$, $\bw(M) \leq l$.
\end{definition}

Finally,

\begin{definition}\label{def:well-closed}
  A class of matroids is \emph{well-closed} if it is closed under isomorphism, duality, taking minors, direct sums, and 2-sums. 	
\end{definition}

Our main result now is the following:

\begin{theorem}\label{thm:rotapf}
  Let $\parf$ be a finitary partial field, let $\matset$ be a well-closed class of $\pf$-representable matroids, each of which has bounded canopy over $\matset$, and let $N \in \matset$ be such that
  \begin{enumerate}
    \item $N$ is 3-connected and not binary;
    \item $N$ stabilizes $\matset$ over $\pf$;
    \item all 3-connected $\pf$-representable matroids, which have an $N$-minor and are stabilized by $N$, are in $\matset$.
  \end{enumerate}
  Then there are finitely many excluded minors for $\matset$ having an $N$-minor.
\end{theorem}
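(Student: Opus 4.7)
The plan is to bound the branch width of every excluded minor $M$ for $\matset$ having an $N$-minor by a constant $k = k(\parf,\matset,N)$, and then invoke Theorem~\ref{thm:rotabw}. Throughout, let $M$ be such an excluded minor, so that every proper minor of $M$ lies in $\matset$ while $M$ itself does not.

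The first, and most delicate, step is to prove that $M$ itself must be $N$-fragile. Suppose for contradiction that some $e \in E(M)$ has both $M\delete e$ and $M\contract e$ retaining an $N$-minor. Both of these minors lie in $\matset$, hence are $\parf$-representable; because $N$ stabilizes $\matset$, a fixed $\parf$-representation of $N$ extends in essentially one way to a representation $A_1$ of $M\delete e$ and to a representation $A_2$ of $M\contract e$. A stabilizer-style glueing argument should then recover a column for $e$ which, when appended to $A_1$, yields a $\parf$-representation of $M$; non-binarity of $N$ pins the column down unambiguously, closure of $\matset$ under 2-sums and direct sums (Definition~\ref{def:well-closed}) lets us reduce to the 3-connected regime in which stabilizers are defined, and hypothesis~(iii) forces the reconstructed $M$ to lie in $\matset$, contradicting the excluded-minor hypothesis. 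This is where I expect the main obstacle to lie: one must control the connectivity of $M\delete e$ and $M\contract e$ (in particular handle the cases where either is only $2$-connected, feeding the argument through the 2-sum decomposition), and verify that the glued matrix really represents $M$ rather than some other single-element extension.

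With fragility proved, pick $e \in E(M)$ such that $M\delete e$ still has an $N$-minor; such an $e$ exists because $M$ has an $N$-minor and is $N$-fragile. The minor $M\delete e$ is a proper minor of $M$, so it lies in $\matset$; it inherits $N$-fragility from $M$, it still has an $N$-minor, and hence it is strictly $N$-fragile in $\matset$. The bounded-canopy hypothesis then supplies a constant $l$ with $\bw(M\delete e) \leq l$, and since branch width rises by at most one under a single-element extension we obtain $\bw(M) \leq l+1$.

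To conclude, because $\parf$ is finitary there is a prime power $q$ with $\matset \subseteq \matset(\GF(q))$, so $\matset$ is a minor-closed class of $\GF(q)$-representable matroids. Applying Theorem~\ref{thm:rotabw} with $k = l+1$ yields finitely many excluded minors for $\matset$ of branch width at most $l+1$, and hence finitely many excluded minors having an $N$-minor.
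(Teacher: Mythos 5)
There is a genuine gap, and it sits exactly where you flag the ``main obstacle'': the claim that an excluded minor $M$ with an $N$-minor must itself be $N$-fragile is not true, and the stabilizer-glueing argument you sketch cannot establish it. Knowing that $M\delete e$ and $M\contract e$ both lie in $\matset$ and both carry representations extending a fixed representation of $N$ gives you no way to certify that any assembled matrix represents $M$: in the principal case $\matset=\matset(\pf)$ the excluded minor $M$ has \emph{no} $\pf$-representation at all, yet elements $e$ with both $M\delete e$ and $M\contract e$ retaining an $N$-minor can perfectly well exist, so no contradiction arises. (Indeed the paper's whole machinery exists because the natural candidate matrix built from two single-element removals \emph{fails} to represent $M$ --- that failure is formalized as an incriminating set in Definition~\ref{def:incrimi} and Theorem~\ref{thm:uniquematrix}, and there one glues two \emph{deletions} forming a deletion pair, not a deletion and a contraction, which is not a well-defined glueing here.) Since fragility of $M$ fails, your next step collapses too: $M\delete e$ need not be $N$-fragile, so the bounded-canopy hypothesis cannot be invoked to bound $\bw(M\delete e)$, and the bound $\bw(M)\le l+1$ is unsupported.

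The paper's route is structurally different at both levels. First, Theorem~\ref{thm:rotapf} is deduced from Theorem~\ref{thm:rotapfN} by induction on the number of algebraically inequivalent $\pf$-representations of $N$, which is how the hypothesis ``$N$ stabilizes $\matset$'' is upgraded to the \emph{strong} stabilizer setting needed for the matrix arguments; your proposal never addresses that a representation of $N$ may fail to extend to $M\delete e$ at all. Second, inside Theorem~\ref{thm:rotapfN} the bounded-canopy hypothesis is applied not to $M$ or to $M\delete e$ but to a \emph{minimal} minor $M_X[C]$ containing a member of $\mathcal{N}$, which is strictly fragile precisely by minimality; the bulk of the proof (deletion pairs via Corollary~\ref{cor:delpairexists}, a small incriminating set via Theorem~\ref{thm:incriminatingsetsmall}, uncrossed $2$-separations and blocking sequences via Lemma~\ref{lem:uncrossed2sep}, Lemma~\ref{lem:block2sep} and Theorem~\ref{thm:blseqbranchwidth}) is then spent rebuilding $3$-connectivity around $C\cup\{a,b,u,v\}$ so that Theorem~\ref{thm:incriminatingsetinminor} forces the rebuilt matroid to equal $M$, yielding $\bw(M)\le t+4\cdot\sepbound{9}+13$. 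A correct proof needs some substitute for this reconstruction step; bounding $\bw(M)$ cannot be done by a one-element extension of a fragile matroid, because the fragile object the hypothesis controls is in general a much smaller minor of $M$.
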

\newcounter{rotasavecounter}
\setcounter{rotasavecounter}{\value{theorem}}

Of course the set $\matset$ we are most interested in is $\matset(\pf)$, but it might be possible to establish by other means that certain $\pf$-representable matroids do not occur as minors of some excluded minor. Then Theorem \ref{thm:rotapf} can be applied to a more restricted class.

The condition that the matroids in $\matset$ have bounded canopy is needed because our result depends crucially on Theorem~\ref{thm:rotabw}. At first it may seem like a rather strong restriction. However, it is expected that, if $\parf$ is a finitary partial field, \emph{every} matroid $N$ has bounded canopy over $\matset(\parf)$. The following is a weaker version of Conjecture 5.9 in \citet{GGW06b}.

\begin{conjecture}\label{con:boundcanopy}
  Let $N$ be a $\GF(q)$-representable matroid. There is an integer $l$, depending only on $N$ and $q$, such that, if $M$ is a $\GF(q)$-representable matroid with $\bw(M) > l$ and $N$ is a minor of $M$, then there exists an $e \in E(M)$ for which both $M\delete e$ and $M\contract e$ have a minor isomorphic to $N$.
\end{conjecture}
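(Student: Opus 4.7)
The plan is to reduce Conjecture~\ref{con:boundcanopy} to the emerging structure theorem for $\GF(q)$-representable matroids of high branch-width coming from the Geelen--Gerards--Whittle matroid-minors project; as noted in the excerpt, the conjecture is a weakening of a statement from that program. The contrapositive asserts that every strictly $N$-fragile $\GF(q)$-representable matroid has branch-width bounded by some $l=l(N,q)$, so the strategy is to prove this bound by case analysis on the structural outcome.

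First I would reduce to $3$-connected $M$. Branch-width behaves well under $1$- and $2$-sum decompositions, and one may replace $N$ by the (3-connected) components relevant to its appearance as a minor: any non-trivial $2$-separation $(A,B)$ of $M$ carries all but a bounded part of an $N$-minor witness to one side, and fragility is inherited by the $3$-connected piece containing the witness. It therefore suffices to bound $\bw(M)$ for $3$-connected strictly $N$-fragile $\GF(q)$-representable $M$.

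Next, invoke the $\GF(q)$-structure theorem. For $\bw(M)$ larger than some threshold $l_0(q)$, $M$ admits a tree-decomposition whose leaves are either bounded-sized matroids or bounded-rank perturbations of $\GF(q)$-frame or lift matroids built over high-branch-width graphs or signed graphs. Fix any witness for the $N$-minor: a labelled bounded set of elements of $M$ together with a partition of the remainder into deletion and contraction sets. If $l$ is chosen large enough in terms of $N$ and $q$, there is a leaf piece $P$ disjoint from this witness and large compared with the perturbation rank. Within $P$, locate an element $e$ outside the bounded support of the perturbation and outside any fixed bounded set needed to preserve the skeleton's connectivity. Then neither $M\delete e$ nor $M\contract e$ alters the witness or the perturbation, so the $N$-minor survives both operations, contradicting fragility. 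In the frame/lift case the element-finding reduces, via the graph minors theorem, to the routine fact that a graph of sufficiently high branch-width contains an edge neither incident to nor needed by any fixed small subgraph.

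The principal obstacle is that this plan rests on a structure theorem whose detailed published form is not available at the time of writing, and Conjecture~\ref{con:boundcanopy} appears to be roughly equivalent in strength to a clean corollary of that theorem: any unconditional proof must either supply the missing structural ingredient or bypass it. Secondary obstacles are a Ramsey/pigeonhole step to displace the $N$-witness from a chosen region of $M$, and careful tracking of how a bounded-rank perturbation can turn a skeleton-non-fragile element fragile in $M$ itself (handled by choosing $e$ far from the perturbation support). An attainable intermediate target is the graphic case, where Robertson--Seymour grid theorems replace the general structure theorem and the argument becomes entirely combinatorial; a similar reduction should work for the near-regular and sixth-roots-of-unity partial fields, yielding an unconditional version of Theorem~\ref{thm:rotapf} for those classes.
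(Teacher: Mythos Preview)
The statement you were asked to prove is labelled a \emph{conjecture} in the paper, and the paper does not prove it; it is stated as an expected consequence of the Geelen--Gerards--Whittle structure theory, and the paper's main theorem takes it as a \emph{hypothesis}. So there is no proof in the paper to compare your proposal against.

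Your proposal is an honest outline rather than a proof, and you correctly identify its principal gap: the whole argument rests on a structure theorem that, at the time of the paper, was not available in published form. Beyond that, two of the steps you label ``secondary'' are in fact substantial. First, even granting a frame/lift-plus-perturbation description, the claim that an element $e$ chosen away from the perturbation support and the $N$-witness is automatically both $N$-deletable and $N$-contractible is not immediate: the underlying frame or lift matroid can itself be $N$-fragile in non-trivial ways, and the interaction between contraction in $M$ and contraction in the skeleton graph through a perturbation needs an actual argument, not a one-line reduction. Second, your reduction to the $3$-connected case is morally correct (and the paper records the relevant lemma as Proposition~\ref{pro:twosepminorintersection}), but you should note that the paper's Proposition~\ref{prop:almostconn} already gives this: a strictly $\mathcal{N}$-fragile matroid is $3$-connected up to series and parallel classes, so no separate tree-of-$3$-connected-pieces analysis is needed.

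Your final paragraph, proposing the near-regular and sixth-roots-of-unity cases as attainable unconditional targets, is exactly what the paper carries out in Section~\ref{sec:examples}, but by a different and much shorter route: rather than invoking grid theorems, the paper explicitly classifies the $3$-connected $U_{2,4}$-fragile matroids in those classes (they are whirls, Lemma~\ref{lem:nregalmostU24whirl}) and observes that whirls have branch width~$3$. That direct classification is both simpler and sharper than a graph-minors approach for these particular partial fields.
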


The difference with Geelen et al.'s conjecture is that they require that both $M\delete e$ and $M\contract e$ have a \emph{fixed} $N$-minor. Our conjecture is clearly implied by theirs. 

Our main application of Theorem \ref{thm:rotapf} is the following result:

\begin{theorem}\label{thm:gf5bcc}
	Rota's Conjecture for $\GF(5)$ is implied by Conjecture \ref{con:boundcanopy}.
\end{theorem}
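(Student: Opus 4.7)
The plan is to apply Theorem \ref{thm:rotapf} with $\parf = \GF(5)$ and $\matset = \matset(\GF(5))$. This class is well-closed, and hypothesis (iii) of Theorem \ref{thm:rotapf} is automatic, since every 3-connected $\GF(5)$-representable matroid already lies in $\matset$. Under Conjecture \ref{con:boundcanopy}, every $N \in \matset(\GF(5))$ has bounded canopy over $\matset(\GF(5))$, supplying the bounded-canopy hypothesis.

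The crux is then to exhibit a finite list $N_1, \ldots, N_k$ of 3-connected non-binary $\GF(5)$-representable matroids, each stabilizing $\matset(\GF(5))$ over $\GF(5)$, such that every sufficiently large 3-connected non-binary $\GF(5)$-representable matroid contains some $N_i$ as a minor. Such a list can be extracted from Whittle's stabilizer framework \cite{Whi96b} together with the classification of matroids admitting multiple inequivalent $\GF(5)$-representations: the candidates are small 3-connected non-binary matroids whose $\GF(5)$-representation is unique up to equivalence. For each $N_i$, Theorem \ref{thm:rotapf} then yields only finitely many excluded minors for $\matset(\GF(5))$ with an $N_i$-minor, and summing over the finite list bounds the number of excluded minors having some $N_i$ as a minor.

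It remains to bound the excluded minors that contain no $N_i$-minor. Because $\matset(\GF(5))$ is closed under direct sums and 2-sums, every excluded minor is 3-connected. A binary excluded minor must contain $F_7$ or $F_7^*$ as a minor; since each of $F_7$ and $F_7^*$ is itself an excluded minor and proper minors of an excluded minor are representable, the only binary excluded minors are $F_7$ and $F_7^*$ themselves. A non-binary excluded minor with no $N_i$-minor has bounded size by the stabilizer covering property, hence bounded branch width, so Theorem \ref{thm:rotabw} supplies only finitely many. The principal obstacle is producing the concrete list of $\GF(5)$-stabilizers with the required covering property; once that is in hand, the reduction is a mechanical assembly of Theorems \ref{thm:rotabw} and \ref{thm:rotapf}.
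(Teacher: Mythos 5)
Your framework (apply Theorem \ref{thm:rotapf} with $\matset=\matset(\GF(5))$, bounded canopy supplied by Conjecture \ref{con:boundcanopy}, hypothesis (\textit{iii}) automatic, binary excluded minors reduced to $F_7,F_7^*$) is sound, but the step you yourself flag as ``the principal obstacle'' is the actual content of the theorem, and it is not merely mechanical bookkeeping that you have deferred: the finite covering list $N_1,\ldots,N_k$ of stabilizers \emph{for $\matset(\GF(5))$} that you postulate is not available, and your heuristic for finding it points in the wrong direction. The stabilizer the paper actually uses is $U_{2,5}$ (with $U_{3,5}$ by duality), which is far from uniquely representable over $\GF(5)$ --- it attains the maximum of six inequivalent representations --- so restricting candidates to ``matroids whose $\GF(5)$-representation is unique up to equivalence'' is neither necessary (Theorem \ref{thm:rotapf} requires only stabilization; multiple representations are handled by its internal induction) nor does it yield the covering property. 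Worse, no finite family of $\GF(5)$-stabilizers covers all large $3$-connected non-binary quinary matroids in the sense you need: the matroids with no $\{U_{2,5},U_{3,5}\}$-minor include arbitrarily large non-binary dyadic matroids, and the matroids one would use to reach them ($F_7^-$, $(F_7^-)^*$, $P_8$) are only known to stabilize the \emph{dyadic} class, not $\matset(\GF(5))$.

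The paper circumvents this by changing the ambient class rather than enlarging the stabilizer list (Lemma \ref{lem:noU25U35}): an excluded minor for $\matset(\GF(5))$ with no minor in $\{U_{2,5},U_{3,5},F_7,F_7^*\}$ is ternary by the excluded-minor theorem for $\GF(3)$, hence is an excluded minor for the dyadic class; the excluded-minor characterization of near-regular matroids then forces it to have a minor among $F_7^-$, $(F_7^-)^*$, $P_8$ (or to be one of finitely many sporadic matroids), and Theorem \ref{thm:rotapf} is applied \emph{inside the dyadic class} with those matroids as stabilizers. The remaining excluded minors, those with a $U_{2,5}$- or $U_{3,5}$-minor, are handled by one application of Theorem \ref{thm:rotapf} with $N=U_{2,5}$ (and duality), using Whittle's theorem that $U_{2,5}$ stabilizes $\matset(\GF(5))$. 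These concrete inputs (the $\GF(3)$ and near-regular excluded-minor characterizations, the dyadic stabilizers, and the $U_{2,5}$ stabilizer theorem) are exactly what your proposal lacks. A secondary, fixable issue: your covering property is phrased for $\GF(5)$-representable matroids, but an excluded minor is not representable, so you would additionally need to pass to a large $3$-connected non-binary proper minor (e.g.\ via the Splitter Theorem applied to a $U_{2,4}$-minor) before invoking it.
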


Unfortunately we cannot make a similar statement for bigger finite fields, since our proof relies on the fact that 3-connected quinary matroids have a bounded number of inequivalent representations, a property that is not shared by bigger fields \cite{OVW95}.

Theorem \ref{thm:rotapf} comes very close to the following conjecture:

\begin{conjecture}
  Let $\parf$ be a partial field. If $\matset(\parf)$ has infinitely many excluded minors, then there is an infinite chain of matroids $N_1, N_2, \ldots$ such that $N_i$ has at least $i$ inequivalent representations over $\parf$, and such that $N_i$ is a minor of some excluded minor.
\end{conjecture}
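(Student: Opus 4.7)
The plan is to prove the contrapositive: assuming there is a uniform bound $k$ such that every $\pf$-representable matroid appearing as a minor of some excluded minor of $\matset(\pf)$ has at most $k$ inequivalent $\pf$-representations, I would deduce that $\matset(\pf)$ has only finitely many excluded minors.

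Under this assumption, I would use Whittle's stabilizer theory to produce a finite collection of $3$-connected $\pf$-representable matroids $N_1, \ldots, N_t$ with the property that every $3$-connected $\pf$-representable matroid whose $\pf$-representation is not unique contains some $N_i$ as a minor, and each $N_i$ stabilizes the class $\matset_i \subseteq \matset(\pf)$ of $3$-connected matroids having it as a minor. Theorem \ref{thm:rotapf} applied to each pair $(N_i, \matset_i)$ then yields finitely many excluded minors with an $N_i$-minor. The remaining excluded minors avoid every $N_i$, hence are uniquely $\pf$-representable, and can be controlled by invoking Theorem \ref{thm:rotabw} together with a direct rigidity argument showing that uniquely representable excluded minors behave in a binary-like fashion with respect to single-element extensions.

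The central obstacle is the bounded-canopy hypothesis of Theorem \ref{thm:rotapf}, which the conjecture does not assume. To close the gap one must either establish (some form of) Conjecture \ref{con:boundcanopy} for each subclass $\matset_i$, or substitute for Theorem \ref{thm:rotabw} the deeper structural results expected from the matroid minors project of Geelen, Gerards, and Whittle. A secondary challenge is sharpening the conclusion from a bounded number of representations over a finite antichain of stabilizer candidates into a chain $N_1, N_2, \ldots$ whose numbers of inequivalent representations \emph{strictly increase} with $i$; this requires iterating the stabilizer selection and carefully bookkeeping how successive candidate stabilizers interact with the growing families of excluded minors produced at each stage, to avoid the chain stabilising below the intended growth rate.
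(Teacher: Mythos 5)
The statement you are trying to prove is a \emph{conjecture} in the paper, not a theorem: the authors explicitly do not prove it, and they even explain why their machinery falls short. They only establish the weaker Corollary~\ref{cor:infichain}, which (i) assumes Conjecture~\ref{con:boundcanopy} as an extra hypothesis and (ii) concludes only that $N_{i+1}$ is not stabilized by $N_i$, \emph{not} that the number of inequivalent representations grows with $i$. The reason they stop there is exactly the point your proposal glosses over: a matroid may fail to be stabilized by $N$ and yet have \emph{fewer} inequivalent $\pf$-representations than $N$, so the iterative construction via Theorem~\ref{thm:rotapf} cannot be upgraded to a chain with strictly increasing representation counts by ``careful bookkeeping''---that upgrade is the open content of the conjecture, not a technical refinement.

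Concretely, your proposal has three gaps that prevent it from being a proof. First, Theorem~\ref{thm:rotapf} requires the bounded canopy hypothesis, which the conjecture does not grant; you acknowledge this but only propose to ``establish (some form of) Conjecture~\ref{con:boundcanopy}'' or to invoke unpublished structure theory, neither of which is available. Second, the claimed output of ``Whittle's stabilizer theory''---a finite family $N_1,\ldots,N_t$ such that every non-uniquely representable $3$-connected $\pf$-representable matroid has some $N_i$-minor, with each $N_i$ stabilizing the class of matroids containing it---does not follow from the stabilizer results used in the paper (Whittle's theorem gives a finite \emph{test} for whether a fixed $N$ is a stabilizer, not a finite universal family of stabilizers; indeed for fields beyond $\GF(5)$ the number of inequivalent representations of $3$-connected matroids is unbounded, as the paper notes via \cite{OVW95}). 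Third, the ``direct rigidity argument'' for uniquely representable excluded minors is not specified and cannot substitute for Theorem~\ref{thm:rotabw}, which itself only bounds excluded minors of \emph{bounded branch width}---bringing you back to the missing canopy hypothesis. So the proposal is a plan that terminates at the same obstacles the authors identify, rather than a proof.
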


The catch is in the observation that a matroid may not be stabilized by $N$ yet have fewer representations than $N$. We can, however, deduce the following:

\begin{corollary}\label{cor:infichain}
	Let $\parf$ be a partial field. If $\matset(\pf)$ has infinitely many excluded minors, but Conjecture \ref{con:boundcanopy} holds for $\pf$, then there is an infinite chain $N_1, N_2, \ldots$, with $N_i$ a minor of $N_{i+1}$ and $N_{i+1}$ not stabilized by $N_i$.
\end{corollary}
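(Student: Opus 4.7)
The plan is to apply Theorem~\ref{thm:rotapf} iteratively. Conjecture~\ref{con:boundcanopy} ensures that every $\pf$-representable matroid has bounded canopy over any well-closed subclass of $\matset(\pf)$, so the canopy hypothesis of Theorem~\ref{thm:rotapf} is automatic. I construct the chain by induction on $k$, maintaining the strengthened invariant that $N_k$ is 3-connected, non-binary, $\pf$-representable, and a minor of infinitely many excluded minors of $\matset(\pf)$. The required chain can exist only when $\pf$ is non-binary (each $N_{i+1}$ needs at least two $\pf$-representations), so we may assume $\pf$ is non-binary; for the base case, I take $N_1 = U_{2,4}$ and use Theorem~\ref{thm:rotabw} applied to the minor-closed class of binary $\pf$-representable matroids to conclude that only finitely many excluded minors of $\matset(\pf)$ are binary, so infinitely many contain $U_{2,4}$.

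For the inductive step, given $N_k$ satisfying the invariant, define
\[ \matset_k = \{M \in \matset(\pf) : \text{every 3-connected minor of } M \text{ with an } N_k\text{-minor is stabilized by } N_k\}. \]
Routine checks show $\matset_k$ is well-closed; the propagation property of Whittle's stabilizer theorem gives that $N_k$ stabilizes $\matset_k$ and that condition~(iii) of Theorem~\ref{thm:rotapf} holds. Theorem~\ref{thm:rotapf} then yields only finitely many excluded minors of $\matset_k$ containing $N_k$. Consequently, for all but finitely many excluded minors $M$ of $\matset(\pf)$ containing $N_k$, some proper minor of $M$ lies in $\matset(\pf)$ but not in $\matset_k$, and this minor contains a 3-connected $\pf$-representable sub-minor of $M$ with an $N_k$-minor that is not stabilized by $N_k$.

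To select $N_{k+1}$ while preserving the invariant, I exploit the finite-check feature of Whittle's stabilizer theorem: any 3-connected $\pf$-representable matroid containing $N_k$ and not stabilized by $N_k$ has, as a minor, a 3-connected $\pf$-representable witness of size bounded by a constant depending only on $N_k$. Each of the infinitely many eligible $M$'s therefore contains such a bounded-size witness, and since only finitely many isomorphism classes of 3-connected $\pf$-representable matroids of bounded size with an $N_k$-minor not stabilized by $N_k$ exist, the pigeonhole principle yields a single $N_{k+1}$ lying in infinitely many excluded minors. This $N_{k+1}$ is 3-connected, $\pf$-representable, contains $N_k$ (and hence $U_{2,4}$) as a minor, and is by construction not stabilized by $N_k$, closing the induction. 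The hardest step is verifying the precise bounded-witness formulation of Whittle's stabilizer theorem that enables the pigeonhole argument, together with the stabilizer-propagation property feeding into condition~(iii).
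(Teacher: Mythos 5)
Your inductive step applies Theorem~\ref{thm:rotapf} to the class $\matset_k$ of matroids all of whose $3$-connected minors with an $N_k$-minor are stabilized by $N_k$, and this is where the argument breaks. Hypothesis~(\textit{iii}) of Theorem~\ref{thm:rotapf} asks that every $3$-connected $\pf$-representable matroid with an $N_k$-minor that is stabilized by $N_k$ lie in the class; with your definition of $\matset_k$ this is precisely the assertion that being stabilized by $N_k$ is inherited by all $3$-connected minors having an $N_k$-minor. Whittle's stabilizer theorem does not give this: it propagates stabilization \emph{upward}, from single-element extensions and coextensions of $N_k$ inside a minor-closed class to all $3$-connected members, and says nothing about minors of a stabilized matroid. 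The inheritance you need is in fact false exactly in the situation the induction is meant to exploit. Already for $\pf=\GF(5)$ (the case of Theorem~\ref{thm:gf5bcc}): if $N_k$ is not a stabilizer for $\matset(\GF(5))$, choose a $3$-connected quinary $M'$ with an $N_k$-minor not stabilized by $N_k$ and a projective geometry $\PG(r-1,5)$ having $M'$ as a minor; this projective geometry is $3$-connected, has an $N_k$-minor, and is stabilized by $N_k$ (it is uniquely representable), yet it is excluded from your $\matset_k$ because of its minor $M'$. So condition~(\textit{iii}) fails and Theorem~\ref{thm:rotapf} cannot be invoked for $\matset_k$; note also that $\matset_k$ is not obviously closed under duality, since its defining condition refers to $N_k$ but not $N_k^*$, so even well-closedness needs repair. (If instead $N_k$ were a stabilizer for all of $\matset(\pf)$, then $\matset_k=\matset(\pf)$ and your own conclusion would contradict the invariant, so the problematic case cannot be avoided.)

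By contrast, the paper takes $\matset_{N_k}$ to be the smallest well-closed class containing every $\pf$-representable matroid stabilized by $N_k$, so that condition~(\textit{iii}) holds by construction, and it obtains the next links of the chain directly from the finitely many $\pf$-representable excluded minors of $\matset_{N_k}$ having an $N_k$-minor, none of which is stabilized by $N_k$, again by construction; iterating gives a finitely branching tree from which an infinite chain is extracted, with no pigeonhole over bounded-size witnesses and no need for your stronger invariant that each $N_k$ lies in infinitely many excluded minors. Two smaller points. Your base case misuses Theorem~\ref{thm:rotabw}: that theorem bounds the number of excluded minors of each fixed branch width $k$, so without a branch-width bound it does not show that only finitely many excluded minors of $\matset(\pf)$ are binary; the paper instead observes that an excluded minor with no minor in $\{U_{2,4},F_7,F_7^*\}$ would be regular and hence $\pf$-representable. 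Finally, ``the required chain can exist only when $\pf$ is non-binary, so we may assume $\pf$ is non-binary'' is backwards reasoning; what you need is that $U_{2,4}\in\matset(\pf)$, and this should be derived from the hypotheses (it follows once infinitely many excluded minors have $U_{2,4}$-minors, since some excluded minor then properly contains $U_{2,4}$), not assumed because the conclusion requires it.
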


The paper is built up as follows. First, in Section \ref{sec:partialfields}, we give an overview of the theory of matroid representation over partial fields. Next, in Section \ref{sec:conn} we recall some standard results on connectivity. Section \ref{ssec:2seps} contains a few new results on $2$-separations. Section \ref{sec:fragility} contains a number of observations concerning fragility. In Section \ref{sec:incriminate} we use \emph{deletion pairs} to create a matrix over a partial field $\pf$ that should represent a matroid $M$ having an $N$-minor, if $M$ were representable over $\pf$. We introduce an \emph{incriminating set} which indicates where this particular representation fails. Deletion pairs and incriminating sets dictate the basic structure of the proof, in Section \ref{sec:thestrongproof}, of a weaker version of Theorem \ref{thm:rotapf}, in which $N$ is required to be a \emph{strong} stabilizer. In Section \ref{sec:theproof}, then, we show how to prove Theorem \ref{thm:rotapf} from this weaker version, and prove Corollary \ref{cor:infichain}. We conclude in Section \ref{sec:examples} with a number of applications of our result.

Unexplained notation follows \citet{oxley}. We write $\si(M)$ for the simplification of $M$ and $\co(M)$ for the cosimplification of $M$. We write $N\minorof M$ if $N$ is isomorphic to a minor of $M$. The smallest member of $\N$ is 0.

\section{Partial fields and representations}\label{sec:partialfields}

We start with the definition of a partial field. In this section we omit proofs, all of which can be found in at least one of \citep{SW96,PZ08conf,PZ08lift}. All proofs are also collected in Van Zwam \cite{vZ09}.

\begin{definition}\label{def:pf}
  A \emph{partial field} is a pair $(\ring, \group)$, where $\ring$ is a commutative ring and $\group$ is a subgroup of the group of units of $\ring$ such that $-1 \in \group$.
\end{definition}

In some contexts (for instance in Definition \ref{def:hom}) we may implicitly identify $\pf$ with the set $G \cup \{0\}$. Likewise, we say that $p$ is an \emph{element of} $\parf$ (notation: $p \in \parf$) if $p = 0$ or $p \in \group$. We define $\parf^* := \group$. Clearly, if $p,q \in \parf$ then also $p\cdot q \in \parf$, but $p+q$ need not be an element of $\parf$.

\begin{definition}\label{def:hom}
  Let $\parf_1,\parf_2$ be partial fields. A function $\phi:\parf_1\rightarrow\parf_2$ is a \emph{partial-field homomorphism} if
  \begin{enumerate}
    \item\label{it:hom1} $\phi(1) = 1$;
    \item\label{it:hom2} For all $p,q \in \parf_1$, $\phi(pq) = \phi(p)\phi(q)$;
    \item\label{it:hom3} For all $p,q,r \in \parf_1$ such that $p+q = r$, $\phi(p) + \phi(q) = \phi(r)$.
  \end{enumerate}
\end{definition}

Recall that $\pf$ is \emph{finitary} if there is a partial-field homomorphism $\pf\rightarrow\GF(q)$ for some prime power $q$. We single out some special homomorphisms:

\begin{definition}\label{def:pfisom}
  Let $\parf_1,\parf_2$ be partial fields and let $\phi:\parf_1\rightarrow\parf_2$ be a homomorphism. Then $\phi$ is an \emph{isomorphism} if\index{isomorphism!between partial fields|see{partial field}}
  \begin{enumerate}
    \item $\phi$ is a bijection;
    \item $\phi(p)+\phi(q) \in \parf_2$ if and only if $p+q \in \parf_1$.
  \end{enumerate}
\end{definition}

\begin{definition}
  A partial-field \emph{automorphism} is an isomorphism $\phi:\parf\rightarrow\parf$.\index{partial field!automorphism|defi}
\end{definition}

We introduce some notation related to matrices. Recall that formally, for linearly ordered sets $X$ and $Y$, an $X\times Y$ matrix $A$ over a partial field $\pf$ is a function $A:X\times Y \rightarrow \pf$. If $X=(1,2,\ldots,k)$ then we say that $A$ is a $k\times Y$ matrix.

If $X'\subseteq X$ and $Y'\subseteq Y$, then we denote by $A[X',Y']$ the submatrix of $A$ obtained by deleting all rows and columns in $X\setminus X'$, $Y\setminus Y'$. If $Z$ is a subset of $X\cup Y$ then we define $A[Z] := A[X\cap Z, Y \cap Z]$. Also, $A-Z := A[X\setminus Z, Y\setminus Z]$.

Let $A_1$ be an $X\times Y_1$ matrix over a partial field $\pf$ and $A_2$ an $X\times Y_2$ matrix over $\pf$, where $Y_1\cap Y_2 = \emptyset$. Then $A := [A_1 \  A_2]$ denotes the $X\times (Y_1 \cup Y_2)$ matrix with $A_{xy} = (A_1)_{xy}$ for $y \in Y_1$ and $A_{xy} = (A_2)_{xy}$ for $y \in Y_2$. If $X$ is an ordered set, then $I_X$ is the $X\times X$ identity matrix. If $A$ is an $X\times Y$ matrix over $\field$, then we use the shorthand $[I\  A]$ for $[I_X \  A]$.

Note that, for our purposes, the ordering of $X$ and $Y$ is only significant for the sign of determinants. And since the sign is irrelevant to the underlying matroid structure, we will freely permute rows and columns, always along with their labels, throughout the paper.

\begin{definition}\label{def:weakPmatrix}
  Let $\parf = (\ring, \group)$ be a partial field and let $A$ be a matrix with entries in $\ring$. Then $A$ is a \emph{$\parf$-matrix} if, for each square submatrix $D$ of $A$, $\det(D) \in \parf$.
\end{definition}

In particular, all entries of $A$ are in $\parf$. 

\begin{proposition}\label{prop:parfmatroid}
  Let $\parf = (\ring, \group)$ be a partial field, let $A$ be an $r\times E$ $\parf$-matrix, and define
  \begin{align*}
    \mathcal{B} := \big\{\,X \subseteq E \,:\, |X| = r, \det(A[r,X]) \neq 0\,\big\}.
  \end{align*}
  If $\mathcal{B} \neq \emptyset$ then $\mathcal{B}$ is the set of bases of a matroid.
\end{proposition}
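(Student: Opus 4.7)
The plan is to check the basis axioms for $\mathcal{B}$ directly. Nonemptiness of $\mathcal{B}$ is given, and all members have cardinality $r$ by construction, so the only content is the basis-exchange axiom: given $B_1,B_2\in\mathcal{B}$ and $x\in B_1\setminus B_2$, I must exhibit $y\in B_2\setminus B_1$ with $(B_1\setminus\{x\})\cup\{y\}\in\mathcal{B}$.

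The tool I would reach for is a Grassmann--Pl\"ucker relation. Specifically, the identity I want is
\[
\det(A[r,B_1])\cdot\det(A[r,B_2]) \;=\; \sum_{y\in B_2\setminus B_1}\varepsilon_y\cdot\det(A[r,(B_1\setminus\{x\})\cup\{y\}])\cdot\det(A[r,(B_2\setminus\{y\})\cup\{x\}]),
\]
with signs $\varepsilon_y\in\{+1,-1\}$ determined by the orderings, obtained from the classical Pl\"ucker relation indexed by the $(r-1)$-set $B_1\setminus\{x\}$ and the $(r+1)$-set $B_2\cup\{x\}$ after discarding terms with $y\in B_1\setminus\{x\}$, which vanish trivially (the first factor becomes a minor with too few columns). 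Since this is a polynomial identity in the entries of $A$, it suffices to verify it when those entries are formal indeterminates over $\Q$; there it is standard, provable for instance by Laplace expansion of a suitable padded $(r+1)\times(r+1)$ determinant.

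Granted the identity, the rest is quick. By the $\pf$-matrix hypothesis, both $\det(A[r,B_1])$ and $\det(A[r,B_2])$ lie in $\pf^*=G$, so each is a unit of $R$; in particular their product is a unit, hence nonzero. Consequently at least one summand on the right-hand side must be nonzero, and because in any ring the vanishing of either factor forces $ab=0$, both determinants appearing in that summand are nonzero. The first factor then certifies that $(B_1\setminus\{x\})\cup\{y\}\in\mathcal{B}$ for the corresponding $y\in B_2\setminus B_1$, and exchange is verified.

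The step I regard as the only real obstacle is the Pl\"ucker identity over the ring $R$: $R$ need not be an integral domain, and the usual linear-algebra derivations divide by subdeterminants that may not even be regular elements. The detour through $\Z[x_{ij}]$ and its field of fractions is what sidesteps this; after that, the argument is mechanical, and the entire proof uses the $\pf$-matrix hypothesis only through the single fact that the two basis minors are units of $R$.
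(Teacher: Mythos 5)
Your proof is correct. Note that the paper itself contains no argument for Proposition~\ref{prop:parfmatroid}: Section~\ref{sec:partialfields} explicitly omits all proofs and defers to \citet{SW96,PZ08conf,PZ08lift} and \cite{vZ09}, so there is no in-paper proof to compare against; your route is essentially the standard one in the ring-based treatment of partial fields that those references use. Concretely, you reduce everything to basis exchange, obtain the exchange candidate from a Grassmann--Pl\"ucker relation, justify that relation over an arbitrary commutative ring by observing it is an integer polynomial identity (so it suffices to check it over $\Z[x_{ij}]$, or its fraction field), and then use that $\det(A[r,B_1])$ and $\det(A[r,B_2])$ lie in $\group$ and hence are units of $\ring$, so their product is nonzero even though $\ring$ may have zero divisors; this is exactly the point where the partial-field hypothesis (that $\group$ is a subgroup of the units) is needed, and you correctly identify it as the only place the $\parf$-matrix condition enters. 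One cosmetic imprecision: in the Pl\"ucker relation the terms indexed by $y\in B_1\cap B_2$ vanish because the corresponding factor is the determinant of a matrix with a repeated column (the column indices are tuples), not because the minor has ``too few columns''; the conclusion that these terms contribute nothing is of course right, and the rest of the argument stands as written.
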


Following the notation for matroids representable over fields, we denote the matroid of Proposition~\ref{prop:parfmatroid} by $M[A]$. Some more terminology:

\begin{definition}\label{def:representable}
  Let $M$ be a matroid. We say $M$ is \emph{representable} over a partial field $\parf$ (or, shorter, $\parf$-representable) if there exists a $\parf$-matrix $A$ such that $M = M[A]$. Moreover, we refer to $A$ as a \emph{representation matrix} of $M$ and say $M$ is \emph{represented by} $A$.
\end{definition}

\begin{proposition}\label{prop:pmatops}Let $A$ be a $\parf$-matrix. Then $A^T$ and $[I\  A]$ are also $\parf$-matrices. Let $\phi:\pf\rightarrow\pf'$ be a partial-field homomorphism. Then $\phi(A)$ is a $\pf'$-matrix and $M[I\  A] = M[I \  \phi(A)]$.
\end{proposition}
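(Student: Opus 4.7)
The plan is to verify the four assertions in order, each unpacking Definitions~\ref{def:hom} and~\ref{def:weakPmatrix} together with standard determinantal identities.

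First, every square submatrix of $A^T$ is the transpose of a square submatrix of $A$, and transposition preserves determinants, so every subdeterminant of $A^T$ is a subdeterminant of $A$ and hence lies in $\parf$. Next, let $D$ be a square submatrix of $[I\ A]$. If some column of $D$ coming from $I$ has its pivot row absent from $D$, then that column of $D$ is zero and $\det(D)=0\in\parf$. Otherwise, by permuting rows and columns of $D$ one puts it in block upper-triangular form with an identity block on the diagonal and a square submatrix $D'$ of $A$ in the other diagonal block; thus $\det(D)=\pm\det(D')$, which lies in $\parf$ since $-1\in\parf$ and $\parf$ is closed under multiplication.

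To prove $\phi(A)$ is a $\parf'$-matrix I would show $\det(\phi(D))=\phi(\det(D))$ for every square submatrix $D$ of $A$, which immediately places every subdeterminant of $\phi(A)$ in $\parf'$. One notes first that $\phi(0)=0$ by applying condition~(iii) of Definition~\ref{def:hom} to $0+0=0$, and that products of elements of $\parf$ lie in $\parf$ (since $\parf^*$ is a subgroup of the units of $R$, and products with $0$ are $0$). Then I would proceed by induction on the order $k$ of $D$ via cofactor expansion along the first row. The base case $k=1$ is trivial. In the inductive step, expanding both determinants in their respective rings and applying the inductive hypothesis to each $(k-1)\times(k-1)$ minor $D_j'$ gives $\det(\phi(D))=\sum_{j=1}^k \phi((-1)^{1+j} D_{1,j}\det(D_j'))$, while $\det(D)=\sum_{j=1}^k (-1)^{1+j}D_{1,j}\det(D_j')$ in $R$, with each summand in $\parf$. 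The remaining content is a multi-term additivity lemma: if $p_1,\dots,p_m,r\in\parf$ satisfy $\sum p_i=r$, then $\sum\phi(p_i)=\phi(r)$.

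Finally, for $M[I\ A]=M[I\ \phi(A)]$, both matroids have ground set $X\cup Y$ and the identity block ensures $X$ is a basis of each, so it suffices to check that the two base families coincide. The previous step gives $\det(\phi(B))=\phi(\det(B))$ for every square submatrix $B$ of $[I\ A]$; since $\phi(1)=1$, $\phi(0)=0$, and $\phi$ sends units to units (as $\phi(p)\phi(p^{-1})=\phi(1)=1$ shows $\phi(p)$ is a unit of $R'$), a subdeterminant is nonzero iff its $\phi$-image is nonzero, so the two base families agree. The main obstacle throughout is the multi-term additivity lemma in the $\phi(A)$ step: condition~(iii) of Definition~\ref{def:hom} is a two-term statement, and partial sums of $\parf$-elements in $R$ need not lie in $\parf$, so condition~(iii) cannot be iterated naively. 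I expect one must order the cofactor-expansion sum carefully, exploiting the $\parf$-matrix hypothesis (which forces many subdeterminants of $A$ into $\parf$) to keep the running partial sums inside $\parf$ so that condition~(iii) can be chained.
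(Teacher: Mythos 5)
Your arguments for $A^T$ and for $[I\ A]$ are correct and complete, and your derivation of $M[I\ A]=M[I\ \phi(A)]$ from the identity $\det(\phi(D))=\phi(\det(D))$ is also fine. The genuine gap is that this identity---which is the entire content of the claim that $\phi(A)$ is a $\pf'$-matrix---is never actually established. Cofactor expansion reduces it to the multi-term statement $\sum_j\phi(t_j)=\phi(\det(D))$ for the cofactor terms $t_j\in\parf$, and, as you yourself note, Definition~\ref{def:hom}(\ref{it:hom3}) is only a two-term axiom. Your proposed repair---reordering the terms so that every running partial sum lies in $\parf$ and then chaining (\ref{it:hom3})---has no support: a partial sum of cofactor terms is the determinant of the matrix obtained from $D$ by zeroing out part of its first row, and that matrix is in general not a $\parf$-matrix, so the hypothesis that all subdeterminants of $A$ lie in $\parf$ gives no control whatsoever over these partial sums. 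The sentence ``I expect one must order the cofactor-expansion sum carefully'' is therefore exactly the missing idea, and as sketched it would fail.

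For comparison: the paper itself omits this proof (Section~\ref{sec:partialfields} defers to \citet{SW96,PZ08lift} and \cite{vZ09}), and the proofs in those sources avoid the multi-term problem by inducting with a pivot instead of a cofactor expansion. If the square $\parf$-matrix $D$ has a zero column, both determinants vanish (using $\phi(0)=0$). Otherwise pick $D_{xy}\neq 0$ and use the Schur-complement identity $\det(D)=\pm D_{xy}\det\bigl(D^{xy}[X\setminus x,\,Y\setminus y]\bigr)$, valid over any commutative ring because $D_{xy}$ is a unit. By Proposition~\ref{prop:pivotproper} the pivoted matrix is again a $\parf$-matrix, and for each of its entries the single equation $D_{uv}=(D^{xy})_{uv}+D_{uy}D_{xy}^{-1}D_{xv}$ has all three terms in $\parf$, so axiom (\ref{it:hom3}) applies verbatim and yields $\phi(D^{xy})=\phi(D)^{xy}$; since $\phi(D_{xy})$ is a unit of $\pf'$, the same Schur identity over $\pf'$ plus induction on the order of $D$ gives $\phi(\det(D))=\det(\phi(D))$ with only two-term sums ever appearing. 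With that lemma in place, your final step establishing $M[I\ A]=M[I\ \phi(A)]$ goes through unchanged.
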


We will sometimes refer to the rank of a $\parf$-matrix.

\begin{definition}
  Let $A$ be an $X\times Y$ $\parf$-matrix. The \emph{rank} of $A$ is
  \begin{align*}
    \rank(A) := \max \big\{\, k\in \N \, :\, & \textrm{ there are } X'\subseteq X, Y'\subseteq Y \textrm{ with } |X'| = |Y'| = k, \\ & \textrm{ and } \det(A[X',Y']) \neq 0 \,\big\}.
  \end{align*}
\end{definition}

It is not hard to verify that the rank function is preserved by partial-field homomorphisms, and that it corresponds to the usual rank function if $\pf$ is a field.

\begin{definition}\label{def:pivot}Let $A$ be an $X\times Y$ matrix over a ring $\ring$ and let $x \in X, y \in Y$ be such that $A_{xy} \in \ring^*$. Then we define $A^{xy}$ to be the $(X\setminus x)\cup y \times (Y\setminus y)\cup x$ matrix with entries
\begin{align*}
  (A^{xy})_{uv} = \left\{ \begin{array}{ll}
    (A_{xy})^{-1} \quad & \textrm{if } (u,v) = (y,x)\\
    (A_{xy})^{-1} A_{xv} & \textrm{if } u = y, v\neq x\\
    -A_{uy} (A_{xy})^{-1} & \textrm{if } v = x, u \neq y\\
    A_{uv} - A_{uy} (A_{xy})^{-1} A_{xv} & \textrm{otherwise.}
  \end{array}\right.
\end{align*}
\end{definition}

We say that $A^{xy}$ is obtained from $A$ by \emph{pivoting} over $xy$. To give some intuition for this definition, we remark that it corresponds to row reduction in the matrix $[I_X\ A]$, as follows. Multiply row $x$ with $(A_{xy})^{-1}$, then add multiples of row $x$ to the other rows so the other entries in column $y$ become zero. Finally, exchange columns $x$ and $y$, and relabel row $x$ to $y$. The resulting matrix is $[I_{(X\setminus x)\cup y} \ A^{xy}]$. The next lemma formalizes this.

\begin{lemma}\label{lem:pivotmat}
  Let $A, x, y$ be as in Definition \ref{def:pivot}. Define $a := A_{xy}$, $b := A[X\setminus x,y]$, $X' := X\setminus x$, and
  \begin{align}
    F := \kbordermatrix{ & x & X'\\
                   y  & a^{-1} & 0 \cdots 0\\
                                                     & \phantom{0} &\\
                                                  X' & -a^{-1} b &    I_{X'}\\
                                                     & \phantom{0} &
                                                  }.\label{eq:Finverse}
  \end{align}
  Let $P$ be the $(X\cup Y) \times (X\cup Y)$ permutation matrix swapping $x$ and $y$. Then
  \begin{align*}
    F [I\  A] P = [I\  A^{xy}].
  \end{align*}
\end{lemma}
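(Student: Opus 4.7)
The plan is to verify the identity by a direct block-matrix calculation. The key observation is that $F$ is exactly the inverse of the submatrix $[I\ A][X,\,X'\cup y]$ of $[I\ A]$ whose columns correspond to the new basis $(X\setminus x)\cup y$: partitioning its rows as $\{x\}\cup X'$ and its columns as $\{y\}\cup X'$, this submatrix has entry $a$ at $(x,y)$, column $b$ at $(X',y)$, zero at $(x,X')$, and $I_{X'}$ at $(X',X')$, and a direct multiplication shows its inverse is precisely $F$. Hence left-multiplication by $F$ reduces those columns of $[I\ A]$ to the standard basis, so that the submatrix of $F\cdot[I\ A]$ on columns $X'\cup y$ is the identity $I_{X'\cup y}$ (with rows now labelled $\{y\}\cup X'$).

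To finish, I would compute the remaining entries block-by-block. Row $y$ of $F\cdot[I\ A]$ evaluates to $a^{-1}$ at column $x$ and $a^{-1}A_{xv}$ at each column $v\in Y\setminus y$; row $u\in X'$ evaluates to $-a^{-1}A_{uy}$ at column $x$ and $A_{uv}-A_{uy}a^{-1}A_{xv}$ at each $v\in Y\setminus y$. Comparing with Definition~\ref{def:pivot}, these are exactly the four families of entries of $A^{xy}$ on the columns $\{x\}\cup(Y\setminus y)$. Post-multiplication by the permutation $P$ swaps positions $x$ and $y$, so the resulting matrix has identity columns at positions $(X\setminus x)\cup y$ and $A^{xy}$-columns at positions $(Y\setminus y)\cup x$ --- which is precisely the block structure of $[I_{(X\setminus x)\cup y}\ A^{xy}]$.

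The main obstacle is purely bookkeeping: one must track carefully which element of $X\cup Y$ labels each column before and after applying $P$, and confirm that all four cases of the pivot formula in Definition~\ref{def:pivot} are correctly recovered. No deeper difficulty arises; the lemma is, in essence, the statement that matrix pivoting is realised by left-multiplication by the inverse of the pivot-block, followed by the column swap that installs $y$ in the identity region of the matrix.
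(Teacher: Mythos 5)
Your verification is correct: the block computation of $F[I\ A]$ recovers exactly the four entry families of Definition~\ref{def:pivot}, with column $y$ becoming a standard basis vector and the swap by $P$ installing the identity block on $(X\setminus x)\cup y$; this is precisely the row-reduction argument the paper intends (its proof is omitted, deferred to \cite{SW96,PZ08conf,PZ08lift,vZ09}), and your key observation that $F$ is the inverse of $[I\ A][X,(y\cup X)\setminus x]$ is exactly the remark the paper makes immediately after the lemma. No gaps beyond the bookkeeping you already flag, which your block-by-block comparison handles.
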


Note that $F$ is the inverse of $[I\  A][X,(y\cup X)\setminus x]$.

\begin{proposition}\label{prop:pivotproper}
  Let $A$ be an $X\times Y$  $\parf$-matrix and let $x \in X, y \in Y$ be such that $A_{xy} \neq 0$. Then $A^{xy}$ is a $\parf$-matrix.
\end{proposition}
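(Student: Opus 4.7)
The plan is to reduce the property ``every square submatrix of $A^{xy}$ has determinant in $\pf$'' to the corresponding property for $A$, via the factorization $F[I\ A]P=[I\ A^{xy}]$ supplied by Lemma~\ref{lem:pivotmat}. Set $a:=A_{xy}\in\pf^*$. Two preparatory observations will do most of the work. First, the matrix $F$ in~(\ref{eq:Finverse}) is lower block-triangular with $a^{-1}$ in the pivot position and $I_{X\setminus x}$ as its other diagonal block, hence $\det(F)=a^{-1}\in\pf^*$. Second, by Proposition~\ref{prop:pmatops} the matrix $[I\ A]$ is a $\pf$-matrix, so each of its square minors lies in $\pf$.

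Next, I reduce to $|X|\times|X|$ minors of $[I\ A^{xy}]$. Let $X_{\text{new}}:=(X\setminus x)\cup y$ be the row-index set of $A^{xy}$, and let $D=A^{xy}[X_0,Y_0]$ with $|X_0|=|Y_0|=k$. Augment $D$ with the identity columns indexed by $X_{\text{new}}\setminus X_0$ to obtain the $|X|\times|X|$ submatrix
\[
B:=[I\ A^{xy}]\bigl[X_{\text{new}},\;(X_{\text{new}}\setminus X_0)\cup Y_0\bigr].
\]
The block structure contributed by the identity columns forces $\det(B)=\pm\det(D)$. So it is enough to show that, for every column set $Z\subseteq X_{\text{new}}\cup(Y\setminus y)\cup\{x\}$ with $|Z|=|X|$, the minor $\det([I\ A^{xy}][X_{\text{new}},Z])$ lies in $\pf$.

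For this I apply the factorization of Lemma~\ref{lem:pivotmat} and multiplicativity of the determinant. Restricting $F[I\ A]P$ to the $Z$-columns yields
\[
[I\ A^{xy}][X_{\text{new}},Z] \;=\; F\cdot[I\ A][X,\,P(Z)]\cdot Q,
\]
where $P(Z)$ is the set $Z$ with $x$ and $y$ swapped (when applicable) and $Q$ is a permutation matrix aligning the columns. Taking determinants gives
\[
\det\bigl([I\ A^{xy}][X_{\text{new}},Z]\bigr) \;=\; \varepsilon\cdot a^{-1}\cdot \det\bigl([I\ A][X,P(Z)]\bigr)
\]
for some $\varepsilon\in\{\pm 1\}$. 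The factor $a^{-1}$ lies in $\pf^*$ and the second determinant lies in $\pf$ by the second preparatory observation, so the product lies in $\pf$. Combining with the previous paragraph gives $\det(D)\in\pf$ for an arbitrary square submatrix $D$ of $A^{xy}$, as required.

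The main obstacle, such as it is, is purely bookkeeping: keeping straight the row relabelling $x\leftrightarrow y$, the column permutation $P$, and the signs from moving identity columns through the $|X|\times|X|$ determinant. None of these threaten $\pf$-membership, because the only nontrivial algebraic contribution is the single factor $a^{-1}\in\pf^*$; everything else is a sign. No computation with individual entries of $A^{xy}$ (as given in Definition~\ref{def:pivot}) is needed.
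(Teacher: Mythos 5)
Your proof is correct. Note that the paper itself gives no argument for Proposition~\ref{prop:pivotproper}: all proofs in Section~\ref{sec:partialfields} are omitted and deferred to \citet{SW96,PZ08lift} and \cite{vZ09}, so there is nothing in the text to compare line by line. The proofs in those sources are typically run at the level of entries, establishing directly that each subdeterminant of $A^{xy}$ equals $\pm A_{xy}^{\pm1}$ times a subdeterminant of $A$ (with a case analysis according to whether the chosen rows and columns contain the pivot labels $y$ and $x$). Your route --- reducing an arbitrary minor of $A^{xy}$ to a maximal minor of $[I\ A^{xy}]$ by appending identity columns, then using the factorization $F[I\ A]P=[I\ A^{xy}]$ of Lemma~\ref{lem:pivotmat} together with $\det(F)=\pm A_{xy}^{-1}$ and the fact (Proposition~\ref{prop:pmatops}) that $[I\ A]$ is a $\pf$-matrix --- achieves the same conclusion while avoiding that case analysis entirely; the only algebraic input is closure of $\pf$ under multiplication by elements of $\pf^*$. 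The bookkeeping you flag (the column swap $x\leftrightarrow y$, the permutation $Q$, and the signs from expanding along identity columns) only affects the determinant by a factor $\pm1$, which is harmless since $-1\in\pf^*$, so there is no gap.
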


We introduce some notions of equivalence of $\pf$-matrices.

\begin{definition}
  Let $A$, $A'$ be matrices with entries in a partial field $\pf$.
  \begin{enumerate}
    \item If $A'$ can be obtained from $A$ by repeatedly scaling rows and columns by elements of $\pf$, then we say that $A$ and $A'$ are \emph{scaling-equivalent}.
    \item If $A'$ can be obtained from $A$ by repeatedly scaling rows or columns, permuting rows, permuting columns, or pivoting, then we say that $A$ and $A'$ are \emph{geometrically equivalent}.
    \item If $\phi(A')$ is geometrically equivalent to $A$ for some partial-field automorphism $\phi$, then we say that $A'$ and $A$ are \emph{algebraically equivalent}.
  \end{enumerate}
\end{definition}

Note that in all operations, labels are exchanged along with their rows and columns. It is easy to verify that the defined relations are indeed equivalence relations, and that equivalent matrices represent the same matroid, as follows.

\begin{lemma}
  Let $A$, $A'$ be algebraically equivalent $\pf$-matrices. Then $M[I\ A] = M[I\ A']$.
\end{lemma}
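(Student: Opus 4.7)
The plan is to decompose algebraic equivalence into its elementary constituents and verify each one preserves $M[I\ \cdot]$. By definition, $A$ and $A'$ algebraically equivalent means there is a partial-field automorphism $\phi$ with $\phi(A')$ geometrically equivalent to $A$. Since an automorphism is a special kind of homomorphism, Proposition \ref{prop:pmatops} applied to $\phi$ yields $M[I\ A'] = M[I\ \phi(A')]$. So it suffices to show that geometric equivalence of $\pf$-matrices preserves $M[I\ \cdot]$.

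Geometric equivalence is generated by three kinds of moves: (a) row and column permutations, (b) row and column scalings by elements of $\pf^*$, and (c) pivoting. Move (a) translates into a simultaneous permutation of the rows or columns of $[I\ A]$ together with their labels, which only relabels the bases of $M[I\ A]$. For (b), scaling row $x$ of $A$ by $\alpha \in \pf^*$ can be realized on $[I\ A]$ by scaling row $x$ of $[I\ A]$ by $\alpha$ and then restoring the identity block via scaling column $x$ by $\alpha^{-1}$; column scalings of $A$ are analogous. Each such elementary row- or column-scaling of $[I\ A]$ multiplies every $|X|\times|X|$ minor by an element of $\pf^*$, so both the $\pf$-matrix property and the set of bases are preserved.

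The crux is (c), pivoting. Lemma \ref{lem:pivotmat} supplies the decomposition $F[I\ A]P = [I\ A^{xy}]$, where $P$ is a permutation matrix and $F$ is the block lower-triangular matrix displayed in \eqref{eq:Finverse}, whose block form yields $\det(F) = a^{-1} \in \pf^*$. Right-multiplication by $P$ permutes columns together with their labels and hence preserves the matroid. For left-multiplication by $F$, the multiplicativity of the determinant gives $\det((F[I\ A])[X,B]) = \det(F)\cdot\det([I\ A][X,B])$ for every size-$|X|$ column subset $B$; since $\det(F) \in \pf^*$, nonzero minors remain nonzero and the basis set is unchanged. Proposition \ref{prop:pivotproper} ensures that $[I\ A^{xy}]$ really is a $\pf$-matrix, so $M[I\ A^{xy}]$ is defined, and it equals $M[I\ A]$. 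Chaining the geometric-equivalence moves from $\phi(A')$ back to $A$ with the automorphism step then delivers $M[I\ A'] = M[I\ A]$.

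The only nontrivial obstacle is the pivot step, and specifically the need to verify that $\det(F)$ is a unit in $\pf$ so the multiplicativity argument delivers a scalar in $\pf^*$; this is immediate from the explicit block form of $F$, but it is exactly what makes the argument specific to pivoting on a unit entry. The remaining work is bookkeeping: ensuring that labels travel with rows and columns so that the equality $M[I\ A'] = M[I\ A]$ holds on a common ground set, not merely up to isomorphism.
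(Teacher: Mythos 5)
Your proof is correct, and it is exactly the routine verification the paper leaves out (Section~\ref{sec:partialfields} omits all proofs, deferring to \cite{SW96,PZ08conf,PZ08lift,vZ09}): reduce the automorphism step to Proposition~\ref{prop:pmatops}, and handle geometric equivalence move by move, with the pivot case settled by Lemma~\ref{lem:pivotmat} together with $\det F = a^{-1}\in\pf^*$ and Proposition~\ref{prop:pivotproper}. Nothing is missing; your attention to $\det F$ being a unit and to labels travelling with rows and columns is precisely where the content of the ``easy verification'' lies.
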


 From this definition it is clear that there is a choice in how to count representations of a matroid. When we say ``$M$ has $k$ inequivalent representations'', we mean that $M$ has $k$ \emph{algebraically} inequivalent representations. In contrast, for the definition of a stabilizer below we use geometric equivalence.

In the remainder of the section we introduce some tools to help us to recognize when matrices are equivalent.

\begin{definition}\label{def:matroidgraph}
	Let $M$ be a matroid, $B$ a basis of $M$, and $D := E(M)\setminus B$. Then $G(M,B)$ is the bipartite graph with vertices $B\cup D$ and edges $\{bd : B \symdiff \{b,d\} \textrm{ is a basis of } M\}$.
\end{definition}

The graph $G(M,B)$ is the \emph{$B$-fundamental cocircuit incidence graph} of $M$ with respect to $B$ (cf. \cite[page 194]{oxley}). It has the following properties:

\begin{lemma}\label{lem:bipconn}
  Let $M$ be a matroid and $B$ a basis of $M$.
  \begin{enumerate}
    \item\label{bip:conn} $M$ is connected if and only if $G(M,B)$ is connected.
    \item\label{bip:threeconn} If $M$ is 3-connected, then $G(M,B)$ is 2-connected.
  \end{enumerate}
\end{lemma}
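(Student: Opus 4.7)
My plan is to exploit the dictionary between edges of $G(M,B)$ and fundamental circuits/cocircuits: for $d \in D := E(M) \setminus B$, the neighborhood $N_{G(M,B)}(d)$ equals $C(d,B) \setminus \{d\}$, and dually $N_{G(M,B)}(b) = C^*(b,B) \setminus \{b\}$ for $b \in B$. The crucial consequence is that whenever $V \subseteq E(M)$ is a union of connected components of $G(M,B)$, every fundamental circuit $C(d,B)$ with $d \in V \cap D$ is contained in $V$, so $V \cap D \subseteq \closure(V \cap B)$, giving $r_M(V) = |V \cap B|$.

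Both implications in part (i) will follow at once. For the forward direction I would contrapose: given a partition of $G(M,B)$ into nonempty unions of components $V_1, V_2$, the rank identity above yields $r(V_1) + r(V_2) = |V_1 \cap B| + |V_2 \cap B| = |B| = r(M)$, so $(V_1, V_2)$ is a $1$-separation of $M$. For the reverse direction, if $(E_1, E_2)$ is a $1$-separation of $M$, every circuit and cocircuit of $M$ lies in one side, in particular every $C(d,B)$ and $C^*(b,B)$, so no edge of $G(M,B)$ can cross between $E_1$ and $E_2$.

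For part (ii) I would first invoke (i) to note that $G(M,B)$ is connected, and then assume for contradiction that $v$ is a cut vertex of $G(M,B)$. Let $V_1, V_2$ be nonempty unions of components of $G(M,B)-v$ partitioning $E(M) \setminus \{v\}$. The same neighborhood identity, now allowing $v$ as an extra neighbor, gives $C(d,B) \subseteq V_i \cup \{v\}$ for every $d \in V_i \cap D$, which yields the rank bound
\begin{align*}
r_M(V_1 \cup \{v\}) + r_M(V_2) \ \leq\ |B| + 1 \ =\ r(M) + 1,
\end{align*}
so $(V_1 \cup \{v\}, V_2)$ is at most a $2$-separation of $M$.

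The main obstacle I anticipate is converting this bound into a genuine contradiction with 3-connectedness by ruling out the degenerate sizes $|V_1 \cup \{v\}| < 2$ or $|V_2| < 2$. The first is automatic, since $V_1 \neq \emptyset$ and $v \notin V_1$. For the second, suppose $V_2 = \{x\}$: because $G(M,B)$ is connected while $G(M,B)-v$ isolates $x$, the edge $xv$ is the unique edge of $G(M,B)$ at $x$, and the bipartiteness of $G(M,B)$ then forces $\{x,v\}$ to be either a parallel pair of $M$ (if $v \in B$, $x \in D$, giving $C(x,B) = \{v,x\}$) or a series pair (if $v \in D$, $x \in B$, giving $C^*(x,B) = \{x,v\}$). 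Either configuration contradicts the 3-connectedness of $M$ under the standing assumption $|E(M)| \geq 4$, closing the argument.
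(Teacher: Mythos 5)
The paper does not actually prove this lemma: it is stated as a known property of the fundamental cocircuit incidence graph, with a pointer to Oxley, so there is no in-paper argument to compare yours against. Your proof is correct and is essentially the standard one: the identification of neighbourhoods with fundamental circuits/cocircuits, the rank count $r_M(V)=|V\cap B|$ for a union of components (giving a $1$-separation when $G(M,B)$ is disconnected, and conversely using that circuits of a disconnected matroid lie in one side), and for (ii) the bound $r_M(V_1\cup\{v\})+r_M(V_2)\leq r(M)+1$ at a cut vertex $v$, which yields a $2$-separation once both sides have at least two elements. The bound itself is fine (it holds whether $v\in B$ or $v\in D$, and it would be worth writing out that one-line case split), and your handling of $|V_2|=1$ via a forced parallel or series pair is the right way to exclude the degenerate split. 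One remark: the hypothesis $|E(M)|\geq 4$ that you invoke is not in the statement of the lemma but is genuinely needed for part (ii) as written --- for example $U_{2,3}$ is $3$-connected while $G(U_{2,3},B)$ is a path with a cut vertex --- so your version is in fact the careful one; in the paper the lemma is only ever applied to $3$-connected matroids containing a $3$-connected non-binary minor, where $|E(M)|\geq 4$ holds automatically.
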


\begin{definition}\label{def:graph}
	Let $A$ be an $X\times Y$ matrix. Then $\bip(A)$ is the bipartite graph with vertices $X\cup Y$ and edges $\{xy : A_{xy}\neq 0\}$.
\end{definition}

\begin{lemma}\label{lem:graphsequal}
	Let $A$ be an $X\times Y$ $\pf$-matrix and $M := M[I\  A]$. Then $G(M,X) = G(A)$.
\end{lemma}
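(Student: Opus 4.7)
The plan is to verify the two graphs agree vertex-wise and edge-wise. Vertex equality is immediate: by construction $\bip(A)$ has vertex set $X \cup Y$, and since $[I\ A]$ clearly has $X$ as a basis of $M = M[I\ A]$, with $E(M) \setminus X = Y$, the graph $G(M, X)$ also has vertex set $X \cup Y$. So everything reduces to showing that, for $x \in X$ and $y \in Y$, the pair $\{x,y\}$ is an edge of $G(M,X)$ if and only if $A_{xy} \neq 0$.

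The edge condition in $G(M,X)$ says that $X \symdiff \{x,y\} = (X \setminus x) \cup y$ is a basis of $M[I\ A]$. By Proposition~\ref{prop:parfmatroid}, this holds precisely when the square submatrix $[I\ A][X, (X \setminus x) \cup y]$ has nonzero determinant. This submatrix consists of all the standard-basis columns of $I_X$ except the one indexed by $x$, together with column $y$ of $A$. The key computation is to expand this determinant by cofactors along the $|X| - 1$ columns coming from the identity block: each such column has a single nonzero entry (equal to $1$) in its own row, so these columns contribute a factor of $1$ each and reduce the computation to the single remaining entry, namely $A_{xy}$ in row $x$. Up to the sign arising from the column permutation that brings column $y$ into position $x$, the determinant equals $A_{xy}$.

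Hence $(X \setminus x) \cup y$ is a basis of $M$ if and only if $A_{xy} \neq 0$, which is exactly the edge condition in $\bip(A)$. The two edge sets therefore coincide, completing the proof.

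There is no real obstacle here; the only minor care needed is in the cofactor expansion and in verifying that permuting columns to align $y$ with $x$ only changes the sign of the determinant, not whether it is zero. Since the statement is an equality of graphs (hence only the vanishing pattern of the determinant matters, not its value), the sign is irrelevant.
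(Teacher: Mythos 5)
Your proof is correct and is exactly the standard argument intended here: the paper itself omits the proof (deferring to the partial-field references), and the cofactor expansion showing $\det\bigl([I\ A][X,(X\setminus x)\cup y]\bigr) = \pm A_{xy}$, so that $X\symdiff\{x,y\}$ is a basis precisely when $A_{xy}\neq 0$, is the intended reasoning. No gaps.
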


The following is a straightforward generalization of a well-known result by \citet{BL76} to partial fields \citep[see also][Theorem 6.4.7]{oxley}.

\begin{lemma}\label{lem:bippropertiesunique}
  Let $A$, $A'$ be matrices with entries in a partial field $\pf$. If $A'$ is scaling-equivalent to $A$ and $A'_{e} = A_{e}$ for all edges $e$ of a maximal spanning forest of $\bip(A)$, then $A' = A$.
\end{lemma}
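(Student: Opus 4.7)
The plan is straightforward propagation along the spanning forest. Since $A'$ is scaling-equivalent to $A$, there exist scalars $r_x \in \pf^*$ for $x \in X$ and $c_y \in \pf^*$ for $y \in Y$ such that $A'_{xy} = r_x c_y A_{xy}$ for all $x,y$. In particular, the zero pattern is preserved, so $\bip(A') = \bip(A)$, and it suffices to show that $r_x c_y = 1$ for every edge $xy$ of $\bip(A)$.

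Let $F$ be the given maximal spanning forest of $\bip(A)$. For every edge $xy \in F$ we are told $A'_{xy} = A_{xy}$, and since $A_{xy} \neq 0$ we may cancel it to obtain $r_x c_y = 1$. Now fix any non-tree edge $xy$ of $\bip(A)$, with $x\in X$ and $y \in Y$. Because $F$ is a spanning forest of $\bip(A)$ and $x,y$ lie in the same component of $\bip(A)$, there is a unique path in $F$ from $x$ to $y$; since $\bip(A)$ is bipartite this path has odd length and may be written $x = u_0, u_1, u_2, \ldots, u_{2k+1} = y$ with $u_{2i} \in X$ and $u_{2i+1}\in Y$. The spanning-forest constraints yield
\begin{align*}
r_{u_0}c_{u_1} = r_{u_2}c_{u_1} = r_{u_2}c_{u_3} = r_{u_4}c_{u_3} = \cdots = r_{u_{2k}}c_{u_{2k+1}} = 1,
\end{align*}
and alternately cancelling the common $c$'s and $r$'s along this chain forces $r_{u_0} = r_{u_2} = \cdots = r_{u_{2k}}$ and $c_{u_1} = c_{u_3} = \cdots = c_{u_{2k+1}}$. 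Combining with the final equation, $r_x c_y = r_{u_{2k}}c_{u_{2k+1}} = 1$, as required.

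Hence $A'_{xy} = A_{xy}$ whenever $xy$ is an edge of $\bip(A)$; and when $xy$ is not an edge, both entries are zero. Therefore $A' = A$.

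The only nontrivial point is the path-propagation argument in the bipartite setting, and, as outlined above, this reduces to alternately cancelling row and column scalars along the tree path. I do not expect any serious obstacle; the statement really is a partial-field analogue of the Brualdi--Ryser lemma cited, with the same proof working verbatim since scaling-equivalence only uses the group of units $\pf^*$.
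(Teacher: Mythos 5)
Your proof is correct and follows exactly the standard tree-propagation argument (write $A'_{xy}=r_xc_yA_{xy}$ with $r_x,c_y\in\pf^*$, get $r_xc_y=1$ on forest edges, and cancel alternately along the unique forest path to any other edge), which is the same argument as in the references to which the paper delegates this lemma. One cosmetic remark: the cited result is due to Brylawski and Lucas, not Brualdi--Ryser, but this does not affect the mathematics.
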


Some more terminology: if $A_e = 1$ for all edges $e$ of a maximal spanning forest $T$ of $\bip(A)$, then we say $A$ is $T$-\emph{normalized}.

Finally, if two matrices are geometrically equivalent and have the same row labels, they are scaling-equivalent:

\begin{proposition}\label{prop:scalepivotcommute}
  Let $A$, $A'$ be geometrically equivalent $X\times Y$ $\parf$-matrices, where $X$, $Y$ are disjoint sets. Then $A$ is scaling-equivalent to $A'$.
\end{proposition}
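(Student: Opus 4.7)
The plan is to encode each geometric-equivalence operation as a simultaneous left- and right-multiplication on the augmented matrix $[I\ A]$, and then to use the fact that the chain starts and ends with the same $X\times Y$ label-partition to pin down the accumulated multiplications.

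By Lemma \ref{lem:pivotmat}, a pivot on $(x,y)$ corresponds to left-multiplication by an invertible matrix $F$ together with right-multiplication by the column transposition $P_{xy}$. Row and column scalings contribute diagonal left- and right-multipliers respectively, and under the paper's convention that labels move with rows and columns, row and column permutations act trivially on labeled matrices. Composing all the operations along the chain from $A$ to $A'$ produces an equation of the form $[I\ A'] = L\,[I\ A]\,R$, in which $R$ is a product of diagonal matrices and column transpositions, hence is \emph{monomial}: exactly one nonzero entry, lying in $\pf^*$, in each row and each column.

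The crucial observation is this: each pivot on $(x,y)$ swaps a label between the identity block and the $A$-block of columns of $[I\ \cdot]$, and because the chain terminates at an $X\times Y$ matrix, the partition $\{X,Y\}$ of column labels is restored at the end. Hence the permutation underlying $R$ preserves $\{X,Y\}$ setwise, and we may write $R = R_X \oplus R_Y$ with both blocks monomial. Reading $[I\ A'] = L\,[I\ A]\,R$ on the $X$-columns gives $L\,R_X = I_X$ (allowing a harmless relabeling within $X$), so $L = R_X^{-1}$ is also monomial; reading it on the $Y$-columns gives $A' = L\,A\,R_Y$. A product of monomial matrices acting on a labeled matrix is, modulo permutations which are trivial by the convention, a scaling of rows and columns, so $A'$ is scaling-equivalent to $A$.

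The main obstacle I foresee is carefully verifying the block-decomposition $R = R_X \oplus R_Y$: one has to track how each pivot reassigns labels between the identity block and the $A$-block, and check that the hypothesis ``$A'$ has the same row- and column-label sets as $A$'' translates precisely to the claim that the net column permutation preserves the partition $\{X,Y\}$. A clean way to do this is by induction on the number of pivots, using the identity $(A^{xy})^{yx}=A$ (verified directly from Definition \ref{def:pivot}) to cancel matched pivot pairs; alternatively one may argue directly from label-bookkeeping. Once the block splitting is in hand, the monomial-matrix argument finishes the proof essentially by inspection.
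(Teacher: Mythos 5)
Your skeleton---compose the chain of operations into a relation $[I\ A'] = L\,[I\ A]\,R$ on the augmented matrix, use the identity block to force the left factor to be essentially diagonal, then read off the $Y$-columns---is exactly the paper's argument. The gap is in how you dispose of the permutation parts. Once you extract the transpositions $P_{xy}$ of Lemma \ref{lem:pivotmat} into the right factor $R$, those permutations move entries relative to \emph{fixed} labels; they are no longer label-moving operations, so you cannot afterwards declare them ``trivial by the convention''. Showing that the permutation underlying $R$ preserves $\{X,Y\}$ setwise is also not enough: if the induced permutations of $X$ and of $Y$ were nontrivial, then $A' = L A R_Y$ would only say that $A'$ is a row- and column-\emph{permuted} scaling of $A$, which is strictly weaker than scaling-equivalence (and too weak for the way the proposition is used, e.g.\ in Theorem \ref{thm:uniquematrix}). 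You need the net permutation to be the \emph{identity} on labels, and neither of your patches delivers this as stated. In particular, pivots in a chain need not occur in matched pairs, so cancelling via $(A^{xy})^{yx}=A$ has no purchase in general: pivoting over $x_1y$, then $x_2x_1$, then $yx_2$ returns the basis to $X$, while the product of the three transpositions is the nontrivial transposition $(x_1\,x_2)$ of two elements of $X$.

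The correct repair is the ``direct label-bookkeeping'' you mention only as an alternative, and it is what the paper's formalism gives for free: the columns of the augmented matrix are indexed by the ground set and never change labels, and a pivot acts purely as left-multiplication by the invertible matrix $F$ of Lemma \ref{lem:pivotmat} (its rows are indexed by the new basis, so the row relabelling is absorbed into $F$; the permutation $P$ in that lemma merely rearranges the block display). Hence the composite relation is $[I_X\ A'] = F[I_X\ A]D$ with $D$ genuinely diagonal, after which $I_X = F\,D[X,X]$ forces $F$ to be diagonal and the $Y$-columns give $A' = FAD[Y,Y]$, i.e.\ scaling-equivalence; this is the paper's entire proof. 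With that bookkeeping in place your argument collapses to the same few lines; without it, the step ``modulo permutations which are trivial by the convention'' is exactly where the proof as written does not go through.
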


\begin{proof}
  Since $A$ is geometrically equivalent to $A'$, we have
  \begin{align}
    [I_X\  A'] = F[I_X\  A]D\label{eq:scalepivotcommuteeq}
  \end{align}
  for an invertible matrix $F$ and a diagonal $(X\cup Y)\times (X\cup Y)$ matrix $D$, by Lemma \ref{lem:pivotmat}. From \eqref{eq:scalepivotcommuteeq} we conclude that
  \begin{align*}
    I_X = F I_X D[X,X].
  \end{align*}
  This implies that $F$ is a diagonal matrix. But then $A$ is scaling-equivalent to $A'$, as desired.
\end{proof}

\subsection{Stabilizers} 
\label{sub:stabilizers}
We now give a more precise definition of stabilizers.
\begin{definition}\label{def:stab}
	Let $\pf$ be a partial field, $M$ a matroid, $X$ a basis of $M$, $Y := E(M)\setminus X$, $S\subseteq X$, $T\subseteq Y$, and $N := M\contract S \delete T$.
	If, for all $X\times Y$ $\pf$-matrices $A_1, A_2$ such that
	\begin{enumerate}
		\item $M = M[I\  A_1] = M[I\  A_2]$
		\item $A_1[X\setminus S,Y\setminus T]$ is scaling-equivalent to $A_2[X\setminus S,Y\setminus T]$,
	\end{enumerate}
	we have that $A_1$ is scaling-equivalent to $A_2$, then we say that $N$ \emph{stabilizes} $M$.
\end{definition}

\begin{definition}\label{def:strongstab}
	If $N$ stabilizes $M$ over $\pf$, and \emph{every} representation of $N$ extends to a representation of $M$, then we say $N$ \emph{strongly stabilizes} $M$ over $\pf$.
\end{definition}

If $N$ has a unique representation over $\pf$ and $N$ stabilizes $M$, then $N$ is necessarily a strong stabilizer. Strong stabilizers were introduced by \citet{GOVW98}.

We say that $N$ stabilizes a set of matroids $\matset$ over a partial field $\pf$ if, for each 3-connected $M \in \matset$, every minor $M'$ isomorphic to $N$ stabilizes $M$ over $\pf$. The following is easily verified:

\begin{lemma}\label{lem:sistab}
	Let $M$ and $N$ be $\pf$-representable matroids such that $N\minorof M$ and $N$ stabilizes $\si(M)$ over $\pf$. Then $N$ stabilizes $M$ over $\pf$.
\end{lemma}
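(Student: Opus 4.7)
The plan is to reduce everything to the given stabilizer hypothesis about $\si(M)$ by controlling the ``extra'' columns of any $\pf$-representation of $M$ (zero columns for loops, scalar-multiple columns for parallels) purely from matroid structure. I would begin by choosing a basis $X$ of $M$ that intersects each parallel class in at most one element and is disjoint from the loops, and choose the simplification $\si(M) = M\delete Z$, with $Z\subseteq Y:=E(M)\setminus X$, so that $X$ is also a basis of $\si(M)$. Given a realization $N = M\contract S\delete T$ with $S\subseteq X$ and $T\subseteq Y$, I would further arrange, by a permissible choice of representatives of non-trivial parallel classes of $M$, that $T\supseteq Z$: otherwise some $e\in Z\setminus T$ would survive in $N$ either as a loop (if $e$ is a loop of $M$, or if $e$ is parallel to some $f\in S$) or in a non-trivial parallel pair with its kept partner, contradicting the fact that $N$, being a minor of the simple matroid $\si(M)$, is simple. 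Setting $Y_{si} := Y\setminus Z$ and $T_{si} := T\cap Y_{si}$, we then have $N = \si(M)\contract S\delete T_{si}$.

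Next, suppose $A_1, A_2$ are $X\times Y$ $\pf$-matrices with $M = M[I\ A_1] = M[I\ A_2]$ and $A_1[X\setminus S, Y\setminus T]$ scaling-equivalent to $A_2[X\setminus S, Y\setminus T]$. Since $Y\setminus T = Y_{si}\setminus T_{si}$, the restrictions $A_i[X, Y_{si}]$ are $\pf$-representations of $\si(M)$ whose submatrices on $(X\setminus S)\times (Y_{si}\setminus T_{si})$ remain scaling-equivalent. The hypothesis that $N$ stabilizes $\si(M)$ then supplies invertible diagonal matrices $D_X$ and $D_{Y_{si}}$ with $D_X \, A_1[X, Y_{si}] \, D_{Y_{si}} = A_2[X, Y_{si}]$.

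It remains to extend $D_{Y_{si}}$ to a diagonal matrix $D_Y$ on $Y$ such that $D_X A_1 D_Y = A_2$. For each loop $e\in Z$, both $A_i[X,e]$ vanish, so any nonzero value of $(D_Y)_{ee}$ works. For each $e\in Z$ parallel in $M$ to some $f\in X\cup Y_{si}$, the matroid forces $A_i[X,e] = \alpha_e^{(i)} A_i[X,f]$ for some $\alpha_e^{(i)}\in \pf^*$; combining this with the already-known scaling on the $f$-column, namely $(D_X)_{ff}$ if $f\in X$ or $(D_{Y_{si}})_{ff}$ if $f\in Y_{si}$, determines a unique value $(D_Y)_{ee}\in \pf^*$ making $D_X A_1[X,e] (D_Y)_{ee} = A_2[X,e]$. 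The resulting identity $D_X A_1 D_Y = A_2$ shows that $A_1$ and $A_2$ are scaling-equivalent. The main obstacle is just the initial bookkeeping that aligns the minor $N\minorof M$ with the minor $N\minorof \si(M)$; once that is in place, the rest is a routine column-by-column extension of diagonal scalings.
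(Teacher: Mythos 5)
Your overall strategy---restrict the two representations to a copy $\si(M)=M\delete Z$ sharing the basis $X$, invoke the stabilizer hypothesis there, and then extend the diagonal scalings column by column over $Z$ (zero columns for loops, unit-multiple or column-multiple relations for parallel elements)---is sound, and the column-extension step is carried out correctly. The problem is the pivotal bookkeeping claim that you justify by saying ``$N$, being a minor of the simple matroid $\si(M)$, is simple.'' That statement is false: minors of simple matroids need not be simple, since contractions create loops and parallel pairs (e.g.\ $U_{2,4}\contract e$ has a parallel class of size three). What your argument actually needs is that, in the \emph{given} realization $N=M\contract S\delete T$, no loop of $M$ and no two members of a parallel class of $M$ survive into $E(N)$; simplicity of $N$ is a sufficient condition for this, but it is not a consequence of the lemma's hypotheses, and when it fails there is no admissible choice of $Z$ with $Z\subseteq T$, so the restriction that feeds the hypothesis ``$N$ stabilizes $\si(M)$'' cannot be performed: one only obtains $\si(M)\contract S\delete(T\cap Y_{si})=N$ minus the offending elements, and stabilization by a \emph{smaller} minor is a strictly stronger property, not implied by the hypothesis.

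This is not merely cosmetic. For non-simple $N$, the hypothesis $N\minorof M$ does not even guarantee $N\minorof\si(M)$: take $M=U_{2,4}\oplus U_{0,1}$ and $N=U_{1,2}\oplus U_{0,1}$, realized as $M\contract S\delete T$ with $S$ a point of the $U_{2,4}$ part and the loop of $N$ being the loop of $M$. Here $N$ is isomorphic to no minor of $\si(M)=U_{2,4}$, yet condition (ii) of Definition~\ref{def:stab} for this realization is satisfied by \emph{any} two representations of $M$, while over $\GF(5)$ the matroid $M$ has scaling-inequivalent representations; so, depending on how one reads ``$N$ stabilizes $\si(M)$'' when $N$ is not a minor of $\si(M)$, this is either a counterexample to the unrestricted statement or shows that the hypothesis cannot engage with the realization at hand. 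Either way, the simplicity of $N$ is doing genuine work and must be assumed, not deduced. The paper itself offers no proof (``easily verified''), and in every application $N$ is $3$-connected with $|E(N)|\geq 4$, hence simple, in which case your argument is complete and is surely the intended verification; but as a proof of the lemma as stated, the step quoted above is a real gap.
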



\section{Connectivity and branch width}\label{sec:conn}

\subsection{The connectivity function}
Recall the standard definition of the connectivity function:

\begin{definition}
  Let $M$ be a matroid with ground set $E$. The connectivity function of $M$, $\lambda_M:2^E\rightarrow \N$ is defined by
  \begin{align*}
    \lambda_M(Z) := \rank_M(Z) + \rank_M(E\setminus Z) - \rank(M).
  \end{align*}
\end{definition}

As usual, a $k$-\emph{separation} of $M$ is a partition $(X,Y)$ of $E(M)$ with $|X|, |Y| \geq k$ and $\lambda_M(X) < k$. A matroid is $k$-\emph{connected} if it has no separations of order $k-1$ or less. 

We start with some elementary and well-known properties of the connectivity function.

\begin{lemma}
  The function $\lambda_M$ is self-dual, submodular, and monotone under taking minors.
\end{lemma}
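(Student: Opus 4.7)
The plan is to verify each of the three properties independently, drawing on standard facts about the matroid rank function.

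For self-duality, I would use the well-known identity $\rank_{M^*}(X) = |X| + \rank_M(E\setminus X) - \rank(M)$, together with $\rank(M^*) = |E| - \rank(M)$. Substituting these into the definition of $\lambda_{M^*}(X)$ and collecting terms gives
\begin{align*}
\lambda_{M^*}(X) &= \bigl(|X| + \rank_M(E\setminus X) - \rank(M)\bigr) + \bigl(|E\setminus X| + \rank_M(X) - \rank(M)\bigr) - \bigl(|E| - \rank(M)\bigr),
\end{align*}
and the terms $|X| + |E\setminus X| - |E|$ vanish, leaving $\rank_M(X) + \rank_M(E\setminus X) - \rank(M) = \lambda_M(X)$.

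For submodularity, I would apply submodularity of $\rank_M$ to both pairs $(X,Y)$ and $(E\setminus X, E\setminus Y)$. The key observation is that $(E\setminus X)\cap(E\setminus Y) = E\setminus(X\cup Y)$ and $(E\setminus X)\cup(E\setminus Y) = E\setminus(X\cap Y)$. Summing the two submodularity inequalities and subtracting $2\rank(M)$ yields exactly $\lambda_M(X\cap Y) + \lambda_M(X\cup Y)$ on the right-hand side.

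For monotonicity, it suffices to check that $\lambda_{M\delete e}(Z) \leq \lambda_M(Z)$ and $\lambda_{M\contract e}(Z) \leq \lambda_M(Z)$ whenever $e \notin Z$; the general case follows by iterating. For deletion I would split into two cases according to whether $e$ is a coloop of $M$. If $e$ is a coloop, then $\rank_M(E\setminus(Z\cup e)) = \rank_M(E\setminus Z) - 1$ and $\rank(M\delete e) = \rank(M) - 1$, so the two decrements cancel and $\lambda_{M\delete e}(Z) = \lambda_M(Z)$. Otherwise $\rank(M\delete e) = \rank(M)$, while $\rank_M(E\setminus(Z\cup e)) \leq \rank_M(E\setminus Z)$ by monotonicity of $\rank_M$, giving the inequality. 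The contraction case then follows immediately from self-duality: since $M\contract e = (M^*\delete e)^*$ and $\lambda$ is self-dual, $\lambda_{M\contract e}(Z) = \lambda_{M^*\delete e}(Z) \leq \lambda_{M^*}(Z) = \lambda_M(Z)$.

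No single step is a serious obstacle; the only mildly subtle point is the coloop case for deletion-monotonicity, where one must be careful to track the simultaneous drops in $\rank(M\delete e)$ and in the rank of the complement. Submodularity and self-duality reduce to bookkeeping with the rank-function identities stated above.
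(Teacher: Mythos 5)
Your proof is correct: the self-duality computation via $\rank_{M^*}(X) = |X| + \rank_M(E\setminus X) - \rank(M)$, the double application of rank submodularity to the pairs $(X,Y)$ and $(E\setminus X, E\setminus Y)$, and the coloop/non-coloop case split for single-element deletion (with contraction handled by duality and the general case by iteration) are all sound. The paper omits the proof altogether, treating the lemma as elementary and well-known, and your argument is exactly the standard verification it implicitly relies on.
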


For representable matroids, the following lemma reformulates the connectivity function in terms of the ranks of certain submatrices of $A$.

\begin{lemma}[\citet{TruI}]\label{lem:matrixconn}
  Suppose $A$ is an $(X_1\cup X_2)\times(Y_1\cup Y_2)$ $\parf$-matrix (where $X_1, X_2, Y_1, Y_2$ are pairwise disjoint). Then
  \begin{align*}
    \lambda_{M[I\  A]}(X_1\cup Y_1) = \rank(A[X_1,Y_2]) + \rank(A[X_2,Y_1]).
  \end{align*}
\end{lemma}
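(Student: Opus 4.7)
My plan is to compute each of the three terms in $\lambda_M(X_1\cup Y_1)$ directly from the matrix $[I_{X_1\cup X_2}\ A]$, exploiting the fact that for a $\pf$-representable matroid $M[I\ A]$ the matroid rank of a subset $S$ of columns equals the $\pf$-matrix rank of the corresponding column-submatrix. Writing $X := X_1\cup X_2$, the full matroid $M$ has $X$ as a basis, so $\rank(M) = |X_1| + |X_2|$.

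Next I would compute $\rank_M(X_1\cup Y_1)$. Restricting $[I_X\ A]$ to the columns $X_1\cup Y_1$ yields the block matrix
\begin{align*}
  B_1 := \kbordermatrix{ & X_1 & Y_1\\
                          X_1 & I_{X_1} & A[X_1,Y_1]\\
                          X_2 & 0       & A[X_2,Y_1]}.
\end{align*}
Because the $X_1\times X_1$ block is the identity, each diagonal entry there is a legal pivot; by Proposition~\ref{prop:pivotproper}, successive pivots on these entries keep us inside $\pf$-matrices and, using Lemma~\ref{lem:pivotmat} to interpret each pivot as the appropriate row/column-exchange step, reduce $B_1$ to a matrix whose only non-zero block outside the identity is $A[X_2,Y_1]$. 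Since the rank function on $\pf$-matrices is invariant under pivoting (it is defined via non-vanishing of determinants of square submatrices, and Proposition~\ref{prop:pivotproper} together with Lemma~\ref{lem:pivotmat} shows these are preserved up to non-zero factors), we obtain $\rank_M(X_1\cup Y_1) = |X_1| + \rank(A[X_2,Y_1])$. By the symmetric argument applied to the complement, $\rank_M(X_2\cup Y_2) = |X_2| + \rank(A[X_1,Y_2])$.

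Plugging these three quantities into the definition of $\lambda_M$ gives
\begin{align*}
  \lambda_M(X_1\cup Y_1) &= \bigl(|X_1| + \rank(A[X_2,Y_1])\bigr) + \bigl(|X_2| + \rank(A[X_1,Y_2])\bigr) - (|X_1|+|X_2|)\\
                        &= \rank(A[X_2,Y_1]) + \rank(A[X_1,Y_2]),
\end{align*}
which is the desired identity.

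The proof is essentially a routine computation, and the only genuinely delicate point is the passage from the matroid rank of $X_1\cup Y_1$ to the partial-field rank of the lower-right block $A[X_2,Y_1]$. Over a field this is a transparent consequence of block-triangular determinant expansion, but over a partial field one cannot freely add multiples of rows since the intermediate entries might fall outside $\pf$. The right tool is pivoting: each elementary reduction step that kills an entry of $A[X_1,Y_1]$ corresponds to a pivot on an identity entry, and Proposition~\ref{prop:pivotproper} guarantees that the result remains a $\pf$-matrix. Once this is handled, the lemma drops out immediately.
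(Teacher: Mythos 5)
The paper offers no proof of this lemma (it is cited to Truemper), so I can only judge your argument on its own terms. Your overall plan is sound and the two rank formulas you aim for, $\rank_M(X_1\cup Y_1)=|X_1|+\rank(A[X_2,Y_1])$ and $\rank_M(X_2\cup Y_2)=|X_2|+\rank(A[X_1,Y_2])$, are correct and do yield the lemma once established. The problem is the mechanism you give for establishing them. A pivot on a diagonal entry of the identity block of $B_1$ does nothing: in Definition~\ref{def:pivot} with $x$ the row and $x$ the column, every other entry of that column is $0$, so all correction terms $A_{uy}(A_{xy})^{-1}A_{xv}$ vanish and $B_1^{xx}=B_1$ up to swapping the label $x$ between the row and column sides. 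In particular such pivots do not ``reduce $B_1$ to a matrix whose only non-zero block outside the identity is $A[X_2,Y_1]$''; the block $A[X_1,Y_1]$ survives untouched. What would kill an entry of $A[X_1,Y_1]$ is a column operation (adding a multiple of an identity column), which is not a pivot and is not among the operations the paper certifies as preserving $\pf$-matrices. A second, quieter gap is your opening ``fact'' that the matroid rank of a set of columns of $[I\ A]$ equals the subdeterminant rank of the corresponding column submatrix: over a partial field this is exactly the delicate point, and it is asserted nowhere in the paper (which only notes that the rank function agrees with the usual one when $\pf$ is a field), so it needs an argument rather than an appeal.

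Both gaps can be closed with the determinant identity $\det([I\ A][X,(X\Oldsetminus W)\cup J])=\pm\det(A[W,J])$ for $W\subseteq X$, $J\subseteq Y$ with $|W|=|J|$, which holds over any commutative ring. For the lower bound $\rank_M(X_1\cup Y_1)\geq |X_1|+\rank(A[X_2,Y_1])$, take $W\subseteq X_2$ and $J\subseteq Y_1$ realizing $\rank(A[X_2,Y_1])$; the identity exhibits a basis containing $X_1\cup J$. For the upper bound, take a maximal independent subset of $X_1\cup Y_1$, extend it to a basis $B=(X\Oldsetminus W)\cup J$, and note that $\det(A[W,J])\neq 0$ forces, by Laplace expansion along the rows of $W\cap X_2$ (equivalently, along the columns of $J\cap Y_1$), a nonzero subdeterminant of $A[X_2,Y_1]$ of size $|J\cap Y_1|$, whence the independent set meets $Y_1$ in at most $\rank(A[X_2,Y_1])$ elements. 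Alternatively, contract $X_1$ and delete $Y_2$, use the paper's unnamed lemma after Definition~\ref{def:basisminor} to see $M_X[X_2\cup Y_1]=M[I\ A[X_2,Y_1]]$, and compute the rank of $Y_1$ there by the same identity. With either repair the computation you wrote in the last display is fine.
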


To keep track of the connectivity of minors of $M$ it is convenient to introduce some extra notation.

\begin{definition}\label{def:basisminor}
  Let $M$ be a matroid, $B$ a basis of $M$, and $Y = E(M)\setminus B$. If $Z \subseteq E(M)$ then $M_B[Z] := M\contract (B\setminus Z) \delete (Y\setminus Z)$ and $M_B - Z := M_B[E\setminus Z]$.
\end{definition}

The following is easily seen:

\begin{lemma}
  If $M = M[I\  A]$ for an $X\times Y$ $\parf$-matrix $A$, sets $X$ and $Y$ are disjoint, and $Z \subseteq X\cup Y$, then $M_X[Z] = M[I\  A[Z]]$.
\end{lemma}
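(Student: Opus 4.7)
The plan is to reduce the identity $M_X[Z] = M[I\ A[Z]]$ to two elementary observations about the standard representation $[I_X\ A]$, and then combine them using the fact that deletion and contraction commute as minor operations. Since $M_X[Z] = M\contract (X\setminus Z)\delete(Y\setminus Z)$ by Definition~\ref{def:basisminor}, and since $A[Z] = A[X\cap Z, Y\cap Z]$, the goal is to show that these minor operations correspond exactly to the corresponding submatrix operations.

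The first observation concerns deletion: for $y\in Y$, the $|X|$-element subsets of $(X\cup Y)\setminus y$ that index nonsingular $X\times X$ submatrices of $[I_X\ A]$ are exactly the same as the bases one would list for $[I_X\ A[X, Y\setminus y]]$. By Proposition~\ref{prop:parfmatroid} this gives $M[I_X\ A]\delete y = M[I_X\ A[X, Y\setminus y]]$; iterating over $y\in Y\setminus Z$ removes all columns indexed by $Y\setminus Z$. The second observation concerns contraction: for $x\in X$, the $x$-column of $[I_X\ A]$ is already the unit vector $e_x$, so cofactor expansion along that column sets up a bijection between the nonsingular $X\times X$ submatrices of $[I_X\ A]$ whose column index contains $x$ and the nonsingular $(X\setminus x)\times (X\setminus x)$ submatrices of $[I_{X\setminus x}\ A[X\setminus x, Y]]$. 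This gives $M[I_X\ A]\contract x = M[I_{X\setminus x}\ A[X\setminus x, Y]]$.

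Combining the two observations, I would first delete all columns indexed by $Y\setminus Z$, obtaining $M[I_X\ A[X, Y\cap Z]]$, and then iterate the contraction step over $X\setminus Z$. The key point making the iteration go through is that after contracting one row of $X\setminus Z$ the remaining matrix is still in standard form $[I_{X'}\ A']$, so the second observation applies at each stage. The end result is $M[I_{X\cap Z}\ A[X\cap Z, Y\cap Z]] = M[I\ A[Z]]$, as required. I do not anticipate any substantive obstacle: the lemma is essentially bookkeeping that translates between the matroid-minor notation $M_X[Z]$ and the submatrix notation $A[Z]$, and the only care required is to keep the row and column index sets aligned as elements are removed.
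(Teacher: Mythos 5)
Your argument is correct, and in fact the paper offers no proof at all for this lemma (it is introduced with ``The following is easily seen''), so your write-up simply supplies the intended routine verification: deleting $y\in Y$ corresponds to dropping the column $y$ of $A$, and contracting $x\in X$ corresponds, via cofactor expansion along the unit column $e_x$ of $[I_X\ A]$, to dropping row and column $x$, after which deletions and contractions commute to give $M\contract(X\setminus Z)\delete(Y\setminus Z)=M[I\ A[Z]]$. The only points worth making explicit are that no element of $Y$ is a coloop and no element of $X$ is a loop of $M$ (since $X$ is a basis of $M[I_X\ A]$), so the basis-level descriptions of deletion and contraction you use apply verbatim, and that $A[Z]$ is again a $\pf$-matrix because its square submatrices are square submatrices of $A$.
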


To counter the stacking of subscripts we introduce alternative notation for the connectivity function. This definition generalizes Lemma~\ref{lem:matrixconn} to arbitrary matroids $M$ and to arbitrary minors of $M$. It is equivalent to the definition found in \citet{GGK}.

\begin{definition}\label{def:connectivityB}
  Let $M$ be a matroid and $B$ a basis of $M$. Then $\lambda_B:2^{E(M)}\times 2^{E(M)} \rightarrow \N$ is defined as
  \begin{align*}
    \lambda_B(X,Y) := \rank_{M\contract (B\setminus Y)}(X\setminus B) + \rank_{M\contract (B\setminus X)}(Y\setminus B)
  \end{align*}
  for all $X,Y\subseteq E(M)$.
\end{definition}

The following lemma shows that this is indeed the connectivity function of a minor of $M$ when $X$ and $Y$ are disjoint. Once again we omit the straightforward proof.

\begin{lemma}\label{lem:connectivityoldnew}
  Let $M$ be a matroid, $B$ a basis of $M$, and $X,Y$ disjoint subsets of $E(M)$. Then
  \begin{align*}
  \lambda_{B}(X,Y) = \lambda_{M_B[X\cup Y]}(X).
  \end{align*}
\end{lemma}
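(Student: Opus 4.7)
The plan is to expand both sides using the standard rank formula for contractions, namely $\rank_{M\contract T}(Z) = \rank_M(Z\cup T) - \rank_M(T)$ for $Z \subseteq E(M)\setminus T$, and verify the expressions match. Throughout, set $S := X\cup Y$, $D := E(M)\setminus B$, and $N := M_B[S] = M\contract(B\setminus S)\delete(D\setminus S)$, and note $B\setminus S$ is independent in $M$ so $\rank_M(B\setminus S) = |B\setminus S|$.

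First I would compute the right-hand side. Since deleting $D\setminus S$ does not affect the rank of any subset of $S = E(N)$, for $Z \subseteq S$ we have $\rank_N(Z) = \rank_M(Z \cup (B\setminus S)) - |B\setminus S|$. Applying this to $Z = X$, $Z = Y$, and $Z = S$, and using the fact that $B$ is a basis so $\rank_M(S\cup(B\setminus S)) = \rank_M(S\cup B) = \rank(M) = |B|$, I get
\begin{align*}
\lambda_{N}(X) = \rank_M(X\cup(B\setminus S)) + \rank_M(Y\cup(B\setminus S)) - |B\setminus S| - |B|.
\end{align*}

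Next I would compute the left-hand side. Since $X$ and $Y$ are disjoint, $B\setminus Y = (B\setminus S)\cup(B\cap X)$, and hence $(X\setminus B) \cup (B\setminus Y) = X \cup (B\setminus S)$. By the contraction rank formula,
\begin{align*}
\rank_{M\contract(B\setminus Y)}(X\setminus B) = \rank_M(X\cup(B\setminus S)) - |B\setminus Y|,
\end{align*}
and symmetrically for the term with $X$ and $Y$ swapped. Summing and using $|B\setminus Y| + |B\setminus X| = |B\setminus S| + |B|$ (which follows from $|B\setminus Y| = |B\setminus S| + |B\cap X|$ and analogous for $X$), I obtain exactly the same expression as above, completing the proof.

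The computation is entirely routine; the only subtlety is the set identity $(X\setminus B)\cup(B\setminus Y) = X\cup(B\setminus S)$, which uses the disjointness of $X$ and $Y$ in an essential way. This is also where the hypothesis that $X,Y$ be disjoint enters: without it, $\lambda_B(X,Y)$ and $\lambda_N(X)$ need not agree because $S = X\cup Y$ would be a multiset and $N$ would not have $X$ and $Y$ as a partition of its ground set.
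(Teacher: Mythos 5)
Your proof is correct, and since the paper explicitly omits the proof of this lemma as straightforward, your argument is precisely the intended routine verification: expand both sides with the contraction rank formula $\rank_{M\contract T}(Z)=\rank_M(Z\cup T)-\rank_M(T)$, use that $B\setminus(X\cup Y)$ is independent, and invoke the set identity $(X\setminus B)\cup(B\setminus Y)=X\cup(B\setminus(X\cup Y))$, which is where disjointness of $X$ and $Y$ enters. Both sides reduce to $\rank_M(X\cup(B\setminus S))+\rank_M(Y\cup(B\setminus S))-|B\setminus S|-|B|$ with $S=X\cup Y$, exactly as you compute, so nothing is missing.
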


The following two results can be found in \citet[{Proposition 4.3.6, Corollary 11.2.1}]{oxley}.

\begin{theorem}\label{thm:2connchain}
  Let $M$ and $N$ be connected matroids, $N\minorof M$, with $|E(N)|<|E(M)|$. Then there is an $e \in E(M)$ such that some $M' \in \{M\delete e, M\contract e\}$ is connected with $N\minorof M'$.
\end{theorem}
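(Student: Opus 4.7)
The plan is to prove this by induction on $|E(M)|-|E(N)|$, which is at least $1$ by hypothesis. The base case is immediate: the unique element $e \in E(M)\setminus E(N)$ satisfies $N\in\{M\delete e, M\contract e\}$, and $N$ is connected by assumption, so $e$ itself witnesses the conclusion.

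For the inductive step the workhorse is the classical fact that if $M$ is connected with $|E(M)|\geq 2$, then for every element $e$, at least one of $M\delete e, M\contract e$ is connected. I would pick any $f\in E(M)\setminus E(N)$; by duality we may assume $M\delete f$ has $N$ as a minor. If $M\delete f$ is connected, take $e:=f$ and we are done. Otherwise $M\delete f$ admits a $1$-separation $(X,Y)$. Since $N$ is connected and $N\minorof M\delete f$, we may assume $E(N)\subseteq X$, and in particular $N\minorof M\delete f\delete Y = M\delete(Y\cup f)$, which equals $M[X]$.

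The fact that $M$ itself is connected forces $f\notin \closure_M(X)$ and $f\notin \closure_M(Y)$: for if, say, $f\in\closure_M(X)$, then $(X\cup f, Y)$ would be a $1$-separation of $M$, contradicting connectedness. In particular $f$ is a coloop of $M[X\cup f]$ and the restrictions decompose nicely. Now if $M[X]$ is connected, the inductive hypothesis applied to the pair $(M[X],N)$, which has strictly fewer elements, yields a good element $e^*\in X$ (i.e., $M[X]\delete e^*$ or $M[X]\contract e^*$ is connected with $N$ as a minor). The crucial step is to verify that the same $e^*$ works in $M$: since $M[X]$ is connected and $|X|\geq 2$, $e^*$ is not a coloop of $M[X]$, so $e^*\in\closure_M(X\setminus e^*)$, and a quick rank calculation (using the relations $r_M(X)+r_M(Y)=r(M)$ and the non-closure conditions on $f$) shows that the obvious candidate partition $(X\setminus e^*, Y\cup f)$ is not a $1$-separation of $M\delete e^*$; a submodularity argument then rules out any other $1$-separation. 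If instead $M[X]$ is disconnected, we pass to its connected component containing $E(N)$ and iterate.

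The main obstacle is the bookkeeping in the last paragraph: namely, translating a good element $e^*$ of a proper connected minor back into a good element of $M$ itself. This requires combining the structural information from the $1$-separation $(X,Y)$ of $M\delete f$ with the connectivity of $M$ (expressed via the non-closure relations for $f$), and showing that neither $M\delete e^*$ nor the alternative $M\contract e^*$ inherits a $1$-separation from the reductions performed. With those verifications in place, the induction closes and the theorem follows.
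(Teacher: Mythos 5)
The crucial lifting step is a genuine gap, and the justification you sketch for it is false in general. Your induction hands you an element $e^*\in X$ such that $M[X]\delete e^*$ \emph{or} $M[X]\contract e^*$ is connected with an $N$-minor, and you then claim a rank/submodularity computation shows the corresponding removal from $M$ itself has no $1$-separation. Consider the cycle matroid $M=M(G)$ where $G$ consists of two triangles sharing a vertex plus an edge $f$ joining the two triangles at vertices away from the shared one; let $X=\{a,b,c\}$ be the edge set of the first triangle, $Y$ that of the second, and $N\cong U_{1,2}$ the minor of $M[X]$ obtained by contracting $c$, so $E(N)=\{a,b\}\subseteq X$. Here $M$ is connected, $(X,Y)$ is a $1$-separation of $M\delete f$, and the inductive hypothesis may legitimately return $e^*=c$ with the contraction option ($M[X]\contract c\cong U_{1,2}$ is connected, while $M[X]\delete c$ is not). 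But $M\contract c$ is disconnected: after contracting $c$ the pair $\{a,b\}$ becomes a component, i.e.\ $(X\setminus e^*,\,Y\cup f)$ \emph{is} a $1$-separation of $M\contract e^*$, even though $e^*\in\closure_M(X\setminus e^*)$ as you note. So connectivity of the removal inside $M[X]$ does not transfer to $M$; to rescue $e^*$ one must switch to the other operation ($M\delete c$ is connected here), but then the $N$-minor is no longer supplied by the induction, and recovering it requires precisely the closure/coloop argument you used for $f$, applied afresh at $e^*$ --- this is the real content of the theorem, not a bookkeeping detail. Two further holes: the induction cannot be invoked when $E(N)=X$ (then $|E(N)|=|E(M[X])|$), and the ``pass to a component and iterate'' clause compounds the same lifting problem.

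The irony is that you already have all the pieces to finish in one step, with no induction. You proved $f\notin\closure_M(X)$, so $f$ is a coloop of the restriction of $M$ to $X\cup f$, whence contracting $f$ and deleting $f$ agree there and $(M\contract f)$ restricted to $X$ equals $M$ restricted to $X$, which has an $N$-minor; and your own ``workhorse'' lemma gives that $M\contract f$ is connected, since $M$ is connected and $M\delete f$ is not. Thus $e:=f$ itself witnesses the conclusion (with the dual argument when $N$ is only a minor of $M\contract f$). This is essentially the standard argument: the paper does not prove the statement but cites Oxley, Proposition 4.3.6, which in fact yields the stronger conclusion that \emph{every} $e\in E(M)\setminus E(N)$ has the required property.
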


\begin{theorem}[Splitter Theorem]\label{thm:3connchain}
  Let $M$ and $N$ be $3$-connected matroids, $N\minorof M$, with $|E(M)| > |E(N)| \geq 4$, such that $M$ is not isomorphic to a wheel or a whirl. Then there is an $e \in E(M)$ such that some $M' \in \{M\delete e, M\contract e\}$ is $3$-connected with $N\minorof M'$.
\end{theorem}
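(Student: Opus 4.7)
The plan is to derive this (Seymour's \emph{Splitter Theorem}) from Tutte's Wheels-and-Whirls Theorem, which guarantees that any $3$-connected matroid $M$ not isomorphic to a wheel or whirl admits some $e \in E(M)$ for which either $M \delete e$ or $M \contract e$ is still $3$-connected; the content of the splitter theorem is then to upgrade ``some $e$'' to ``some $e$ that additionally preserves an $N$-minor''. The hypothesis $|E(N)| \geq 4$ is essential to avoid degenerate small cases where $3$-connectivity has nonstandard behavior.

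I would argue by contradiction: suppose $M$ is $3$-connected, not a wheel or whirl, has $N$ as a $3$-connected minor with $|E(M)| > |E(N)| \geq 4$, yet for every $e$ such that $M \delete e$ (respectively $M \contract e$) is $3$-connected, that minor has no $N$-minor. Fix a realization $N = M \contract C \delete D$, so that $E(M) \setminus E(N) = C \dotcup D$; by assumption no $e \in C$ admits $M \contract e$ being $3$-connected, and no $e \in D$ admits $M \delete e$ being $3$-connected. The key structural step, which I would carry out by using submodularity of $\lambda_M$ to cross a $2$-separation of one such minor with a $2$-separation of another, is to show that each $e \in C$ must lie in a triad of $M$ meeting $C \cup D$, and dually each $e \in D$ in a triangle of $M$ meeting $C \cup D$. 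Iterating, these triangles and triads chain together into a \emph{fan} $F$ built from elements of $C \cup D$ and anchored at its endpoints to elements of $E(N)$.

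The main obstacle will be turning this local fan structure into the global conclusion that $M$ is a wheel or whirl, contradicting the hypothesis. The idea is that any fan of length $\geq 4$ contains an interior element whose removal preserves both $3$-connectivity and the $N$-minor, violating our assumption; so every maximal fan must be short. Combining this length bound with Tutte's theorem applied to $M$ (which produces an honest removable element), and with the observation that any such removable element must itself be trapped inside the enforced fan structure, forces $E(M) \setminus E(N)$ to be rigidly organized and ultimately forces $M$ itself to be a wheel or whirl. The delicate bookkeeping is at the interface between $F$ and $E(N)$: extending a fan across this boundary must be obstructed by the $3$-connectivity of $N$, and it is precisely this obstruction that both blocks a naive inductive removal in general and produces the wheel/whirl exception in its strongest form.
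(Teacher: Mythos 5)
The paper does not prove this statement at all: it is Seymour's Splitter Theorem, quoted verbatim from Oxley (cited as Corollary 11.2.1), so there is no internal proof to compare against; your proposal is an attempt to reprove a classical theorem from scratch. Judged on its own terms, it is a plan rather than a proof, and its central structural claim is not correct as stated. From the negation you fix a realization $N = M\contract C\delete D$ and conclude that $M\contract e$ is not $3$-connected for each $e\in C$ (dually for $D$); so far so good. But you then assert that each such $e$ must lie in a triad of $M$ (dually, a triangle). That does not follow: if $M$ is $3$-connected and $M\contract e$ is not, the witnessing $2$-separation of $M\contract e$ may have both sides of size at least three (a ``vertical'' $2$-separation), in which case $e$ need lie in no triad at all; the presence of a series/parallel pair in the reduced matroid is exactly what has to be earned, not assumed. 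Ruling out the large-side case is where the real content of the Splitter Theorem lives: one must use the $N$-minor (via results on how a $3$-connected minor sits across a $2$-separation, in the spirit of Proposition \ref{pro:twosepminorintersection}, together with Tutte-triangle/Bixby-type lemmas) to show the offending $2$-separations are of the degenerate kind, and this analysis occupies the bulk of the standard treatments.

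The subsequent steps are likewise only gestured at: that the triangles and triads chain into fans, that a sufficiently long fan contains an interior element whose removal preserves both $3$-connectivity and the $N$-minor, and that short maximal fans plus Tutte's Wheels-and-Whirls Theorem force $M$ to be a wheel or whirl — each of these is a nontrivial lemma, and you explicitly defer them as ``the main obstacle'' and ``delicate bookkeeping.'' There is also a subtlety you do not address: $C$ and $D$ depend on the chosen realization of $N$, and the fan and removability arguments must be carried out uniformly over (or be stable under re-choosing) that realization. So while your overall strategy — fan structure terminating in the wheel/whirl alternative — is the standard and viable route, the proposal as written has genuine gaps at every load-bearing step, and in particular the triad/triangle claim needs the $N$-minor-versus-$2$-separation analysis before it is even true.
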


\subsection{Blocking sequences}

The following definitions are from \citet{GGK}.

\begin{definition}
  Let $M$ be a matroid on ground set $E$, $M'$ a minor of $M$ on ground set $E'\subseteq E$, and $(Z_1',Z_2')$ a $k$-separation of $M'$. We say that $(Z_1', Z_2')$ is \emph{induced} in $M$ if there exists a $k$-separation $(Z_1,Z_2)$ of $M$ with $Z_1' \subseteq Z_1$ and $Z_2' \subseteq Z_2$.
\end{definition}

Let $B$ be a basis of $M$ such that $M' = M_B[E']$.

\begin{definition}\label{def:blockseq}
	Let $M$, $M'$, $E$, $E'$, $Z_1'$, and $Z_2'$ be as in the previous definition. A \emph{blocking sequence}\index{blocking sequence} for $(Z_1',Z_2')$ is a sequence of elements $v_1, \ldots, v_t$ of $E\setminus E'$ such that
  \begin{enumerate}
    \item\label{bls:first} $\lambda_B(Z_1', Z_2' \cup v_1) = k$;
    \item\label{bls:middle} $\lambda_{B}(Z_1'\cup v_i, Z_2'\cup v_{i+1}) = k$ for $i = 1, \ldots, t-1$;
    \item\label{bls:last} $\lambda_{B}(Z_1'\cup v_t,Z_2') = k$; and
    \item no proper subsequence of $v_1, \ldots, v_t$ satisfies the first three properties.
  \end{enumerate}
\end{definition}

Blocking sequences find their origin in Seymour's work on regular matroid decomposition \citep[Section 8]{Sey80}. The first general formulation was due to \citet{TruIII}, but blocking sequences truly took off with the publication of the proof of Rota's Conjecture for $\GF(4)$ \citep{GGK}. We have opted to use their notation rather than the notation used in, for instance, \citet{GHW05}, because Definition~\ref{def:blockseq} clearly exhibits the symmetry.

The following theorem illustrates the usefulness of blocking sequences:

\begin{theorem}[\citet{GGK}, Theorem 4.14]\label{thm:blockseq}
  Let $M$ be a matroid on ground set $E$, $B$ a basis of $M$, $M' := M_B[E']$ for some $E'\subseteq E$, and $(Z_1',Z_2')$ an exact $k$-separation of $M'$. Exactly one of the following holds:\index{k-separation@$k$-separation}
  \begin{enumerate}
    \item There exists a blocking sequence for $(Z_1',Z_2')$;
    \item $(Z_1',Z_2')$ is induced in $M$.
  \end{enumerate}
\end{theorem}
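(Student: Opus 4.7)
The theorem has the form ``exactly one'', so my plan splits naturally into two halves: (a) the two options are mutually exclusive, and (b) at least one holds. The mutual exclusivity half is straightforward and I would do it first, since it primes the intuition for the existence half.

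For the mutual exclusivity half, suppose $(Z_1',Z_2')$ is induced in $M$ by an honest $k$-separation $(Z_1,Z_2)$ of $M$, and suppose a blocking sequence $v_1,\ldots,v_t$ also exists. Each $v_i$ lies in exactly one of $Z_1,Z_2$. Monotonicity of the connectivity function under minor-taking (together with Lemma~\ref{lem:connectivityoldnew}) implies $\lambda_B(Z_1',Z_2'\cup v_1)\le \lambda_M(Z_2\cup (v_1\cap Z_2))\le k-1$ whenever $v_1\in Z_2$; combined with property \ref{bls:first} this forces $v_1\in Z_1$. Symmetrically, property \ref{bls:last} forces $v_t\in Z_2$. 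So there is an index $i$ with $v_i\in Z_1$ and $v_{i+1}\in Z_2$, and then a further monotonicity argument gives $\lambda_B(Z_1'\cup v_i,Z_2'\cup v_{i+1})\le k-1$, contradicting property \ref{bls:middle}. This half uses only submodularity/monotonicity plus the bookkeeping in Definition~\ref{def:connectivityB}.

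For the existence half I would prove the contrapositive: assuming no blocking sequence exists, construct an inducing $k$-separation of $M$. The natural approach is a closure-type construction on $T:=E\setminus E'$. Define $\bar Z_1$ to be the smallest subset of $E$ containing $Z_1'$ with the property that whenever $v\in E$ satisfies $\lambda_B(\bar Z_1,(E\setminus \bar Z_1)\cup v')<k$ for some sequence of witnesses $v'$ chained to $v$ through property~\ref{bls:middle}, we have $v\in \bar Z_1$. Set $\bar Z_2:=E\setminus \bar Z_1$. I would verify, using the submodularity of $\lambda$ applied along the chain, that $\lambda_M(\bar Z_1)\le k-1$ and that $Z_1'\subseteq \bar Z_1$, $Z_2'\subseteq \bar Z_2$ (for the latter one shows: if any element of $Z_2'$ ended up in $\bar Z_1$, a shortest chain to it would satisfy properties \ref{bls:first}--\ref{bls:last}, yielding a blocking sequence after pruning to achieve minimality). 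This produces the desired $k$-separation of $M$.

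The technical obstacle is making the chain/closure construction precise while guaranteeing two things at once: the resulting partition has $\lambda_M\le k-1$ in the full matroid $M$ (not just in successive small minors), and no minimal chain escaping $\bar Z_1$ would itself constitute a blocking sequence. The tool for both is submodularity of $\lambda_B$ together with the observation that if a chain $v_1,\ldots,v_s$ that satisfies properties \ref{bls:first}--\ref{bls:last} does exist, then by removing each $v_i$ whose omission preserves the three properties one arrives at an honest blocking sequence satisfying the minimality property \textit{iv}. So the heart of the argument is a careful submodularity computation that converts ``$\lambda_M(\bar Z_1)=k$'' or ``$Z_2'\not\subseteq\bar Z_2$'' into a chain one can prune.
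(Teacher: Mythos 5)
The paper itself offers no proof of this statement --- it is imported verbatim from Geelen, Gerards and Kapoor --- so I can only judge your argument on its own terms. Your mutual-exclusivity half is correct: minor-monotonicity of the connectivity function forces $v_1\in Z_1$ and $v_t\in Z_2$, and the switch-over index then contradicts property \ref{bls:middle}. Your remark that any sequence satisfying \ref{bls:first}--\ref{bls:last} can be pruned to one that also satisfies the minimality condition is also fine (take a subsequence of minimum length).

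The existence half, however, has a genuine gap, in two places. First, the closure definition as written does not say what you intend: membership of $v$ in $\bar Z_1$ should be triggered by a chain of \emph{equalities} $\lambda_B(Z_1'\cup v_i,\,Z_2'\cup v_{i+1})=k$ linking $Z_1'$ to $v$ through single elements of $E\setminus E'$, whereas your condition reads $\lambda_B(\bar Z_1,(E\setminus\bar Z_1)\cup v')<k$ --- the inequality points the wrong way and the sets occurring in it are the wrong ones (they should be $Z_1'$ and $Z_2'$ augmented by one element each, not $\bar Z_1$ and its complement). Second, and decisively, the step ``$\lambda_M(\bar Z_1)\le k-1$ by submodularity applied along the chain'' is not available as stated: all the hypotheses you have are connectivities of \emph{different} minors $M_X[E'\cup\{v_i,v_{i+1}\}]$, so submodularity of $\lambda_M$ or $\lambda_{M'}$ inside a single matroid cannot be applied to them. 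What is needed is the two-variable inequality $\lambda_B(X_1,Y_1)+\lambda_B(X_2,Y_2)\ge\lambda_B(X_1\cup X_2,Y_1\cap Y_2)+\lambda_B(X_1\cap X_2,Y_1\cup Y_2)$, which follows from Definition \ref{def:connectivityB} by writing each term as $\rank_M((X\setminus B)\cup(B\setminus Y))-|B\setminus Y|$ and using submodularity of $\rank_M$; this is precisely the tool developed by Geelen, Gerards and Kapoor. With it your plan does succeed: letting $R$ be the set of elements of $E\setminus E'$ reachable from $Z_1'$ by such arcs, the absence of any arc leaving $R$ gives $\lambda_B(Z_1'\cup u,Z_2'\cup v)=k-1$ for all $u\in R$ and $v\in(E\setminus E')\setminus R$ (and with $u$ or $v$ omitted), and an induction that grows the two sides one element at a time, applying the displayed inequality with $X_1\cap X_2=Z_1'$, yields $\lambda_B(Z_1'\cup R,\,Z_2'\cup((E\setminus E')\setminus R))=k-1$, i.e.\ an induced $k$-separation. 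Until you state and prove that inequality and carry out this induction, the heart of the theorem is asserted rather than proved in your writeup.
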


In the first case we say that $(Z_1',Z_2')$ is \emph{bridged} in $M$.

Another useful property of blocking sequences is the following:

\begin{lemma}[\citet{GGK}, Proposition 4.15(iv)]\label{lem:blockseqalternating}If $v_1, \ldots, v_t$ is a blocking sequence for the $k$-separation $(Z_1',Z_2')$, then $v_i\in B$ implies $v_{i+1} \in E\setminus B$ and $v_i \in E\setminus B$ implies $v_{i+1} \in B$ for $i = 1, \ldots, t-1$.
\end{lemma}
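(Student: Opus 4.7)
The plan is to argue by contradiction using the minimality condition (iv) of Definition \ref{def:blockseq}. Suppose consecutive elements $v_i, v_{i+1}$ lie on the same side of $B$; by self-duality of $\lambda$, applied to $M^*$ (which swaps $B$ and $E \setminus B$), one reduces to the case $v_i, v_{i+1} \in E \setminus B$.

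I would fix a representation $A$ of $M$ with rows indexed by $B$ and partition $B = B_1 \cup B_2 \cup B_3$, $E\setminus B = D_1 \cup D_2 \cup D_3$ according to $Z_1'$, $Z_2'$, and $E\setminus E'$. By Lemma \ref{lem:matrixconn}, $\lambda_B(Z_1', Z_2') = r_0 + s_0 = k - 1$, where $r_0 = \rank(A[B_2, D_1])$ and $s_0 = \rank(A[B_1, D_2])$. For $v \in E\setminus B$, write $\epsilon_1(v), \epsilon_2(v) \in \{0, 1\}$ for the rank increments of $A[B_2, D_1]$ and $A[B_1, D_2]$ respectively when column $v$ is adjoined. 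The singleton-addition conditions defining a blocking sequence then translate to $\epsilon_2(v_1) = 1$, $\epsilon_1(v_j) + \epsilon_2(v_{j+1}) = 1$ for each applicable $j$, and $\epsilon_1(v_t) = 1$. A single element $u$ by itself forms a blocking sequence if and only if $\epsilon_1(u) = \epsilon_2(u) = 1$ (call this \emph{solo-bridging}); by minimality and $t \geq 2$, no $v_j$ may solo-bridge.

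I would then split on $\sigma := \epsilon_1(v_{i+1}) + \epsilon_2(v_{i+1})$. If $\sigma = 2$, then $v_{i+1}$ solo-bridges, contradicting minimality. If $\sigma = 1$, summing the two conditions at $(i, i+1)$ and $(i+1, i+2)$ and subtracting $\sigma = 1$ yields $\epsilon_1(v_i) + \epsilon_2(v_{i+2}) = 1$, so the sequence obtained by removing $v_{i+1}$ still satisfies (i)--(iii), a contradiction. If $\sigma = 0$, then $\epsilon_1(v_i) = 1$; either $v_i$ solo-bridges (contradiction) or $\epsilon_2(v_i) = 0$, and the condition at $(i-1, i)$ then forces $\epsilon_1(v_{i-1}) = 1$, so removing $v_i$ produces a valid subsequence. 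Boundary cases $i = 1$ and $i + 1 = t$ are handled identically, using conditions (i) and (iii) in place of the missing instance of (ii). The case where $v_{i-1}$ or $v_{i+2}$ belongs to $B$ is analogous after replacing the relevant $\epsilon$ with its row-analogue.

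The main obstacle will be the careful bookkeeping across sub-cases: when a neighbour of $\{v_i, v_{i+1}\}$ lies in $B$, one must switch between column-increment and row-increment contributions, and boundary positions of $i$ require invoking conditions (i) or (iii) rather than (ii). The unifying observation is that minimality permits only the ``propagating'' configuration in which $v_j$ and $v_{j+1}$ contribute jointly to the same rank block, and this happens precisely when $v_j$ and $v_{j+1}$ lie on opposite sides of $B$.
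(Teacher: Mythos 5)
The paper offers no proof of this lemma---it is quoted from \citet{GGK}---so your argument has to stand on its own, and as written it has a genuine gap. Your overall plan (contradict minimality, condition (iv) of Definition \ref{def:blockseq}) is the right one, and your translation of conditions (i)--(iii) into rank increments $\epsilon_1,\epsilon_2$ is correct \emph{when all elements involved lie in $E\setminus B$}. The problem is your choice to contradict minimality by deleting a single middle element, which forces you to verify the junction condition $\lambda_B(Z_1'\cup v_i,\,Z_2'\cup v_{i+2})=k$ and hence to control the neighbours $v_{i-1},v_{i+2}$. Your claim that the case $v_{i+2}\in B$ (or $v_{i-1}\in B$) is ``analogous after replacing the relevant $\epsilon$ with its row-analogue'' is not correct: if $v_{i+1}\notin B$ and $v_{i+2}\in B$, condition (ii) at position $i+1$ says that adjoining the column $v_{i+1}$ \emph{and} the row $v_{i+2}$ to the block $A[B\cap Z_2',\,Z_1'\setminus B]$ raises its rank by exactly one, and this joint increment neither determines nor is determined by the separate increments (the new rank can sit entirely in the single entry in position $(v_{i+2},v_{i+1})$). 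Consequently your ``sum the conditions at $(i,i+1)$ and $(i+1,i+2)$ and subtract $\sigma$'' step has no analogue there, and in the sub-case $\epsilon_1(v_{i+1})=0$, $\epsilon_2(v_{i+1})=1$ (hence $\epsilon_1(v_i)=0$) the junction equality for the shortened sequence cannot be deduced from your hypotheses at all. A secondary point: you fix a representation of $M$, but the lemma is stated (and is used in this paper, on an excluded minor not known to be $\pf$-representable) for arbitrary matroids; this is repairable, since all your increments can be read off directly from Definition \ref{def:connectivityB}, but it should not be assumed.

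Both defects disappear if you contradict minimality with a prefix or suffix rather than by removing a middle element. For $v_i,v_{i+1}\in E\setminus B$, Definition \ref{def:connectivityB} gives the decoupling
$\lambda_B(Z_1'\cup v_i,\,Z_2'\cup v_{i+1})=\lambda_B(Z_1'\cup v_i,\,Z_2')+\lambda_B(Z_1',\,Z_2'\cup v_{i+1})-\lambda_B(Z_1',Z_2')$,
because adjoining a non-basis element to one side does not change the contracted matroid used to compute the other side's term; so condition (ii) at $i$ says $\epsilon_1(v_i)+\epsilon_2(v_{i+1})=1$. If $\epsilon_1(v_i)=1$, then $v_1,\ldots,v_i$ already satisfies (i)--(iii) (it inherits (i) and the earlier instances of (ii), and $\epsilon_1(v_i)=1$ is exactly (iii)); if $\epsilon_2(v_{i+1})=1$, then $v_{i+1},\ldots,v_t$ satisfies them. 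Either is a proper subsequence since $1\le i\le t-1$, contradicting (iv), with no reference to $v_{i-1}$, $v_{i+2}$, or whether they lie in $B$; the both-rows case is dual, as you say. Your ``solo-bridging'' observation is the special case of this where the prefix or suffix has length one.
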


We will use the following lemma:

\begin{lemma}[\citet{GGK}, Proposition 4.16(i)]\label{lem:blseqlem}
  Let $v_1, \ldots, v_t$ be a blocking sequence for $(Z_1', Z_2')$. If $Z_2'' \subseteq Z_2'$ is such that $|Z_2''| \geq k$ and $\lambda_B(Z_1',Z_2'') = k-1$, then $v_1, \ldots, v_{t-1}$ is a blocking sequence for the exact $k$-separation $(Z_1',Z_2''\cup v_t)$.
\end{lemma}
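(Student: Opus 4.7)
The plan is to verify each of the four clauses of Definition \ref{def:blockseq} for the sequence $v_1, \ldots, v_{t-1}$ and the separation $(Z_1', Z_2'' \cup v_t)$. First I show that $(Z_1', Z_2'' \cup v_t)$ is an exact $k$-separation. Monotonicity of $\lambda_B$ under taking minors gives $\lambda_B(Z_1', Z_2'' \cup v_t) \geq \lambda_B(Z_1', Z_2'') = k - 1$. For the matching upper bound, I first argue that $\lambda_B(Z_1', Z_2' \cup v_t) = k - 1$: if this value were instead $k$, then combined with the old condition (\textit{iii}), $\lambda_B(Z_1' \cup v_t, Z_2') = k$, the singleton $v_t$ would itself satisfy conditions (\textit{i})--(\textit{iii}) of Definition \ref{def:blockseq} for $(Z_1', Z_2')$, contradicting the minimality of $v_1, \ldots, v_t$ (when $t = 1$ the lemma is vacuous). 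Monotonicity then yields $\lambda_B(Z_1', Z_2'' \cup v_t) \leq \lambda_B(Z_1', Z_2' \cup v_t) = k - 1$.

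The technical core of the proof is a \emph{rank-preservation identity} extracted from $\lambda_B(Z_1', Z_2') = \lambda_B(Z_1', Z_2'') = k - 1$. Writing $\lambda_B$ as a sum of two ranks (Definition \ref{def:connectivityB}) and observing that each summand is monotone non-decreasing as $Z_2$ grows, the equality of the two sums forces each summand individually to be preserved; concretely,
\[
\rank_{M \contract (B \setminus Z_1')}(Z_2' \setminus B) = \rank_{M \contract (B \setminus Z_1')}(Z_2'' \setminus B),
\]
together with the symmetric identity on the other side. Equivalently, each element of $(Z_2' \setminus Z_2'') \setminus B$ lies in the closure of $Z_2'' \setminus B$ in the minor $M \contract (B \setminus Z_1')$, and dually for elements of $(Z_2' \setminus Z_2'') \cap B$.

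Using this identity I then verify each of the three connectivity conditions for the new sequence. For the new condition (\textit{iii}), $\lambda_B(Z_1' \cup v_{t-1}, Z_2'' \cup v_t) = k$, the upper bound $\leq k$ follows by monotonicity from the old condition (\textit{ii}) at $i = t - 1$, namely $\lambda_B(Z_1' \cup v_{t-1}, Z_2' \cup v_t) = k$. For the matching lower bound, the rank-preservation identity is upgraded to incorporate the rows or columns attached by $v_{t-1}$ and $v_t$: this proceeds by a case split on whether each $v_i$ lies in $B$ (using the alternation property of Lemma \ref{lem:blockseqalternating}), showing in each case that the relevant spans remain unchanged. The new conditions (\textit{i}) and (\textit{ii}) are handled analogously from the corresponding old conditions.

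Finally, to verify minimality (\textit{iv}), I argue by contrapositive: any proper subsequence $v_{i_1}, \ldots, v_{i_s}$ of $v_1, \ldots, v_{t-1}$ satisfying clauses (\textit{i})--(\textit{iii}) for $(Z_1', Z_2'' \cup v_t)$ would, after appending $v_t$ and applying the rank-preservation identity in reverse (substituting $Z_2'$ for $Z_2''$), yield a proper subsequence of $v_1, \ldots, v_t$ satisfying clauses (\textit{i})--(\textit{iii}) for $(Z_1', Z_2')$, contradicting the original minimality. The main obstacle of the argument is the verification of the three connectivity conditions: ensuring that the rank-preservation identity genuinely extends when the additional elements $v_i$ attach on the $Z_1'$ side, which amounts to a careful rank-theoretic check that the span equalities survive adjoining extra rows or columns.
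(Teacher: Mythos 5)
The paper itself gives no proof of this lemma: it is quoted verbatim from \citet{GGK} (Proposition~4.16(i)), so your argument can only be measured against the standard argument there, which runs on monotonicity and submodularity of $\lambda_B$ together with further consequences of the minimality clause (\citet{GGK}, Proposition~4.15). Your skeleton is essentially that argument: the exactness of $(Z_1',Z_2''\cup v_t)$ via the truncation trick (correct for $t\geq 2$; for $t=1$ the statement is not so much ``vacuous'' as implicitly excluded, since there $\lambda_B(Z_1',Z_2''\cup v_1)=k$), the rank-preservation identity forced by $\lambda_B(Z_1',Z_2'')=\lambda_B(Z_1',Z_2')=k-1$, and the transfer of conditions (\textit{i})--(\textit{iv}). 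The identity itself and the verification of the new condition (\textit{i}) from the old one are fine.

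The gap is exactly at the point you flag as ``a careful rank-theoretic check'': the claim that the span equalities survive adjoining the row or column of $v_i$ on the $Z_1'$ side does \emph{not} follow from the ingredients you have assembled (old conditions (\textit{i})--(\textit{iii}), alternation, and the rank-preservation identity), and is false without an extra input. Concretely, in a represented instance with $X_1=Z_1'\cap B=\{x\}$, $Y_2''=\{y_1\}\subseteq Y_2=\{y_1,y_2\}$, $v_{t-1}\in B$, $v_t\notin B$, take $A[x,(y_1,y_2,v_t)]=(1,1,1)$ and $A[v_{t-1},(y_1,y_2,v_t)]=(0,1,0)$: rank preservation holds on the row $x$, and $\rank A[\{x,v_{t-1}\},\{y_1,y_2,v_t\}]=2$ as old condition (\textit{ii}) requires, yet $\rank A[\{x,v_{t-1}\},\{y_1,v_t\}]=1$, so the count needed for the new condition (\textit{iii}) fails. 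What excludes this configuration is not your identity but the additional minimality consequences $\lambda_B(Z_1'\cup v_i,Z_2')=k-1$ for $1\leq i\leq t-1$ (and, dually, $\lambda_B(Z_1',Z_2'\cup v_i)=k-1$ for $i\geq 2$), which are provable by the very truncation argument you already use once for $v_t$, but which appear nowhere in your proposal; in the example above this fact is violated, since $\rank A[\{x,v_{t-1}\},\{y_1,y_2\}]=2$. Once these equalities are on the table, each remaining verification is a one-line submodularity computation, e.g.\ $\lambda_B(Z_1'\cup v_{t-1},Z_2''\cup v_t)\geq \lambda_B(Z_1'\cup v_{t-1},Z_2'')+\lambda_B(Z_1'\cup v_{t-1},Z_2'\cup v_t)-\lambda_B(Z_1'\cup v_{t-1},Z_2')\geq (k-1)+k-(k-1)=k$, and they are also needed in your step (\textit{iv}) (to bound $\lambda_B(Z_1'\cup v_{i_s},Z_2'\cup v_t)$ above by $k$). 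So the approach is right, but the central step is asserted rather than proved, and as stated it cannot be completed without identifying this missing ingredient.
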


\subsection{Branch width}

A graph $T = (V,E)$ is a \emph{cubic tree} if $T$ is a tree in which each vertex has degree exactly one or three. We denote the leaves of $T$ by $L(T)$.

\begin{definition}
  Let $M$ be a matroid. A \emph{partial branch decomposition} of $M$ is a pair $(T, l)$, where $T$ is a cubic tree and $l:V(T)\rightarrow 2^{E(M)}$ a function assigning a subset of $E(M)$ to each vertex of $T$, such that $\{l(v)  :  v \in V(T)\}$ partitions $E(M)$.
\end{definition}

If $T$ is a tree and $e=vw \in E(T)$, then we denote by $T_v$ the component of $T\delete e$ containing $v$.

\begin{definition}
  Let $M$ be a matroid and let $(T,l)$ be a partial branch decomposition of $M$. We define $w_{(T,l)}: V^2\rightarrow \N$ as
  \begin{align*}
    w_{(T,l)}(v,w) & = \left\{ \begin{array}{cl}
                       \lambda_M\left(\bigcup_{u \in V(T_v)} l(u)\right) + 1 & \textrm{ if } vw \in E(T);\\
                       \phantom{\bigg(} 0 & \textrm{ otherwise.}
                       \end{array}\right.
  \end{align*}
\end{definition}

In words, $w_{(T,l)}(v,w)$ is the degree of the separation of $M$ displayed by the edge $vw$. Note that $(\bigcup_{u \in V(T_v)} l(u), \bigcup_{u \in V(T_w)} l(u))$ is a partition of $E(M)$, so $w_{(T,l)}(v,w) = w_{(T,l)}(w,v)$. Hence, for $e = vw \in E(T)$, we will write $w_{(T,l)}(e)$ as shorthand for $w_{(T,l)}(v,w)$.

\begin{definition}
  Let $M$ be a matroid and let $(T,l)$ be a partial branch decomposition of $M$. The \emph{width} of $(T,l)$ is
  \begin{align*}
    w(T,l) := \left\{ \begin{array}{ll}
                      \max_{e \in E(T)} w_{(T,l)}(e) & \textrm{ if } E(T) \neq \emptyset\\
                      1                      & \textrm{ otherwise.}
                      \end{array}\right.
  \end{align*}
\end{definition}

\begin{definition}
  Let $M$ be a matroid. A \emph{branch decomposition} of $M$ is a partial branch decomposition such that $|l(v)| \leq 1$ for all $v \in L(T)$, and such that $l(v) = \emptyset$ for all $v \in V(T)\setminus L(T)$.
\end{definition}

\begin{definition}
  Let $M$ be a matroid. A \emph{reduced branch decomposition} of $M$ is a branch decomposition such that $|l(v)| = 1$ for all $v \in L(T)$.
\end{definition}

We denote the set of reduced branch decompositions of $M$ by $\mathcal{D}_M$.

\begin{definition}
  Let $M$ be a matroid. The \emph{branch width} of $M$ is
  \begin{align*}
    \bw(M) := \min_{(T,l) \in \mathcal{D}_M} w(T,l).
  \end{align*}
\end{definition}

We start with some elementary and well-known observations. We omit the proofs.

\begin{lemma}
  Let $(T,l)$ be a branch decomposition of a matroid $M$. There is a reduced branch decomposition $(T',l')$ of $M$ such that $w(T,l) = w(T',l')$.
\end{lemma}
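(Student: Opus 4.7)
The plan is to transform $(T,l)$ step by step into a reduced branch decomposition, each step leaving the width unchanged. The key operation is the removal of an empty-labeled leaf: pick $v \in L(T)$ with $l(v) = \emptyset$, and let $u$ be its unique neighbor. Since $T$ is cubic, $u$ has degree $1$ or $3$. In the principal case $u$ has degree $3$, with neighbors $v,a,b$. Form $T^\ast$ from $T$ by deleting $v$ and then suppressing $u$, i.e.\ removing $u$ and joining $a$ directly to $b$; define $l^\ast$ as the restriction of $l$ to $V(T^\ast)$. Then $T^\ast$ is a cubic tree (the degrees of $a$ and $b$ are unchanged), and $(T^\ast,l^\ast)$ is again a branch decomposition of $M$, with one fewer empty leaf than $(T,l)$.

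Next I would verify that $w(T^\ast,l^\ast) = w(T,l)$. For the deleted edge $uv$, the component $T_v$ consists of $v$ alone, so this edge displays the trivial separation $(\emptyset,E(M))$ and contributes $w_{(T,l)}(u,v) = \lambda_M(\emptyset)+1 = 1$. For the two edges $ua$ and $ub$ of $T$ that are replaced by the single edge $ab$ of $T^\ast$: because $l(u) = l(v) = \emptyset$, the set $\bigcup_{x \in V(T_a)} l(x)$ (computed in $T$ along edge $ua$) equals the corresponding set in $T^\ast$ along edge $ab$, so $w_{(T^\ast,l^\ast)}(ab)$ equals both $w_{(T,l)}(ua)$ and $w_{(T,l)}(ub)$. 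All remaining edges of $T^\ast$ correspond to edges of $T$ displaying the same separation. Therefore the multiset of edge-widths of $(T^\ast,l^\ast)$ is obtained from that of $(T,l)$ by deleting two occurrences of the common value $w_{(T,l)}(ua)=w_{(T,l)}(ub)$, inserting one occurrence of the same value, and discarding the value $w_{(T,l)}(uv) = 1$. Since $1 \leq w(T,l)$ and $w_{(T,l)}(ua)$ is still present, the maximum is preserved; if $E(T^\ast) = \emptyset$ we fall back on the convention $w(T^\ast,l^\ast) = 1 = w(T,l)$.

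The degenerate subcase in which $u$ is itself a leaf occurs only when $T = K_2$; then $|E(M)| \leq 1$, and the reduced decomposition is taken to be a single vertex carrying the (at most one) element of $E(M)$, which has width $1 = w(T,l)$ by convention.

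Iterating the basic operation strictly decreases the number of leaves with empty label, so after finitely many steps we reach a branch decomposition $(T',l')$ in which every leaf carries exactly one element; this is a reduced branch decomposition with $w(T',l') = w(T,l)$. I expect the only subtle point to be checking that the two suppressed edges indeed display the same separation as the new edge $ab$, which is exactly where the hypothesis $l(v)=\emptyset$ (together with $l(u)=\emptyset$ since $u$ is internal) is used.
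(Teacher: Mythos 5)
Your proof is correct: the paper deliberately omits this proof as ``elementary and well-known,'' and your argument --- repeatedly deleting an empty-labelled leaf $v$ and suppressing its internal neighbour $u$, checking that the deleted edge only displayed the trivial separation of width $1$ and that the new edge $ab$ displays the same separation as the two edges it replaces --- is exactly the standard argument the authors have in mind, with the width-preservation and degenerate $|E(M)|\leq 1$ cases handled appropriately (the latter being a convention issue already present in the paper's definitions).
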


\begin{proposition}\label{prop:bwmonotone}
  Let $M$ be a matroid and $e \in E(M)$. Then
  \begin{align*}
    \bw(M\delete e) \leq \bw(M) \leq \bw(M\delete e) + 1.
  \end{align*}
\end{proposition}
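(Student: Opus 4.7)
The plan is to prove the two inequalities separately by explicitly transforming an optimal reduced branch decomposition.

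For $\bw(M \delete e) \leq \bw(M)$, I would start from a reduced branch decomposition $(T, l)$ witnessing $\bw(M)$. The element $e$ is the label of a unique leaf $v_e$. Removing $v_e$ leaves its former neighbor $u$ with degree at most two, so I suppress $u$ if necessary to obtain a cubic tree $T'$, and inherit $l$ on the remaining leaves to obtain a reduced branch decomposition $(T', l')$ of $M \delete e$. Each edge of $T'$ corresponds to an edge of $T$ (or, in the case created by suppression, to one of the two edges $u w_1$, $u w_2$ incident with $u$), and displays in $M \delete e$ the same partition it displayed in $M$ with $e$ removed from one side. Since $\lambda$ is monotone under deletion, the width of this edge in $(T', l')$ is at most the width of the corresponding edge in $(T, l)$, so $w(T', l') \leq w(T, l) = \bw(M)$.

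For $\bw(M) \leq \bw(M \delete e) + 1$, I would start from a reduced branch decomposition $(T', l')$ witnessing $\bw(M \delete e)$. Pick any edge $f' = v'w'$ of $T'$ (or, in the degenerate case $|E(M \delete e)| \leq 1$, its unique vertex), subdivide $f'$ by a new vertex $u$, and attach a new leaf $v_e$ adjacent to $u$ with label $\{e\}$. The resulting cubic tree $T$ with the evident labeling $l$ is a reduced branch decomposition of $M$. The pendant edge $u v_e$ contributes width $\lambda_M(\{e\}) + 1 \leq 2$. Each of the two ``halves'' $v'u$ and $uw'$ displays a partition of $E(M)$ whose $M \delete e$-counterpart (i.e.\ with $e$ removed from one side) is precisely the partition $f'$ displayed in $T'$; and every other edge of $T$ corresponds directly to an edge of $T'$ displaying the same partition up to $e$. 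Combining these, $w(T, l) \leq w(T', l') + 1$.

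The key numerical input, which I would verify directly from the rank-sum definition of $\lambda$, is the inequality
\begin{align*}
  \lambda_M(Z) \leq \lambda_{M \delete e}(Z \setminus e) + 1 \quad \text{for every } Z \subseteq E(M).
\end{align*}
This holds because $r_M(Z) \leq r_{M \delete e}(Z \setminus e) + 1$, $r_M(E(M) \setminus Z) \leq r_{M \delete e}(E(M \delete e) \setminus (Z \setminus e)) + 1$ (with one of these inequalities tight depending on whether $e \in Z$), while $r(M) \geq r(M \delete e)$, so the net change in $\lambda$ is at most $+1$. No substantial obstacle is anticipated; the only bookkeeping to watch is handling the degenerate cases $|E(M)| \leq 2$, where either $T$ or $T'$ may have no edges and the convention $w(T, l) = 1$ when $E(T) = \emptyset$ handles the statement trivially.
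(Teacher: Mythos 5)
The paper treats this proposition as elementary and omits the proof entirely (``We start with some elementary and well-known observations. We omit the proofs.''), so there is nothing to compare against line by line; your job is simply to supply the standard argument, and you do. Your tree surgery (deleting the leaf labelled $\{e\}$ and suppressing its neighbour for $\bw(M\delete e)\leq\bw(M)$; subdividing an edge and hanging a new leaf labelled $\{e\}$ for $\bw(M)\leq\bw(M\delete e)+1$) together with monotonicity of $\lambda$ under minors and the bound $\lambda_M(\{e\})+1\leq 2\leq\bw(M\delete e)+1$ is exactly the expected proof, and the degenerate cases are handled by the paper's convention $w(T,l)=1$ when $E(T)=\emptyset$. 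One small wording issue in your key inequality: as literally stated, the two rank inequalities each allow a $+1$, which would only give $\lambda_M(Z)\leq\lambda_{M\delete e}(Z\setminus e)+2$; what you need (and clearly intend with the parenthetical) is that for the side \emph{not} containing $e$ the rank is unchanged, i.e.\ $r_M(Z)=r_{M\delete e}(Z)$ when $e\notin Z$, so only one side can pick up a $+1$ while $r(M)\geq r(M\delete e)$ — saying that inequality is ``tight'' points in the wrong direction, since tightness of the $+1$ bound is the unfavourable case. With that phrase corrected, the proof is complete.
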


Series and parallel classes do not have an effect on the branch width of a matroid:

\begin{proposition}Let $M$ be a matroid with $\bw(M)\geq 2$. Then $\bw(M) = \bw(\si(M))$.
\end{proposition}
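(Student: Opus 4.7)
The plan is to prove both $\bw(\si(M)) \leq \bw(M)$ and $\bw(M) \leq \bw(\si(M))$. The first inequality is straightforward: $\si(M)$ is obtained from $M$ by successively deleting loops and all-but-one element in each parallel class, and Proposition \ref{prop:bwmonotone} guarantees that no such deletion increases branch width.

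For the reverse inequality, I would fix a reduced branch decomposition $(T', l')$ of $\si(M)$ with $w(T', l') = \bw(\si(M))$ and extend it to a branch decomposition $(T, l)$ of $M$ by processing the elements of $E(M)\setminus E(\si(M))$ one at a time. For such an element $e$, either $e$ is a loop of $M$ or $e$ is parallel to some $f \in E(\si(M))$; in the former case I would attach $e$ as a new leaf next to an arbitrary existing leaf, and in the latter case next to the leaf currently labeled by $f$ (or a leaf already holding a previously-inserted parallel copy of $f$). Concretely, subdivide the edge leaving the chosen leaf by a new internal vertex $u$, attach a new leaf $v'$ adjacent to $u$, and set $l(v') = \{e\}$.

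The crux is verifying that this extension does not raise the width above $\bw(\si(M))$. For any newly-created edge, the separation $(Z, E(M)\setminus Z)$ it displays has $Z$ either a set of loops or a subset of a single parallel class, and a direct rank computation shows $\lambda_M(Z) \leq 1$, so its weight is at most $2$. For any edge inherited from $T'$, let $X$ be the side of the displayed separation viewed as a subset of $E(\si(M))$ and let $Y$ collect the elements of $E(M)\setminus E(\si(M))$ grafted on that side. By construction every element of $Y$ is either a loop of $M$ or parallel in $M$ to some element of $X$, so $\rank_M(X \cup Y) = \rank_{\si(M)}(X)$; the analogous identity on the complement together with $\rank(M) = \rank(\si(M))$ give $\lambda_M(X\cup Y) = \lambda_{\si(M)}(X)$, so the edge's weight is unchanged. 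Hence $w(T, l) \leq \max\{2, w(T', l')\}$, and since $\bw(M) \geq 2$ forces $\bw(\si(M)) \geq 2$ in the non-degenerate cases, we obtain $\bw(M) \leq \bw(\si(M))$.

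The main obstacle is the careful bookkeeping needed to ensure that grafting preserves sidedness, so that in every old separation every loop and every new parallel copy ends up on the same side as its representative; secondary care is needed for the tiny edge cases (e.g., single parallel classes) where $\si(M)$ might have too few elements for the constructed tree to even accommodate a reduced branch decomposition, and these should be disposed of using the hypothesis $\bw(M) \geq 2$ directly.
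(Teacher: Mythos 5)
The paper treats this proposition as an elementary, well-known fact and omits its proof entirely, so the only comparison available is with the standard argument---which is exactly what you give: one direction from deletion-monotonicity of branch width (Proposition~\ref{prop:bwmonotone}), the other by grafting each loop and each redundant parallel element next to (a copy of) its representative, observing that the old edges of the tree then display separations of unchanged connectivity while every new edge displays a set of rank at most one. That construction and the rank bookkeeping are correct, and what it actually establishes is $\bw(\si(M)) \leq \bw(M) \leq \max\{2, \bw(\si(M))\}$, valid whenever $\si(M)$ has at least two elements (so that there is an edge to subdivide).

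The one flaw is the closing sentence: $\bw(M) \geq 2$ does \emph{not} force $\bw(\si(M)) \geq 2$, so the degenerate cases cannot be ``disposed of using the hypothesis directly.'' For $M = U_{1,3}$, or $M = U_{1,2}\oplus U_{1,2}$, one has $\bw(M) = 2$ while $\si(M)$ is a one- or two-element free matroid of branch width $1$ under the paper's conventions---so in these cases the literal statement of the proposition fails too. The intended reading is evidently that $\bw(\si(M)) \geq 2$, i.e.\ that one is away from these trivial matroids, and under that reading your inequality $\bw(M) \leq \max\{2, \bw(\si(M))\} = \bw(\si(M))$ closes the proof. In short: the substance of your argument is the standard proof and is sound; the unresolved tail is a boundary-convention issue inherited from the statement itself rather than a missing idea.
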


\citet[Theorem 1.4]{GHW05} proved the following result, which states that a blocking sequence does not increase branch width by much:

\begin{theorem}\label{thm:blseqbranchwidth}
  Let $M$ be a matroid having basis $B$, and let $Z \subseteq E(M)$. Suppose that $M_B[Z]$ has a $k$-separation $(X,Y)$, and that $v_1, \ldots, v_t$ is a blocking sequence for $(X,Y)$ in $M$. Then $\bw(M_B[Z\cup\{v_1, \ldots, v_t\}]) \leq \bw(M_B[Z]) + k$.\index{blocking sequence}
\end{theorem}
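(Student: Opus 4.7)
The plan is to construct, from a near-optimal branch decomposition of $M':=M_B[Z]$, a branch decomposition of $M'':=M_B[Z\cup\{v_1,\ldots,v_t\}]$ whose width exceeds $\bw(M')$ by at most $k$. Begin with any reduced branch decomposition $(T,l)$ of $M'$ of width $w:=\bw(M')$. By replacing $(T,l)$ with the tree obtained by joining optimal branch decompositions of $M'|X$ and $M'|Y$ along a new edge $e$, I may assume at no cost that the $k$-separation $(X,Y)$ is displayed by $e$; the width of $e$ is $\lambda_{M'}(X)+1=k$, and each remaining edge still has width at most $w$.

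Next, subdivide $e$ by inserting $t$ new internal vertices $u_1,\ldots,u_t$ along the path (in blocking-sequence order) and attach a pendant leaf labelled $v_i$ at $u_i$. The result $(T',l')$ is a reduced branch decomposition of $M''$. I would then bound the width of each edge under $\lambda_{M''}$ in three families. The new pendant edges each have width at most $2$. The $t+1$ edges along the subdivided path display partitions of the form $(X\cup\{v_1,\ldots,v_i\},\,Y\cup\{v_{i+1},\ldots,v_t\})$; to show that their $\lambda_{M''}$-values are at most $k$, I would translate the three equalities in Definition~\ref{def:blockseq} via Lemma~\ref{lem:connectivityoldnew} and then promote them to the larger ambient minor $M''$ using submodularity of the connectivity function. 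Every old edge $f$ of $T$ (not on the subdivided path) still displays a partition $(A,Z\setminus A)$ with, say, $A\subseteq X$; in $M''$ the edge $f$ now displays $(A,(Z\setminus A)\cup\{v_1,\ldots,v_t\})$, and I would prove that $\lambda_{M''}(A)\leq \lambda_{M'}(A)+k-1$, yielding width at most $w+k$.

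The key technical ingredient — and the main obstacle — is this last inequality. Conceptually, one must show that adjoining the blocking sequence increases the connectivity of any ``nested'' partition $(A,\cdot)$ by at most $k-1$. A natural approach is to combine submodularity of $\lambda_B$ with the cumulative estimate $\lambda_{M''}(X\cup\{v_1,\ldots,v_t\})\leq k-1$, which in turn follows from iteratively applying Lemma~\ref{lem:blseqlem} (or its symmetric twin) to peel off blocking sequence elements one at a time. Care is required because $M'$ and $M''$ are related by a mixture of extensions and coextensions (since the $v_i$ lie on both sides of the basis $B$), so it is cleanest to carry out the submodularity argument in the uniform language of $\lambda_B$ before specialising to each particular minor. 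Once this estimate is established, assembling the three edge-width bounds gives $w(T',l')\leq w+k$, completing the proof.
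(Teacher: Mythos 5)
The paper does not actually prove this statement: it quotes it verbatim from \citet[Theorem 1.4]{GHW05}. So your argument has to stand on its own, and as written it has genuine gaps. First, the claim that you may assume ``at no cost'' that $(X,Y)$ is displayed by an edge of an optimal decomposition is unjustified: when you glue optimal branch decompositions of $M'|X$ and $M'|Y$, the width of an edge inside the $X$-side must be computed with $\lambda_{M'}$, not with $\lambda_{M'|X}$, and for $A\subseteq X$ one only gets $\lambda_{M'}(A)\leq\lambda_{M'|X}(A)+\lambda_{M'}(X)=\lambda_{M'|X}(A)+k-1$ from submodularity. Displaying the separation may therefore cost up to $k-1$, and your accounting, which charges the display as free and then separately allows a further increase of $k-1$ per old edge when passing to $M''$, no longer fits inside the $+k$ budget; these two contributions are really one and must be bounded jointly.

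Second, and more decisively, the ``cumulative estimate'' $\lambda_{M''}(X\cup\{v_1,\ldots,v_t\})\leq k-1$ is false: condition \eqref{bls:last} of Definition~\ref{def:blockseq} together with Lemma~\ref{lem:connectivityoldnew} gives $\lambda_{M_B[Z\cup v_t]}(X\cup v_t)=k$, and since $M_B[Z\cup v_t]$ is obtained from $M'':=M_B[Z\cup\{v_1,\ldots,v_t\}]$ by removing $v_1,\ldots,v_{t-1}$, all of which lie on the $X$-side, monotonicity of the connectivity function under minors forces $\lambda_{M''}(X\cup\{v_1,\ldots,v_t\})\geq k$. Hence your derivation of the key inequality $\lambda_{M''}(A)\leq\lambda_{M'}(A)+k-1$ collapses, and that inequality, which is exactly where the content of the theorem lies, is left unproven (in the ``any nested partition'' form you state it, it already fails for $A=X$ when $k=1$). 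Likewise, the bound $\lambda_{M''}(X\cup\{v_1,\ldots,v_i\})\leq k$ for the path edges does not follow by merely ``promoting'' the three defining equalities; it is a genuine induction using submodularity together with the minimality clause of Definition~\ref{def:blockseq}, which your sketch never invokes. A repair along your lines: build the decomposition of $M''$ directly from optimal decompositions of the minors $M_B[X]$ and $M_B[Y]$ joined by the path carrying $v_1,\ldots,v_t$; prove the shifting property $\lambda_B(X\cup\{v_1,\ldots,v_i\},Y\cup\{v_{i+1},\ldots,v_t\})=k$ for all $i$; and bound an old edge displaying $A\subseteq X$ by the single inequality $\lambda_{M''}(A)\leq\lambda_{M_B[X]}(A)+\lambda_{M''}(X)\leq(\bw(M')-1)+k$, which is a rank-submodularity computation. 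That merges the display cost and the bridging cost into one $+k$ and yields the stated bound.
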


We note one particular case for the examples in Section \ref{sec:examples}:

\begin{lemma}\label{lem:bwwhirl}
  For all $n \geq 2$, $\bw(\whirl{n}) = 3$.\index{whirl}
\end{lemma}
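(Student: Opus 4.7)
My plan is to prove the equality by matching upper and lower bounds.

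For the upper bound $\bw(\whirl{n}) \leq 3$, I would exhibit an explicit reduced branch decomposition of width $3$. Label the spokes of $\whirl{n}$ by $s_1,\ldots,s_n$ and the rim by $r_1,\ldots,r_n$, arranged so that $\{s_i,r_i,s_{i+1}\}$ is a triangle for each $i$ (indices modulo $n$). Take a caterpillar cubic tree whose internal vertices form a path $u_1,u_2,\ldots,u_{2n-2}$; attach two leaves labeled $s_1,r_1$ to $u_1$, two leaves labeled $s_n,r_n$ to $u_{2n-2}$, and, for $2 \leq j \leq n-1$, single leaves $s_j$ and $r_j$ to $u_{2j-2}$ and $u_{2j-1}$ respectively. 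Each internal edge displays a separation whose left part $A$ has either the form $\{s_1,r_1,\ldots,s_j,r_j\}$ or $\{s_1,r_1,\ldots,s_j,r_j,s_{j+1}\}$. Using that $r_i \in \mathrm{span}\{s_i,s_{i+1}\}$ for each $i$, a short rank computation shows that $r_M(A) + r_M(E\setminus A) = n + 2$ in every case (the ``wrap-around'' rim element $r_n$ contributing the extra dimension on the right), so $\lambda_M(A) = 2$ and the edge has width $3$. Leaf edges displayed by a single element have width at most $2$. Hence $\bw(\whirl{n}) \le 3$.

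For the lower bound I would invoke minor-monotonicity of branch width, which follows from Proposition \ref{prop:bwmonotone} together with self-duality of the connectivity function. Since $\whirl{n}$ is non-binary (it contains $\whirl{2} \cong U_{2,4}$, or directly by Tutte's theorem applied to its non-binary structure), $\whirl{n}$ has a $U_{2,4}$-minor, so $\bw(\whirl{n}) \ge \bw(U_{2,4})$. Finally, $\bw(U_{2,4}) = 3$: the unique cubic tree on four leaves has a single internal edge inducing the unique (up to relabelling) $(2,2)$-partition $(X, E\setminus X)$, and $\lambda_{U_{2,4}}(X) = 2 + 2 - 2 = 2$, so every branch decomposition has width exactly $3$. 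Combining the bounds gives the lemma.

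The main technical step is the rank bookkeeping in the upper-bound verification: one has to keep track of exactly which rim elements at the boundary of the prefix $A$ contribute a dimension not already spanned by the spokes present on their side, and in particular handle the wrap-around index $n$ correctly. Once the counts are done, $\lambda_M(A) = 2$ falls out uniformly for every internal edge, reflecting the intuition that the whirl is a cycle of triangles and the caterpillar decomposition cuts across this cycle at cost $2$ per cut.
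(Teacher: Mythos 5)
Your proof is correct, and in fact the paper offers no proof of this lemma at all --- it is stated as a known fact (whirls are standard examples of matroids of branch width $3$), so there is nothing to compare your argument against; yours is a complete from-scratch verification. Both halves check out: the caterpillar decomposition is a genuine cubic tree with $2n$ leaves, and the rank bookkeeping is right --- for $A=\{s_1,r_1,\dots,s_j,r_j\}$ one gets $r(A)=j+1$ and $r(E\setminus A)=n-j+1$, while for $A=\{s_1,r_1,\dots,r_{j-1},s_j\}$ one gets $r(A)=j$ and $r(E\setminus A)=n-j+2$, so $\lambda_M(A)=2$ in all cases (and this also covers $n=2,3$ correctly). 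Two small remarks. First, your phrasing of the lower bound is mildly circular (``non-binary because it contains $U_{2,4}$, hence has a $U_{2,4}$-minor''); just say directly that $\whirl{2}\cong U_{2,4}$ is a minor of $\whirl{n}$ (delete a rim element and contract a spoke to pass from $\whirl{n}$ to $\whirl{n-1}$), and note that minor-monotonicity of $\bw$ follows from Proposition~\ref{prop:bwmonotone} plus self-duality of $\lambda$, exactly as you indicate. Second, an even quicker lower bound avoids $U_{2,4}$ altogether: $\whirl{n}$ is $3$-connected with at least four elements, so in any reduced branch decomposition some edge displays a partition with both sides of size at least two, forcing $\lambda\geq 2$ and hence width at least $3$. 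Either route is fine.
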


\subsection{Results on $2$-separations}\label{ssec:2seps}

We will need to bound the number of $2$-separations in small extensions of a $3$-connected matroid. The following lemma does just that.

\begin{lemma}\label{lem:2sepbound}
  If $M$ is a connected matroid, $N \minorof M$, $N$ is $3$-connected, $|E(N)| \geq 4$, and $|E(M)|-|E(N)| \leq k$, then the number of 2-separations in $M$ is at most $2^{k+1}$.
\end{lemma}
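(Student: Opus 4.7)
The plan is to count 2-separations of $M$ by their intersection with $A := E(M) \setminus E(N)$, and show that at most two 2-separations correspond to each subset of $A$. Since $|A| \le k$, this yields at most $2 \cdot 2^k = 2^{k+1}$ 2-separations.

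The first observation is that every 2-separation $\{X,Y\}$ of $M$ has a (unique) side $X$ with $|X \cap E(N)| \le 1$. Indeed, $\lambda_M$ is monotone under minors, so $\lambda_N(X \cap E(N)) \le \lambda_M(X) = 1$; if both $|X \cap E(N)| \ge 2$ and $|Y \cap E(N)| \ge 2$, this would be a 2-separation of $N$, contradicting its 3-connectivity (since $|E(N)| \ge 4$). Uniqueness is immediate, since $|X \cap E(N)| \le 1$ together with $|Y \cap E(N)| \le 1$ would force $|E(N)| \le 2$. This lets me encode each 2-separation by a pair $(S, T)$ with $S := X \cap A$ and $T := X \cap E(N)$ of size $0$ or $1$, where necessarily $S \neq \emptyset$ (otherwise $|X| \le 1$).

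The key claim is that for each nonempty $S \subseteq A$, at most two such pairs $(S,T)$ can arise, namely $T = \emptyset$ (at most one 2-separation, with $X = S$) and $T = \{f\}$ for at most one $f \in E(N)$. To see the uniqueness of $f$, I would use submodularity of $\lambda_M$: if $f_1, f_2 \in E(N)$ are distinct with $\lambda_M(S \cup f_i) = 1$, then
\[
\lambda_M(S) + \lambda_M(S \cup \{f_1,f_2\}) \;\le\; \lambda_M(S \cup f_1) + \lambda_M(S \cup f_2) \;=\; 2.
\]
Both $S$ and $S \cup \{f_1,f_2\}$ are proper nonempty subsets of $E(M)$ (the latter because $|S \cup \{f_1,f_2\}| \le k+2 < |E(N)| + k = |E(M)|$), so by connectivity of $M$ each term on the left is at least $1$, forcing both to equal $1$. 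Then minor-monotonicity gives $\lambda_N(\{f_1,f_2\}) \le \lambda_M(S \cup \{f_1,f_2\}) = 1$. Since $|\{f_1,f_2\}| = 2$ and $|E(N) \setminus \{f_1,f_2\}| \ge 2$, this exhibits a 2-separation of $N$, contradicting 3-connectivity.

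Summing over the at most $2^k - 1$ nonempty subsets $S \subseteq A$ gives a total of at most $2(2^k - 1) < 2^{k+1}$ 2-separations. The main technical point, which I expect to require the most care, is verifying that the sizes line up so that $S$ and $S \cup \{f_1, f_2\}$ are both proper nonempty subsets of $E(M)$ whenever they need to be (so that connectivity of $M$ can be invoked to lower-bound $\lambda_M$), and that the resulting $(\{f_1,f_2\},E(N)\setminus\{f_1,f_2\})$ is a legitimate 2-separation of $N$ --- this is where the hypotheses $|E(N)| \ge 4$ and $N$ 3-connected are used in full strength.
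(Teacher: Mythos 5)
Your proof is correct, but it takes a genuinely different route from the paper. The paper argues by induction on $k$: using Theorem~\ref{thm:2connchain} it builds $M$ from the $N$-minor by a chain of single-element extensions each of which is connected, normalizes by duality so the last element is a cobasis element, and derives the recursion $t_k \leq 2t_{k-1}+k$ (each $2$-separation either extends one of $M\delete e_k$ in at most two ways, or is a series/parallel pair involving $e_k$), which solves to $2^{k+1}-k-2$. You instead give a direct injective encoding: each $2$-separation has a unique side meeting $E(N)$ in at most one element (this is exactly Proposition~\ref{pro:twosepminorintersection}, i.e.\ the monotonicity of $\lambda$ under induced partitions in minors), and for a fixed nonempty trace $S\subseteq E(M)\setminus E(N)$ the submodularity/uncrossing argument shows at most one element $f\in E(N)$ can be appended, since two choices $f_1\neq f_2$ would force $\lambda_M(S\cup\{f_1,f_2\})=1$ and hence a $2$-separation of $N$. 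This yields $2(2^k-1)\leq 2^{k+1}$, avoiding both the chain theorem and the recursion, at the cost of a marginally weaker constant than the paper's $2^{k+1}-k-2$; your uncrossing step is also closer in spirit to the techniques the paper uses later for crossing $2$-separations. One small slip: you write $|S\cup\{f_1,f_2\}|\leq k+2 < |E(N)|+k = |E(M)|$, but $|E(M)|-|E(N)|$ may be strictly less than $k$; the conclusion is nevertheless immediate, since the complement of $S\cup\{f_1,f_2\}$ contains $E(N)\setminus\{f_1,f_2\}$, which is nonempty (indeed of size at least $2$) because $|E(N)|\geq 4$, so both sets you feed to the connectivity bound are proper and nonempty as required.
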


\begin{proof}
  Let $t_k$ denote the maximum number of $2$-separations of a $k$-element extension of a $3$-connected matroid. We argue by induction on $k$. By Theorem~\ref{thm:2connchain} there exist a basis $B$ of $M$, a subset $X$ of $E(M)$, and an ordering $e_1, \ldots, e_k$ of the elements of $E(M)\setminus X$ such that $N \cong M_B[X]$ and $M_B[X\cup \{e_1, \ldots, e_i\}]$ is connected for all $i \in \{1, \ldots, k\}$.

  If $k = 1$ then $e_1$ can be in series or in parallel with at most one element of $M_B[X]$, and it cannot be both in series and in parallel. Hence $t_1 = 1$.

  By duality we may assume $e_k \not \in B$. Let $(Z_1,Z_2)$ be a $2$-separation of $M$, with $e_k \in Z_1$. If $|Z_1| \geq 3$ then $\lambda_{M\delete e_k}(Z_2) \leq 1$, and connectivity of $M\delete e_k$ implies that equality holds. Hence $(Z_1\setminus e_k, Z_2)$ is a $2$-separation of $M\delete e_k$. This leads to at most two $2$-separations of $M$: $(Z_1,Z_2)$ and $(Z_1\setminus e_k, Z_2\cup e_k)$.

  If a $2$-separation of $M$ is not an extension of a $2$-separation of $M\delete e_k$, then we must have $|Z_1| = 2$. There is one of these for each $f \in E(M)\setminus \{e_k\}$ such that $e_k, f$ are in series or in parallel. But $e_k$ can, again, be in series or in parallel with at most one element of $X$, as well as with each of $e_1, \ldots, e_{k-1}$, so it follows that
  \begin{align*}
    t_k \leq 2 t_{k-1} + k.
  \end{align*}
  Define $t'_k := 2^{k+1} - k - 2$. We claim that $t_k \leq t_k'$. Indeed: $t'_1 = t_1 = 1$, and if the claim is valid for $k-1$, then
  \begin{align*}
    t_k \leq 2 t_{k-1} + k \leq 2 t'_{k-1} + k = 2(2^k- (k-1)-2)+k = 2^{k+1} -k - 2 = t'_k.
  \end{align*}
  Obviously $t'_k \leq 2^{k+1}$, and the result follows.
\end{proof}

The following definitions are from \citet{GGK}.

\begin{definition}
  Let $M$ be a matroid and let $(X_1,X_2)$ and $(Y_1,Y_2)$ be $2$-separations of $M$. If $X_i \cap Y_j\neq \emptyset$ for all $i,j \in \{1,2\}$, then we say that $(X_1,Y_1)$ and $(X_2,Y_2)$ \emph{cross}.
\end{definition}

\begin{definition}
  Let $M$ be a matroid and let $(X_1,X_2)$ be a $2$-separation of $M$. We say that $(X_1,X_2)$ is \emph{crossed} if there exists a $2$-separation $(Y_1,Y_2)$ of $M$ such that $(X_1,X_2)$ and $(Y_1,Y_2)$ cross. Otherwise we say $(X_1,X_2)$ is \emph{uncrossed}.
\end{definition}

Crossing $2$-separations have previously been studied by \citet{CE80}. \citet{OSW04} characterized crossing $3$-separations in $3$-connected matroids, and those results have been generalized to crossing $k$-separations by \citet{AO08}. The proof of the following lemma is an instance of the technique of ``uncrossing'' from those papers.

\begin{lemma}\label{lem:uncrossed2sep}
	Let $M$ be a connected, nonbinary matroid. If $M$ has a 2-separation, then $M$ must have an uncrossed 2-separation.
\end{lemma}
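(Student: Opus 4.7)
I would argue by contradiction and uncrossing, with an induction on $|E(M)|$ driven by the nonbinary hypothesis.

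Suppose for contradiction that every 2-separation of $M$ is crossed, and choose a 2-separation $(X_1,X_2)$ with $|X_1|\leq|X_2|$ and $|X_1|$ minimum. By assumption it is crossed by some $(Y_1,Y_2)$; write $A=X_1\cap Y_1$, $B=X_1\cap Y_2$, $C=X_2\cap Y_1$, $D=X_2\cap Y_2$, all nonempty. Submodularity of $\lambda$ applied to $X_1$ and $Y_1$ yields $\lambda(A)+\lambda(D)\leq\lambda(X_1)+\lambda(Y_1)=2$, and symmetrically $\lambda(B)+\lambda(C)\leq 2$. Since $M$ is connected, $\lambda$ takes values at least $1$ on every nonempty proper subset, so $\lambda(A)=\lambda(B)=\lambda(C)=\lambda(D)=1$. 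Minimality of $|X_1|$ applied to $(A,E(M)\setminus A)$ and $(B,E(M)\setminus B)$ (whose smaller sides lie inside $X_1$) forces $|A|=|B|=1$, so $|X_1|=2$; write $X_1=\{a,b\}$. The relation $\lambda(\{a,b\})=1$ now forces $\{a,b\}$ to be either a 2-circuit or a 2-cocircuit, and by duality we may assume the former.

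Next I use nonbinarity. $M$ has a $U_{2,4}$-minor, and since $a,b$ are parallel any such minor involving $a$ may be rerouted through $b$, so $M\delete a$ is still nonbinary; it is also connected because deleting one element of a parallel pair preserves connectedness. The plan is to induct on $|E(M)|$: the base case is vacuous since the smallest nonbinary matroid $U_{2,4}$ is 3-connected, so the hypothesis ``$M$ has a 2-separation'' only kicks in for $|E(M)|\geq 5$. Either $M\delete a$ is 3-connected, or by the inductive hypothesis it has an uncrossed 2-separation $(Z_1,Z_2)$.

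In the first subcase, I claim $(\{a,b\},E(M)\setminus\{a,b\})$ is itself uncrossed in $M$. Any hypothetical crosser $(W_1,W_2)$ (WLOG $a\in W_1$, $b\in W_2$) satisfies $a\in\closure_M(W_1\setminus a)$---otherwise $(W_1\setminus a,W_2)$ would be a zero-separation of the connected matroid $M\delete a$---so $(W_1\setminus a,W_2)$ is an exact 2-separation of $M\delete a$ whenever $|W_1|\geq 3$, contradicting 3-connectedness. The case $|W_1|=2$, say $W_1=\{a,y\}$, is eliminated by circuit-cocircuit orthogonality: $\{a,y\}$ cannot be a 2-cocircuit since it would meet the 2-circuit $\{a,b\}$ in the single element $a$, and if $\{a,y\}$ were a 2-circuit then $y$ would be parallel to $b$ in the 3-connected $M\delete a$, again impossible. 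In the second subcase, assume $b\in Z_1$ and lift $(Z_1,Z_2)$ to $(Z_1\cup\{a\},Z_2)$; a symmetric case analysis shows any crossing 2-separation of the lift projects, after removing $a$, to a crossing 2-separation of $(Z_1,Z_2)$ in $M\delete a$, contradicting uncrossed-ness.

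The main obstacle is the bookkeeping in the lifted case, specifically the subcase where a candidate crosser $(W_1,W_2)$ places $a,b$ on opposite sides with $W_1\cap Z_1=\emptyset$, so the crossing in $M$ is ``witnessed only by $a$.'' Here one passes instead to $(W_1\setminus a,W_2\cup\{a\})$, still an exact 2-separation of $M$ in which $a$ and $b$ now lie on the same side, and iterates the earlier argument. The nonbinary hypothesis is indispensable throughout, because for binary matroids such as $U_{n-1,n}$ with $n\geq 4$ every 2-separation is crossed; the induction succeeds precisely because nonbinarity is preserved under $M\mapsto M\delete a$, which in turn is guaranteed only because $a$ lies in a parallel pair.
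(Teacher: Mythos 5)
Your overall strategy (minimal 2-separation gives a parallel or series pair $\{a,b\}$; delete $a$, which preserves connectivity and nonbinarity; induct and lift an uncrossed 2-separation of $M\delete a$) can be made to work, but the one case you yourself flag as ``the main obstacle'' is not actually closed by the fix you propose, and this is a genuine gap. Suppose $(W_1,W_2)$ crosses the lift $(Z_1\cup\{a\},Z_2)$ with $a\in W_1$, $b\in W_2$ and $W_1\cap Z_1=\emptyset$, so $W_1=\{a\}\cup Y$ with $\emptyset\neq Y\subsetneq Z_2$. Passing to $(W_1\Oldsetminus a,\,W_2\cup\{a\})$ does not help: its first side $Y$ is disjoint from $Z_1\cup\{a\}$, so this separation does \emph{not} cross the lift, and ``the earlier argument'' (projecting a crosser with $a,b$ on one side down to a crosser of $(Z_1,Z_2)$ in $M\delete a$) has nothing to act on --- deleting $a$ just yields $(Y,E(M\delete a)\Oldsetminus Y)$, which is nested with $(Z_1,Z_2)$ and produces no contradiction; moreover when $|W_1|=2$ the swapped pair is not even a 2-separation. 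What actually closes the case is a different observation: since $M\delete a$ is connected and $Y$ is a nonempty proper subset, $\lambda_M(\{a\}\cup Y)\leq 1$ forces $a\in\closure_M(Y)$, hence $b\in\closure_{M\delete a}(Y)$ and $\lambda_{M\delete a}(Y)=1$; but then $(\{b\}\cup Y,\,E(M\delete a)\Oldsetminus(\{b\}\cup Y))$ is a 2-separation of $M\delete a$ (its connectivity is at most $\lambda_{M\delete a}(Y)=1$ because $b\in\closure(Y)$, and both sides have size at least two since $|Z_1|\geq 2$ and $Y\subsetneq Z_2$) which crosses $(Z_1,Z_2)$, contradicting uncrossedness. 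Without an argument of this kind your induction step is incomplete.

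For comparison, the paper avoids induction and case analysis entirely: it fixes a $U_{2,4}$-minor on $\{a,b,c,d\}$, notes that every 2-separation has a side meeting this set in at most one element, chooses a 2-separation $(X',Y')$ with $Y'$ maximal subject to $|Y'\cap\{a,b,c,d\}|\leq 1$, and shows via one submodularity inequality that any crosser $(U,V)$ would produce the 2-separation $(X'\cap U,\,Y'\cup V)$ whose side $Y'\cup V$ still meets $\{a,b,c,d\}$ in at most one element yet is strictly larger than $Y'$ --- contradicting maximality. That extremal argument is both shorter and avoids the delicate lifting bookkeeping where your proof currently breaks down.
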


\begin{proof}
  Since $M$ is non-binary, $M$ has a $U_{2,4}$-minor. Fix such a minor, say with elements $\{a,b,c,d\}$. If $(X,Y)$ is a $2$-separation of $M$, then either $|X\cap\{a,b,c,d\}| \leq 1$ or $|Y \cap \{a,b,c,d\}| \leq 1$. Let $(X',Y')$ be a $2$-separation of $M$ such that $Y'$ is maximal subject to $|Y' \cap \{a,b,c,d\}| \leq 1$. Let $(U,V)$ be a $2$-separation that crosses $(X',Y')$, and assume $|V\cap\{a,b,c,d\}|\leq 1$. Then $X'\cap U$ has at least two elements from $\{a,b,c,d\}$. Now
  \begin{align*}
    2 = \lambda_M(X') + \lambda_M(U) \geq \lambda_M(X'\cap U) + \lambda_M(X'\cup U),
  \end{align*}
  so we must have $\lambda_M(X'\cap U) = 1 = \lambda_M(Y' \cup V)$. Since $|(X'\cap U)\cap \{a,b,c,d\}| \geq 2$, it follows that $|(Y'\cup V)\cap \{a,b,c,d\}| \leq 1$. But $|Y'\cup V| > |Y'|$, a contradiction.
\end{proof}

Uncrossed $2$-separations are relevant because they can be bridged without introducing new $2$-separations:

\begin{lemma}[\citet{GGK}, Proposition 4.17]\label{lem:block2sep}
  Let $M$ be a matroid, $B$ a basis of $M$, $E' \subseteq E$, and $(Z_1',Z_2')$ an uncrossed $2$-separation of \index{2-separation@$2$-separation} $M_B[E']$. Let $v_1, \ldots, v_t$ be a blocking sequence for $(Z_1',Z_2')$. If $(Z_1,Z_2)$ is a $2$-separation of $M_B[E'\cup \{v_1, \ldots, v_t\}]$ then $Z_i' \cup \{v_1, \ldots, v_t\} \subseteq Z_j$  for some $i, j \in \{1,2\}$.
\end{lemma}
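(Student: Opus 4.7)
The plan is to argue by contradiction. Let $M^* := M_B[E'\cup V]$ with $V := \{v_1,\ldots,v_t\}$; suppose $(Z_1, Z_2)$ is a 2-separation of $M^*$ such that $Z_i' \cup V \not\subseteq Z_j$ for every $(i,j)\in\{1,2\}^2$. Set $Y_i := Z_i\cap E'$ and $V_i := Z_i\cap V$. The argument splits into two stages.

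The first step is to establish $V \subseteq Z_1$ or $V \subseteq Z_2$ using the minimality of the blocking sequence. Suppose both $V_1$ and $V_2$ are nonempty. Because the sequence $v_1,\ldots,v_t$ is ordered, there exists a consecutive pair $v_i, v_{i+1}$ lying on opposite sides of $(Z_1, Z_2)$ (with ``boundary'' pairs $(Z_1', v_1)$ or $(v_t, Z_2')$ handled by convention). The blocking-sequence condition $\lambda_B(Z_1' \cup v_i, Z_2' \cup v_{i+1}) = 2$, combined with monotonicity of $\lambda$ under minors (observing that $M_B[E' \cup \{v_i, v_{i+1}\}]$ is a minor of $M^*$), gives $\lambda_{M^*}(Z_1' \cup v_i) \geq 2$. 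Submodularity of $\lambda_{M^*}$ applied to $Z_1$ and $Z_1' \cup \{v_1, \ldots, v_i\}$, together with $\lambda_{M^*}(Z_1) = 1$, should yield a set $Z_2'' \subseteq Z_2'$ with $|Z_2''| \geq 2$ and $\lambda_B(Z_1', Z_2'') = 1$. Then Lemma~\ref{lem:blseqlem} converts this into a strictly shorter blocking sequence for $(Z_1', Z_2''\cup v_t)$, and a further argument (tracking which $v_j$'s are actually needed) exhibits a proper subsequence of $v_1,\ldots,v_t$ still blocking $(Z_1', Z_2')$, violating minimality.

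With $V \subseteq Z_1$ established (WLOG), consider the trace $(Y_1, Y_2) = (Z_1 \setminus V,\, Z_2)$. Since the desired conclusion fails and $V \subseteq Z_1$, both $Z_1'$ and $Z_2'$ must meet $Z_2 = Y_2$, so $|Y_2| \geq 2$. By iteratively applying the elementary bound $\lambda_{M^*\delete e}(Z_1\setminus e),\, \lambda_{M^*\contract e}(Z_1\setminus e) \leq \lambda_{M^*}(Z_1)$ for each $e \in V \subseteq Z_1$ (which is a direct rank computation), we obtain $\lambda_{M_B[E']}(Y_1) \leq \lambda_{M^*}(Z_1) = 1$. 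If $|Y_1| \geq 2$, then $(Y_1, Y_2)$ qualifies as a 2-separation of $M_B[E']$ (a $1$-separation with both parts of size $\geq 2$ is also a 2-separation). By the uncrossed hypothesis, some $Z_i' \subseteq Y_j$. The choice $j = 1$ immediately gives $Z_i' \cup V \subseteq Z_1$; the choice $j = 2$ together with $Y_1\cup Y_2 = E' = Z_1'\cup Z_2'$ forces $Z_{3-i}' \subseteq Y_1 \subseteq Z_1$, again yielding the conclusion --- in either case contradicting our assumption.

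The remaining case is the degenerate one $|Y_1| \leq 1$, so $Z_1 \subseteq V \cup \{a\}$ for some $a \in E'$ or $Z_1 = V$. Here I would exploit the bounds $\lambda_{M^*}(Z_2') \geq 2$ and $\lambda_{M^*}(Z_1' \cup v_j) \geq 2$ for each $j$ (each obtained by monotonicity from an appropriate blocking-sequence condition), and apply submodularity of $\lambda_{M^*}$ between $Z_1$ and these separators to force $\lambda_{M^*}(Z_1) \geq 2$, contradicting that $(Z_1, Z_2)$ is a 2-separation; if this direct route leaves a gap, one can fall back on Lemma~\ref{lem:blseqlem} applied to $Z_2'' := Z_2'\setminus Y_1$ to shortcut the blocking sequence. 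The main obstacle throughout is the first stage: formalizing the intuition that a 2-separation straddling $V$ gives a ``shortcut'' through the blocking sequence, and extracting that shortcut in the precise form that Lemma~\ref{lem:blseqlem} consumes, requires careful bookkeeping with submodularity and with the alternating structure of the blocking sequence (Lemma~\ref{lem:blockseqalternating}).
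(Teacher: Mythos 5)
This lemma is not proved in the paper at all --- it is quoted verbatim from \citet{GGK} (their Proposition 4.17) --- so your argument has to stand on its own, and it has two genuine gaps. The more serious one is in your second stage. Write $V:=\{v_1,\ldots,v_t\}$, assume $V\subseteq Z_1$, and set $Y_1:=Z_1\setminus V$, $Y_2:=Z_2$. The uncrossed hypothesis only yields that \emph{some} $Z_i'$ is contained in $Y_1$ or in $Y_2$. The case $Z_i'\subseteq Y_1$ indeed finishes the argument, but in the case $Z_i'\subseteq Y_2$ your deduction that ``$Y_1\cup Y_2=E'=Z_1'\cup Z_2'$ forces $Z_{3-i}'\subseteq Y_1$'' has the containment backwards: what follows is $Y_1\subseteq Z_{3-i}'$, and $Z_{3-i}'$ may (by your own earlier observation, must) also meet $Y_2$. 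The configuration with $Z_i'$ entirely inside $Z_2$, $Z_{3-i}'$ split between $Z_1$ and $Z_2$, and $V\subseteq Z_1$ is perfectly consistent with uncrossedness and is exactly what the lemma must exclude; uncrossedness alone cannot exclude it, because up to that point your stage~2 has not used the blocking sequence at all. The missing ingredient is Lemma~\ref{lem:blseqlem} (together with its unquoted mirror image for subsets of $Z_1'$): for instance, if $Z_1'\subseteq Y_2$ then $Y_1\subseteq Z_2'$ and monotonicity gives $\lambda_B(Z_1',Y_1)\le\lambda_{M_B[E'\cup V]}(Z_1)=1$, so Lemma~\ref{lem:blseqlem} applies with $Z_2''=Y_1$ (the part of $Z_2'$ lying in $Z_1$ --- not $Z_2'\setminus Y_1$, as you suggest at the end), and the defining conditions of the resulting shorter blocking sequence for $(Z_1',Y_1\cup v_t)$, combined with monotonicity and $Y_1\cup V\subseteq Z_1$, $Z_1'\subseteq Z_2$, contradict $\lambda_{M_B[E'\cup V]}(Z_1)=1$ (small values of $t$ need separate checking).

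The first stage is also only a plan, not a proof. The bound $\lambda_{M_B[E'\cup V]}(Z_1'\cup v_i)\ge 2$ is correct, but the submodularity step is not in working order: to exploit submodularity on $Z_1$ and $Z_1'\cup\{v_1,\ldots,v_i\}$ you would need a lower bound on $\lambda_{M_B[E'\cup V]}(Z_1'\cup\{v_1,\ldots,v_i\})$, which is not among the blocking-sequence axioms and requires further facts from \citet{GGK} (their Proposition 4.15) that this paper does not quote; moreover $Z_1\cap(Z_1'\cup\{v_1,\ldots,v_i\})$ is not a subset of $Z_2'$, so this step cannot directly produce the set $Z_2''\subseteq Z_2'$ you intend to feed into Lemma~\ref{lem:blseqlem}. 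Your intended contradiction is also aimed at the wrong target: Lemma~\ref{lem:blseqlem} produces a shorter blocking sequence for a \emph{different} separation $(Z_1',Z_2''\cup v_t)$, not a proper subsequence blocking $(Z_1',Z_2')$, and converting the former into a violation of minimality of the original sequence is precisely the bookkeeping you defer; the standard way out is instead to play the shorter sequence's own conditions against $\lambda_{M_B[E'\cup V]}(Z_1)=1$ via monotonicity, as above. The degenerate case $|Y_1|\le 1$ is likewise only gestured at (submodularity with $\lambda(Z_1)=1$ and $\lambda(Z_1'\cup v_j)\ge 2$ gives upper bounds on other sets, not the lower bound $\lambda(Z_1)\ge 2$ you want). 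So the overall shape --- contradiction, localize $V$ on one side, project the separation to $M_B[E']$, invoke uncrossedness --- is reasonable, but both places where the blocking sequence itself has to do the work are missing or incorrect.
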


\begin{corollary}\label{cor:block2sep}
  Let $M$ be a matroid, $B$ a basis of $M$, $E' \subseteq E$, and $(Z_1',Z_2')$ an uncrossed $2$-separation of the connected matroid $M_B[E']$. Let $v_1, \ldots, v_t$ be a blocking sequence for $(Z_1',Z_2')$. Then $M_B[E'\cup \{v_1, \ldots, v_t\}]$ has strictly fewer $2$-separations than $M_B[E']$.
\end{corollary}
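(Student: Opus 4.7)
Let $V := \{v_1, \ldots, v_t\}$, $M' := M_B[E']$, and $M'' := M_B[E' \cup V]$. The plan is to construct an injection $\phi$ from the set of $2$-separations of $M''$ into the set of $2$-separations of $M'$ whose image misses $\{Z_1', Z_2'\}$; this immediately gives the strict inequality.

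To define the map, given a $2$-separation $\{Z_1, Z_2\}$ of $M''$, Lemma \ref{lem:block2sep} guarantees that one side---call it $Z_1$---contains all of $V$, and this choice is unique because the other side must be nonempty. I would set $\phi(\{Z_1, Z_2\}) := \{Z_1 \setminus V, Z_2\}$. Well-definedness is routine: Lemma \ref{lem:block2sep} also forces $Z_1 \setminus V$ to contain some $Z_i'$, so both parts have at least two elements, and monotonicity of $\lambda$ under minors together with connectedness of $M'$ pins $\lambda_{M'}(Z_2)$ to exactly $1$. Injectivity follows because the $V$-containing side of $\{Z_1, Z_2\}$ is uniquely determined by the partition, so $\{Z_1, Z_2\}$ is recovered from its image by adjoining $V$ to whichever side makes a valid $2$-separation of $M''$.

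The main step---and the only one requiring real argument---is to show that $\{Z_1', Z_2'\}$ itself is not in the image of $\phi$; equivalently, that neither $\{Z_1' \cup V, Z_2'\}$ nor $\{Z_1', Z_2' \cup V\}$ is a $2$-separation of $M''$. The hard part will be to re-apply Theorem \ref{thm:blockseq} with $M''$ in the role of the ambient matroid. The key observation is that by Lemma \ref{lem:connectivityoldnew} every $\lambda_B$-value appearing in Definition \ref{def:blockseq} depends only on a minor $M_B[X \cup Y]$ with $X \cup Y \subseteq E' \cup V$, and such a minor is the same whether viewed inside $M$ or inside $M''$. Therefore $v_1, \ldots, v_t$ remains a blocking sequence for $(Z_1', Z_2')$ in $M''$ (with respect to the basis $B \cap E(M'')$). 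Theorem \ref{thm:blockseq} applied in $M''$ then says $(Z_1', Z_2')$ is not induced in $M''$, which rules out both candidate extensions and finishes the proof.
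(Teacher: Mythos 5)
Your overall strategy is the same as the paper's: its proof uses exactly the map you call $\phi$ (strip $V:=\{v_1,\ldots,v_t\}$ from the side containing it) and verifies, as you do, that the image is a $2$-separation of $M_B[E']$, leaving injectivity and the strictness witness implicit. Your handling of the strictness witness is correct and makes explicit what the paper omits: since every $\lambda_B$-value in Definition~\ref{def:blockseq} is, by Lemma~\ref{lem:connectivityoldnew}, the connectivity of a minor $M_B[X\cup Y]$ with $X\cup Y\subseteq E'\cup V$, and such minors are unchanged when the ambient matroid is replaced by $M'':=M_B[E'\cup V]$ with basis $B\cap(E'\cup V)$, the sequence $v_1,\ldots,v_t$ is still a blocking sequence in $M''$, so Theorem~\ref{thm:blockseq} gives that $(Z_1',Z_2')$ is not induced in $M''$, ruling out both $\{Z_1'\cup V,Z_2'\}$ and $\{Z_1',Z_2'\cup V\}$. (A lighter route to just this point: conditions \emph{(i)} and \emph{(iii)} of Definition~\ref{def:blockseq} together with monotonicity of connectivity under minors give $\lambda_{M''}(Z_1')\geq\lambda_B(Z_1',Z_2'\cup v_1)=2$ and $\lambda_{M''}(Z_2')\geq\lambda_B(Z_1'\cup v_t,Z_2')=2$ directly.)

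The one genuine soft spot is your injectivity argument. Recovering $\{Z_1,Z_2\}$ from its image ``by adjoining $V$ to whichever side makes a valid $2$-separation of $M''$'' presupposes that at most one side works; a priori, for some $2$-separation $\{A,B\}$ of $M_B[E']$ both $\{A\cup V,B\}$ and $\{A,B\cup V\}$ could be $2$-separations of $M''$, in which case two distinct $2$-separations of $M''$ share the image $\{A,B\}$ and $\phi$ is not injective. This configuration is impossible, but that needs a short argument rather than the appeal to uniqueness of the $V$-containing side: applying Lemma~\ref{lem:block2sep} to each of the two candidate separations forces some $Z_i'\subseteq A$ and some $Z_j'\subseteq B$, and $i\neq j$ because $A\cap B=\emptyset$; hence either candidate is a $2$-separation of $M''$ with $Z_1'$ and $Z_2'$ on opposite sides, contradicting precisely the ``not induced in $M''$'' fact you prove in your main step. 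With those two lines added your proof is complete; as written, injectivity is asserted rather than proved, and it is the only place where this swap configuration has to be excluded.
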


\begin{proof}
  Let $(Z_1,Z_2)$ be a $2$-separation of $M_B[E'\cup\{v_1,\ldots,v_t\}]$. Possibly after relabelling, Lemma~\ref{lem:block2sep} implies that $Z_2'\cup \{v_1,\ldots,v_t\} \subseteq Z_2$. Therefore we know that $|Z_2\setminus\{v_1,\ldots,v_t\}| \geq 2$. Also $|Z_1| \geq 2$ so, since $M_B[E']$ is connected, $1 \leq \lambda_B(Z_1,Z_2\setminus \{v_1, \ldots, v_t\}) \leq \lambda_B(Z_1,Z_2)  = 1$. Hence $(Z_1, Z_2\setminus\{v_1,\ldots,v_t\})$ is a $2$-separation of $M_B[E']$, and the result follows.
\end{proof}

\subsection{Excluded minors for well-closed classes}

We omit the easy proofs of the observations in this section. In all results, $\matset$ is a well-closed class of matroids.

\begin{lemma}\label{lem:exmindual}
  Let $M$ be an excluded minor for $\matset$. Then $M^*$ is an excluded minor for $\matset$.
\end{lemma}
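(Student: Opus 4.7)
The plan is a direct argument using only the duality closure of $\matset$, together with the well-known fact that the dual of a minor of $M$ is the corresponding minor of $M^*$ (deletion and contraction get swapped).

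First, I would verify that $M^* \notin \matset$. Since $\matset$ is closed under duality, $M^* \in \matset$ would give $M = (M^*)^* \in \matset$, contradicting the hypothesis that $M$ is an excluded minor for $\matset$.

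Next, I would show every proper minor of $M^*$ lies in $\matset$. Let $N$ be a proper minor of $M^*$, say $N = M^* \delete S \contract T$ for disjoint $S, T \subseteq E(M^*) = E(M)$ with $S \cup T \neq \emptyset$. Then $N^* = M \delete T \contract S$ is a proper minor of $M$, so $N^* \in \matset$ since $M$ is an excluded minor. Closure of $\matset$ under duality then yields $N = (N^*)^* \in \matset$.

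Combining these two observations, $M^*$ satisfies the two defining properties of an excluded minor for $\matset$. No other closure properties of well-closed classes (direct sums, 2-sums, isomorphism, minors) are needed; only duality closure is used. There is no real obstacle here — this is a bookkeeping proof that simply unpacks the definitions.
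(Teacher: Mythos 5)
Your proof is correct and is exactly the standard argument the paper has in mind (the paper omits the proof as easy): duality closure of the well-closed class $\matset$ gives $M^*\notin\matset$, and since minors and duality commute, every proper minor of $M^*$ is the dual of a proper minor of $M$ and hence lies in $\matset$. Nothing further is needed.
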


\begin{lemma}\label{lem:exmin3c}
  Let $M$ be an excluded minor for $\matset$. Then $M$ is $3$-connected.
\end{lemma}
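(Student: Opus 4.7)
The plan is a short case analysis on which kind of low-order separation $M$ exhibits, using the direct-sum and $2$-sum closure conditions in the definition of well-closed (Definition~\ref{def:well-closed}).

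First, I would assume for contradiction that $M$ is an excluded minor for $\matset$ but is not $3$-connected. Then $M$ admits a $k$-separation $(X,Y)$ for some $k \in \{1,2\}$. If $k=1$, then $M$ is disconnected, and the additivity of the rank function across the separation (i.e.\ $\rank_M(X) + \rank_M(Y) = \rank(M)$) lets me write $M = (M|X) \oplus (M|Y)$. Both summands are proper minors of $M$ (each has strictly fewer elements since $|X|,|Y|\geq 1$ and $|X|+|Y|=|E(M)|\geq 2$), so by minimality of $M$ as an excluded minor they lie in $\matset$. Closure of $\matset$ under direct sums then yields $M \in \matset$, contradicting that $M$ is excluded.

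If $k=2$, so $M$ is connected with a $2$-separation $(X,Y)$ and $|X|,|Y|\geq 2$, then the standard Cunningham--Edmonds/Seymour decomposition writes $M = M_1 \oplus_2 M_2$, where $M_i$ lives on one side plus a basepoint $p$; each $M_i$ is obtained from $M$ by deleting or contracting all but one element of the opposite side (the remaining element being relabeled as $p$), and so is genuinely a minor of $M$. Moreover $|E(M_i)| = |X_i|+1 < |E(M)|$ because the other side has at least two elements. Hence $M_1,M_2 \in \matset$ by minimality, and closure under $2$-sums gives $M \in \matset$, the desired contradiction.

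There is essentially no obstacle here: the only point requiring care is the verification that in the $2$-sum decomposition the parts $M_1, M_2$ are actual minors of $M$ (not merely abstract matroids constructed from the separation), but this is a routine consequence of the definition of the $2$-sum and the fact that both sides of the separation have at least two elements. Low-element corner cases (when $|E(M)|\leq 3$) don't arise as counterexamples because such matroids are vacuously $3$-connected.
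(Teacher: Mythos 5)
Your proof is correct and is exactly the standard argument the paper has in mind (it explicitly omits the proofs in that subsection as easy observations): a 1-separation gives a direct-sum decomposition into proper minors and a 2-separation gives a 2-sum decomposition into matroids isomorphic to proper minors, and closure of $\matset$ under isomorphism, direct sums, and 2-sums yields the contradiction. Your closing remark about matroids with at most three elements is unnecessary (and slightly imprecise, since e.g.\ a loop plus a coloop is not 3-connected), but it does not matter because your case analysis on $k\in\{1,2\}$ already covers every non-3-connected matroid.
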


\begin{lemma}\label{lem:exminrankbound}
  Suppose all matroids in $\matset$ are representable over some finite field $\GF(q)$. Let $r \in \N$. Then there are finitely many rank-$r$ excluded minors for $\matset$.
\end{lemma}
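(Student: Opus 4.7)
The strategy is the standard one: show that any rank-$r$ excluded minor has a uniformly bounded number of elements, and then invoke the finiteness of matroids on a bounded ground set.

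First, I would dispose of the trivial cases. By Lemma~\ref{lem:exmin3c} any excluded minor $M$ for $\matset$ is $3$-connected. If $|E(M)| \leq 3$ there are clearly only finitely many such $M$ of rank $r$, so I may assume $|E(M)| \geq 4$, whence $M$ is simple and has no coloops.

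Next, I would use the excluded-minor property to pass to a $\GF(q)$-representable minor of the same rank. Choose any $e \in E(M)$. Since $M$ is an excluded minor for $\matset$, the proper minor $M\delete e$ lies in $\matset$ and is therefore representable over $\GF(q)$. Because $e$ is not a coloop, $\rank(M\delete e) = r$, and because $M$ is simple so is $M\delete e$. Any simple rank-$r$ matroid representable over $\GF(q)$ is a restriction of $\PG(r-1,q)$, and hence has at most $(q^r - 1)/(q-1)$ elements. Consequently
\begin{align*}
  |E(M)| \leq \frac{q^r - 1}{q-1} + 1.
\end{align*}

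Finally, since the ground set of a rank-$r$ excluded minor for $\matset$ can be chosen from a fixed finite set of labels of size at most $(q^r - 1)/(q-1) + 1$, and there are only finitely many matroids on a finite ground set, the collection of rank-$r$ excluded minors for $\matset$ is finite.

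There is no real obstacle here; the only subtlety is remembering that the excluded minor itself need not be $\GF(q)$-representable, so one has to deduce the size bound from a carefully chosen \emph{proper} minor (namely $M\delete e$), which works because $3$-connectedness rules out coloops and guarantees that the rank is preserved by a single deletion.
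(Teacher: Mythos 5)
Your proof is correct, and since the paper explicitly omits the proof of this lemma as an ``easy observation,'' there is nothing to diverge from: your argument is the standard one the authors intend. The key points are all in place --- $3$-connectedness (Lemma~\ref{lem:exmin3c}) gives simplicity and no coloops, so a single-element deletion $M\delete e$ lies in $\matset$, is simple of rank $r$, and embeds in $\PG(r-1,q)$, bounding $|E(M)|$ by $(q^r-1)/(q-1)+1$, after which finiteness of matroids on a bounded ground set finishes the job.
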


\section{Fragility}\label{sec:fragility}

In the introduction we defined fragility for a single matroid. A slightly more general definition is the following:

\begin{definition}\label{def:fragile}
	Let $\mathcal{N}$ be a set of matroids. A matroid $M$ is $\mathcal{N}$-\emph{fragile} if, for all $e \in E(M)$, at least one of $M\delete e$ and $M\contract e$ has no minor isomorphic to a member of $\mathcal{N}$. Moreover, an $\mathcal{N}$-fragile matroid $M$ is \emph{strictly} $\mathcal{N}$-fragile if some minor of $M$ is isomorphic to a member of $\mathcal{N}$.
\end{definition}

Let $N$ be a matroid. We say that a matroid $M$ is $N$-\emph{fragile} if $M$ is $\{N\}$-fragile. We establish a few basic properties of $\mathcal{N}$-fragile matroids. The following is easy to see from the definition:

\begin{lemma}
  If $M$ is $\mathcal{N}$-fragile and $M'\minorof M$ then $M'$ is $\mathcal{N}$-fragile.
\end{lemma}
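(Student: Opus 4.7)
The plan is a direct element-by-element argument exploiting the fact that minors of minors are minors, and that deletion and contraction of disjoint elements commute.

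First, I would express $M'$ in standard form as $M' = M \delete D \contract C$ for disjoint sets $C, D \subseteq E(M) \setminus E(M')$. Then, fix an arbitrary element $e \in E(M')$; note that $e \in E(M)$ as well and is outside both $C$ and $D$. Since $M$ is $\mathcal{N}$-fragile, at least one of $M \delete e$ or $M \contract e$ has no $\mathcal{N}$-minor. The goal is to transfer this property to $M'$.

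The key observation is that $M' \delete e = (M \delete e) \delete D \contract C$ and $M' \contract e = (M \contract e) \delete D \contract C$ (since $e \notin C \cup D$ and deletion/contraction on disjoint sets commute), so $M' \delete e$ is a minor of $M \delete e$ and $M' \contract e$ is a minor of $M \contract e$. In particular, if $M \delete e$ has no $\mathcal{N}$-minor then neither does $M' \delete e$, and symmetrically for the contraction. In either case, at least one of $M' \delete e$, $M' \contract e$ has no $\mathcal{N}$-minor, so $M'$ satisfies Definition~\ref{def:fragile} at $e$. Since $e$ was arbitrary, $M'$ is $\mathcal{N}$-fragile.

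There is essentially no obstacle here: the lemma is a routine consequence of the definition together with the transitivity of the minor relation. The only point worth being careful about is the bookkeeping that $e$ remains in the ground set after deleting $D$ and contracting $C$, which is immediate since $e \in E(M')$.
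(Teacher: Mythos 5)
Your argument is correct and is exactly the routine verification the paper has in mind (it omits the proof as ``easy to see from the definition''): writing $M' = M\delete D\contract C$ with $e\notin C\cup D$, commuting the operations, and using transitivity of the minor relation. Nothing further is needed.
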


The following proposition is well-known; see, for instance, \citet[Corollary 2.4]{GW01} for a proof technique.

\begin{proposition}\label{pro:twosepminorintersection}
	Let $M$ be a matroid with a 2-separation $(A,B)$ and let $N$ be a 3-connected minor of $M$. Assume $|E(N)\cap A| \geq |E(N)\cap B|$. Then $|E(N)\cap B| \leq 1$. Moreover, unless $B$ consists of a parallel class or series class, there is an $e\in B$ such that both $M\delete e$ and $M\contract e$ have a minor isomorphic to $N$.
\end{proposition}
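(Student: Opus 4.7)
My plan for the first assertion is as follows. When $|E(N)|\leq 3$, the hypothesis $|E(N)\cap A|\geq|E(N)\cap B|$ immediately gives $|E(N)\cap B|\leq 1$. When $|E(N)|\geq 4$, I would assume for contradiction that both intersections have size at least $2$. Iteratively applying monotonicity of $\lambda$ under single-element deletion and contraction to the elements of $E(M)\setminus E(N)$, the value of $\lambda$ on the evolving ``$A$-side'' is non-increasing at each step, yielding $\lambda_N(E(N)\cap A)\leq\lambda_M(A)\leq 1$. Combined with both sides having size at least $2$, this produces a 2-separation of $N$, contradicting 3-connectedness.

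For the moreover statement, I would assume $B$ is neither a parallel nor a series class of $M$. Since $|B|\geq 2$, the matroid $M$ admits a 2-sum decomposition $M=M_1\oplus_2 M_2$ with $E(M_1)=A\cup\{p\}$, $E(M_2)=B\cup\{p\}$, where $p$ is neither a loop nor a coloop of either summand. I focus on the case $E(N)\cap B=\emptyset$, treating $|E(N)\cap B|=1$ analogously by tracking the distinguished element through the $M_2$-side. A standard 2-sum minor analysis gives $N\minorof M$ iff $N\minorof M_1\delete p$ or $N\minorof M_1\contract p$: reducing $M_2$ over a partition $(X_2,Y_2)$ of $B$ to a one-element matroid on $\{p\}$ produces a loop iff $p\in\closure_{M_2}(Y_2)$, yielding $M_1\delete p$ on the $A$-side, and a coloop otherwise, yielding $M_1\contract p$. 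The same analysis applies to $M\delete e$ and $M\contract e$ via reducing $M_2\delete e$ and $M_2\contract e$, respectively, with care for the degenerate situations where $p$ becomes a loop or coloop in the right-hand summand.

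I would then split into three cases based on which of $M_1\delete p$ and $M_1\contract p$ contains $N$ as a minor. If both do, every $e\in B$ lies in the canopy. If only $M_1\delete p$ does (Case B), then $M\contract e$ always has an $N$-minor, while $M\delete e$ does if and only if $p\in\closure_{M_2}(B\setminus e)$; the scenario where no such $e$ exists is precisely $M_2\cong U_{|B|,|B|+1}$, which makes $B\cup\{p\}$ a single series class of $M_2$. The main obstacle is verifying that this forces $B$ itself (not some strictly larger subset) to be a series class of $M$: if some $a\in A$ formed a 2-cocircuit $\{p,a\}$ with $p$ in $M_1$, then $a$ would be a coloop of $M_1\delete p$, so the 3-connectedness of $N$ with $|E(N)|\geq 4$ forces $a\notin E(N)$; using the identity $M_1\contract p\delete a=M_1\delete p\delete a$ valid whenever $\{p,a\}$ is a 2-cocircuit of $M_1$, one derives $N\minorof M_1\contract p$, contradicting Case B. Case C (only $M_1\contract p$ contains $N$) is dual and, via the analogous identity for 2-circuits, forces $B$ to be a parallel class of $M$. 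Hence, unless $B$ is a parallel or series class, some $e\in B$ is in the canopy.
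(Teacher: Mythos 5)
Your proof of the first assertion is correct, and your route to the ``moreover'' part---decompose along the (exact) $2$-separation as $M=M_1\oplus_2 M_2$ with basepoint $p$ and decide which of $M_1\delete p$, $M_1\contract p$ carries $N$---is the standard technique; the paper gives no proof of its own and only cites a reference for this proof method. The problem is that your key dichotomy is inverted, and the inversion propagates. If you contract $C\subseteq B$ and delete $B\setminus C$, the matroid induced on $A$ is $M_1\contract p$ precisely when $p\in\closure_{M_2}(C)$ (the case in which $p$ would become a loop), and it is $M_1\delete p$ otherwise (when $p$ would survive as a coloop); you attach the loop outcome to $M_1\delete p$ and the coloop outcome to $M_1\contract p$. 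Consequently your Case B is false as stated: when $N\minorof M_1\delete p$ but $N\not\minorof M_1\contract p$, it is $M\delete e$ that always keeps an $N$-minor (take $C=\emptyset$), while $M\contract e$ is guaranteed one only when $e$ is not parallel to $p$ in $M_2$, and the all-fail scenario is $M_2\cong U_{1,|B|+1}$, i.e.\ $B$ a parallel class---not $U_{|B|,|B|+1}$ and a series class. Concretely, let $M_1=U_{2,5}$ on $\{x_1,\dots,x_4,p\}$, $M_2=U_{1,3}$ on $\{b_1,b_2,p\}$, $N=U_{2,4}$, $A=\{x_1,\dots,x_4\}$, $B=\{b_1,b_2\}$: this is your Case B, and $p\in\closure_{M_2}(B\setminus b_1)$, so your argument certifies $b_1$ as a good element; but $M\contract b_1$ has rank $1$ and no $U_{2,4}$-minor (the proposition survives only because $B$ is a parallel class).

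The same inversion undermines your ``main obstacle'': with the correct correspondence the scenario $M_2\cong U_{|B|,|B|+1}$ occurs when only $M_1\contract p$ carries $N$, so the coloop-in-$M_1\delete p$ argument you invoke is not available there. Moreover, the maximality you are trying to establish is simply not true in general: append to the example above an element $a\in A$ parallel to $p$ in $M_1$---then no element of $B$ is good, yet the parallel class of $M$ properly contains $B$ (dualize for the series analogue). Fortunately maximality is not needed if one reads the statement as asserting that the elements of $B$ form a parallel or series \emph{set}. Two smaller gaps: the $2$-sum decomposition exists only when the $2$-separation is exact, whereas the paper's definition also allows $\lambda_M(B)=0$, a case to be dispatched separately (there $N$ sits inside $M|A$ and every $e\in B$ works); and the case $|E(N)\cap B|=1$ is not purely notational---one must first argue that $N$ is then isomorphic to a minor of $M_1$ with $p$ playing the role of the distinguished element (using that a $3$-connected matroid with at least two elements has no loops or coloops), after which the same case analysis applies.
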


An immediate corollary is the following. It was also proven by Kingan and Lemos \cite[Proposition 3.1]{KL02}.

\begin{proposition}\label{prop:almostconn}
  Let $\mathcal{N}$ be a set of 3-connected matroids with $|E(N)| \geq 4$ for all $N \in \mathcal{N}$, and let $M$ be a strictly $\mathcal{N}$-fragile matroid. Then $M$ is $3$-connected up to series and parallel classes.
\end{proposition}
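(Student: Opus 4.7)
The plan is to derive the conclusion directly from Proposition \ref{pro:twosepminorintersection}. Since $M$ is strictly $\mathcal{N}$-fragile, I would first fix some $N \in \mathcal{N}$ that occurs as a minor of $M$; by hypothesis $N$ is $3$-connected with $|E(N)| \geq 4$. The task then splits into two parts: showing that $M$ is connected, and showing that every $2$-separation of $M$ has one side equal to a series or parallel class.

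For the first part, I would argue by contradiction. Suppose $M = M_1 \oplus M_2$ with both summands nonempty. Because $N$ is connected, it must be a minor of exactly one of the summands; say $N \minorof M_1$. Then for every $e \in E(M_2)$, both $M\delete e$ and $M\contract e$ retain $M_1$ as a direct summand and hence still have an $N$-minor, contradicting $\mathcal{N}$-fragility. So $M$ must be connected.

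For the second part, let $(A,B)$ be a $2$-separation of $M$, labelled so that $|E(N) \cap A| \geq |E(N) \cap B|$. Applying Proposition \ref{pro:twosepminorintersection} with this $N$ in the role of the $3$-connected minor yields that either $B$ is a series or parallel class of $M$, or there is an element $e \in B$ such that both $M\delete e$ and $M\contract e$ have an $N$-minor. The latter alternative contradicts $\mathcal{N}$-fragility, so $B$ must be a series or parallel class. Combining the two parts gives the definition of being $3$-connected up to series and parallel classes.

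There is no real obstacle beyond bookkeeping: the statement "$3$-connected up to series and parallel classes" is precisely the conjunction of connectedness with the property that every $2$-separation has a series or parallel class on one side, and both of these are immediate from Proposition \ref{pro:twosepminorintersection} once a single witnessing $N \in \mathcal{N}$ has been chosen. The only subtlety is remembering that fragility is defined with respect to the whole set $\mathcal{N}$, so the contradiction in each step only requires exhibiting \emph{some} member of $\mathcal{N}$ that survives as a minor of both $M\delete e$ and $M\contract e$.
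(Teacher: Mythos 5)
Your proof is correct and follows the route the paper itself intends: the paper presents this proposition as an immediate corollary of Proposition~\ref{pro:twosepminorintersection} (with the proof omitted), and your argument simply fills in the routine details, including the easy direct-sum argument for connectedness. No issues.
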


Some more terminology:

\begin{definition}
  Let $\mathcal{N}$ be a set of matroids, let $M$ be a matroid, and let $e \in E(M)$.
  \begin{enumerate}
    \item If $M\contract e$ has a minor isomorphic to a member of $\mathcal{N}$ then $e$ is \emph{$\mathcal{N}$-contractible};
    \item If $M\delete e$ has a minor isomorphic to a member of $\mathcal{N}$ then $e$ is \emph{$\mathcal{N}$-deletable};
    \item If neither $M\delete e$ nor $M\contract e$ has a minor isomorphic to a member of $\mathcal{N}$ then $e$ is \emph{$\mathcal{N}$-essential}.
  \end{enumerate}
\end{definition}

We will drop the prefix ``$\mathcal{N}$-'' if it is clear from the context which set is intended. For readers familiar with the work of \citet{TruVI} this definition may cause some confusion: Truemper defines a \emph{con} element $e$ to be such that $M\contract e$ has no $F_7$-minor and no $F_7^*$-minor, and a \emph{del} element $e$ to be such that $M\delete e$ has no $F_7$- and no $F_7^*$-minor. The reasoning behind his choice is clear: rather than studying $\{F_7,F_7^*\}$-fragile binary matroids, he studies \emph{almost regular} binary matroids. Hence losing the minor is a good thing for him. For us the elements of $\mathcal{N}$ will be stabilizers, so we want to keep a member of $\mathcal{N}$ by all means. We use the following notation:

\begin{definition}
  Let $\mathcal{N}$ be a set of matroids and let $M$ be a matroid.
  \begin{align*}
    \conset_{\mathcal{N},M} & := \{\, e \in E(M)  :  e \textrm{ is } \mathcal{N}\textrm{-contractible}\,\};\\
    \delset_{\mathcal{N},M} & := \{\, e \in E(M)  :  e \textrm{ is } \mathcal{N}\textrm{-deletable}\,\};\\
    \essset_{\mathcal{N},M} & := \{\, e \in E(M)  :  e \textrm{ is } \mathcal{N}\textrm{-essential}\,\}.
  \end{align*}
\end{definition}

We conclude the section with a number of elementary properties of $\mathcal{N}$-fragile matroids. We omit the straightforward proofs.

\begin{lemma}\label{lem:almostpartition}
	Let $\mathcal{N}$ be a set of matroids, and let $M$ be an $\mathcal{N}$-fragile matroid.
	\begin{enumerate}
		\item $\conset_{\mathcal{N},M}$, $\delset_{\mathcal{N},M}$, $\essset_{\mathcal{N},M}$ are pairwise disjoint and partition $E(M)$.
		\item Let $\mathcal{N}^* := \{N^*  :  N \in \mathcal{N}\}$. Then $M^*$ is $\mathcal{N}^*$-fragile with $\conset_{\mathcal{N}^*, M^*} = \delset_{\mathcal{N},M}$, $\delset_{\mathcal{N}^*, M^*} = \conset_{\mathcal{N},M}$, and $\essset_{\mathcal{N}^*, M^*} = \essset_{\mathcal{N},M}$.
		\item Let $M'\minorof M$.
	  \begin{enumerate}
	    \item If $e \in E(M')$ and $e\in \conset_{\mathcal{N},M}$ then $e \in \conset_{\mathcal{N},M'}\cup \essset_{\mathcal{N},M'}$;
	    \item If $e \in E(M')$ and $e\in \delset_{\mathcal{N},M}$ then $e \in \delset_{\mathcal{N},M'}\cup \essset_{\mathcal{N},M'}$;
	    \item If $e \in E(M')$ and $e\in \essset_{\mathcal{N},M}$ then $e \in \essset_{\mathcal{N},M'}$.
	  \end{enumerate}
		\item\label{it:parpairdel} If $N$ is 3-connected and $|E(N)|\geq 4$ for all $N\in\mathcal{N}$, and if $\rank_M(\{e,f\}) = 1$, then $e$ and $f$ are both deletable.
		\item If $N$ is 3-connected and $|E(N)|\geq 4$ for all $N\in\mathcal{N}$, and if $\rank^*_M(\{e,f\}) = 1$, then $e$ and $f$ are both contractible.
	\end{enumerate}
\end{lemma}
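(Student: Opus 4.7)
The plan is to dispatch (i)--(iii) directly from the definitions and the standard behaviour of minors under duality, then reduce (v) to (iv) by means of (ii), and finally handle (iv) by a short case analysis on the location of a parallel pair inside an $\mathcal{N}$-minor of $M$.

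For (i), fragility is precisely the statement that at most one of $M\contract e$ and $M\delete e$ has an $\mathcal{N}$-minor, so $\conset_{\mathcal{N},M}\cap\delset_{\mathcal{N},M}=\emptyset$; the set $\essset_{\mathcal{N},M}$ is by definition the complement of $\conset_{\mathcal{N},M}\cup\delset_{\mathcal{N},M}$ in $E(M)$, giving the partition. For (ii), I would use the standard identities $M^*\delete e=(M\contract e)^*$ and $M^*\contract e=(M\delete e)^*$ together with the fact that $N^*\minorof M^*$ iff $N\minorof M$, which translates each of the three sets across dualization, and in particular verifies that $M^*$ is $\mathcal{N}^*$-fragile. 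For (iii), any $\mathcal{N}$-minor of $M'\delete e$ (resp.\ $M'\contract e$) is also an $\mathcal{N}$-minor of $M\delete e$ (resp.\ $M\contract e$); taking contrapositives of each possibility yields the three stated implications.

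For (iv) and (v), I would first note that (v) is (iv) applied to $M^*$ via the translation in (ii), so it suffices to treat (iv). Let $e,f$ satisfy $\rank_M(\{e,f\})=1$, so neither is a loop and the two are parallel; the transposition of $e$ and $f$ is then an automorphism of $M$, so $e$ is deletable iff $f$ is, and it suffices to show one of them is deletable. The claim is vacuous unless $M$ has an $\mathcal{N}$-minor, so fix such a minor $N=M\contract C\delete D$ with $N\in\mathcal{N}$. If $e\in D$, then $N$ is already a minor of $M\delete e$ and we are done. If $e\in C$, then $f$ is a loop in $M\contract e$; because $N$ is $3$-connected with $|E(N)|\geq 4$, it has no loops, so this minor avoids $f$, giving $N\minorof M\contract e\delete f=M\delete f\contract e$, whence $f$ (and so $e$) is deletable. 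The only remaining possibility is $e\in E(N)$, in which case $f\notin E(N)$ since otherwise $\{e,f\}$ would survive as a parallel pair in the $3$-connected matroid $N$; then $f\in C\cup D$ and the previous two cases apply to $f$.

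The first three items are essentially definitional, so the main obstacle is in (iv), where one must combine two ingredients carefully: the commutativity of deletion and contraction on distinct elements, so that contracting $e$ and then deleting the resulting loop $f$ agrees with $M\delete f\contract e$, and the loop-/parallel-freeness of $3$-connected matroids on at least four elements, which is what rules out every position for $e$ other than $D$ (or reduces it to the analogous position for $f$). Once (iv) is in hand, (v) is immediate from (ii).
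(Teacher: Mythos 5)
Your proof is correct, and since the paper explicitly omits the proof of this lemma as "straightforward," your argument supplies exactly the intended routine verification: definition-chasing for (i) and (iii), the duality identities $M^*\delete e=(M\contract e)^*$, $M^*\contract e=(M\delete e)^*$ for (ii) and (v), and the case analysis on where a parallel pair sits relative to a fixed $\mathcal{N}$-minor for (iv). Two small points of precision: $\rank_M(\{e,f\})=1$ alone does not imply neither element is a loop, so you should add the one-line remark that an $\mathcal{N}$-fragile matroid with an $\mathcal{N}$-minor has no loops (for a loop $g$ one has $M\delete g=M\contract g$, and the loopless minor $N$ survives in both, contradicting fragility), which legitimizes your "parallel non-loops, transposition is an automorphism" step; and item (iv) is not merely vacuous but actually false when $M$ has no $\mathcal{N}$-minor (e.g.\ $M=U_{1,2}$ with $\mathcal{N}=\{U_{2,4}\}$), so your standing assumption that such a minor exists is a needed reading of the statement --- an imprecision in the paper's formulation rather than in your argument, which the paper's own applications (to strictly $\mathcal{N}$-fragile matroids) confirm.
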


\section{Deletion pairs and incriminating sets}\label{sec:incriminate}

The results in this section form part of the basic strategy of our proof. They are closely related to results in \citet{GGK} and \citet{HMZ11}. Our first ingredient is an easy corollary of a theorem by \citet{Whi96b}. We start by defining a \emph{deletion pair}.

\begin{definition}\label{def:delpair}
  Let $M$ be a matroid having an $N$-minor. Then $\{u,v\} \subseteq E(M)$ is a \emph{deletion pair preserving} $N$ if $M\delete \{u,v\}$ is connected and $\co(M\delete u)$, $\co(M\delete v)$, $\co(M\delete \{u,v\})$ are $3$-connected and have an $N$-minor.
\end{definition}

A deletion pair is guaranteed to exist, provided that $M$ is sufficiently large and 3-connected:

\begin{theorem}[\citet{Whi96b}, Theorem 3.2]\label{thm:secondelt}
  Let $M$, $N$ be matroids such that $N\minorof M$, $\rank(M) -\rank(N) \geq 3$, and both $M$ and $N$ are $3$-connected. If there exists a $u \in E(M)$ such that $\si(M\contract u)$ is $3$-connected and has an $N$-minor, then there exists a $v \in E(M)$, $v\neq u$, such that $\si(M\contract v)$ and $\si(M\contract \{u,v\})$ are both $3$-connected, and $\si(M\contract\{u,v\})$ has an $N$-minor.
\end{theorem}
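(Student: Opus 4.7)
The plan is to apply Seymour's Splitter Theorem twice: first to locate an element $v$ whose contraction together with $u$ preserves $3$-connectivity and the $N$-minor, and then to verify that $v$ can in fact be chosen so that $\si(M\contract v)$ is also $3$-connected. Throughout, I work inside $\si(M\contract u)$, which by hypothesis is $3$-connected and has an $N$-minor, and I identify elements of $\si(M\contract u)$ with a choice of representative in $M\contract u$, hence in $E(M)\setminus\{u\}$.

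First, I would find $v$ with $\si(M\contract\{u,v\})$ $3$-connected and having an $N$-minor. Since $\rank(\si(M\contract u)) = \rank(M)-1 \geq \rank(N)+2$, the matroid $\si(M\contract u)$ is neither equal to $N$ nor (by the rank gap) a wheel or whirl on the same rank as $N$. Applying the Splitter Theorem (Theorem~\ref{thm:3connchain}) to $(\si(M\contract u),N)$ yields an element $w_1$ and a choice of operation preserving $3$-connectivity and the $N$-minor. If contraction works, set $v := w_1$. Otherwise deletion works, and I iterate the Splitter Theorem on the deleted matroid. Each iteration either produces a contraction element (giving the desired $v$) or a deletion element, which keeps the rank fixed while shrinking the ground set. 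Because the rank gap is at least $2$ throughout this sub-chain while the ground set is finite and must eventually reach $N$, the chain cannot consist solely of deletions, and a contraction element must appear.

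Second, given this $v$, I would verify that $\si(M\contract v)$ is $3$-connected. If not, $M\contract v$ has a $2$-separation $(X,Y)$, which lifts to a vertical $3$-separation of $M$ of the form $(X\cup\{v\},Y)$ with $v\in\closure_M(X)\cap\closure_M(Y)$. Say $u\in X$ (the other case is symmetric). Using submodularity of $\lambda_M$ together with $\lambda_M(X\cup\{v\}) = 2$ and the $3$-connectivity of $\si(M\contract u)$, I would show that all but at most one element on either side of the separation must collapse into series or parallel classes with $u$, which together with the rank gap contradicts $\si(M\contract\{u,v\})$ being $3$-connected with an $N$-minor of rank $\rank(N)\leq \rank(M)-3$. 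If the particular $v$ produced in the first step fails, the Splitter chain inside $\si(M\contract u)$ provides alternative contraction candidates (guaranteed by the fact that the rank gap there is at least $2$); one then shows that not all such candidates can simultaneously induce vertical $3$-separations of $M$, since these separations would have to be compatible with the single $3$-connected matroid $\si(M\contract u)$.

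The main obstacle is the second step: engineering the choice of $v$ so that $\si(M\contract v)$ is $3$-connected, on top of the constraints already imposed in the first step. The danger is that the hypothetical $2$-separation of $M\contract v$ might be ``shielded'' by $u$ in the sense that it only manifests after $v$ is contracted but disappears once $u$ is also contracted, so that $\si(M\contract u)$ and $\si(M\contract\{u,v\})$ remain $3$-connected. Ruling this out requires crucially the rank gap $\rank(M)-\rank(N)\geq 3$, which both supplies multiple candidate contractions via the Splitter Theorem and gives enough elements in $E(M)\setminus E(N)$ to guarantee that at least one candidate avoids all such shielded separations simultaneously.
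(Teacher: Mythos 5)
This statement is quoted by the paper from Whittle [Whi96b, Theorem 3.2]; the paper itself contains no proof of it, so your attempt can only be judged on its own terms, and as it stands it has two genuine gaps. First, your iteration of the Splitter Theorem does not deliver the element you need. If Theorem~\ref{thm:3connchain} hands you a deletion at each stage, then the contraction element that must eventually appear is an element $e$ of some proper minor $M_i$ of $\si(M\contract u)$ obtained after earlier deletions, and what you learn is that $\si(M_i\contract e)$ is $3$-connected with an $N$-minor; this says nothing about $\si(\si(M\contract u)\contract e)=\si(M\contract\{u,e\})$ being $3$-connected, because the previously deleted elements may create $2$-separations once they are put back. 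So even the existence of some $v$ with $\si(M\contract\{u,v\})$ $3$-connected and having an $N$-minor is not established by your first step. (A smaller point: your dismissal of the wheel/whirl hypothesis is off --- what must be excluded to apply Theorem~\ref{thm:3connchain} is that $\si(M\contract u)$ is a wheel or whirl of \emph{any} rank, not merely one of rank $\rank(N)$.)

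Second, and more seriously, the heart of the theorem --- arranging that $\si(M\contract v)$ is $3$-connected \emph{simultaneously} --- is exactly the part you leave as a sketch. The passages ``I would show that all but at most one element on either side of the separation must collapse into series or parallel classes with $u$'' and ``one then shows that not all such candidates can simultaneously induce vertical $3$-separations'' are statements of intent, not arguments: no mechanism is given for extracting a contradiction from submodularity and the rank gap, and no comparison is made between the number of available candidate contractions and the number of possible ``shielded'' $3$-separations of $M$. This is precisely where the real work lies in Whittle's proof (a detailed analysis of how a vertical $3$-separation of $M$ through $v$ interacts with the $3$-connectivity of $\si(M\contract u)$ and of $\si(M\contract\{u,v\})$), and your proposal correctly identifies the obstacle but does not overcome it. As written, this is an outline of a plausible strategy rather than a proof.
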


\begin{corollary}\label{cor:delpairexists}
  Let $M$ and $N$ be $3$-connected matroids, with $N\minorof M$, and suppose $M$ is not a wheel or a whirl. If $\rank(M) - \rank(N) \geq 3$ and $\rank(M^*) - \rank(N^*) \geq 3$, then for some $(M',N') \in \{(M,N), (M^*, N^*)\}$, $M'$ has a deletion pair $\{u,v\}$ preserving $N'$. Moreover, $\{u,v\}$ can be chosen such that $M'\delete u$ is $3$-connected.
\end{corollary}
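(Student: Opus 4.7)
The plan is to combine the Splitter Theorem (Theorem~\ref{thm:3connchain}) with the second-element lemma (Theorem~\ref{thm:secondelt}), using duality to translate deletions into contractions. First I would apply the Splitter Theorem to $M$ and $N$: since both are $3$-connected with $|E(M)|>|E(N)|\geq 4$ and $M$ is not a wheel or whirl, there is an element $e\in E(M)$ such that one of $M\delete e$, $M\contract e$ is $3$-connected and has an $N$-minor. If the contraction case holds, I replace $(M,N)$ by $(M^*,N^*)$; in either event let $(M',N')$ denote the resulting pair and set $u:=e$, so that $M'\delete u$ is $3$-connected with an $N'$-minor. This already settles the ``moreover'' clause.

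Next, dualizing, $(M')^*\contract u$ is $3$-connected; since a $3$-connected matroid on at least four elements is simple, $\si((M')^*\contract u)=(M')^*\contract u$, and it carries an $(N')^*$-minor. Because $\rank((M')^*)-\rank((N')^*)\geq 3$ by hypothesis, Theorem~\ref{thm:secondelt} applied to $(M')^*$, $(N')^*$, and $u$ produces a $v\neq u$ such that $\si((M')^*\contract v)$ and $\si((M')^*\contract\{u,v\})$ are both $3$-connected and $\si((M')^*\contract\{u,v\})$ has an $(N')^*$-minor. Since $(N')^*$ is simple (being $3$-connected) and is a minor of $(M')^*\contract\{u,v\}$ hence of $(M')^*\contract v$, it is also a minor of $\si((M')^*\contract v)$.

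Dualizing back, both $\co(M'\delete v)$ and $\co(M'\delete\{u,v\})$ are $3$-connected and retain an $N'$-minor; together with $\co(M'\delete u)=M'\delete u$, this verifies three of the four clauses of Definition~\ref{def:delpair}. For the remaining clause, $M'\delete\{u,v\}$ is connected because its cosimplification is $3$-connected, and contracting series classes cannot merge distinct components of a disconnected matroid. Hence $\{u,v\}$ is a deletion pair of $M'$ preserving $N'$, with $M'\delete u$ $3$-connected, as required.

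The only real obstacle is careful bookkeeping of duality: verifying that a single duality swap at the outset is sufficient, and matching the simplifications appearing in Theorem~\ref{thm:secondelt} with the cosimplifications demanded by Definition~\ref{def:delpair}. Everything else is immediate from the two cited results together with the standard fact that a $3$-connected matroid of size at least four is both simple and cosimple.
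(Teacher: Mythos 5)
Your argument follows the paper's own route almost step for step: the Splitter Theorem plus a single duality swap produces $u$ with $M'\delete u$ $3$-connected, and the dual form of Theorem~\ref{thm:secondelt} applied with this $u$ yields $v$ such that $\co(M'\delete v)$ and $\co(M'\delete\{u,v\})$ are $3$-connected with an $N'$-minor. Your extra care in transferring the $N'$-minor to $\co(M'\delete v)$ (using that $(N')^*$ is simple, so a minor of $(M')^*\contract v$ survives simplification) is fine, and is in fact more explicit than the paper on this point.

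The one step whose justification is incomplete is the final connectedness claim. You argue that $M'\delete\{u,v\}$ is connected because its cosimplification is $3$-connected and ``contracting series classes cannot merge distinct components.'' But cosimplification does more than contract series classes: since $\co(P)=(\si(P^*))^*$, it also removes coloops of $P$ (they are loops of $P^*$). Consequently the implication ``$\co(P)$ is $3$-connected $\Rightarrow$ $P$ is connected'' is false in general; for instance $P=U_{1,1}\oplus U_{2,4}$ is disconnected while $\co(P)=U_{2,4}$ is $3$-connected. To close the gap you must rule out coloops in $M'\delete\{u,v\}$. This does follow in your setting: $M'\delete u$ is $3$-connected with at least four elements, hence has no coloops and no series pairs, so deleting $v$ cannot create a coloop. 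But once you invoke $3$-connectivity of $M'\delete u$ you may as well use the paper's direct argument: $M'\delete\{u,v\}=(M'\delete u)\delete v$, and deleting a single element from a $3$-connected matroid of this size leaves a $2$-connected, hence connected, matroid.
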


\begin{proof}
  By the Splitter Theorem there is a $u \in E(M)$ such that either $M\delete u$ is $3$-connected with an $N$-minor, or $M\contract u$ is $3$-connected with an $N$-minor. Using duality we may assume, without loss of generality, that the former holds. Then the dual of Theorem~\ref{thm:secondelt} implies the existence of a $v \in E(M)\setminus u$ such that $\co(M\delete v)$ and $\co(M\delete \{u,v\})$ are $3$-connected with an $N$-minor. To ensure that $\{u,v\}$ is a deletion pair we need to prove that $M\delete \{u,v\}$ is connected. But $M\delete \{u,v\} = (M\delete u)\delete v$, and since $M\delete u$ is $3$-connected, $M\delete \{u,v\}$ is $2$-connected.
\end{proof}

In the remainder of this section $\parf$ will be a partial field, $\matset$ will be a well-closed class of $\pf$-representable matroids, $N \in \matset$ will be a $3$-connected $\parf$-representable matroid that is a strong $\parf$-stabilizer for $\matset$, $M$ will be a $3$-connected matroid with an $N$-minor, and $\{u,v\}\subseteq E(M)$ will be a deletion pair preserving $N$.

Next we employ the deletion pair to create a candidate $\parf$-representation for $M$ when $M\delete u$ and $M\delete v$ are $\pf$-representable.

\begin{lemma}\label{lem:delpairscale}
  Let $D$, $D'$ be $X\times Y$ matrices with entries in a partial field $\parf$. Let $u,v \in Y$ be such that
  \begin{enumerate}
    \item $D-u$ is scaling-equivalent to $D'-u$ and $D-v$ is scaling-equivalent to $D'-v$;
    \item $G(D-\{u,v\})$ is connected.
  \end{enumerate}
  Then $D$ is scaling-equivalent to $D'$.
\end{lemma}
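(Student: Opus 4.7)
The plan is to explicitly construct a scaling from $D$ to $D'$ by combining the two partial scalings that exist on $D-u$ and $D-v$, after observing that on their overlap $D-\{u,v\}$ they can differ by at most a single ``gauge'' scalar. Specifically, pick non-zero scalars $r_x, c_y$ (for $y \in Y \setminus u$) witnessing the scaling-equivalence of $D-u$ and $D'-u$, so that $(D')_{xy} = r_x c_y D_{xy}$ for all $(x,y) \in X \times (Y\setminus u)$; and similarly pick $r'_x, c'_y$ (for $y \in Y \setminus v$) witnessing the scaling-equivalence of $D-v$ and $D'-v$.

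On the overlap $Y \setminus \{u,v\}$, both scalings agree; hence whenever $D_{xy} \neq 0$ we have $r_x c_y = r'_x c'_y$, i.e.\ $r'_x/r_x = c_y/c'_y$. This is precisely the statement that the ratios $r'_x/r_x$ and $c_y/c'_y$ are constant along every edge of $\bip(D-\{u,v\})$. Since $\bip(D-\{u,v\}) = G(D-\{u,v\})$ is connected, pick a spanning tree and propagate: there exists a single $\alpha \in \parf^*$ such that $r'_x = \alpha r_x$ for all $x \in X$ and $c'_y = \alpha^{-1} c_y$ for all $y \in Y \setminus \{u,v\}$.

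Now define a unified scaling by $R_x := r_x$ for $x \in X$, $C_y := c_y$ for $y \in Y \setminus u$, and $C_u := \alpha c'_u$. For $y \neq u$ this agrees with the first scaling, so $(D')_{xy} = R_x C_y D_{xy}$. For $y = u$, one checks $R_x C_u D_{xu} = r_x (\alpha c'_u) D_{xu} = (\alpha r_x) c'_u D_{xu} = r'_x c'_u D_{xu} = (D')_{xu}$, using the second scaling. Hence $D'$ is obtained from $D$ by scaling rows by $R_x$ and columns by $C_y$, so $D$ and $D'$ are scaling-equivalent.

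The only real content is the ``gauge constant'' step: that the two row-and-column scalings on the overlap $D-\{u,v\}$ differ by a single scalar $\alpha$. This is where the hypothesis that $G(D-\{u,v\})$ is connected enters, and is really a variant of the uniqueness statement behind Lemma~\ref{lem:bippropertiesunique}. The rest is bookkeeping to glue the two scalings into one consistent scaling over all of $Y$.
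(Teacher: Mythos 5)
Your proof is correct. It differs from the paper's in mechanics rather than in substance: the paper extends a spanning tree $T'$ of $\bip(D-\{u,v\})$ by one edge into column $u$ and one into column $v$, assumes both $D$ and $D'$ are $T$-normalized, and then invokes Lemma~\ref{lem:bippropertiesunique} twice to get literal equality $D-u=D'-u$ and $D-v=D'-v$, hence $D=D'$; you instead work directly with the scaling factors and prove the needed uniqueness statement from scratch, propagating the relation $r'_x/r_x=c_y/c'_y$ along the connected graph $\bip(D-\{u,v\})$ to obtain a single gauge constant $\alpha$, and then glue the two scalings by setting $C_u:=\alpha c'_u$. Both arguments hinge on exactly the same point (connectivity of $\bip(D-\{u,v\})$ forces the two partial scalings to be compatible), and your closing remark correctly identifies your gauge step as a hand-rolled variant of Lemma~\ref{lem:bippropertiesunique}. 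Two small advantages of your route: it needs no separate treatment of the degenerate case where column $u$ or $v$ of $D$ is all zero (the paper must handle this first, since it needs nonzero entries $D_{xu}$, $D_{x'v}$ to extend the spanning tree), and it is self-contained. What it costs is a little more bookkeeping; also note, for completeness, that all your scalars ($\alpha$, $C_u$, etc.) lie in $\parf^*$ because the nonzero elements form a group, so the constructed row/column scaling is legitimate, and the cancellation step $r_xc_yD_{xy}=r'_xc'_yD_{xy}\Rightarrow r_xc_y=r'_xc'_y$ is valid since nonzero entries of $D$ are units.
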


\begin{proof}
  If one of $D[X,u]$ and $D[X,v]$ is an all-zero column then the result is trivially true, so we assume this is not the case. Now let $T'$ be a spanning tree for $\bip(D-\{u,v\})$ and let $T := T' \cup \{xu, x'v\}$ for some $x, x' \in X$ with $D_{xu} \neq 0$, $D_{x'v} \neq 0$. Then $T$ is a spanning tree for $\bip(D) = \bip(D')$. Assume, without loss of generality, that $D$ and $D'$ are $T$-normalized. Then $D-u$ and $D'-u$ are $(T\setminus xu)$-normalized, and hence, by Lemma~\ref{lem:bippropertiesunique}, $D-u = D'-u$. Likewise $D-v = D'-v$. But then $D = D'$, and the result follows.
\end{proof}

\begin{theorem}\label{thm:uniquematrix}
  Let $D$ be an $X_N\times Y_N$ $\parf$-matrix such that $N = M[I\  D]$. Choose sets $B, E_N \subseteq E(M)$ such that $B$ is a basis of $M\delete \{u,v\}$, $E_N\subseteq E(M)\setminus\{u,v\}$ is such that $M_B[E_N] = N$, and $X_N\subseteq B$. Suppose $M\delete u, M\delete v \in \matset$.
  Then there is a $B\times (E(M)\setminus B)$ matrix $A$ with entries in $\parf$ such that
  \begin{enumerate}
    \item $A-u$ and $A-v$ are $\parf$-matrices;
    \item $M[I\  (A-u)] = M\delete u$ and $M[I\  (A-v)] = M\delete v$;
    \item $A[E_N]$ is scaling-equivalent to $D$.
  \end{enumerate}
  Moreover, $A$ is unique up to scaling of rows and columns.
\end{theorem}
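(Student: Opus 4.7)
The plan is to construct $A$ in two stages. First, use the strong-stabilizer hypothesis to build $\pf$-representations $A_u$ and $A_v$ of $M\delete u$ and $M\delete v$ whose $E_N$-restrictions are each scaling-equivalent to $D$. Then glue them along their shared columns by invoking the stabilizer property on $M\delete\{u,v\}$. Observe first that $B$ is a basis of each of $M\delete u$, $M\delete v$, and $M\delete\{u,v\}$: as $M$ is $3$-connected with $u,v\notin B$, no subset of $\{u,v\}$ is a cocircuit, so all three ranks equal $\rank(M)$.

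For the first stage, the deletion-pair hypothesis gives that $\co(M\delete u)$ is $3$-connected, and as a minor of $M\delete u\in\matset$ it lies in $\matset$. Since $N$ is $3$-connected, no series class of $M\delete u$ meets $E_N$ in more than one element, so the representatives contracted to form $\co(M\delete u)$ can be chosen outside $E_N$, leaving $N$ as a minor of $\co(M\delete u)$. The strong-stabilizer hypothesis then supplies a $\pf$-representation of $\co(M\delete u)$ whose $E_N$-restriction is scaling-equivalent to $D$. Uncontracting the series elements by appending the appropriate unit rows (the new entries lie in $\{0,1\}\subseteq\pf$, preserving the $\pf$-matrix property) yields a $B\times(E(M)\setminus B\setminus u)$ $\pf$-matrix $A_u$ with $M[I\ A_u]=M\delete u$ and $A_u[E_N]$ scaling-equivalent to $D$. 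Construct $A_v$ symmetrically.

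For the gluing, the matrices $A_u-v$ and $A_v-u$ both represent the connected matroid $M\delete\{u,v\}$ and both restrict to matrices scaling-equivalent to $D$ on $E_N$. Since $\co(M\delete\{u,v\})$ is $3$-connected and in $\matset$, it is stabilized by $N$; a cosimplification analog of Lemma~\ref{lem:sistab} then gives that $N$ stabilizes $M\delete\{u,v\}$ itself, so $A_u-v$ and $A_v-u$ are scaling-equivalent. Absorb this scaling into $A_u$ and $A_v$ (the scalings of the respective dropped columns remain free), so $A_u-v=A_v-u$. Define $A$ on $B\times(E(M)\setminus B)$ by copying the column $e$ of $A_u$ when $e\ne u$ and the column $u$ of $A_v$ when $e=u$. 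Then $A-v=A_u$, $A-u=A_v$, and $A[E_N]=A_u[E_N]$ is scaling-equivalent to $D$, establishing (i)-(iii).

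For uniqueness, let $A'$ also satisfy (i)-(iii). The stabilizer property applied to $M\delete u$ and $M\delete v$ (obtained by the same Lemma~\ref{lem:sistab}-style transfer as above) yields that $A-u$ is scaling-equivalent to $A'-u$ and $A-v$ to $A'-v$. Since $M\delete\{u,v\}$ is connected, Lemma~\ref{lem:bipconn} implies $G(A-\{u,v\})$ is connected, and Lemma~\ref{lem:delpairscale} concludes that $A'$ is scaling-equivalent to $A$. The main obstacle is the transfer of the (strong) stabilizer property from the $3$-connected cosimplifications to their series-coextensions $M\delete u$, $M\delete v$, and $M\delete\{u,v\}$: this is the dual of Lemma~\ref{lem:sistab} combined with the observation that series coextension of a $\pf$-representable matroid preserves $\pf$-representability, both standard but not explicitly recorded in the excerpt.
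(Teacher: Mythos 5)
Your proof is correct and takes essentially the same route as the paper: existence of representations of $M\delete u$ and $M\delete v$ extending $D$ via the strong-stabilizer property transferred through cosimplification (the dual of Lemma~\ref{lem:sistab}), gluing along the common minor $M\delete\{u,v\}$ by the stabilizer property (the paper pins down the overlap by normalizing on a spanning tree of $G(M,B)$ with $u$ and $v$ as leaves, whereas you rescale one matrix to match the other, which amounts to the same thing), and uniqueness via Lemma~\ref{lem:delpairscale} together with connectivity of $M\delete\{u,v\}$. The only blemish is the swapped labels in ``$A-v=A_u$, $A-u=A_v$'': with your construction $A-u=A_u$ and $A-v=A_v$, which is clearly what was intended.
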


\begin{proof}
  Suppose $D$, $B$, $E_N$ are as in the theorem. Let $T$ be a spanning tree for $G(M,B)$ having $u$ and $v$ as leaves; $T$ exists since $\{u,v\}$ is a deletion pair. The fact that $N$ is a strong $\parf$-stabilizer for $\matset$, together with the dual of Lemma \ref{lem:sistab}, shows that there is a unique $(T\setminus u)$-normalized $\parf$-matrix $A'$ such that $A'[E_N]$ is scaling-equivalent to $D$ and $M\delete u = M[I\  A']$, and a unique $(T\setminus v)$-normalized $\parf$-matrix $A''$ such that $A''[E_N]$ is scaling-equivalent to $D$ and $M\delete v = M[I\  A'']$. Since $N$ is a strong $\parf$-stabilizer, also $A'-v = A''- u$. Now let $A$ be the matrix obtained from $A'$ by appending column $A''[B,v]$. Then $A$ satisfies all properties of the theorem. Uniqueness follows from Lemma~\ref{lem:delpairscale}.
\end{proof}

Most of the time we will apply Theorem~\ref{thm:uniquematrix} to matrices $D$ that do not extend to a representation of $M$. If a matrix with entries in a partial field does not represent a matroid, then it must have one of three problems, described by the next definition.

\begin{definition}\label{def:incrimi}
  Let $B$ be a basis of $M$ and let $A$ be a $B\times (E(M)\setminus B)$ matrix with entries in $\parf$. A set $Z\subseteq E(M)$ \emph{incriminates} the pair $(M,A)$ if $A[Z]$ is square and one of the following holds:\index{incriminating set|defi}
  \begin{enumerate}
    \item\label{it:incrimi1} $\det(A[Z])\not\in\parf$;
    \item\label{it:incrimi2} $\det(A[Z]) = 0$ but $B\symdiff Z$ is a basis of $M$;
    \item\label{it:incrimi3} $\det(A[Z]) \neq 0$ but $B\symdiff Z$ is dependent in $M$.
  \end{enumerate}
\end{definition} 

The proof of the following lemma is obvious and therefore omitted.

\begin{lemma}\label{lem:incriminate}
  Let $A$ be an $X\times Y$ matrix, where $X$ and $Y$ are disjoint and $X\cup Y = E(M)$. Exactly one of the following statements is true:
  \begin{enumerate}
    \item $A$ is a $\parf$-matrix and $M = M[I\  A]$;
    \item some $Z\subseteq X\cup Y$ incriminates $(M,A)$.
  \end{enumerate}
\end{lemma}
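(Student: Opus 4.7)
The plan is to reduce everything to a single determinantal identity relating square submatrices of $A$ to full-rank column sets of $[I\ A]$, and then handle the two statements by a case analysis.

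First I would nail down the core observation. Any $Z \subseteq X\cup Y$ for which $A[Z]$ is square can be written as $Z = X_0 \cup Y_0$ with $X_0 \subseteq X$, $Y_0 \subseteq Y$, and $|X_0|=|Y_0|$; then the set $S := X \symdiff Z = (X\setminus X_0)\cup Y_0$ has exactly $|X|$ elements. After permuting rows so that those indexed by $X\setminus X_0$ come first, the submatrix $[I\ A][X,S]$ is block-triangular with diagonal blocks $I_{X\setminus X_0}$ and $A[Z]$, so
\[
\det\bigl([I\ A][X,S]\bigr) = \pm \det(A[Z]).
\]
Moreover, as $Z$ ranges over all such choices, $S=X\symdiff Z$ ranges over all size-$|X|$ subsets of $X\cup Y$. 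Thus, whenever $A$ is a $\parf$-matrix, $S$ is a basis of $M[I\ A]$ if and only if $\det(A[Z])\neq 0$.

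With this correspondence, the forward direction is immediate. If (1) holds, then every square submatrix of $A$ has determinant in $\parf$, so Definition~\ref{def:incrimi}(\textit{i}) never applies; and by the identification above, $\det(A[Z])=0$ exactly when $S = B\symdiff Z$ is non-basic in $M[I\ A]=M$, which rules out both (\textit{ii}) and (\textit{iii}). Hence (1) excludes (2).

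For the converse I would argue contrapositively. Suppose (1) fails. If $A$ is not a $\parf$-matrix, then by Definition~\ref{def:weakPmatrix} some square submatrix $A[Z]$ has $\det(A[Z])\notin\parf$, so $Z$ incriminates $(M,A)$ via (\textit{i}). Otherwise $A$ is a $\parf$-matrix but $M\neq M[I\ A]$. Since $B=X$ is a basis of $M$ (the standing hypothesis of Definition~\ref{def:incrimi}) and $X$ is a basis of $M[I\ A]$, both matroids share the ground set $X\cup Y$ and rank $|X|$, so their base-families are collections of size-$|X|$ subsets that must differ on some set $S$. Writing $S = B\symdiff Z$, the determinantal identity forces exactly one of (\textit{ii}) (if $S$ is a basis of $M$ but $\det(A[Z])=0$) or (\textit{iii}) (if $\det(A[Z])\neq 0$ but $S$ is dependent in $M$).

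The only potential obstacle is purely bookkeeping: making sure the block-triangular permutation is consistent so that the sign ambiguity in $\det([I\ A][X,S]) = \pm\det(A[Z])$ does not interfere with the ``nonzero'' condition (it cannot, since we only ever test for vanishing), and verifying that every size-$|X|$ subset of $X\cup Y$ arises as some $B\symdiff Z$ with $A[Z]$ square. Both are routine.
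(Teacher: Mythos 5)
Your argument is correct: the correspondence $Z \mapsto X\symdiff Z$ between square submatrices $A[Z]$ and size-$|X|$ column sets of $[I\ A]$, together with $\det([I\ A][X,X\symdiff Z]) = \pm\det(A[Z])$ and the fact that a size-$|X|$ non-basis of $M$ is dependent, is exactly the routine verification the paper has in mind when it omits the proof of Lemma~\ref{lem:incriminate} as obvious. Nothing is missing, and your handling of the sign and of case (\textit{iii}) is sound.
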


For the remainder of this section we will assume that $A$ is an $X\times Y$ matrix with entries in $\parf$ such that $X$ and $Y$ are disjoint, $X\cup Y = E(M)$, and $u,v\in Y$.

It is often desirable to have a small incriminating set. If we have some information about minors of $A$ then this can be achieved by pivoting.

\begin{theorem}\label{thm:incriminatingsetsmall}
  Suppose $A-u$, $A-v$ are $\parf$-matrices and $M\delete u = M[I\  (A-u)]$, $M\delete v = M[I\  (A-v)]$. Suppose $Z \subseteq X\cup Y$ incriminates $(M,A)$. Then there exists an $X'\times Y'$ matrix $A'$ and $a,b \in X'$, such that $u,v \in Y'$, $A-u$ is geometrically equivalent to $A'-u$, such that $A-v$ is geometrically equivalent to $A'-v$, and such that $\{a,b,u,v\}$ incriminates $(M,A')$.
\end{theorem}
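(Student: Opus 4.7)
The plan is to reduce an arbitrary incriminating set $Z$ to one of size exactly four, containing $u$ and $v$, by iterated pivots on entries outside columns $u$ and $v$; each such pivot shrinks $|Z|$ by two.

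First, every incriminating set $Z$ for $(M,A)$ contains both $u$ and $v$. If $u\notin Z$ then $A[Z]$ is a submatrix of the $\parf$-matrix $A-u$ representing $M\delete u$, so $\det(A[Z])\in\parf$; moreover $u\notin X\symdiff Z$, and $u$ is not a coloop of $M$ (else $M\delete\{u,v\}$ in Definition \ref{def:delpair} would be disconnected), so ``basis of $M\delete u$'' coincides with ``basis of $M$'' for subsets avoiding $u$. Hence none of the conditions of Definition \ref{def:incrimi} can hold; the case $v\notin Z$ is symmetric.

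Among all $X'\times Y'$ matrices $A'$ with $u,v\in Y'$, $A'-u$ geometrically equivalent to $A-u$, and $A'-v$ geometrically equivalent to $A-v$, together with incriminating sets $Z'\subseteq X'\cup Y'$ for $(M,A')$, choose one with $|Z'|$ minimum, and write $2k:=|Z'|$. Since $\{u,v\}\subseteq Y'\cap Z'$, $k\geq 2$; I claim $k=2$. Suppose $k\geq 3$, and consider the $k\times(k-2)$ submatrix $B:=A'[X'\cap Z',(Y'\cap Z')\setminus\{u,v\}]$. If $B=0$, then $A'[Z']$ has a zero column, so $\det(A'[Z'])=0\in\parf$, ruling out conditions (i) and (iii); $X'\symdiff Z'$ must be a basis of $M$. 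But $M\delete\{u,v\}=M[I\ A'[X',Y'\setminus\{u,v\}]]$, and $B=0$ means every $e\in(Y'\cap Z')\setminus\{u,v\}$ lies in the closure in $M$ of the independent set $X'\setminus(X'\cap Z')$ of size $r(M)-k$; hence $(X'\setminus(X'\cap Z'))\cup\{e\}$ is dependent, so its superset $X'\symdiff Z'=(X'\setminus(X'\cap Z'))\cup(Y'\cap Z')$ is dependent, contradicting our basis conclusion.

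So $B\neq 0$; pick $x\in X'\cap Z'$ and $y\in(Y'\cap Z')\setminus\{u,v\}$ with $A'_{xy}\neq 0$, and set $A'':=(A')^{xy}$, $Z'':=Z'\setminus\{x,y\}$, $X'':=(X'\setminus x)\cup y$, $Y'':=(Y'\setminus y)\cup x$. Because $y\notin\{u,v\}$, the formulas in Definition \ref{def:pivot} yield $(A')^{xy}-u=(A'-u)^{xy}$ and similarly for $v$; by Proposition \ref{prop:pivotproper} these are $\parf$-matrices, geometrically equivalent to $A-u$ and $A-v$ respectively. A Schur-complement computation gives
\begin{align*}
\det(A'[Z'])=A'_{xy}\cdot\det(A''[Z'']),
\end{align*}
and $X''\symdiff Z''=X'\symdiff Z'$. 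Since $A'_{xy}\in\parf^*$, each of (i)--(iii) for $(M,A')$ at $Z'$ transfers to the corresponding condition for $(M,A'')$ at $Z''$, contradicting minimality. Hence $k=2$, and $Z'=\{a,b,u,v\}$ with $\{a,b\}=X'\cap Z'\subseteq X'$, as required. The main obstacle is guaranteeing a valid pivot entry once $|Z'|\geq 6$; the closure argument above handles this by exploiting that $A-u$ and $A-v$ already correctly represent $M\delete u$ and $M\delete v$.
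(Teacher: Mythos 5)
Your proof is correct and follows essentially the paper's own argument: take a minimal incriminating set (your $|Z'|$ versus the paper's $|Z\cap Y|$ is the same measure, since $A[Z]$ is square), find a nonzero entry of $A'[X'\cap Z',(Y'\cap Z')\setminus\{u,v\}]$, and pivot on it, using $\det(A'[Z'])=\pm A'_{xy}\det(A''[Z''])$ and $X''\symdiff Z''=X'\symdiff Z'$ to transfer each incriminating condition. The only substantive differences are cosmetic: you establish the existence of a pivot entry by a closure/fundamental-circuit dependence argument where the paper uses basis exchange between $X$ and $X\symdiff Z$ against a fixed zero column, and your parenthetical justification that $u$ is not a coloop (via disconnection of $M\delete\{u,v\}$) is not a valid implication, though the fact itself is immediate since $M$ is $3$-connected.
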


\begin{proof}
  Suppose the theorem is false. Let $X,Y,A,u,v,M,Z$ form a counterexample, and suppose the counterexample was chosen such that $|Z\cap Y|$ is minimal. Clearly $u,v \in Z$. Suppose $y \in Z$ for some $y \in Y\setminus\{u,v\}$.
  \begin{claim}
    Some entry of $A[X\cap Z, y]$ is nonzero.
  \end{claim}
  \begin{subproof}
    Suppose all entries of $A[X\cap Z, y]$ equal zero. Then $\det(A[Z]) = 0$. Since $Z$ incriminates $(M,A)$, this implies that $X\symdiff Z$ is a basis of $M$. Now there is an $x \in Z\cap X$ such that $B := X\symdiff \{x,y\}$ is a basis of $M$. But since $u,v \not \in B$, $B$ is also a basis of $M\delete \{u,v\}$. Since $M\delete u = M[I\  (A-u)]$, this implies that $A_{xy} \neq 0$, a contradiction.
  \end{subproof}
  Now pick $x \in X\cap Z$ such that $A_{xy} \neq 0$, let $X' := X\symdiff \{x,y\}$, let $Y' := Y\symdiff \{x,y\}$, $A' := A^{xy}$, and let $Z' := Z\setminus \{x,y\}$. Since $A^{xy}-u = (A-u)^{xy}$, the matrix $A'-u$ is a $\parf$-matrix and $M\delete u = M[I\  (A'-u)]$. Likewise $A'-v$ is a $\parf$-matrix and $M\delete v = M[I\  (A'-v)]$.
  \begin{claim}
    $Z'$ incriminates $(M,A')$.
  \end{claim}
  \begin{subproof}
    Note that $\det(A'[Z']) = \pm A_{xy}^{-1} \det(A[Z])$. Therefore, if $\det(A[Z]) \not \in \parf$, then certainly $\det(A'[Z']) \not \in \parf$ and the claim follows. Otherwise, observe that $X'\symdiff Z' = X \symdiff Z$, so $X'\symdiff Z'$ is a basis of $M$ if and only if $X\symdiff Z$ is a basis. Moreover, $\det(A'[Z']) = 0$ if and only if $\det(A[Z]) = 0$. The claim now follows from Definition \ref{def:incrimi}.
  \end{subproof}
  But $Z' \cap Y' = (Z\cap Y) \setminus y$, contradicting minimality of $|Z\cap Y|$.
\end{proof}

For the remainder of this section we assume $A-u$, $A-v$ are $\parf$-matrices, $M\delete u = M[I\  (A-u)]$, $M\delete v = M[I\  (A-v)]$, and $M\delete u, M\delete v \in \matset$. We also assume that $a,b \in X$ are such that $\{a,b,u,v\}$ incriminates $(M,A)$.

Pivots were used to create a small incriminating set, but they may destroy it too. We identify some pivots that don't.

\begin{definition}\label{def:allowable}
  If $x \in X, y \in Y\setminus\{u,v\}$ are such that $A_{xy} \neq 0$, then a pivot over $xy$ is \emph{allowable} if there are $a', b' \in X\symdiff \{x,y\}$ such that $\{a',b',u,v\}$ incriminates $(M,A^{xy})$.
\end{definition}

\begin{lemma}\label{lem:allowableab}
  If $x \in \{a,b\}, y \in Y\setminus\{u,v\}$ are such that $A_{xy} \neq 0$, then $\{a,b,u,v\}\symdiff\{x,y\}$ incriminates $(M,A^{xy})$.
\end{lemma}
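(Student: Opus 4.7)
The plan is to verify directly, via Definition \ref{def:incrimi}, that the set $Z' := \{a,b,u,v\}\symdiff\{x,y\} = \{b,u,v,y\}$ (taking $x=a$ without loss of generality; the case $x=b$ is symmetric) satisfies one of the three incriminating conditions for $(M,A^{ay})$, using the corresponding condition for $(M,A)$ with $Z := \{a,b,u,v\}$.

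The key calculation is a short determinant identity. Since $A^{ay}[Z']$ is a $2\times 2$ matrix whose entries are given by the pivot formula in Definition \ref{def:pivot}, one computes
\begin{align*}
\det(A^{ay}[Z'])
&= (A^{ay})_{yu}(A^{ay})_{bv} - (A^{ay})_{yv}(A^{ay})_{bu}\\
&= A_{ay}^{-1}A_{au}\bigl(A_{bv}-A_{by}A_{ay}^{-1}A_{av}\bigr) - A_{ay}^{-1}A_{av}\bigl(A_{bu}-A_{by}A_{ay}^{-1}A_{au}\bigr)\\
&= A_{ay}^{-1}\bigl(A_{au}A_{bv}-A_{av}A_{bu}\bigr) \;=\; A_{ay}^{-1}\det(A[Z]).
\end{align*}
Since $A_{ay}\in\pf^*$, this shows both that $\det(A^{ay}[Z'])\in\pf$ if and only if $\det(A[Z])\in\pf$, and that $\det(A^{ay}[Z'])=0$ if and only if $\det(A[Z])=0$.

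Next I observe that the ``basis'' part of the incriminating condition is preserved as well, because the pivoted row set is $X':=(X\setminus a)\cup y$, and
\begin{align*}
X'\symdiff Z' \;=\;\bigl((X\setminus a)\cup y\bigr)\symdiff\{b,u,v,y\} \;=\;(X\setminus\{a,b\})\cup\{u,v\} \;=\; X\symdiff Z.
\end{align*}
Thus whichever of the three alternatives in Definition \ref{def:incrimi} held for $(M,A)$ with $Z$, the same alternative holds for $(M,A^{ay})$ with $Z'$: alternative \textit{(i)} transfers by the scalar identity (since $A_{ay}^{-1}\in\pf^*$ cannot send something outside $\pf$ into $\pf$); alternatives \textit{(ii)} and \textit{(iii)} transfer since both the vanishing of the determinant and the set $X\symdiff Z$ are unchanged. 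Hence $Z'$ incriminates $(M,A^{ay})$, as required. The only subtle point is to keep careful track of the row/column relabelling when applying the pivot, which is what makes the symmetric-difference identity above work out exactly. No deeper input (stabilizer theory, connectivity, etc.) is needed; this is purely a bookkeeping lemma.
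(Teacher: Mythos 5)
Your proof is correct, and it takes a genuinely more computational route than the paper's. You establish the scalar identity $\det(A^{ay}[Z']) = \pm A_{ay}^{-1}\det(A[Z])$ by expanding the $2\times 2$ pivot formulas directly, and then transfer whichever alternative of Definition~\ref{def:incrimi} holds in one step, using that multiplication by the unit $A_{ay}$ preserves both membership in $\pf$ and (non)vanishing, together with the bookkeeping identity $(X\symdiff\{a,y\})\symdiff Z' = X\symdiff Z$. The paper avoids this computation: for alternative \eqref{it:incrimi1} it argues by contradiction that if $\det(A^{ay}[Z'])$ were in $\pf$ then $A^{ay}[Z\cup y]$ would be a $\pf$-matrix, and pivoting back via Proposition~\ref{prop:pivotproper} would force $A[Z\cup y]$, hence $\det(A[Z])$, into $\pf$; for alternatives \eqref{it:incrimi2} and \eqref{it:incrimi3} it compares the matroids $M_X[Z\cup y]$ and $M[I\ A[Z\cup y]]$ and reads off whether $\{u,v\}$ is a basis, using the same symmetric-difference observation you make. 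Your single identity is shorter and treats the three cases uniformly; it works so cleanly because the incriminating set meets $X$ in exactly the two rows $a,b$, so everything reduces to a $2\times2$ determinant computed in the commutative ring underlying $\pf$, which is legitimate even though $A$ itself need not be a $\pf$-matrix (and the sign ambiguity from row/column ordering is immaterial since $-1\in\pf^*$). The paper's route trades this computation for appeals to Proposition~\ref{prop:pivotproper} and matroid arguments, in the same spirit as the neighbouring lemmas. One point both treatments leave implicit, which you could state for completeness, is that $X\symdiff\{a,y\}$ is indeed a basis of $M$ (immediate from $A_{ay}\neq 0$, $y\notin\{u,v\}$ and $M\delete u = M[I\ (A-u)]$), so that Definition~\ref{def:incrimi} genuinely applies to the pair $(M,A^{ay})$.
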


\begin{proof}
  By symmetry we may assume $x = a$. Let $Z := \{a,b,u,v\}$ and $Z' := \{y,b,u,v\}$. First suppose $\det(A[Z]) \not\in\parf$, but $\det(A^{ay}[Z'])\in \parf$. Then $A^{ay}[Z\cup y]$ is a $\parf$-matrix. Indeed: all entries are in $\parf$, $\det(A^{ay}[\{y,b,a,u\}])\in\parf$, and $\det(A^{ay}[\{y,b,a,v\}]) \in \parf$. This is clearly impossible, since $(A^{ay})^{ya}$ is scaling-equivalent to $A$, after which Proposition~\ref{prop:pivotproper} implies that $A[Z\cup y]$ is a $\parf$-matrix. Hence $\det(A^{ay}[Z'])\not\in\parf$, and the lemma follows.

  Next suppose that $\det(A[Z]) = 0$ and that $X\symdiff Z$ is a basis of $M$. Consider $M' := M_X[Z\cup y]$. Since $\det(A[Z])\in \parf$, $A[Z\cup y]$ is a $\parf$-matrix. Let $N' := M[I\  A[Z\cup y]]$. We have $N' \neq M'$, since $\{u,v\}$ is a basis of $M'$ yet dependent in $N'$. But since $\{u,v\}$ is dependent in $N'$, we have $\det(A^{ay}[Z']) = 0$. Since $X\symdiff Z = (X\symdiff \{a,y\})\symdiff Z'$, the lemma follows.

  The final case, where $\det(A[Z]) \in \parf^*$ and $B\symdiff Z$ is dependent in $M$, is similar to the second and we omit the proof.
\end{proof}

\begin{lemma}\label{lem:allowablezeros}
  If $x \in X\setminus \{a,b\}, y \in Y\setminus\{u,v\}$ are such that $A_{xy} \neq 0$ and either $A_{xu} = A_{xv} = 0$ or $A_{ay} = A_{by} = 0$, then $\{a,b,u,v\}$ incriminates $(M,A^{xy})$.
\end{lemma}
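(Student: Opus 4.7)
The plan is to verify that $Z := \{a,b,u,v\}$ still incriminates $(M, A^{xy})$ by establishing two ingredients: (a) $A^{xy}[\{a,b\},\{u,v\}] = A[\{a,b\},\{u,v\}]$ (and in particular $\det A^{xy}[Z] = \det A[Z]$), and (b) $X \symdiff Z$ is a basis of $M$ if and only if $X^{xy} \symdiff Z$ is, where $X^{xy} := (X \setminus x) \cup y$. Together these make each of the three conditions in Definition \ref{def:incrimi} transfer from $(M,A)$ to $(M,A^{xy})$, since the only relevant data are the value of $\det A^{xy}[Z]$ and the basis status of the corresponding symmetric difference of row-set with $Z$.

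Ingredient (a) is a direct calculation: by Definition \ref{def:pivot}, since $x \notin \{a,b\}$ and $y \notin \{u,v\}$, each entry at $(r,c) \in \{a,b\}\times\{u,v\}$ changes by $-A_{ry}(A_{xy})^{-1}A_{xc}$, and under either hypothesis one factor in this product is zero. For ingredient (b) I would work in the rank-$3$ minor $M' := M/S$ with $S := X \setminus \{a,b,x\}$. Since $A_{xy} \neq 0$ in the $\pf$-matrix $A - \{u,v\}$ representing $M\delete\{u,v\}$, both $X$ and $X^{xy}$ are bases of $M\delete\{u,v\}$ and hence of $M$, so $\{a,b,x\}$ and $\{a,b,y\}$ are both bases of $M'$. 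Writing $X \symdiff Z = S \cup \{x,u,v\}$ and $X^{xy} \symdiff Z = S \cup \{y,u,v\}$, ingredient (b) reduces to showing that $\{x,u,v\}$ is a basis of $M'$ if and only if $\{y,u,v\}$ is.

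If $A_{xu} = A_{xv} = 0$, then the fundamental cocircuit of $x$ with respect to $X$ in $M \delete u$ (read from $A-u$) omits $v$, so $(X \setminus x) \cup v$ is dependent in $M \delete u$, giving $v \in \closure_M(X \setminus x)$; symmetrically from $M \delete v$ we obtain $u \in \closure_M(X \setminus x)$. Thus in $M'$ both $u$ and $v$ lie in the rank-$2$ flat $F := \closure_{M'}(\{a,b\})$, which contains neither $x$ nor $y$ since $\{a,b,x\}$ and $\{a,b,y\}$ are bases of $M'$. Hence for $z \in \{x,y\}$, the set $\{z,u,v\}$ is a basis of $M'$ precisely when $\{u,v\}$ is independent, a condition symmetric in $x$ and $y$. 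If instead $A_{ay} = A_{by} = 0$, the fundamental circuit of $y$ with respect to $X$ in $M \delete u$ avoids $\{a,b\}$, so $y \in \closure_M(S \cup x)$, which in $M'$ says $y \in \closure_{M'}(\{x\})$; since neither $x$ nor $y$ is a loop of $M'$, they are parallel there, and $\rank_{M'}(\{x,u,v\}) = \rank_{M'}(\{y,u,v\})$, as needed.

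The hard part is really ingredient (b): the hypothesis is a condition on entries of $A$, but $A$ itself is not known to represent $M$, so the matroidal conclusions (closure relations in the first case, a parallel pair in the second) must be pulled out of the $\pf$-representations $A - u$ and $A - v$ of $M \delete u$ and $M \delete v$, and then concentrated inside the common rank-$3$ minor $M/S$ to compare $\{x,u,v\}$ and $\{y,u,v\}$ simultaneously.
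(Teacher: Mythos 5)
Your proof is correct and takes essentially the same route as the paper: both observe that the zero hypothesis keeps $A[\{a,b\},\{u,v\}]$ unchanged under the pivot, and both settle the remaining basis question in the small minor obtained by contracting $X\setminus\{a,b,x\}$ (the paper also deletes the irrelevant columns), using the representations $A-u$ and $A-v$ to show $\{x,u,v\}$ and $\{y,u,v\}$ are interchangeable. The only cosmetic difference is in the case $A_{xu}=A_{xv}=0$, where the paper states the key fact as ``$x$ and $y$ are in series'' in the six-element minor, while you state it as ``$u$ and $v$ lie in the rank-$2$ flat $\closure_{M'}(\{a,b\})$''---equivalent formulations of the same structural observation.
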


\begin{proof}
  Let $Z := \{a,b,u,v\}$ and define $X' := X\symdiff\{x,y\}$.
  Since $A^{xy}[Z] = A[Z]$, we have $\det(A^{xy}[Z])\in \parf$ if and only if $\det(A[Z])\in\parf$. Therefore we only need to prove the two cases where $\det(A[Z]) \in \parf$. Define $M' := M_X[Z\cup \{x,y\}]$.

  \begin{claim}
    $x$ and $y$ are either in series or in parallel in $M'$.
  \end{claim}
  \begin{subproof}
    If $A_{ay} = A_{by} = 0$ then $x$ and $y$ are clearly in parallel, since they are in parallel in $M'\delete v = M[I\  A[\{x,a,b,y,u]]$. Now assume $A_{xu} = A_{xv} = 0$. If $x$ and $y$ are not in series, then $\{x,y,z\}$ is a cobasis of $M'$ for some $z \in Z$. Clearly $\{y,u,v\}$ is a cobasis of $M'$, so $\{x,y,u'\}$ is a cobasis of $M'$ for some $u'\in\{u,v\}$. Without loss of generality, assume $u' = u$. But then a pivot over $xv$ should be possible in $M'\delete u = M[I\  A[\{x,a,b,y,v\}]]$, contradicting $A_{xv} = 0$.
  \end{subproof}

  But now it follow that $\{x,u,v\}$ is a basis of $M'$ if and only if $\{y,u,v\}$ is a basis of $M'$, and hence that $X\symdiff Z$ is a basis of $M$ if and only if $X'\symdiff Z$ is a basis of $M$. The lemma follows.
\end{proof}

The next theorem gives sufficient conditions under which a certain minor of $M$ can be shown to be outside $\matset$.

\begin{theorem}\label{thm:incriminatingsetinminor}
  Let $N'$ be a strong stabilizer for $\matset$ and suppose $C \subseteq E(M)$ is such that $M_X[C]$ is strictly $N'$-fragile. If there exist subsets $Z,Z_1, Z_2\subseteq E(M)$ such that
  \begin{enumerate}
    \item $u \in Z_1\setminus Z_2$, $v \in Z_2\setminus Z_1$;
    \item $C\cup \{a,b\}\subseteq Z \subseteq Z_1\cap Z_2$;
    \item $M_X[Z]$ is connected;
    \item $M_X[Z_1]$ is $3$-connected up to series and parallel classes;
    \item $M_X[Z_2]$ is $3$-connected up to series and parallel classes;
    \item $\{a,b,u,v\}$ incriminates $(M_X[Z_1\cup Z_2],A[Z_1\cup Z_2])$;
  \end{enumerate}
  then $M_X[Z_1\cup Z_2]$ is not strongly $\pf$-stabilized by $N'$.
\end{theorem}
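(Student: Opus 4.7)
The plan is to argue by contradiction: suppose $N'$ strongly stabilizes $M_X[Z_1\cup Z_2]$ over $\pf$, so in particular $M_X[Z_1\cup Z_2]$ is $\pf$-representable (otherwise the conclusion holds trivially, since strong stabilization requires the stabilized matroid to have a representation). Strict $N'$-fragility of $M_X[C]$ supplies an $N'$-minor $N''$ inside $M_X[C]$. Because $u,v\notin Z$, we have $A[Z]=(A-u)[Z]=(A-v)[Z]$, so $A[Z]$ is a genuine $\pf$-matrix with $M_X[Z]=M[I\ A[Z]]$, and it restricts to a $\pf$-representation of $N''$. Strong stabilization then yields a $\pf$-matrix $A^*$ with $M_X[Z_1\cup Z_2]=M[I\ A^*]$ that extends this representation of $N''$.

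Next I would verify that $N'$ stabilizes each of $M_X[Z_1]$ and $M_X[Z_2]$. For $i\in\{1,2\}$, the matroid $M_X[Z_i]$ lies in $\matset$ as a minor of $M\delete v$ (for $i=1$) or $M\delete u$ (for $i=2$); and because $M_X[Z_i]$ is 3-connected up to series and parallel classes, $\co(\si(M_X[Z_i]))$ is 3-connected, lies in $\matset$, and retains an $N'$-minor (since $N'$ is 3-connected with enough elements to survive simplification and cosimplification). Since $N'$ is a stabilizer for $\matset$, it stabilizes $\co(\si(M_X[Z_i]))$, and Lemma~\ref{lem:sistab} together with its dual promote this to stabilization of $M_X[Z_i]$ itself.

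The remaining step is to show that, after a suitable rescaling, $A^*=A[Z_1\cup Z_2]$. Both $A^*[Z_1]$ and $A[Z_1]=(A-v)[Z_1]$ are $\pf$-representations of $M_X[Z_1]$ that agree on the $N'$-minor, so by stabilization they are scaling-equivalent; analogously on $Z_2$. Rescale $A^*$ so that $A^*[Z_1]=A[Z_1]$, then perform a further scaling supported on the rows and columns indexed by $Z_2\setminus Z_1$ to obtain $A^*[Z_2]=A[Z_2]$; since these rows and columns are disjoint from $Z_1$, the equality $A^*[Z_1]=A[Z_1]$ is preserved. Consistency of the two scalings on the overlap $Z_1\cap Z_2$ is ensured by the fact that $\bip(A[Z])$ is connected (Lemmas~\ref{lem:bipconn} and~\ref{lem:graphsequal} applied to the connected matroid $M_X[Z]$), together with Lemma~\ref{lem:bippropertiesunique}. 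This yields $A^*=A[Z_1\cup Z_2]$; but $A^*$ is a $\pf$-matrix representing $M_X[Z_1\cup Z_2]$, so Lemma~\ref{lem:incriminate} implies that no set incriminates $(M_X[Z_1\cup Z_2],A[Z_1\cup Z_2])$, contradicting the hypothesis that $\{a,b,u,v\}$ does. The main obstacle is this last gluing step: the one-parameter rescaling freedom on each of $Z_1$ and $Z_2$ has to be synchronized across $Z_1\cap Z_2$, and it is precisely the connectivity of $\bip(A[Z])$ (and the fact that $Z\subseteq Z_1\cap Z_2$) that makes such a synchronization possible.
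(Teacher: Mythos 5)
Your overall strategy is the same as the paper's: argue by contradiction, use strong stabilization to produce a $\pf$-matrix $A^*$ representing $M_X[Z_1\cup Z_2]$ that agrees with $A$ on the fragile minor, use that $N'$ stabilizes $M_X[Z_1]$ and $M_X[Z_2]$ (via Lemma~\ref{lem:sistab} and its dual) to get $A^*[Z_i]$ scaling-equivalent to $A[Z_i]$, and then play this off against the incriminating set. The genuine gap is in your final gluing step. First, the claimed ``further scaling supported on the rows and columns indexed by $Z_2\setminus Z_1$'' is not justified: connectivity of $\bip(A[Z])$ synchronizes the scaling factors only on $Z$, whereas the hypothesis allows $Z_1\cap Z_2$ to be strictly larger than $Z$. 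A row $w\in (Z_1\cap Z_2)\setminus Z$ all of whose nonzero entries in $A[Z_2]$ lie in columns of $Z_2\setminus Z_1$ receives no constraint from the equality $A^*[Z_1\cap Z_2]=A[Z_1\cap Z_2]$, yet its scaling factor is forced (up to the one free parameter) by connectivity of $\bip(A[Z_2])$ and may be a unit different from $1$; then no scaling avoiding $Z_1\cap Z_2$ can carry $A^*[Z_2]$ to $A[Z_2]$. Second, even if you had both $A^*[Z_1]=A[Z_1]$ and $A^*[Z_2]=A[Z_2]$, the entries of $A^*$ with row in $Z_1\setminus Z_2$ and column in $Z_2\setminus Z_1$ (and vice versa) lie in neither submatrix, so $A^*=A[Z_1\cup Z_2]$ does not follow; but your closing contradiction, which transfers ``no set incriminates $(M_X[Z_1\cup Z_2],A^*)$'' to the pair $(M_X[Z_1\cup Z_2],A[Z_1\cup Z_2])$, needs exactly that full equality.

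The repair is what the paper does, and it only requires control on $Z\cup\{u,v\}$ rather than on $Z_1\cup Z_2$. From $A^*[Z_1]$ scaling-equivalent to $A[Z_1]$ and $A^*[Z_2]$ scaling-equivalent to $A[Z_2]$ you get that $A^*[Z\cup u]$ is scaling-equivalent to $A[Z\cup u]$ and $A^*[Z\cup v]$ is scaling-equivalent to $A[Z\cup v]$; since $M_X[Z]$ is connected, Lemma~\ref{lem:delpairscale} then gives that $A^*[Z\cup\{u,v\}]$ is scaling-equivalent to $A[Z\cup\{u,v\}]$. Because $\{a,b,u,v\}\subseteq Z\cup\{u,v\}$, this yields $\det(A^*[\{a,b,u,v\}])=p\det(A[\{a,b,u,v\}])$ for some $p\in\pf^*$, and each of the three alternatives in Definition~\ref{def:incrimi} is invariant under multiplication of the determinant by a unit, so $\{a,b,u,v\}$ incriminates $(M_X[Z_1\cup Z_2],A^*)$, contradicting Lemma~\ref{lem:incriminate} applied to the $\pf$-matrix $A^*$. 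With your last paragraph replaced by this argument, the proof goes through and coincides with the paper's.
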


\begin{proof}
  Let $C$, $Z_1$, and $Z_2$ be as in the theorem. Suppose that, contrary to the result claimed, $M_X[Z_1\cup Z_2]$ \emph{is} strongly $\parf$-stabilized by $N'$. Then $M_X[Z_1\cup Z_2] = M[I\  A']$, where $A'$ is an $(X\cap (Z_1\cup Z_2)) \times (Y\cap (Z_1\cup Z_2))$ $\parf$-matrix. Since $N'$ is a strong stabilizer for $\matset$, we may assume that $A'$ was chosen so that $A'[C] = A[C]$. By Lemma \ref{lem:sistab} and its dual, then, $A'[Z_1]$ is scaling-equivalent to $A[Z_1]$ and $A'[Z_2]$ is scaling-equivalent to $A[Z_2]$. Since $Z\subseteq Z_1 \cap Z_2$, also $A'[Z\cup u]$ is scaling-equivalent to $A[Z\cup u]$ and $A'[Z\cup v]$ is scaling-equivalent to $A[Z\cup v]$.

  Since $M_X[Z]$ is connected, it follows from Lemma~\ref{lem:delpairscale} that $A'[Z\cup \{u,v\}]$ is scaling-equivalent to $A[Z\cup \{u,v\}]$. But then $\det(A'[\{a,b,u,v\}]) = p \det(A[\{a,b,u,v\}])$ for some $p \in \pf^*$, and hence $\{a,b,u,v\}$ incriminates $(M_X[Z_1\cup Z_2],A')$, a contradiction.
\end{proof}

\section{Excluded minors containing a strong stabilizer}\label{sec:thestrongproof}
The main step in our proof of Theorem \ref{thm:rotapf} is the following result:

\begin{theorem}\label{thm:rotapfN}
  Let $s, t$ be positive integers, let $\parf$ be a finitary partial field, let $\matset$ be a well-closed class of $\pf$-representable matroids, and let $\mathcal{N}$ be a set of $\pf$-representable matroids such that, for each $N' \in \mathcal{N}$,
  \begin{enumerate}
    \item\label{it:pfN1} $N'$ is 3-connected and non-binary;
    \item\label{it:pfN2} $N'$ is a stabilizer for $\matset(\pf)$;
    \item\label{it:pfN3} $N'$ is a strong stabilizer for $\matset$.
  \end{enumerate}
  Let $N \in \mathcal{N}$ be a matroid with the following additional property.
  \begin{enumerate}\addtocounter{enumi}{3}
    \item\label{it:pfN4} If $M'$ is an excluded minor for $\matset$ having an $N$-minor and $M'$ is $\pf$-representable, then either $M'$ is not strongly stabilized by $N$ or $M'$ has branch width at most $s$.
  \end{enumerate}
  If all strictly $\mathcal{N}$-fragile matroids have branch width at most $t$, then there is a constant $l$ depending only on $s,t, \pf, \matset, \mathcal{N},N$, such that an excluded minor $M$ for $\matset$, with $N\minorof M$, has branch width at most $l$.
\end{theorem}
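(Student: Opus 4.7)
The plan is to combine deletion pairs, incriminating sets, blocking sequences, and the hypothesis that strictly $\mathcal{N}$-fragile matroids have bounded branch width. Let $M$ be an excluded minor for $\matset$ with $N\minorof M$; by Lemma~\ref{lem:exmin3c} $M$ is $3$-connected, and if $|E(M)|$ is too small to apply Corollary~\ref{cor:delpairexists} then $\bw(M)$ is already bounded. Otherwise, possibly after passing to duals via Lemma~\ref{lem:exmindual}, $M$ has a deletion pair $\{u,v\}$ preserving $N$, and both $M\delete u$ and $M\delete v$ lie in $\matset$. For a chosen representation $D$ of an $N$-minor, Theorem~\ref{thm:uniquematrix} produces a unique matrix $A$ such that $A-u$ and $A-v$ represent $M\delete u$ and $M\delete v$. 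If $M = M[I\ A]$ for \emph{every} choice of $D$, then every representation of $N$ extends to one of $M$, so $M$ is $\pf$-representable and strongly stabilized by $N$, and hypothesis~(iv) bounds $\bw(M)\leq s$. Otherwise I pick $D$ with $M\neq M[I\ A]$; Lemma~\ref{lem:incriminate} and Theorem~\ref{thm:incriminatingsetsmall} then yield an incriminating set of the form $\{a,b,u,v\}$ with $a,b$ on the basis side.

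Next I build a strictly $\mathcal{N}$-fragile minor around $\{a,b\}$. Starting from some $N'$-minor of $M$ containing $\{a,b\}$ and iteratively pruning any element that has both a contraction and a deletion preserving an $\mathcal{N}$-minor, I obtain $C\subseteq E(M)\setminus\{u,v\}$ with $\{a,b\}\subseteq C$ such that $M_X[C]$ is connected and strictly $\mathcal{N}$-fragile. By the standing hypothesis $\bw(M_X[C])\leq t$, and Proposition~\ref{prop:bwmonotone} gives $\bw(M_X[C\cup\{u,v\}])\leq t+2$. I now grow $Z:=C\cup\{u,v\}$ toward $E(M)$: whenever $M_X[Z]$ has a $2$-separation, Lemma~\ref{lem:uncrossed2sep} produces an uncrossed one (using that $N'$ is non-binary and so $M_X[Z]$ has a $U_{2,4}$-minor), and Theorem~\ref{thm:blockseq} supplies a blocking sequence whose addition reduces the $2$-separation count by Corollary~\ref{cor:block2sep} at a cost of at most $2$ in branch width (Theorem~\ref{thm:blseqbranchwidth}). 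When $M_X[Z]$ is $3$-connected and a proper minor, I add a single element via the Splitter Theorem, relying on Lemmas~\ref{lem:allowableab} and~\ref{lem:allowablezeros} to keep $\{a,b,u,v\}$ incriminating across the pivots needed along the way.

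The hard part, as in \citet{GGK}'s proof of Rota's Conjecture for $\GF(4)$, will be to bound the total number of growth steps. For this I rely on hypothesis~(ii), that each $N'\in\mathcal{N}$ stabilizes $\matset(\pf)$: at every intermediate stage, Theorem~\ref{thm:incriminatingsetinminor} forces the extended minor $M_X[Z_1\cup Z_2]$ to fail strong stabilization by $N'$, and since $\pf$ is finitary with $\matset(\pf)\subseteq\matset(\GF(q))$, the possible stabilization failures are governed by a finite collection of local configurations around $\{a,b,u,v\}$ depending only on $\pf,\mathcal{N},N$. Combined with Lemma~\ref{lem:2sepbound}, which bounds the number of $2$-separations appearing in bounded-distance extensions of a $3$-connected minor, this caps the required number of iterations by a constant $c=c(\pf,\matset,\mathcal{N},N)$. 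The resulting bound is $\bw(M)\leq\max(s,\,t+2+2c)=l$, as desired.
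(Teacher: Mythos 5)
Your setup (deletion pair, the candidate matrix from Theorem~\ref{thm:uniquematrix}, the incriminating quadruple $\{a,b,u,v\}$, a strictly $\mathcal{N}$-fragile minor of branch width at most $t$, blocking sequences costed by Theorem~\ref{thm:blseqbranchwidth} and counted via Lemma~\ref{lem:2sepbound}) matches the paper's, but the core of your argument has a genuine gap: you grow a single set $Z$ all the way to $E(M)$, one Splitter-Theorem element or one blocking sequence at a time, so the branch-width bound you get is $t+2+2c$ where $c$ is the total number of growth steps --- and you never actually bound $c$. The assertion that ``the possible stabilization failures are governed by a finite collection of local configurations around $\{a,b,u,v\}$'' is not a lemma available in the paper and is not proved; as stated it is exactly the missing content. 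The paper never bounds a global growth process. Instead it builds only a \emph{bounded-overhead} minor: a connected $Z\supseteq C\cup\{a,b\}$ with $|Z|\leq|C|+8$ (via allowable pivots and Assumption~\ref{ass:lexmin}), then two sets $Z_1\ni u$, $Z_2\ni v$ obtained by bridging the at most $\sepbound{9}$ $2$-separations of $M_X[(Z\setminus S)\cup u]$ and $M_X[Z\cup v]$ (Claims~\ref{cl:bridgedelv} and~\ref{cl:bridgedelu}), so that the hypotheses of Theorem~\ref{thm:incriminatingsetinminor} hold. That theorem shows $M_X[Z_1\cup Z_2]$ is not strongly stabilized by $N'$, hence not in $\matset$ by condition~(\textit{iii}); since $M$ is an excluded minor, minimality forces $M=M_X[Z_1\cup Z_2]$, and the chain of branch-width inequalities then gives $\bw(M)\leq t+4\cdot\sepbound{9}+13$, the desired contradiction. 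In short, the excluded-minor minimality does the work you are trying to do with an unproved finiteness claim; note also that your single growing set never produces the two sets with $u\in Z_1\setminus Z_2$ and $v\in Z_2\setminus Z_1$ that Theorem~\ref{thm:incriminatingsetinminor} needs (that split is what lets the representations of $M_X[Z_1]$ and $M_X[Z_2]$ be pinned down by $A-v$ and $A-u$).

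A second, smaller flaw: you insist that the fragile minor $C$ contain $\{a,b\}$, obtained by ``pruning'' elements that are both deletable and contractible. If at some stage $a$ or $b$ is itself both deletable and contractible you must either prune it (losing $\{a,b\}\subseteq C$) or stop (losing $\mathcal{N}$-fragility, and with it the bound $\bw(M_X[C])\leq t$ that your whole estimate rests on). The paper avoids this by taking $C$ merely minimal with an $\mathcal{N}$-minor (so $M_X[C]$ is strictly $\mathcal{N}$-fragile and $3$-connected), and then using allowable pivots (Lemmas~\ref{lem:allowableab} and~\ref{lem:allowablezeros}) to make $a$ and $b$ lie within distance~$3$ of $C$ in $G(M,X)$, so that $Z$ exceeds $C$ by at most $8$ elements; fragility of a set containing $a,b$ is never needed.
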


Note that \eqref{it:pfN4} is trivially satisfied if $\matset$ contains all 3-connected $\pf$-representable matroids strongly stabilized by $N$. In the applications in this paper this will always be the case. Moreover, within this paper we will only apply this result with $|\mathcal{N}| = 1$. We expect that the more general version will be useful in other contexts.

The proof can be summarized as follows. First, we pick an excluded minor having an $N$-minor but big branch width, and we select a deletion pair $\{u,v\}$ preserving $N$. We construct a matrix $A$ that is close to representing $M$ and locate a small incriminating set, $\{a,b,u,v\}$. Then we identify a $3$-connected minor $M'$ using $\{a,b,u,v\}$ such that $M'\contract \{a,b\}\delete \{u,v\}$ is $\mathcal{N}$-fragile. Now $\{u,v\}$ may not be a deletion pair for $M'$ since the connectivity of $\co(M'\delete u)$, $\co(M'\delete v)$, $\co(M'\delete \{u,v\})$ may be too low. We count the $1$- and $2$-separations and find that the number does not depend on $\mathcal{N}$ or $\parf$. But then only a constant number of blocking sequences need to be added back to $M'$ to repair the connectivity. The resulting matroid, $M''$ say, has branch width bounded by the branch width of $M'$ plus some constant. But $M''$ still has a strong stabilizer $N' \in \mathcal{N}$ as minor, and we can show $M'' \not \in \matset$, which leads to a contradiction.

\begin{proof} 
	Let $\pf$, $\matset$, $\mathcal{N}$, $N$, $s$, $t$ be as in the theorem. Let $r$ be an integer such that the excluded minors $M$ for $\matset$ with $\min\{\rank(M)-\rank(N), \rank(M^*)-\rank(N^*)\} < 3$ have branch width at most $r$. By Lemmas \ref{lem:exmindual} and \ref{lem:exminrankbound} there are finitely many such $M$, so $r$ exists. Let $l := \max\{r,s,t+4109\}$.

  Suppose that $M$ is an excluded minor for $\matset$ having an $N$-minor, but $\bw(M) > l$. Then $\rank(M)-\rank(N)\geq 3$ and $\rank(M^*)-\rank(N^*) \geq 3$. Let $E$ be the ground set of $M$. By Corollary~\ref{cor:delpairexists}, some $M' \in \{M,M^*\}$ has a deletion pair $\{u,v\}$ such that $M'\delete u$ is $3$-connected. By swapping $N$ with $N^*$ and $M$ with $M^*$ if necessary, we may assume $M' = M$. Pick sets $B, E_N$ such that $B$ is a basis of $M$ and $E_N \subseteq E\setminus \{u,v\}$ is such that $M_B[E_N] \cong N$. 

  By \eqref{it:pfN4} and the fact that $\bw(M) > s$, $M$ is either not $\pf$-representable or $M$ is not strongly stabilized by $N$. In the latter case it follows from \eqref{it:pfN2} that $M$ is stabilized by $N$. So in both cases there must be some representation of $N$ that does not extend to a representation of $M$. Fix an $(E_N\cap B)\times (E_N\setminus B)$ $\pf$-matrix $D$ with $N = M[I\  D]$ such that $D$ does not extend to a representation of $M$, and let $A'$ be the matrix described in Theorem~\ref{thm:uniquematrix}. 

  It follows that some $S\subseteq E$ incriminates $(M,A')$. Clearly $u,v \in S$. By Theorem~\ref{thm:incriminatingsetsmall}, there exists an $X\times Y$ matrix $A$ geometrically equivalent to $A'$ such that $a,b \in X$, $u,v \in Y$, and $\{a,b,u,v\}$ incriminates $(M,A)$. By Proposition \ref{prop:scalepivotcommute}, $A$ is unique up to scaling.

  Let $C\subseteq E\setminus \{u,v\}$ be a smallest possible set such that $M_X[C]$ has a minor isomorphic to a member of $\mathcal{N}$. Since $M\delete \{u,v\}$ has an $N$-minor, $C$ exists.

  \begin{claim}
    $M_X[C]$ is $3$-connected.
  \end{claim}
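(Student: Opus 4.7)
The plan is to argue by contradiction: suppose $M_X[C]$ admits a $k$-separation $(A,B)$ with $k\in\{1,2\}$, and produce an element $e\in C$ for which $M_X[C\setminus e]$ still has an $\mathcal{N}$-minor, contradicting the minimality of $C$. Fix a minor $N'\in\mathcal{N}$ of $M_X[C]$; since $N'$ is $3$-connected and non-binary, $|E(N')|\geq 4$. The key bookkeeping point is that $M_X[C\setminus e]=M_X[C]\contract e$ when $e\in X$, whereas $M_X[C\setminus e]=M_X[C]\delete e$ when $e\in Y$, so the argument must match the operation used to shrink $C$ to the side of the basis that $e$ lies on.

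For $k=1$, the matroid decomposes as $M_X[C]=M_X[C]|A\oplus M_X[C]|B$. Since $N'$ is connected, its image sits entirely on one side, say $B$. Because the two summands are independent, for any $e\in A$ both $M_X[C]\delete e$ and $M_X[C]\contract e$ retain the $N'$-minor (it lives in the untouched $B$-summand), so I may pick the operation matching $e$'s side.

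For $k=2$, take the $2$-separation with $|E(N')\cap A|\geq|E(N')\cap B|$. Proposition~\ref{pro:twosepminorintersection} yields $|E(N')\cap B|\leq 1$ and, unless $B$ is a parallel or series class, an element $e\in B$ for which \emph{both} $M_X[C]\delete e$ and $M_X[C]\contract e$ keep an $N'$-minor; the flexibility again lets me pick the operation matching $e$'s position. If $B$ is a parallel class, the fact that $X\cap C$ is a basis of $M_X[C]$ forces $|B\cap X|\leq 1$, so some $e\in B\cap Y$ is parallel to a second element of $B$; since the $N'$-minor uses at most one element of $B$, that second element can substitute for $e$, and $M_X[C]\delete e=M_X[C\setminus e]$ still has an $N'$-minor. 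The series case is dual, using $e\in B\cap X$ and contraction.

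The main obstacle is precisely this delete-versus-contract alignment: if Proposition~\ref{pro:twosepminorintersection} only gave one direction, and it were the wrong one for $e$'s side of the basis, the contradiction would fail. The proposition's ``both'' guarantee, together with the direct handling of the parallel and series cases via a parallel/series partner inside $B$, is what closes the argument.
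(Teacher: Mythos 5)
Your proof is correct and takes essentially the same route as the paper's: both arguments exploit the minimality of $C$ together with the fact that $X\cap C$ is a basis of $M_X[C]$ (so removing an element of $C$ amounts to contracting a basis element or deleting a cobasis element), and both ultimately rest on Proposition~\ref{pro:twosepminorintersection} and the deletability/contractibility of parallel/series pairs. The paper simply packages your $1$- and $2$-separation case analysis differently, first noting that $M_X[C]$ is strictly $\mathcal{N}$-fragile, then invoking Proposition~\ref{prop:almostconn} to get $3$-connectivity up to series and parallel classes, and finally killing parallel and series pairs via Lemma~\ref{lem:almostpartition}, which is exactly your substitution argument.
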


  \begin{subproof}
    For all $x \in C$, $M_X[C\setminus x]$ has no minor in $\mathcal{N}$. Hence, if $x\in C\cap X$ then $x \not\in \conset_{\mathcal{N},M}$, and if $x \in C\cap Y$ then $x \not \in \delset_{\mathcal{N},M}$. It follows that $M_X[C]$ is strictly $\mathcal{N}$-fragile. Clearly $M_X[C]$ has no loops or coloops. By Proposition~\ref{prop:almostconn}, $M_X[C]$ is $3$-connected up to series and parallel classes. Suppose $M_X[C]$ is not $3$-connected, and let $\{e,f\}$ be a parallel pair. By Lemma~\ref{lem:almostpartition}\eqref{it:parpairdel}, $e,f \in \delset_{\mathcal{N},M}$. Since $X$ is a basis of $M$ and $\rank_M(\{e,f\}) = 1$, $|X\cap \{e,f\}| \leq 1$, say $f \not \in X$. But then $M_X[C\setminus f]$ has a minor in $\mathcal{N}$, a contradiction. The same argument shows that $M_X[C]$ has no series pairs.
  \end{subproof}

  Be aware that $M_X[C]$ may have no $N$-minor. However, it still contains \emph{some} strong stabilizer as minor. Let $N'$ be a minor of $M_X[C]$ such that $N' \in \mathcal{N}$. By our assumptions we have $\bw(M_X[C]) \leq t$.

  We now refine the choice of our small incriminating set. By $d_X(U,W)$ we denote the minimal distance between the vertices indexed by $U$ and the vertices indexed by $W$ in $G(M,X)$.

  \begin{assumption}\label{ass:lexmin}
    $X,a,b,C$ were chosen such that $(d_X(a,C), d_X(b,C))$ is lexicographically minimal.
  \end{assumption}

  We now start constructing sets $Z$, $Z_1$, $Z_2$ having the properties in Theorem~\ref{thm:incriminatingsetinminor}.

  \begin{claim}\label{cl:Z}
    There exists a set $Z \subseteq E\setminus\{u,v\}$, with $C\cup \{a,b\} \subseteq Z$, such that $M_X[Z]$ is connected. Moreover, $Z$ can be chosen so that $|Z| \leq |C|+8$.
  \end{claim}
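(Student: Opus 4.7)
My plan is to take $Z$ to be $C$ together with the vertex sets of shortest paths connecting $a$ and $b$ to $C$ in the fundamental graph $G(M,X)$ (restricted away from $\{u,v\}$), and to use Assumption \ref{ass:lexmin} to force each such path to be short.

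First I translate the connectivity hypotheses into the fundamental-graph picture. Because $\{u,v\}$ is a deletion pair, $M\delete\{u,v\}$ is connected, so by Lemma \ref{lem:bipconn} the induced subgraph $G(M,X)[E\setminus\{u,v\}]$ is connected. Likewise $M_X[C]$ was just shown to be 3-connected, so $G(M,X)[C]$ is 2-connected, again by Lemma \ref{lem:bipconn}. Choose shortest paths $P_a$ and $P_b$ in $G(M,X)[E\setminus\{u,v\}]$ from $a$ and from $b$ to $C$, and set $Z:=C\cup V(P_a)\cup V(P_b)$. Then $G(M,X)[Z]$ is connected, so Lemma \ref{lem:bipconn} yields that $M_X[Z]$ is connected. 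Since $|Z|\leq|C|+d_X(a,C)+d_X(b,C)$, the bound $|Z|\leq|C|+8$ reduces to showing $d_X(a,C)\leq 4$ and $d_X(b,C)\leq 4$.

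By symmetry it suffices to bound $d_X(a,C)$; I argue by contradiction. Suppose $d_X(a,C)\geq 5$ and let $v_1\in Y\setminus\{u,v\}$ be the first vertex of $P_a$ after $a$, so $A_{av_1}\neq 0$. Applying Lemma \ref{lem:allowableab} to the pivot over $av_1$, the set $\{v_1,b,u,v\}$ incriminates $(M,A^{av_1})$ in the new basis $X':=X\symdiff\{a,v_1\}$. The goal is to exhibit a minimal set $C'\subseteq E\setminus\{u,v\}$ such that $M_{X'}[C']$ has a minor in $\mathcal{N}$ and such that the pair $(d_{X'}(v_1,C'),d_{X'}(b,C'))$ is lexicographically smaller than $(d_X(a,C),d_X(b,C))$, contradicting Assumption \ref{ass:lexmin} and hence forcing $d_X(a,C)\leq 4$.

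The main obstacle is that pivoting changes both the fundamental graph (via $A^{av_1}_{uw}=A_{uw}-A_{uv_1}A_{av_1}^{-1}A_{aw}$) and the minor $M_{X'}[\,\cdot\,]$, so $C$ itself need not serve as $C'$. The key lever is that after the pivot the neighbors of $v_1$ in $G(M,X')$ are precisely $\{a\}\cup(N_{G(M,X)}(a)\setminus\{v_1\})$, since the row of $v_1$ in $A^{av_1}$ is a nonzero scalar multiple of the row of $a$ in $A$ (with positions $a$ and $v_1$ swapped). Using this, I plan to select a minimal $\mathcal{N}$-witness $C'$ for $M_{X'}$ lying within distance $d_X(a,C)-1$ of $v_1$ in $G(M,X')$, obtaining the required lex improvement. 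Combining the resulting bounds $d_X(a,C)\leq 4$ and $d_X(b,C)\leq 4$ then yields $|Z|\leq|C|+8$.
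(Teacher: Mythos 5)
Your overall frame (take $Z=C$ together with shortest paths from $a$ and $b$, and use Assumption~\ref{ass:lexmin} to cap the path lengths) matches the paper, but the step that carries all the weight is missing, and the pivot you chose cannot supply it. You pivot on the \emph{terminal} edge $av_1$ of $P_a$, so by Lemma~\ref{lem:allowableab} the incriminating set becomes $\{v_1,b,u,v\}$, and to contradict Assumption~\ref{ass:lexmin} you must exhibit a \emph{smallest} $\mathcal{N}$-witness $C'$ for the new basis $X'=X\symdiff\{a,v_1\}$ with $(d_{X'}(v_1,C'),d_{X'}(b,C'))$ lexicographically smaller. You only ``plan'' to do this, and the one structural fact you cite points the wrong way: since the new row $v_1$ of $A^{av_1}$ is a scalar multiple of the old row $a$, the $G(M,X')$-neighbours of $v_1$ are $\{a\}\cup(N(a)\setminus v_1)$, all at old distance at least $d_X(a,C)-1$ from $C$; moreover the only entries changed by the pivot are those $A_{xw}$ with $A_{xv_1}\neq 0$ and $A_{aw}\neq 0$, i.e.\ the pivot only touches the part of the fundamental graph at old distance at least $d_X(a,C)-2$ from $C$. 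Grading vertices by their old distance to $C$, one checks that in fact $d_{X'}(v_1,C)\geq d_X(a,C)$: a pivot on the first edge of the path is, near the incriminating element, just a relabelling and creates no shortcut towards $C$. So $C'=C$ gives no improvement, and there is no argument (and no reason in general) that some other smallest witness lies within distance $d_X(a,C)-1$ of $v_1$. This is a genuine gap, not a routine detail to be filled in.

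The paper's proof pivots instead on an \emph{interior} edge of the path: writing $P_a=(a,x_1,x_2,x_3,\ldots,x_k)$ with $k>3$, it pivots on $x_2x_3$. Shortest-path considerations give $A_{x_2c}=0$ for all $c\in C$ and $A_{ax_3}=0$, and $A_{bx_3}=0$ (one may assume $d_X(a,C)\leq d_X(b,C)$ by swapping the labels $a,b$ in Assumption~\ref{ass:lexmin}), so Lemma~\ref{lem:allowablezeros} keeps $\{a,b,u,v\}$ incriminating, while $A^{x_2x_3}[C]=A[C]$ keeps the same $C$ as a smallest witness. The payoff is that the rank-one update of this pivot makes the entry in position $(x_4,x_1)$ nonzero, i.e.\ it creates the chord $x_1x_4$ of the path, so $d_{X\symdiff\{x_2,x_3\}}(a,C)<d_X(a,C)$ and Assumption~\ref{ass:lexmin} is contradicted directly, with $a$, $b$, $C$ all unchanged. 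That choice of pivot---away from $a,b$ and away from $C$, in the middle of the path---is the idea your proposal lacks; salvaging your terminal-edge variant would require proving the existence of the nearby witness $C'$, which is essentially the claim itself. (A secondary point: Assumption~\ref{ass:lexmin} measures distances in $G(M,X)$, whereas you take paths in $G(M,X)$ minus $\{u,v\}$; these need to be reconciled before the lexicographic comparison makes sense.)
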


  \begin{subproof}
    Let $P_a$ be a shortest $a-C$ path in $G(M,X)$. Suppose $|P_a| = k > 3$, say $P_a = (a,x_1, x_2, x_3, \ldots, x_k)$, where $x_k \in C$. Then $x_2$ labels a row of $A$. Also $A_{x_2c} = 0$ for all $c \in C$, and $A_{ax_3} = A_{bx_3} = 0$. It follows that a pivot over $x_2x_3$ is allowable and $A^{x_2x_3}[C] = A[C]$. However, $d_{X\symdiff\{x_2,x_3\}}(a,C) < d_{X}(a,C)$, a contradiction to Assumption~\ref{ass:lexmin}.

    Similarly, if $P_b$ is a shortest $b-(C\cup P_a)$ path, then $|P_b| \leq 3$. Now $M_X[C\cup P_a \cup P_b]$ is connected, and the result follows.
  \end{subproof}

  Let $Z$ be as in Claim~\ref{cl:Z}. Note that $\bw(M_X[Z]) \leq \bw(M_X[C]) + 8$, by Proposition~\ref{prop:bwmonotone}. Since $\{u,v\}$ is a deletion pair, $\co(M\delete v)$ is $3$-connected.

  \begin{claim}\label{cl:contractoutsideC}
    There is a set $S \subseteq (X \setminus Z) \cup \{a,b\}$ such that $M_X[E\setminus (S\cup v)]$ is $3$-connected and isomorphic to $\co(M\delete v)$.
  \end{claim}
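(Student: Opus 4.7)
The plan is to construct $S$ as a transversal of the non-trivial series classes of $M\delete v$, chosen entirely within $(X\setminus Z)\cup\{a,b\}$. Since $\{u,v\}$ is a deletion pair, $\co(M\delete v)$ is $3$-connected and arises from $M\delete v$ by contracting $|T|-1$ elements from each non-trivial series class $T$; any transversal choice produces a matroid isomorphic (equal up to relabelling the chosen representatives) to $\co(M\delete v)$. Because $M$ is $3$-connected, each non-trivial series class of $M\delete v$ comes from a triad of $M$ through $v$, and since $X$ is a basis meeting every cocircuit, every such triad contains an element of $X$. The task thus reduces to showing that every non-trivial series class $T$ satisfies $|T\setminus((X\setminus Z)\cup\{a,b\})|\leq 1$: then one takes the unique outside element of $T$ (if it exists) as the representative $r(T)$ and defines $S:=\bigcup_T T\setminus r(T)$.

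To establish this bound, I would suppose for contradiction that some non-trivial series class contains distinct $e,f\in(X\cap Z\setminus\{a,b\})\cup(Y\setminus v)$, so that $\{e,f,v\}$ is a triad of $M$. If $e,f\in C\cap X$, the rows of $A-v$ indexed by $e,f$ are scalar multiples whose common support meets $Y\cap C$ (else one of them would be a coloop of the $3$-connected $M_X[C]$); their restrictions to $Y\cap C$ then make $\{e,f\}$ a series pair of $M_X[C]$, contradicting its $3$-connectedness. If $e\in C\cap X$ and $f\in Y\setminus v$, the triad forces row $e$ of $A-v$ to have support $\{f\}$, either making $e$ a coloop of $M_X[C]$ (when $f\notin C$) or giving $e$ degree one in $G(M_X[C],X\cap C)$ (when $f\in Y\cap C$); both contradict the $2$-connectedness of this bipartite graph from Lemma~\ref{lem:bipconn}.

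The remaining case is that one of $e,f$ is the unique intermediate $X$-vertex of $P_a$ or $P_b$; by symmetry take $e=x_2^a$. When $f\in X\cap C$, the parallelism of the rows at $e$ and $f$ implies that $f$ is adjacent to $x_1^a$ in $G(M,X)$, giving a path $a$--$x_1^a$--$f$ of length $2$ with $f\in C$, contradicting $|P_a|=3$. When $f\in Y\setminus v$, the sparse support of row $e$ in $A-v$ forces $x_1^a=v$, so $P_a=(a,v,x_2^a,f)$ with $f\in C$; the pivot over $x_2^a f$ is allowable by Lemma~\ref{lem:allowablezeros} (after checking $A_{af}=A_{bf}=0$, justified by minimality of the distances), and in the new basis $X'=X\symdiff\{x_2^a,f\}$ the element $f$ becomes a row vertex adjacent to $v$, creating a length-$2$ path $a$--$v$--$f$ in $G(M,X')$ and hence $d_{X'}(a,C)<d_X(a,C)$, contradicting Assumption~\ref{ass:lexmin}. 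Once this is verified, the choice of representatives described above gives $S\subseteq(X\setminus Z)\cup\{a,b\}$ with $M_X[E\setminus(S\cup v)]=M\contract S\delete v\cong\co(M\delete v)$, which is $3$-connected. The main obstacle lies in this final sub-case: rigorously justifying $A_{bf}=0$ (so the pivot is allowable) and that the lex-minimality of $(d_X(a,C),d_X(b,C))$ is genuinely violated in the new basis, possibly requiring a consistent re-selection of $C$ relative to $X'$.
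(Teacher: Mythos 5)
Your overall strategy is essentially the paper's: reduce the claim to showing that every non-trivial series class of $M\delete v$ has at most one element outside $(X\setminus Z)\cup\{a,b\}$, using that a $2$-cocircuit cannot avoid the basis $X$, and that elements of $X\cap (Z\setminus\{a,b\})$ have rows of $A-v$ with at least two nonzero entries. (The paper packages this as an exchange argument on an $S\subseteq X$ chosen to minimize $|S\cap(Z\setminus\{a,b\})|$, but the content is the same.) However, your case analysis has a genuine gap: the ``bad'' set $(X\cap Z\setminus\{a,b\})\cup(Y\setminus v)$ also contains the intermediate $X$-vertices of \emph{both} paths, so you must rule out the configuration where the series pair is $\{x_2^a,x_2^b\}$. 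Your ``remaining case'' only treats $f\in X\cap C$ and $f\in Y\setminus v$, so this possibility is never addressed, and it is not vacuous a priori. The paper disposes of it (together with your other subcases involving path vertices) via the assertion that $M_X[Z\setminus\{a,b\}]$ has no series classes; within your set-up one could instead argue that equal row supports would make $x_2^a$ adjacent to $x_1^b$, yielding a length-two $b$--$(C\cup P_a)$ path and contradicting the choice of $P_b$ in the proof of Claim~\ref{cl:Z} --- but some such argument has to be supplied.

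Your final sub-case ($e=x_2^a$, $f\in Y\setminus v$) is also mishandled, though in a direction that makes it easier, not harder. Since Claim~\ref{cl:Z} guarantees $Z\subseteq E\setminus\{u,v\}$, the path vertex $x_1^a$ cannot equal $v$; hence the row of $x_2^a$ in $A-v$ has at least two nonzero entries (at $x_1^a$ and $x_3^a$), which directly contradicts the fact that a series pair $\{x_2^a,f\}$ with $f\in Y\setminus v$ forces that row to have support exactly $\{f\}$. No pivot is needed. Moreover, the re-pivoting patch you sketch (and yourself flag as the main obstacle) is not available at this stage of the argument: $A$, the incriminating set $\{a,b,u,v\}$, $C$ and $Z$ were all fixed relative to the basis $X$, and Assumption~\ref{ass:lexmin} cannot simply be re-invoked for a new basis $X\symdiff\{x_2^a,f\}$ without redoing that entire construction. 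So as written the proposal has one missing case and one sub-case resting on an unjustified (and unnecessary) detour.
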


  \begin{subproof}
    Let $S_1$ be a series class in $M\delete v$. At most one element of $S_1$ is not in $X$. It follows that we can obtain a matroid isomorphic to $\co(M\delete v)$ by contracting only elements from $X$. Let $S \subset X$ be such that $\co(M\delete v) \cong M\contract S\delete v$, and suppose $S$ was chosen such that $|S\cap (Z\setminus \{a,b\})|$ is minimal. Let $x \in (X \setminus (C\cup \{a,b\}))\cap Z$. Then $x$ is in a shortest $a-C$ path or in a shortest $b-C$ path. In either case $A[x,Y\setminus v]$ has at least two nonzero entries. Likewise, if $x \in X\cap C$ then $A[x,Y\setminus v]$ has at least two nonzero entries, since $M_X[C]$ is $3$-connected. It follows that, if $x \in (Z\setminus \{a,b\})\cap S$, then also $y\in X$ for all $y$ such that $x,y$ are in series. Clearly $y \not \in Z\setminus \{a,b\}$, as $M_X[Z\setminus \{a,b\}]$ has no series classes. There is such a $y$ that is not in $S$. But then $M_X[Z\setminus (S\cup v)] \cong M_X[Z\setminus (S\symdiff\{x,y\}\cup v)]$, contradicting minimality of $|S\cap (Z\setminus\{a,b\})|$.
  \end{subproof}

  Let $S$ be as in Claim~\ref{cl:contractoutsideC}.
  \begin{claim}\label{cl:bridgedelv}
    Let $Z_0' \subseteq E\setminus (v \cup S)$ be such that $(Z\setminus S)\cup u \subseteq Z_0'$ and such that
    $M_X[Z_0']$ has exactly $k$ distinct $2$-separations. Then there exists a set $Z_0 \subseteq E\setminus (v\cup S)$ such that $Z_0 \supseteq Z_0'$, $M_X[Z_0]$ is $3$-connected and such that $\bw(M_X[Z_0])\leq \bw(M_X[Z_0']) + 2 k$.
  \end{claim}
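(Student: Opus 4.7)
The plan is to argue by induction on $k$, repeatedly bridging uncrossed $2$-separations of $M_X[Z_0']$ by blocking sequences. Two ingredients drive the argument: bridging an \emph{uncrossed} $2$-separation strictly reduces the total count of $2$-separations (Corollary \ref{cor:block2sep}), and a single blocking sequence inflates branch width by at most the order of the separation, here at most $2$ (Theorem \ref{thm:blseqbranchwidth}). Each iteration therefore spends one $2$-separation in exchange for at most $2$ units of branch width, so $k$ iterations suffice and the branch width bound $\bw(M_X[Z_0'])+2k$ falls out.

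To make the iteration feasible, first observe that Claim \ref{cl:contractoutsideC} gives $M_X[E\setminus(v\cup S)]\cong\co(M\delete v)$, which is $3$-connected. Consequently, no $2$-separation of $M_X[Z_0']$ is induced in the ambient matroid $M_X[E\setminus(v\cup S)]$, so Theorem \ref{thm:blockseq} supplies, for every $2$-separation we encounter, a blocking sequence whose terms lie entirely in $E\setminus(v\cup S\cup Z_0')$. Thus every set $Z_0''$ produced during the iteration stays inside $E\setminus(v\cup S)$ and continues to contain $(Z\setminus S)\cup u$, so the inductive hypothesis may be re-applied.

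For the inductive step, we need $M_X[Z_0']$ to be connected and non-binary so that Lemma \ref{lem:uncrossed2sep} yields an uncrossed $2$-separation $(X',Y')$. Connectedness holds because $(Z\setminus S)\cup u\subseteq Z_0'$ and $M_X[(Z\setminus S)\cup u]$ is connected (inherited from the connected matroid $M_X[Z]$ of Claim \ref{cl:Z}, with $u$ attached via its column of $A$ in the $3$-connected matroid $M\delete v$). Non-binarity holds because $Z_0'$ contains enough of $C$ to retain a minor isomorphic to some $N'\in\mathcal{N}$, and each such $N'$ is non-binary by hypothesis \eqref{it:pfN1}; non-binarity is preserved in all subsequent extensions obtained by appending blocking sequences. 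Having selected an uncrossed $2$-separation, find a blocking sequence $v_1,\ldots,v_t$ and let $Z_0'':=Z_0'\cup\{v_1,\ldots,v_t\}$. Corollary \ref{cor:block2sep} gives at most $k-1$ $2$-separations in $M_X[Z_0'']$, and Theorem \ref{thm:blseqbranchwidth} gives $\bw(M_X[Z_0''])\leq\bw(M_X[Z_0'])+2$. Applying the inductive hypothesis to $Z_0''$ yields $Z_0\supseteq Z_0''\supseteq Z_0'$ with $M_X[Z_0]$ $3$-connected and
\[
  \bw(M_X[Z_0])\leq\bw(M_X[Z_0''])+2(k-1)\leq\bw(M_X[Z_0'])+2k,
\]
as required. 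The base case $k=0$ is trivial: $Z_0:=Z_0'$ works.

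The main obstacle I expect is not the inductive count but the verification that $M_X[Z_0']$, and each of its bridged extensions, retains a non-binary minor even when $a$ or $b$ happens to lie in $S$; this must be checked from the definition of $C$ and the fact that members of $\mathcal{N}$ are $3$-connected with at least four elements, so a single missing element of $C$ still leaves a $U_{2,4}$-minor. Once that is in hand, the rest of the argument is bookkeeping that precisely matches the pattern used in the analogous results of \citet{GGK}.
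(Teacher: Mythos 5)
Your proof follows the paper's argument essentially verbatim: induct on $k$, note that no $2$-separation of $M_X[Z_0']$ is induced because it is a minor of the $3$-connected matroid $M\contract S\delete v$, extract an uncrossed $2$-separation via Lemma~\ref{lem:uncrossed2sep}, bridge it with a blocking sequence, and combine Theorem~\ref{thm:blseqbranchwidth} with Corollary~\ref{cor:block2sep} to close the induction, exactly as the paper does. The only divergence is your closing heuristic that one missing element of $C$ still leaves a $U_{2,4}$-minor, which is not sound in general (take $N'=U_{2,4}$ itself), but this concerns a corner case the paper's own proof also leaves implicit and does not change the fact that your route matches the published one.
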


  \begin{subproof}
    The result is obvious if $k = 0$, so we suppose $k > 0$. Since $M_X[Z_0']$ is a minor of the $3$-connected matroid $M\contract S \delete v$, no $2$-separation of $M_X[Z_0']$ is induced. Since each matroid in $\mathcal{N}$ is non-binary, $U_{2,4} \minorof N'$. It then follows from Lemma \ref{lem:uncrossed2sep} that $M_X[Z_0']$ has an uncrossed $2$-separation, say $(W_1,W_2)$. Let $v_1, \ldots, v_t$ be a blocking sequence for $(W_1, W_2)$. By Theorem~\ref{thm:blseqbranchwidth}, $\bw(M_X[Z_0'\cup \{v_1, \ldots, v_t\}]) \leq \bw(M_X[Z_0']) + 2$. By Corollary~\ref{cor:block2sep}, the number of $2$-separations in $M_X[Z_0'\cup\{v_1, \ldots, v_t\}]$ is strictly less than $k$. The result now follows by induction.
  \end{subproof}

  Pick $Z_0' = (Z\setminus S) \cup u$. Then $|Z_0'| - |C| \leq 9$, by Claim \ref{cl:Z}. 
  By Lemma~\ref{lem:2sepbound}, $M_X[Z_0']$ has at most $\sepbound{9}$ distinct $2$-separations. Then Claim \ref{cl:bridgedelv} proves the existence of a set $Z_0\supseteq Z_0'$ such that $M_X[Z_0]$ is $3$-connected and such that $\bw(M_X[Z_0]) \leq \bw(M_X[Z_0']) + 2\cdot \sepbound{9}$.

  Define $Z_1 := Z_0\cup\{a,b\}$. For all $x \in S\cap \{a,b\}$, $Z_0 \cup x$ is either $3$-connected or has a series pair. It follows that $M_X[Z_1]$ is $3$-connected up to series classes. Also, $\bw(M_X[Z_1]) \leq \bw(M_X[Z_0]) + 2$.

  \begin{claim}\label{cl:bridgedelu}
    Let $Z_2' \subseteq E\setminus u$ be such that $Z\cup v \subseteq Z_2'$ and such that $M_X[Z_2']$ has exactly $k$ distinct $2$-separations. Then there exists a set $Z_2\subseteq E\setminus u$ such that $Z_2 \supseteq Z_2'$, $M_X[Z_2]$ is $3$-connected, and $\bw(M_X[(Z_1 \setminus u)\cup Z_2])\leq \bw(M_X[(Z_1\setminus u)\cup Z_2']) + 2 k$.
  \end{claim}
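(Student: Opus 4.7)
I would prove Claim \ref{cl:bridgedelu} by induction on $k$, mirroring the proof of Claim \ref{cl:bridgedelv} with $v$ replaced by $u$ and with the elements of $Z_1\setminus u$ carried along throughout. For the base case $k = 0$, set $Z_2 := Z_2'$: the matroid $M_X[Z_2']$ is already $3$-connected, and the branch-width bound is trivial since $2k = 0$.

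For the inductive step, the key observation is that, because $\{u,v\}$ was chosen so that $M\delete u$ is $3$-connected and $M_X[Z_2']\minorof M\delete u$, no $2$-separation of $M_X[Z_2']$ is induced in $M\delete u$. Theorem \ref{thm:blockseq} then guarantees that every such $2$-separation is bridged in $M\delete u$. Since every matroid in $\mathcal{N}$ is non-binary, $U_{2,4}\minorof M_X[C]\minorof M_X[Z_2']$, so by Lemma \ref{lem:uncrossed2sep} there is an uncrossed $2$-separation $(W_1, W_2)$ of $M_X[Z_2']$. Let $v_1, \ldots, v_t$ be a blocking sequence for $(W_1, W_2)$ in $M\delete u$.

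To control the branch width of the enlarged matroid I will apply Theorem \ref{thm:blseqbranchwidth} with $Z := (Z_1\setminus u)\cup Z_2'$. This requires $(W_1, W_2)$ to extend to a $2$-separation of $M_X[(Z_1\setminus u)\cup Z_2']$. Since $(W_1, W_2)$ is uncrossed in $M_X[Z_2']$, a submodularity computation (analogous to that in the proof of Lemma \ref{lem:uncrossed2sep}) should show that at least one of $(W_1\cup T, W_2)$ and $(W_1, W_2\cup T)$, where $T := (Z_1\setminus u)\setminus Z_2'$, is a $2$-separation of $M_X[(Z_1\setminus u)\cup Z_2']$. Theorem \ref{thm:blseqbranchwidth} then gives $\bw(M_X[(Z_1\setminus u)\cup Z_2'\cup\{v_1,\ldots,v_t\}]) \leq \bw(M_X[(Z_1\setminus u)\cup Z_2']) + 2$. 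By Corollary \ref{cor:block2sep}, $M_X[Z_2'\cup\{v_1,\ldots,v_t\}]$ has strictly fewer $2$-separations than $M_X[Z_2']$, so induction applied to $Z_2'\cup\{v_1,\ldots,v_t\}$ produces the desired $Z_2$.

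The main obstacle is the extension of the uncrossed $2$-separation from $M_X[Z_2']$ to $M_X[(Z_1\setminus u)\cup Z_2']$. This is a purely matroidal uncrossing-type step that leans on $T$ being a fixed set independent of the induction, on the uncrossedness of $(W_1, W_2)$ inside $M_X[Z_2']$, and on the fact that $M_X[Z_1]$ was arranged to be $3$-connected up to series classes. Once this technical step is in place, the remainder of the induction proceeds exactly as in Claim \ref{cl:bridgedelv}.
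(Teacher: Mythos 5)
The step you flag as ``the main obstacle'' is in fact where the argument breaks, and it cannot be repaired in the form you propose. Adding the elements of $T := (Z_1\setminus u)\setminus Z_2'$ to $M_X[Z_2']$ can \emph{bridge} the $2$-separation $(W_1,W_2)$: by Theorem~\ref{thm:blockseq} the dichotomy is precisely ``induced or bridged'', and in the bridged case there is no $2$-separation of $M_X[(Z_1\setminus u)\cup Z_2']$ extending $(W_1,W_2)$ at all, so no submodularity computation can produce one. Uncrossedness of $(W_1,W_2)$ is of no help here --- it concerns other $2$-separations of the same matroid $M_X[Z_2']$, not the behaviour of $(W_1,W_2)$ under adding elements --- and even in the induced case the induced separation need not have the special shape $(W_1\cup T,W_2)$ or $(W_1,W_2\cup T)$, since the elements of $T$ may be split between the two sides.

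The paper's proof handles exactly this dichotomy. If $(W_1,W_2)$ is already bridged in $M_X[(Z_1\setminus u)\cup Z_2']$, it sets $T=\emptyset$: then a blocking sequence $v_1,\ldots,v_t$ for $(W_1,W_2)$ lies inside $(Z_1\setminus u)\cup T$, so $M_X[(Z_1\setminus u)\cup Z_2'\cup\{v_1,\ldots,v_t\}]$ is no larger than $M_X[(Z_1\setminus u)\cup Z_2']$ and no branch-width increase is incurred. If instead $(W_1,W_2)$ is induced, it takes \emph{some} induced $2$-separation $(W_1',W_2')$ of $M_X[(Z_1\setminus u)\cup Z_2']$ (with $W_1\subseteq W_1'$, $W_2\subseteq W_2'$), bridges \emph{that} with a blocking sequence $T$ in $M\delete u$, and applies Theorem~\ref{thm:blseqbranchwidth} to $(W_1',W_2')$ --- which is legitimately a separation of the larger minor --- to get the $+2$. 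In either case $(W_1,W_2)$ is bridged in $M_X[(Z_1\setminus u)\cup Z_2'\cup T]$, so a blocking sequence $v_1,\ldots,v_t$ for it exists inside $(Z_1\setminus u)\cup T$; Corollary~\ref{cor:block2sep} (using uncrossedness) then reduces the count of $2$-separations of $M_X[Z_2'\cup\{v_1,\ldots,v_t\}]$, and the branch width of the full union is controlled by monotonicity since $\{v_1,\ldots,v_t\}\subseteq (Z_1\setminus u)\cup T$. Your outline is otherwise on the right track (base case, use of Lemma~\ref{lem:uncrossed2sep}, induction via Corollary~\ref{cor:block2sep}), but the missing induced/bridged case analysis is the essential content of this claim as compared with Claim~\ref{cl:bridgedelv}.
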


  \begin{subproof}
    The result is obvious if $k = 0$, so we suppose $k > 0$. Since $M_X[Z_2']$ is a minor of the $3$-connected matroid $M\delete u$, no $2$-separation of $M_X[Z_2']$ is induced. Again it follows from Lemma \ref{lem:uncrossed2sep} that $M_X[Z_2']$ has an uncrossed $2$-separation, say $(W_1,W_2)$. If $(W_1,W_2)$ is bridged in $M_X[(Z_1\setminus u)\cup Z_2']$ then we set $T = \emptyset$. Otherwise let $(W_1', W_2')$ be a $2$-separation of $M_X[(Z_1\setminus u)\cup Z_2']$ such that $W_1 \subseteq W_1'$ and $W_2 \subseteq W_2'$. Let $v'_1, \ldots, v'_{p'}$ be a blocking sequence for $(W_1', W_2')$ and set $T := \{v'_1, \ldots, v'_{p'}\}$.

    Now $(W_1, W_2)$ is bridged in $M_X[(Z_1\setminus u)\cup Z_2' \cup T]$, so there is a blocking sequence $v_1, \ldots, v_t$ contained in $Z_1\setminus u \cup T$. By Theorem~\ref{thm:blseqbranchwidth}, $\bw(M_X[(Z_1\setminus u)\cup Z_2'\cup \{v_1, \ldots, v_t\}]) \leq \bw(M_X[(Z_1\setminus u)\cup Z_2' \cup T]) \leq \bw(M_X[(Z_1\setminus u)\cup Z_2']) + 2$. By Corollary~\ref{cor:block2sep}, the number of 2-separations in $M_X[Z_2'\cup\{v_1, \ldots, v_t\}]$ is strictly less than $k$. The result now follows by induction.
  \end{subproof}

  Pick $Z_2' := Z \cup v$. Then $|Z_2'| - |C| \leq 9$, by Claim~\ref{cl:Z}. By Lemma \ref{lem:2sepbound}, $M_X[Z_2']$ has at most $\sepbound{9}$ distinct $2$-separations. Then Claim~\ref{cl:bridgedelu} proves the existence of a set $Z_2\supseteq Z_2'$ such that $M_X[Z_2]$ is $3$-connected and such that $\bw(M_X[Z_1 \cup Z_2]) \leq \bw(M_X[(Z_1\setminus u) \cup Z_2]) + 1 \leq \bw(M_X[(Z_1\setminus u)\cup Z_2']) + 2\cdot \sepbound{9}+1$. 

  It now follows from Theorem~\ref{thm:incriminatingsetinminor} that $M_X[Z_1\cup Z_2]$ is not strongly stabilized by $N'$, and hence $M_X[Z_1\cup Z_2]\not\in\matset$. But $M$ is an excluded minor for $\matset$, so we must have $M = M_X[Z_1\cup Z_2]$. By liberal application of Proposition~\ref{prop:bwmonotone} we can now deduce

   \begin{align}
     \bw(M) & = \bw(M_X[Z_1\cup Z_2])\label{eq:final1}\\
            & \leq \bw(M_X[(Z_1\setminus u)\cup Z_2]) + 1\label{eq:final2}\\
            & \leq \bw(M_X[(Z_1\setminus u)\cup Z_2']) + 2 \cdot \sepbound{9} + 1\label{eq:final3}\\
            & \leq \bw(M_X[Z_1\setminus u]) + 2 \cdot \sepbound{9} + 2\label{eq:final4}\\
            & \leq \bw(M_X[Z_1]) + 2 \cdot \sepbound{9} + 2\label{eq:final5}\\
            & \leq \bw(M_X[Z_0]) + 2 \cdot \sepbound{9} + 4 \label{eq:final6}\\
            & \leq \bw(M_X[Z_0']) + 4 \cdot \sepbound{9} + 4 \label{eq:final7}\\
            & \leq \bw(M_X[Z_0'\setminus u]) + 4\cdot \sepbound{9} + 5 \label{eq:final8}\\
            & \leq \bw(M_X[Z]) + 4 \cdot \sepbound{9} + 5 \label{eq:final9}\\
            & \leq \bw(M_X[C]) + 4 \cdot \sepbound{9} + 13 \label{eq:final10}\\
            & \leq t + 4 \cdot \sepbound{9} + 13\label{eq:final11},
   \end{align}
   where \eqref{eq:final3} follows from Claim~\ref{cl:bridgedelu}, \eqref{eq:final4} holds because $Z_2'\setminus (Z_1\setminus u) = \{v\}$, \eqref{eq:final6} holds because $Z_1\setminus Z_0 \subseteq \{a,b\}$, \eqref{eq:final7} follows from Claim~\ref{cl:bridgedelv}, \eqref{eq:final9} holds because $Z\setminus (Z_0'\setminus u) \subseteq \{a,b\}$, and \eqref{eq:final10} follows from Claim~\ref{cl:Z}. But this contradicts our choice of $M$, and our proof is complete.
\end{proof}

\section{Proof of Theorem \ref{thm:rotapf} and Corollary \ref{cor:infichain}}\label{sec:theproof}

\begin{proof}[Proof of Theorem \ref{thm:rotapf}]
  Let $\pf$ be a finitary partial field and let $\matset$ be a well-closed class of $\pf$-representable matroids, each of which has bounded canopy. Suppose that Theorem \ref{thm:rotapf} is false for a matroid $N$. Then $N$ satisfies all conditions of the theorem, yet occurs in an infinite number of excluded minors for $\matset$. Choose $N$ with as few algebraically inequivalent representations over $\pf$ as possible.

  If $N$ has a unique representation over $\pf$ then $N$ is clearly a strong stabilizer. If we apply Theorem \ref{thm:rotapfN} with $\mathcal{N}=\{N\}$ then we find that there is a constant $l$ such that excluded minors for $\matset$ with an $N$-minor have branch width at most $l$. Then Theorem \ref{thm:rotabw} implies the result.

Therefore $N$ has at least two algebraically inequivalent representations over $\matset$. Let $\matset_N \subseteq \matset$ be the smallest well-closed class containing $N$ and all matroids that are strongly stabilized by $N$. If we apply Theorems \ref{thm:rotapfN} and \ref{thm:rotabw} to $\matset_N$, again with $\mathcal{N} = \{N\}$, then we find that there are finitely many excluded minors for $\matset_N$ having an $N$-minor.

  Let $N'$ be such an excluded minor. Then either $N'$ is also an excluded minor for $\matset$, or $N'\in\matset$ but $N'$ is not strongly stabilized by $N$. Assume the latter holds. We know that $N'$ is stabilized by $N$, so $N'$ must have strictly fewer algebraically inequivalent $\pf$-representations than $N$. Hence, by induction, $N'$ is contained in a finite number of excluded minors for $\matset$. It follows that $N$ is contained in only a finite number of excluded minors for $\matset$, a contradiction.
\end{proof}

A similar argument proves Corollary \ref{cor:infichain}:

\begin{proof}[Proof of Corollary \ref{cor:infichain}]
	Let $\pf$ be a finitary partial field. Suppose the Bounded Canopy Conjecture holds for $\pf$, yet $\pf$ has infinitely many excluded minors. First consider the excluded minors with no $U_{2,4}$-minor. Either this set is empty (i.e. $\matset(\pf)$ contains all binary matroids) or it is $\{F_7, F_7^*\}$ (since matroids with no minor in $\{U_{2,4}, F_7, F_7^*\}$ are regular and hence certainly $\pf$-representable).	Hence infinitely many excluded minors contain $U_{2,4}$.
	
	Now consider the following algorithm. Initially, define $\mathcal{S} :=  \{ U_{2,4}\}$. While $\mathcal{S} \neq \emptyset$, do the following. Take $N \in \mathcal{S}$. Let $\matset_N$ be the smallest well-closed class in $\matset(\pf)$ such that every $\pf$-representable matroid stabilized by $N$ is in $\matset_N$. By Theorem \ref{thm:rotapf}, finitely many excluded minors for $\matset_N$ have an $N$-minor. Let $\{M_1, \ldots, M_k\}$ be these excluded minors, and let $\{M_{i_1}, \ldots, M_{i_l}\}$ be the subset that is representable over $\pf$. By definition of $\matset_N$, none of these is stabilized by $N$. Replace $\mathcal{S}$ by $(\mathcal{S}\setminus \{N\}) \cup \{M_{i_1}, \ldots, M_{i_l}\}$ and continue.
	
	Since $\matset(\pf)$ has infinitely many excluded minors, this algorithm does not terminate. It is now straightforward to extract an infinite chain as in the corollary.
\end{proof}

\section{Applications}\label{sec:examples}
In all examples presented here we will have a strong stabilizer at our disposal, so we can apply Theorem \ref{thm:rotapfN}. An advantage of this is that we only need $N$ to have bounded canopy, which we can actually prove in a few cases.

\subsection{Excluded minors for the classes of near-regular and $\sru$ matroids}
Near-regular matroids were introduced in \cite{Whi95} as  the class of matroids representable over a certain partial field that we  denote here by $\nreg$. It turns out that the class of near-regular matroids is exactly the class of matroids representable over all fields of size at least $3$. These representations can be obtained from partial-field homomorphisms, so $\nreg$ is finitary. We apply Theorem~\ref{thm:rotapfN} to give an alternative proof of the following result:\index{excluded minor}

\begin{theorem}[\citet{HMZ11}]\label{thm:exminnreg}
  The class $\matset(\nreg)$ has a finite number of excluded minors.
\end{theorem}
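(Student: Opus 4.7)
The plan is to apply Theorem~\ref{thm:rotapfN} with $\pf = \nreg$, $\matset = \matset(\nreg)$, and $\mathcal{N} = \{U_{2,4}\}$. The partial field $\nreg$ is finitary, since a near-regular matroid is representable over $\GF(3)$ via the canonical partial-field homomorphism. The class $\matset(\nreg)$ is well-closed, because classes of $\pf$-representable matroids are closed under minors, duality, direct sums, and 2-sums. The matroid $U_{2,4}$ is 3-connected and non-binary, which disposes of the first hypothesis.

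Next I would verify that $U_{2,4}$ is both a stabilizer and a strong stabilizer for $\matset(\nreg)$. Up to scaling of rows and columns and permutation, $U_{2,4}$ has essentially a unique $\nreg$-representation, so the strong stabilizer property is automatic from uniqueness (as noted just after Definition~\ref{def:strongstab}). The stabilizer property can be verified using Whittle's finite criterion for stabilizers. Hypothesis~(iv) of Theorem~\ref{thm:rotapfN} is trivial here, since $\matset = \matset(\nreg)$ contains every 3-connected $\nreg$-representable matroid by definition.

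The crucial remaining task, and the main obstacle, is establishing bounded canopy: I must exhibit a constant $t$ such that every strictly $U_{2,4}$-fragile near-regular matroid has branch width at most $t$. The key claim is that every 3-connected strictly $U_{2,4}$-fragile near-regular matroid is, up to a finite list of sporadic exceptions, a whirl $\whirl{n}$. To argue this, I would exploit the dichotomy forced by fragility (each element is either $U_{2,4}$-deletable or $U_{2,4}$-contractible but not both) together with the rigid representation theory of $\nreg$: the few entries that are permitted in a normalized $\nreg$-matrix combine with Proposition~\ref{prop:almostconn} and the stabilizer property to recursively determine the structure around any $U_{2,4}$-minor, eventually forcing the whirl shape. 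Since $\bw(\whirl{n}) = 3$ by Lemma~\ref{lem:bwwhirl}, and since the sporadic exceptions are a bounded finite collection, the required constant $t$ exists.

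With all hypotheses verified, Theorem~\ref{thm:rotapfN} yields a constant $l$ such that every excluded minor for $\matset(\nreg)$ having a $U_{2,4}$-minor has branch width at most $l$; Theorem~\ref{thm:rotabw} then bounds the number of such excluded minors. It remains to treat excluded minors with no $U_{2,4}$-minor: such a matroid is binary, and a minor-minimal binary matroid that is not $\nreg$-representable must be $F_7$ or $F_7^*$, the classical excluded minors for the ternary matroids (and hence for $\nreg$ as well, via the homomorphism $\nreg\to\GF(3)$). Combining the two cases yields the desired finiteness.
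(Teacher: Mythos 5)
Your outline follows the paper's own route: apply Theorem~\ref{thm:rotapfN} with $N=U_{2,4}$ and $\matset=\matset(\nreg)$, get the strong stabilizer property from unique representability of $U_{2,4}$ over $\nreg$, dismiss the excluded minors with no $U_{2,4}$-minor via Tutte's excluded minors for regular matroids (this is Lemma~\ref{lem:nregU24}; your phrasing through ``excluded minors for ternary matroids'' amounts to the same thing, since binary matroids with no $F_7$- or $F_7^*$-minor are regular, hence near-regular), and reduce everything to bounded canopy. The genuine gap is exactly at the step you flag as the main obstacle. The claim that every $3$-connected strictly $U_{2,4}$-fragile near-regular matroid is a whirl ``up to a finite list of sporadic exceptions'' is asserted, but your justification --- that the rigid representation theory of $\nreg$ and the stabilizer property ``recursively determine the structure around any $U_{2,4}$-minor, eventually forcing the whirl shape'' --- is not a checkable argument, and this claim is the entire substance of the bounded-canopy hypothesis of Theorem~\ref{thm:rotapfN}. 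Moreover, allowing unspecified sporadic exceptions is not harmless as stated: you must actually exhibit or bound that finite list, or no constant $t$ is obtained.

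The paper closes this step without any matrix recursion: by Lemma~\ref{lem:notwhirlthingy} (Lemma~3.3 of \citet{GGK}), a $3$-connected non-binary matroid that is not a whirl has a minor in $\{U_{2,5}, U_{3,5}, F_7^-, (F_7^-)^*, P_7, P_7^*, O_7, O_7^*\}$; combining this with the Splitter Theorem (checking that $P_7$, $O_7$, their duals, and the relevant extensions of uniform matroids each contain an element that is both deletable and contractible) yields Lemma~\ref{lem:U24fragile}, and since $U_{2,5}$, $U_{3,5}$, $F_7^-$ and $(F_7^-)^*$ are not near-regular, every $3$-connected strictly $U_{2,4}$-fragile matroid in $\matset(\nreg)$ is a whirl, with no exceptions (Lemma~\ref{lem:nregalmostU24whirl}); Lemma~\ref{lem:bwwhirl} then gives the bound $t=3$ (non-$3$-connected fragile matroids are handled by Proposition~\ref{prop:almostconn} and the invariance of branch width under series and parallel classes). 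To make your proof complete, replace the hand-waved structural recursion by an argument of this kind, or carry out the representation-matrix analysis in full detail; the rest of your outline is sound and coincides with the paper's proof.
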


First we need to find the structure of $U_{2,4}$-fragile matroids.

\begin{lemma}\label{lem:U24fragile}
  Let $M$ be a $3$-connected $U_{2,4}$-fragile matroid that has no minor isomorphic to $U_{2,6}$ or $U_{4,6}$. Then exactly one of the following holds.
  \begin{enumerate}
    \item $M$ has rank or corank two;
    \item $M$ has a minor isomorphic to $F_7^-$ or $(F_7^-)^*$;
    \item $M$ has rank at least 3 and is a whirl.
  \end{enumerate}
\end{lemma}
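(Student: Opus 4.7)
The lemma is essentially vacuous unless $M$ has a $U_{2,4}$-minor (the statement is understood for strictly $U_{2,4}$-fragile $M$; if $M$ has no $U_{2,4}$-minor it is binary, so cases (2) and (3) are excluded by dimensional or representability considerations). So I would assume $U_{2,4}\minorof M$. All three conclusions, and all hypotheses, are self-dual (whirls are self-dual and $U_{4,6} = U_{2,6}^*$), so I may assume $\rank(M) \leq \rank(M^*)$. If $\rank(M)\leq 2$ we are in case (1); thus assume $\rank(M), \rank(M^*) \geq 3$ and aim to prove (2) or (3). In particular $|E(M)| \geq 6$.

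\paragraph{Base case ($|E(M)|=6$).}
Here $\rank(M)=\rank(M^*)=3$. The 3-connected rank-3 matroids on 6 elements with a $U_{2,4}$-minor and no $U_{2,6}$-minor form a short list: $U_{3,6}$, $P_6$, $Q_6$, and $\whirl{3}$. A direct check of Lemma \ref{lem:almostpartition} shows that $U_{3,6}$, $P_6$, $Q_6$ each contain an element $e$ for which both $M\delete e$ and $M\contract e$ still have a $U_{2,4}$-minor, so they are not $U_{2,4}$-fragile. The remaining matroid, $\whirl{3}$, is in case (3).

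\paragraph{Inductive step.}
Argue by induction on $|E(M)|$. Suppose $|E(M)| \geq 7$ and apply the Splitter Theorem (Theorem \ref{thm:3connchain}) to the pair $(M, U_{2,4})$. Since $M$ has a $U_{2,4}$-minor, $M$ is non-binary and hence not a wheel. If $M$ is a whirl we are in case (3). Otherwise there is $e \in E(M)$ such that $M' \in \{M\delete e, M\contract e\}$ is 3-connected with a $U_{2,4}$-minor. Because fragility and both forbidden minors are preserved under taking minors, $M'$ satisfies the hypotheses of the lemma. Moreover $|E(M')| \geq 6$; rank 2 or corank 2 for $M'$ would force $M' \in \{U_{2,5},U_{3,5}\}$, which have too few elements, so $\rank(M'), \rank(M'^*) \geq 3$. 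The inductive hypothesis then yields: either $M'$ has an $F_7^-$- or $(F_7^-)^*$-minor (so does $M$, giving (2)), or $M'$ is a whirl $\whirl{n}$ with $n\geq 3$.

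\paragraph{Finishing: 3-connected single-element extensions of whirls.}
It remains to show that a 3-connected, $U_{2,4}$-fragile single-element extension or coextension $M$ of $\whirl{n}$ ($n\geq 3$) with no $U_{2,6}$- or $U_{4,6}$-minor, and which is not itself a whirl, contains an $F_7^-$ or $(F_7^-)^*$ minor. For $n=3$ this is a finite check: enumerate all 3-connected rank-3 and rank-4 matroids on 7 elements having $\whirl{3}$ as a minor, and verify in each case that either fragility or one of the forbidden uniform minors triggers, or $F_7^-/(F_7^-)^*$ appears. For $n\geq 4$, I would contract or delete a spoke/rim pair of $\whirl{n}$ in a region far from the new element $e$ to obtain a 3-connected minor $M^{\dagger}$ which is a single-element extension or coextension of $\whirl{n-1}$ still containing $e$; iterating reduces to the $n=3$ analysis (the crucial point is that the reduced minor inherits the $F_7^-/(F_7^-)^*$ witnesses from the starting minor).

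\paragraph{Main obstacle.}
The heart of the proof is the combinatorial case analysis of how a single element can be added to a whirl while preserving 3-connectivity and $U_{2,4}$-fragility. Each candidate position for the new element (on a rim pair, a spoke pair, or in a more generic position creating new triangles) must be examined, and one must either exhibit an $F_7^-$/$(F_7^-)^*$-minor or derive a contradiction from the fragility or forbidden-minor hypotheses. This step is unavoidable because the conclusion is a precise classification, and small rank cases must be settled by inspection before the reduction argument can propagate the result to arbitrary rank.
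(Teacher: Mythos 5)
Your reduction to the $6$-element base case and your inductive step via the Splitter Theorem are sound, and the base-case verification (each of $U_{3,6}$, $P_6$, $Q_6$ has an element that is both deletable and contractible with respect to $U_{2,4}$) is the same kind of finite check the paper itself relies on. The genuine gap is your finishing step for $n\geq 4$: the assertion that one can always remove a rim--spoke pair of $\whirl{n}$ ``in a region far from the new element $e$'' and obtain a $3$-connected single-element extension or coextension of $\whirl{n-1}$ still containing $e$ is an intuition, not an argument. Concretely, in the extension case $M\delete e=\whirl{n}$ the candidate minor $M\contract r_i\delete s_i$ fails to be $3$-connected exactly when $e$ lies on a triangle of $M$ through $r_i$ (after contracting $r_i$ the element $e$ becomes parallel to a surviving element), and dually for coextensions with triads; a priori $e$ can lie on triangles meeting rim elements scattered all around the whirl, so ``far from $e$'' has no content until you prove that some position is unblocked. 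Such a proof can probably be given (for instance, via the modular cut of the extension: any two lines of $\whirl{n}$ whose closure in $M$ contains $e$ must be disjoint and coplanar, or else $e$ is forced to be a loop or parallel to a whirl element, and since the rim of $\whirl{n}$ is independent such lines cannot meet every rim element when $n\geq 4$), but nothing of this sort appears in your proposal, the coextension case needs its own dual treatment, and your $n=3$ finishing check (all $3$-connected $7$-element matroids with a $\whirl{3}$-minor) is a sizable enumeration that is deferred rather than done. As it stands, the heart of your proof is missing.

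For comparison, the paper bypasses your induction entirely by invoking Lemma~\ref{lem:notwhirlthingy}: a $3$-connected non-binary matroid that is not a whirl has a minor in $\{U_{2,5},U_{3,5},F_7^-,(F_7^-)^*,P_7,P_7^*,O_7,O_7^*\}$. Fragility at once eliminates $P_7$, $O_7$ and their duals (each has an element that is both deletable and contractible), $F_7^-$ and its dual give conclusion (\textit{ii}), and a $U_{2,5}$- or $U_{3,5}$-minor combined with the Splitter Theorem and the exclusion of $U_{2,6}$ and $U_{4,6}$ forces a minor in $\{P_6,Q_6,U_{3,6}\}$, contradicting fragility again. If you wish to avoid that external lemma, the analysis of single-element extensions and coextensions of whirls sketched above is precisely where the real work must be carried out.
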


The proof follows easily from the following result:

\begin{lemma}[\citet{GGK}, Lemma 3.3]\label{lem:notwhirlthingy}
  Let $M$ be a $3$-connected, non-binary matroid that is not a whirl. Then $M$ has a minor in the set
  \begin{align*}
    \{U_{2,5}, U_{3,5}, F_7^-, (F_7^-)^*, P_7, P_7^*, O_7, O_7^*\}.
  \end{align*}
\end{lemma}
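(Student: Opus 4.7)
The plan is to argue by induction on $|E(M)|$. Suppose $M$ is a minimum counterexample: a $3$-connected, non-binary matroid that is not a whirl and has no minor in the target set $\mathcal{T} := \{U_{2,5}, U_{3,5}, F_7^-, (F_7^-)^*, P_7, P_7^*, O_7, O_7^*\}$. Since $M$ is non-binary it has a $U_{2,4}$-minor, and since $U_{2,4} \cong \whirl{2}$ is a whirl we have $M \neq U_{2,4}$. The low-rank cases are easy to dispose of first: if $\rank(M) = 2$ then $M = U_{2,k}$, which is either the whirl $U_{2,4}$ or contains $U_{2,5}$; the dual case is symmetric. So we may assume $\rank(M), \rank^*(M) \geq 3$.

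Next I would apply the Splitter Theorem (Theorem \ref{thm:3connchain}) iteratively with $N = U_{2,4}$ to obtain a chain of $3$-connected matroids $U_{2,4} = M_0 \minorof M_1 \minorof \cdots \minorof M_k = M$, each obtained from its predecessor by a single deletion or contraction. Let $i$ be the smallest index with $M_i$ not a whirl; such $i$ exists because $M_k = M$ is not a whirl, and $i \geq 1$ because $M_0 = \whirl{2}$. Then $M_{i-1} = \whirl{n}$ for some $n \geq 2$ and $M_i$ is a $3$-connected single-element extension or coextension of $\whirl{n}$ that is itself not a whirl. By minimality of the counterexample, $M_i$ also contains no member of $\mathcal{T}$; replacing $M$ by $M_i$, we may therefore assume $M$ itself is a $3$-connected, non-whirl, single-element extension or coextension of $\whirl{n}$.

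The heart of the proof is then a case analysis of such extensions. By duality it suffices to analyze extensions; I would work from a standard matrix representation of $\whirl{n}$ (over, say, a field in which the ``non-Fano'' choice of signs is available) and classify the ways to append a column so the result is simple, $3$-connected, and not a whirl. For $n = 2$ this comes down to the finitely many rank-$2$ and rank-$3$ single-element extensions of $U_{2,4}$: a rank-$2$ extension is $U_{2,5}$, and each $3$-connected simple rank-$3$ extension is readily checked to be (a restriction of) one of $F_7^-$, $P_7$, or $O_7$, placing a member of $\mathcal{T}$ as a minor. For $n \geq 3$ one exploits the explicit geometry of $\whirl{n}$ as $n$ triangles chained in a cycle through rim elements: the automorphism group acts transitively on flats of each type, so up to isomorphism there are only a bounded number of orbits for the new point. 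An extension that puts the new point on a line of $\whirl{n}$ yields a $U_{2,5}$-minor inside a triangle; an extension that puts it on a $3$-flat spanned by two adjacent triangles produces a $U_{3,5}$-minor or one of the sporadic $7$-element matroids after contracting the remaining rim and spoke elements; and a ``free'' extension fails to be $3$-connected or reproduces a larger whirl, contradicting our hypothesis on $M$.

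The main obstacle is precisely this final case analysis: cataloguing the $3$-connected single-element extensions of $\whirl{n}$ and certifying, for each, that after suitable further contraction of spoke or rim elements one reaches $\mathcal{T}$. The $n = 2, 3$ cases reduce to a finite computer-checkable enumeration of $3$-connected matroids on at most eight elements, while the $n \geq 4$ case needs the structural fact that the new element touches at most three consecutive ``spoke–rim–spoke'' triples nontrivially, so that contracting everything outside a window of size three leaves a rank-$3$ extension of $\whirl{3}$ already handled in the base case. I would expect the cleanest write-up to reduce the $n \geq 4$ analysis to the $n = 3$ case by such a windowing argument, turning the whole lemma into a finite check plus an inductive reduction via the Splitter Theorem.
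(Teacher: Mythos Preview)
The paper does not prove this lemma at all: it is quoted verbatim as Lemma~3.3 of \citet{GGK} and used as a black box, with the next proof environment being for Lemma~\ref{lem:U24fragile}. So there is no ``paper's own proof'' to compare your attempt against.

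That said, your overall architecture is the standard one and is essentially how the result is obtained in \citet{GGK}: use the Splitter Theorem to find, inside a minimal counterexample, a $3$-connected single-element extension or coextension of some $\whirl{n}$ that is not itself a whirl, and then analyse those extensions. Where your write-up goes wrong is in the case analysis, which is the actual content of the lemma:
\begin{itemize}
  \item Placing the new point on a triangle of $\whirl{n}$ produces a $4$-point line, i.e.\ a $U_{2,4}$-restriction, not a $U_{2,5}$-minor; since $U_{2,4}$ is already present this gives you nothing. One has instead to contract suitable rim or spoke elements to reach one of the seven-element matroids in the list.
  \item Your assertion that a free extension ``fails to be $3$-connected or reproduces a larger whirl'' is false. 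The free single-element extension of $\whirl{n}$ for $n\geq 3$ is $3$-connected, has $2n+1$ elements, and is certainly not a whirl. (It does contain $U_{3,5}$ as a restriction, which is what you actually need to show.)
  \item The ``windowing'' reduction from $n\geq 4$ to $n=3$ is asserted, not argued: you need to show that the modular cut defining the extension interacts nontrivially with only a bounded arc of the whirl, and that contracting the complementary spokes preserves the relevant structure. This step requires care.
\end{itemize}
So the skeleton is correct, but the substantive part of the proof---the classification of $3$-connected non-whirl extensions of whirls---still needs to be done properly.
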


\begin{proof}[Proof of Lemma \ref{lem:U24fragile}]
  Suppose that the lemma is false, and let $M$ be a matroid that is not in one of the classes mentioned. Then $M$ must have rank and corank at least $3$. It is easily checked that each of $P_7$, $O_7$, and their duals has an element that is both deletable and contractible, so by Lemma \ref{lem:notwhirlthingy}, $M$ must have a $U_{2,5}$- or $U_{3,5}$-minor.
  
  By the Splitter Theorem, $M$ must have a one-element extension of $U_{n-2,n}$ or a one-element coextension of $U_{2,n}$ as a minor, where $n \geq 5$. It is readily checked that $M$ then has a minor in $P_6, Q_6, U_{3,6}$, each of which has an element that is both deletable and contractible, a contradiction.
\end{proof}

\begin{lemma}\label{lem:nregU24}
  Let $M$ be an excluded minor for $\matset(\nreg)$. If $M \not\in \{F_7,F_7^*\}$, then $M$ has a $U_{2,4}$-minor.
\end{lemma}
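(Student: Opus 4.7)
I will argue by contrapositive. Assume $M$ is an excluded minor for $\matset(\nreg)$ and $M$ has no $U_{2,4}$-minor; I will show $M\in\{F_7,F_7^*\}$. By Tutte's excluded-minor characterization of the binary matroids, the absence of a $U_{2,4}$-minor forces $M$ to be binary.

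Next I invoke the fact, recorded in the preceding paragraph, that $\matset(\nreg)$ coincides with the class of matroids representable over every field of size at least $3$. In particular, every near-regular matroid is ternary. Combining this with Tutte's theorem that a matroid is regular iff it is representable over both $\GF(2)$ and $\GF(3)$, I conclude that a \emph{binary} near-regular matroid is regular, and conversely every regular matroid is near-regular. Since $M$ is an excluded minor for $\matset(\nreg)$, $M$ itself is not near-regular; being binary, it is therefore not regular. Tutte's excluded-minor characterization of the regular matroids then guarantees that $M$ has a minor isomorphic to $F_7$ or to $F_7^*$.

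If $M\neq F_7$ and $M\neq F_7^*$, then $F_7$ or $F_7^*$ appears as a \emph{proper} minor of $M$. Because $M$ is an excluded minor, every proper minor of $M$ lies in $\matset(\nreg)$; but $F_7$ and $F_7^*$ are only representable in characteristic $2$ and hence are not near-regular, a contradiction. So $M\in\{F_7,F_7^*\}$, as required.

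There is essentially no obstacle here beyond assembling the classical excluded-minor theorems of Tutte for the binary and regular matroids, together with the characterization of $\matset(\nreg)$ as the matroids representable over all fields of order at least $3$ and the minor-closure of $\matset(\nreg)$ (which is automatic from partial-field representability). The only subtlety is remembering that $\nreg$ admits a homomorphism to $\GF(3)$, which is what lets us identify binary-and-near-regular with regular.
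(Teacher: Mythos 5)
Your proof is correct and rests on the same ingredients as the paper's own one-line argument: Tutte's excluded-minor characterization of regular matroids, the fact that regular matroids are near-regular, and the fact that $F_7$ and $F_7^*$ are not near-regular (equivalently, are themselves excluded minors for $\matset(\nreg)$), combined with minor-minimality of $M$. The detour through binarity and the ``binary plus ternary equals regular'' theorem is harmless but unnecessary, since non-regularity already yields a minor in $\{U_{2,4},F_7,F_7^*\}$ directly.
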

\begin{proof}
  It is readily checked that $F_7$ is an excluded minor for $\matset(\nreg)$. But if $M$ has no minor in $\{F_7, F_7^*, U_{2,4}\}$, then $M$ is regular and hence certainly near-regular.
\end{proof}

\begin{lemma}\label{lem:nregalmostU24whirl}
  If $M \in \matset(\nreg)$ is $3$-connected and strictly $U_{2,4}$-fragile, then $M$ is a whirl.
\end{lemma}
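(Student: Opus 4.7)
The plan is to reduce the statement directly to Lemma \ref{lem:U24fragile}, which already classifies the $3$-connected $U_{2,4}$-fragile matroids that exclude $U_{2,6}$ and $U_{4,6}$ as minors. The hypothesis that $M$ is strictly $U_{2,4}$-fragile gives a $U_{2,4}$-minor, and the hypothesis that $M$ is near-regular will be used repeatedly to discard every outcome of Lemma \ref{lem:U24fragile} except the whirl case.

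First I would verify that $M$ actually falls within the scope of Lemma \ref{lem:U24fragile}. Since the matroids $U_{2,5}$ and $U_{3,5}$ are not representable over $\GF(3)$, they fail to be near-regular; by minor-closure $M$ has no $U_{2,5}$ or $U_{3,5}$ minor, and consequently no $U_{2,6}$ or $U_{4,6}$ minor. Hence Lemma \ref{lem:U24fragile} applies and leaves three possibilities: either $M$ has rank or corank two, $M$ has a minor in $\{F_7^-, (F_7^-)^*\}$, or $M$ is a whirl of rank at least three.

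Next I would eliminate the middle case. The non-Fano matroid $F_7^-$ is not representable over every field of size at least three (this is the standard fact used to place it in the excluded-minor list for $\matset(\nreg)$), so neither $F_7^-$ nor $(F_7^-)^*$ is near-regular; consequently $M$ has no such minor. Finally, for the low-rank case, if $M$ has rank two then $3$-connectivity and the existence of a $U_{2,4}$-minor force $M \cong U_{2,n}$ for some $n\geq 4$; since $U_{2,5}$ is not near-regular, $n = 4$, and so $M \cong U_{2,4} \cong \whirl{2}$. The corank-two case is symmetric by duality (using Lemma \ref{lem:almostpartition} and the fact that $\matset(\nreg)$ is closed under duality). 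In every remaining case $M$ is a whirl, which is the desired conclusion.

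The main obstacle is not any deep argument but the careful bookkeeping of which small matroids are and are not near-regular; beyond that, the lemma is a direct corollary of Lemma \ref{lem:U24fragile} combined with the structure of $\matset(\nreg)$.
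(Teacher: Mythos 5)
Your proof is correct and follows essentially the same route as the paper: invoke Lemma \ref{lem:U24fragile}, then use the facts that $U_{2,5}$, $U_{3,5}$, $F_7^-$, $(F_7^-)^*$ are not near-regular to rule out everything but the whirl outcome, handling the rank/corank-two case via $U_{2,4}\cong\whirl{2}$. The paper's proof is just a terser version of the same argument, leaving the low-rank bookkeeping implicit.
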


\begin{proof}[Proof of Lemma~\ref{lem:nregalmostU24whirl}]
  The matroids $U_{2,5}$, $F_7^-$, and their duals are not near-regular. The result follows from Lemma \ref{lem:U24fragile}.
\end{proof}

\begin{lemma}[\citet{GOVW98}]
  The matroid $U_{2,4}$ is a strong stabilizer for $\matset(\nreg)$.
\end{lemma}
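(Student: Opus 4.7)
The plan is to verify both requirements in Definition \ref{def:strongstab}: that $U_{2,4}$ stabilizes $\matset(\nreg)$ and that every $\nreg$-representation of $U_{2,4}$ extends to every 3-connected matroid in $\matset(\nreg)$ with a $U_{2,4}$-minor. First I would recall the explicit description of $\nreg$ and enumerate the $\nreg$-matrices of the form $\bigl(\begin{smallmatrix} 1 & 1 \\ 1 & p\end{smallmatrix}\bigr)$ representing $U_{2,4}$. The constraint that both $p$ and $1-p$ lie in $\nreg^*$ forces $p$ to be one of the six cross-ratios $\alpha,\, 1-\alpha,\, \alpha^{-1},\, (1-\alpha)^{-1},\, \alpha/(\alpha-1),\, (\alpha-1)/\alpha$, and these six normalized representations form a single orbit under the partial-field automorphism group $\mathrm{Aut}(\nreg) \cong S_3$.

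For the stabilizer property, the standard route is to invoke Whittle's stabilizer theorem from \cite{Whi96b}, which reduces verification to a finite check: one shows that $U_{2,4}$ stabilizes every 3-connected one-element extension and coextension of itself in $\matset(\nreg)$, and then uses the Splitter Theorem (Theorem \ref{thm:3connchain}) to propagate uniqueness along a chain from a fixed $U_{2,4}$-minor up to any 3-connected $M \in \matset(\nreg)$ containing it. The small-case verification is routine: enumerate 3-connected near-regular matroids of size at most $|E(U_{2,4})|+2$ with a $U_{2,4}$-minor and check that, after normalizing on a spanning tree of $G(M,B)$ via Lemma \ref{lem:bippropertiesunique}, fixing a representation of the $U_{2,4}$-minor forces every remaining entry uniquely.

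For the strong property, let $M$ be 3-connected in $\matset(\nreg)$ with a $U_{2,4}$-minor $N'$ and let $D$ be any prescribed $\nreg$-representation of $N'$. Since $M \in \matset(\nreg)$ there is some $\nreg$-representation $A$ of $M$. After normalizing both $A[E(N')]$ and $D$ on a common spanning tree of $G(N', X \cap E(N'))$, $A[E(N')]$ is one of the six matrices listed above, so by the transitivity of $\mathrm{Aut}(\nreg)$ on that orbit there is an automorphism $\phi$ of $\nreg$ with $\phi(A[E(N')])$ scaling-equivalent to $D$. By Proposition \ref{prop:pmatops}, $\phi(A)$ is an $\nreg$-matrix representing $M$, and by construction it extends $D$ up to scaling of rows and columns. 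Combined with the stabilizer conclusion of the previous paragraph, this proves that $U_{2,4}$ is a strong stabilizer.

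The main obstacle is the stabilizer property itself, since the strong upgrade reduces essentially to knowing $\mathrm{Aut}(\nreg)$. The finite case check underlying Whittle's theorem can alternatively be bypassed by a direct induction along the Splitter Theorem that uses Lemma \ref{lem:nregalmostU24whirl}: a strictly $U_{2,4}$-fragile 3-connected near-regular matroid is a whirl, whose $\nreg$-representations can be written down explicitly and shown to be forced by the embedded $U_{2,4}$; while in the non-fragile case one finds an element $e$ with $U_{2,4} \minorof M \delete e$ and $U_{2,4} \minorof M \contract e$, so that uniqueness transfers from both the deletion and contraction to $M$ itself.
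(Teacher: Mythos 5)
Your proposal is correct and takes essentially the same route as the paper: Whittle's stabilizer theorem for the stabilizer property, and unique representability of $U_{2,4}$ over $\nreg$ up to partial-field automorphism (your $S_3$-orbit of the six cross ratios) for the upgrade to a strong stabilizer. The only difference is that the paper discharges your finite check instantly by noting that $U_{2,4}$ has \emph{no} $3$-connected near-regular single-element extensions or coextensions (the only candidates, $U_{2,5}$ and $U_{3,5}$, are excluded minors for $\matset(\nreg)$), so the hypothesis of the stabilizer theorem is vacuously satisfied and no enumeration or Splitter-Theorem induction is needed.
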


\begin{proof}
  Since $U_{2,4}$ has no near-regular $3$-connected single-element extensions or coextensions, the stabilizer theorem from \cite{Whi96} immediately implies that $U_{2,4}$ is a stabilizer. Since $U_{2,4}$ is uniquely representable over $\nreg$, it is strong.
\end{proof}

\begin{proof}[Proof of Theorem~\ref{thm:exminnreg}]
  Lemma~\ref{lem:nregU24} implies that finitely many excluded minors have no $U_{2,4}$-minor. But $U_{2,4}$ is non-binary, $3$-connected, a strong stabilizer, and has bounded canopy over $\nreg$ (by Lemma~\ref{lem:nregalmostU24whirl} and Lemma~\ref{lem:bwwhirl}). Hence Theorems \ref{thm:rotapfN} and \ref{thm:rotabw} imply that finitely many excluded minors do have a $U_{2,4}$-minor, so the result follows.
\end{proof}

Let $\psru$ be the sixth-roots-of-unity partial field introduced by Whittle \cite{Whi97}. He showed that $\matset(\psru)$ equals the set of matroids representable over both $\GF(3)$ and $\GF(4)$. All results above remain valid if we replace $\nreg$ by $\psru$. Hence we also have the following result by \citet{GGK}:

\begin{theorem}\label{thm:exminsru}
  The class $\matset(\psru)$ has a finite number of excluded minors.
\end{theorem}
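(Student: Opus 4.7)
The plan is to follow the same strategy used in the proof of Theorem \ref{thm:exminnreg}, essentially verbatim, exploiting the fact that $\psru$ shares the key structural properties with $\nreg$ that were needed there. The four ingredients that make the previous argument work are: (a) an analog of Lemma \ref{lem:nregU24} ensuring only finitely many excluded minors avoid $U_{2,4}$; (b) an analog of Lemma \ref{lem:nregalmostU24whirl} identifying the $3$-connected strictly $U_{2,4}$-fragile matroids in $\matset(\psru)$; (c) the fact that $U_{2,4}$ is a strong $\psru$-stabilizer; and (d) the applicability of Theorems \ref{thm:rotapfN} and \ref{thm:rotabw}.

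For (a), I would observe that $\psru$ admits a partial-field homomorphism to $\GF(3)$, so any $\psru$-representable matroid is ternary, and $F_7$, $F_7^*$ are excluded minors for $\matset(\psru)$. Any matroid with no minor in $\{F_7, F_7^*, U_{2,4}\}$ is regular, hence $\psru$-representable, so excluded minors lacking a $U_{2,4}$-minor are precisely $F_7$ and $F_7^*$. For (b), I would use Lemma \ref{lem:U24fragile}: any $3$-connected strictly $U_{2,4}$-fragile $\psru$-representable matroid has rank or corank at most $2$, contains $F_7^-$ or $(F_7^-)^*$ as a minor, or is a whirl. Since $U_{2,5}$, $U_{3,5}$, and $F_7^-$ fail to be representable over $\GF(3)$ (or over $\GF(4)$), and hence are not $\psru$-representable, and since low-rank/corank cases are small, the only infinite family remaining is the whirls $\whirl{n}$. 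By Lemma \ref{lem:bwwhirl}, whirls have branch width $3$, so $U_{2,4}$ has bounded canopy over $\matset(\psru)$.

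For (c), $U_{2,4}$ has no $3$-connected $\psru$-representable single-element extensions or coextensions (the only candidates would be $U_{2,5}, U_{3,5}, U_{2,4}\oplus_2 U_{2,4}$-type objects, none $\psru$-representable as $3$-connected matroids), so Whittle's stabilizer criterion (as invoked in the $\nreg$ case) makes $U_{2,4}$ a stabilizer for $\matset(\psru)$; since $U_{2,4}$ has a unique $\psru$-representation (up to equivalence), it is a strong stabilizer. For (d), I would verify the hypotheses \eqref{it:pfN1}--\eqref{it:pfN4} of Theorem \ref{thm:rotapfN} with $\pf=\psru$, $\matset=\matset(\psru)$, and $\mathcal{N}=\{U_{2,4}\}$: $U_{2,4}$ is $3$-connected, non-binary, and a strong stabilizer; condition \eqref{it:pfN4} is trivial because $\matset(\psru)$ contains all $3$-connected $\psru$-representable matroids stabilized by $U_{2,4}$; and $t$ can be taken to be $3$ by (b). Theorem \ref{thm:rotapfN} then bounds the branch width of any excluded minor with a $U_{2,4}$-minor, and Theorem \ref{thm:rotabw} finishes the proof by showing there are finitely many such excluded minors of each bounded branch width.

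The main obstacle, as in the $\nreg$ case, is really step (b) — confirming that none of the ``exotic'' matroids appearing in Lemma \ref{lem:U24fragile} are $\psru$-representable. This reduces to checking representability of the small matroids $U_{2,5}$, $U_{3,5}$, $F_7^-$, $(F_7^-)^*$ over $\GF(3)$ and $\GF(4)$, which is a routine finite computation and is well known in the literature. Every other step is a direct transcription of the $\nreg$ argument.
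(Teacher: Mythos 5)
Your proposal is correct and matches the paper's own treatment: the paper proves Theorem \ref{thm:exminsru} simply by observing that every step of the near-regular argument (Lemmas \ref{lem:nregU24}--\ref{lem:bwwhirl} together with Theorems \ref{thm:rotapfN} and \ref{thm:rotabw}) remains valid with $\nreg$ replaced by $\psru$, using that $\matset(\psru)$ consists of the matroids representable over both $\GF(3)$ and $\GF(4)$. Your verification that $U_{2,5}$, $U_{3,5}$, $F_7^-$, $(F_7^-)^*$ are not $\psru$-representable, that $U_{2,4}$ is a strong $\psru$-stabilizer with bounded canopy (whirls, branch width $3$), and that only $F_7$, $F_7^*$ avoid $U_{2,4}$, is exactly the content of that observation.
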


\subsection{Excluded minors for the class of quaternary matroids}
Using almost the same arguments as in the previous section we can give an alternative proof of the following result by \citet{GGK}:

\begin{theorem}[\citet{GGK}]\label{thm:exminGF4}
  The class $\matset(\GF(4))$ has a finite number of excluded minors.
\end{theorem}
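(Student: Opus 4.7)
The proof will follow the same template used for Theorems \ref{thm:exminnreg} and \ref{thm:exminsru}, applying Theorem \ref{thm:rotapfN} with $\mathcal{N} = \{U_{2,4}\}$ and concluding via Theorem \ref{thm:rotabw}. The plan is to verify each of the four hypotheses of Theorem \ref{thm:rotapfN} for the class $\matset(\GF(4))$, where hypothesis \eqref{it:pfN4} is trivial since the class of $3$-connected $\GF(4)$-representable matroids strongly stabilized by $U_{2,4}$ is contained in $\matset(\GF(4))$.

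First I would dispose of the excluded minors with no $U_{2,4}$-minor: if $M$ is an excluded minor for $\matset(\GF(4))$ and $U_{2,4}\not\minorof M$, then $M$ is binary, and since every binary matroid is representable over $\GF(4)$ (via the inclusion $\GF(2)\hookrightarrow\GF(4)$), $M$ would be $\GF(4)$-representable, a contradiction. Hence every excluded minor for $\matset(\GF(4))$ has a $U_{2,4}$-minor. Next, $U_{2,4}$ is manifestly $3$-connected and non-binary, verifying \eqref{it:pfN1}.

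For hypotheses \eqref{it:pfN2} and \eqref{it:pfN3} I would cite the classical fact, due to Kahn, that every $3$-connected $\GF(4)$-representable matroid has a unique representation up to the natural notion of equivalence; this immediately yields that $U_{2,4}$ is a strong stabilizer for $\matset(\GF(4))$, since uniqueness of representation forces every representation of a $U_{2,4}$-minor to extend to the unique representation of the ambient matroid. The last ingredient to check is that $U_{2,4}$ has bounded canopy over $\matset(\GF(4))$. Let $M$ be a $3$-connected, strictly $U_{2,4}$-fragile, $\GF(4)$-representable matroid. Since $\GF(4)$ has characteristic $2$, neither $F_7^-$ nor $(F_7^-)^*$ is $\GF(4)$-representable; and since $U_{2,n}$ is $\GF(4)$-representable only for $n\leq 5$, the matroids $U_{2,6}$ and $U_{4,6}$ are also excluded from $\matset(\GF(4))$. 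Lemma \ref{lem:U24fragile} then forces $M$ to either have rank or corank at most $2$, or to be a whirl. In the rank-or-corank-$2$ case $M\in\{U_{2,4},U_{2,5},U_{3,5}\}$, all of which have branch width at most $3$; in the whirl case Lemma \ref{lem:bwwhirl} gives branch width exactly $3$. So $\bw(M)\leq 3$ in all cases, which is the required bounded canopy.

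With hypotheses \eqref{it:pfN1}--\eqref{it:pfN4} in hand and $t=3$, Theorem \ref{thm:rotapfN} produces a constant $l$ such that every excluded minor for $\matset(\GF(4))$ with a $U_{2,4}$-minor has branch width at most $l$. Theorem \ref{thm:rotabw} then limits such excluded minors to a finite list, and combined with the first step the conclusion follows. There is no genuine obstacle here beyond citing Kahn's uniqueness theorem for the strong-stabilizer property; the rest is a direct specialization of the two preceding proofs, and the mild departure from the $\nreg$ and $\psru$ cases (where the stabilizer theorem could be applied trivially because $U_{2,4}$ had no $3$-connected extensions in those classes) is handled by invoking unique representability instead.
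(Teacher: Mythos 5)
Your proposal is correct and follows essentially the same route as the paper: show every excluded minor has a $U_{2,4}$-minor since binary matroids are quaternary, verify that $U_{2,4}$ is $3$-connected, non-binary, a strong stabilizer with bounded canopy (via Lemma~\ref{lem:U24fragile}, the non-representability of $F_7^-$, $(F_7^-)^*$, $U_{2,6}$, $U_{4,6}$ over $\GF(4)$, and Lemma~\ref{lem:bwwhirl}), and then apply Theorems~\ref{thm:rotapfN} and~\ref{thm:rotabw}. The only deviation is that you extract both the stabilizer and the strong-stabilizer properties from Kahn's uniqueness theorem for all $3$-connected quaternary matroids, where the paper instead cites Whittle's result that $U_{2,4}$ is a $\GF(4)$-stabilizer and uses Kahn only for unique representability of $U_{2,4}$; your shortcut is fine, but the word ``immediately'' hides the small check that the two scaling-inequivalent $\GF(4)$-representations of $U_{2,4}$ are interchanged by the Frobenius automorphism, which is what pins down the automorphism in the stabilizer argument and gives the extension of every representation in the strong-stabilizer argument.
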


\begin{lemma}\label{lem:GF4U24}
  Let $M$ be an excluded minor for $\matset(\GF(4))$. Then $M$ has a $U_{2,4}$-minor.
\end{lemma}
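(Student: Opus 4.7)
The plan is to argue by contradiction using Tutte's excluded-minor characterization of binary matroids together with the fact that $\GF(2)$ is a subfield of $\GF(4)$. Suppose, for a contradiction, that $M$ is an excluded minor for $\matset(\GF(4))$ that has no $U_{2,4}$-minor. By Tutte's theorem, the matroids with no $U_{2,4}$-minor are exactly the binary matroids, so $M$ is binary.

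Since $M$ is binary, it admits a representation by a matrix $A$ over $\GF(2)$. The inclusion $\GF(2) \hookrightarrow \GF(4)$ of fields lets us regard $A$ as a matrix over $\GF(4)$ whose entries happen to lie in $\{0,1\}$; equivalently, the embedding $\GF(2) \to \GF(4)$ is a partial-field homomorphism, and by Proposition~\ref{prop:pmatops} it preserves the associated matroid. Therefore $M$ is $\GF(4)$-representable, contradicting the assumption that $M$ is an excluded minor for $\matset(\GF(4))$.

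In contrast to the analogous statement for near-regular matroids (Lemma \ref{lem:nregU24}), no exceptional cases such as $F_7$ or $F_7^*$ need to be separated out here: those matroids are binary, hence $\GF(4)$-representable, so they simply cannot appear as excluded minors for $\matset(\GF(4))$. Consequently the proof is essentially immediate and no real obstacle arises; the only ingredients used are Tutte's excluded-minor theorem for binary matroids and the subfield relation $\GF(2) \subseteq \GF(4)$.
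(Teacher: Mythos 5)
Your argument is correct and is essentially the paper's proof, which states in one line that a matroid with no $U_{2,4}$-minor is binary and hence $\GF(4)$-representable; you have simply made explicit the two ingredients (Tutte's excluded-minor theorem and the subfield embedding $\GF(2)\subseteq\GF(4)$).
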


\begin{proof}
  If $M$ has no $U_{2,4}$-minor then $M$ is binary and hence certainly $\GF(4)$-representable.
\end{proof}

\begin{lemma}
  The matroid $U_{2,4}$ is a strong stabilizer for $\matset(\GF(4))$.
\end{lemma}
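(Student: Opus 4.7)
The proof will closely parallel the previous lemma for $\matset(\nreg)$, but with one complication: unlike the near-regular case, $U_{2,4}$ does have $\GF(4)$-representable 3-connected single-element extensions and coextensions (for instance, $U_{2,5}$ and $U_{3,5}$), so the argument cannot merely be ``vacuous by the stabilizer theorem.'' The plan is therefore to apply Whittle's theorem from \cite{Whi96b} to reduce the stabilizer check to these finitely many extensions and coextensions, and separately verify the strong property using the Frobenius automorphism of $\GF(4)$.

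For the stabilizer property, I would appeal to Kahn's theorem on the uniqueness of $\GF(4)$-representations of 3-connected matroids: every 3-connected $\GF(4)$-representable matroid has a $\GF(4)$-representation that is unique up to row operations, row and column scalings, permutations, and the Frobenius automorphism of $\GF(4)$. Applied to each 3-connected $\GF(4)$-representable single-element extension or coextension $M'$ of $U_{2,4}$, this yields that fixing a scaling-equivalence class of representations of the $U_{2,4}$-minor inside $M'$ determines the representation of $M'$ up to scaling, which is exactly the condition required by Definition~\ref{def:stab}. Whittle's theorem then concludes that $U_{2,4}$ stabilizes $\matset(\GF(4))$.

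For the strong property, observe that $U_{2,4}$ admits, up to scaling of rows and columns, exactly two $\GF(4)$-representations, and these are interchanged by the Frobenius automorphism $x \mapsto x^2$, which is a partial-field automorphism of $\GF(4)$. Now let $M$ be any $\GF(4)$-representable matroid with a $U_{2,4}$-minor and let $A$ be any $\GF(4)$-representation of $U_{2,4}$. Pick any representation $B$ of $M$; its restriction to the $U_{2,4}$-minor is scaling-equivalent either to $A$ or to the Frobenius image of $A$. In the latter case, applying Frobenius entrywise to $B$ produces a representation of $M$ whose restriction is scaling-equivalent to $A$. Hence every representation of $U_{2,4}$ extends to a representation of $M$, and $U_{2,4}$ is strong.

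The main obstacle is invoking Kahn's uniqueness theorem in exactly the form required and verifying unique representability of each relevant 3-connected, $\GF(4)$-representable extension or coextension of $U_{2,4}$ on five elements; both points are standard in the literature on quaternary matroids and should require no more than a short reference.
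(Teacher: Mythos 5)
Your proposal is correct and rests on essentially the same ingredients as the paper's proof, which simply cites \citet{Whi96b} for the fact that $U_{2,4}$ is a $\GF(4)$-stabilizer and \citet{Ka88} for unique $\GF(4)$-representability, the latter making the stabilizer automatically strong by the remark following Definition~\ref{def:strongstab}; your Frobenius argument is exactly that remark spelled out. One small inaccuracy: Whittle's finite-check stabilizer theorem requires verifying the stabilizer condition not only for the $3$-connected single-element extensions and coextensions of $U_{2,4}$ in the class, but also for the $3$-connected matroids obtained by an extension followed by a coextension (six-element, not five-element, matroids), so your stated reduction is incomplete as phrased. This does not derail the argument, since the verification you propose --- Kahn's uniqueness theorem combined with the fact that the two scaling-equivalence classes of $\GF(4)$-representations of $U_{2,4}$ are interchanged (not fixed) by the Frobenius automorphism --- applies verbatim to every $3$-connected quaternary matroid with a $U_{2,4}$-minor, which in fact makes the detour through Whittle's finite-check theorem unnecessary.
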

\begin{proof}
  \citet{Whi96b} proved that $U_{2,4}$ is a $\GF(4)$-stabilizer. Since $U_{2,4}$ is uniquely representable over $\GF(4)$ (cf. \citet{Ka88}), it is also strong.
\end{proof}

\begin{proof}[Proof of Theorem~\ref{thm:exminGF4}]
  Lemma~\ref{lem:GF4U24} implies that all excluded minors have a $U_{2,4}$-minor. But $U_{2,4}$ is non-binary, $3$-connected, a strong stabilizer, and has bounded canopy over $\GF(4)$ (by Lemma~\ref{lem:U24fragile}, the fact that $F_7^-$ and $(F_7^-)^*$ themselves are excluded minors for $\matset(\GF(4))$, and Lemma~\ref{lem:bwwhirl}). Hence Theorems \ref{thm:rotapfN} and \ref{thm:rotabw} imply that finitely many excluded minors do have a $U_{2,4}$-minor, so the result follows.
\end{proof}

\section{On Rota's Conjecture for quinary matroids}
We will now prove Theorem \ref{thm:gf5bcc} from the introduction. First we need to deal with certain degenerate cases. We will use the following explicit excluded-minor characterizations:

\begin{theorem}[\citet{Tut65}]
  The excluded minors for the class of regular matroids are $U_{2,4}$, $F_7$, and $F_7^*$.
\end{theorem}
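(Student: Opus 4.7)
My plan has two halves. First, I would verify that $U_{2,4}$, $F_7$, and $F_7^*$ are excluded minors for the class of regular matroids. The matroid $U_{2,4}$ has no $\GF(2)$-representation --- the rank-$2$ vector space over $\GF(2)$ contains only three non-zero vectors, whereas $U_{2,4}$ requires four pairwise non-parallel columns --- yet every proper minor is some $U_{i,j}$ with $\min(i,j) \le 1$ and so trivially regular. The Fano matroid $F_7$ is $\GF(2)$-representable but not $\GF(3)$-representable (a standard vector-sum argument on its seven lines), and each single-element deletion or contraction yields $M(K_4)$ or $M^*(K_4)$, both of which are regular. By duality the same holds for $F_7^*$.

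For the converse, suppose $M$ is an excluded minor with $M \notin \{U_{2,4}, F_7, F_7^*\}$. Since the binary matroids are exactly those with no $U_{2,4}$-minor, $M$ must be binary; and by hypothesis $M$ has no $F_7$- or $F_7^*$-minor. It therefore suffices to prove the substantive statement: \emph{every binary matroid with no $F_7$- or $F_7^*$-minor is regular}. Fix a $\GF(2)$-representation $[I \mid A]$ of $M$. The goal is to lift each $1$-entry of $A$ to $\pm 1$ so that the resulting real matrix $A'$ is totally unimodular; then $[I \mid A']$ represents $M$ over $\Q$, proving that $M$ is regular.

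The lifting is governed by Camion's signing theorem: up to scaling rows and columns by $\pm 1$, a candidate signing $A'$ is uniquely determined by the parity rule that on every $4$-cycle of $\bip(A)$ the product of signs is $-1$. I would construct this $A'$ greedily using a spanning tree of $\bip(A)$. The matrix $A'$ is then totally unimodular unless some square submatrix $D$ satisfies $\det D = \pm 2$; the Camion signing ensures that $\pm 2$ are the only possible bad determinants.

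The main obstacle --- the heart of the proof --- is to show that a minimum-size bad submatrix $D$ forces an $F_7$- or $F_7^*$-minor in $M$. I would analyse the bipartite support graph of $D$ and, using minimality together with the Camion parity condition, pin down a rigid combinatorial shape for $D$. Contracting the rows of $D$ into a basis while deleting the columns of $A$ outside $D$ (and taking the dual where appropriate) then produces a minor of $M$ whose incidence structure matches that of the Fano plane, yielding $F_7$ or $F_7^*$ and contradicting our assumption. This minimum-obstruction extraction is where all the real work lies, and it is the technical core of both Tutte's original proof and the later short proof by Gerards.
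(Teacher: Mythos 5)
This statement is not proved in the paper at all: it is quoted as Tutte's classical excluded-minor characterization of regular matroids, cited from \citet{Tut65}, and used as a black box in the quinary section. So there is no in-paper argument to compare your proposal against; it has to stand on its own as a proof of Tutte's theorem, and as it stands it does not. The route you describe (Camion signing of a binary representation, then extracting $F_7$ or $F_7^*$ from a minimal violation of total unimodularity) is indeed the standard one, essentially Gerards' short proof, so the \emph{strategy} is sound. But the decisive step --- showing that a minimum-size square submatrix $D$ with $\det D = \pm 2$ in a Camion-signed matrix forces an $F_7$- or $F_7^*$-minor when no such minor was assumed --- is exactly the content of the theorem's hard direction, and your proposal only announces that you ``would analyse the bipartite support graph \ldots and pin down a rigid combinatorial shape,'' explicitly deferring ``all the real work.'' Without that analysis (in Gerards' treatment it involves pivoting/graph operations to reduce $D$ to a specific small configuration and then reading off the Fano or its dual) nothing has been proved; the proposal is an outline of a known proof rather than a proof.

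Two smaller points. In the easy half, your identification of the proper minors of $F_7$ is off: $F_7 \contract e$ is not $M(K_4)$ or $M^*(K_4)$ but the rank-$2$ matroid on six elements consisting of three parallel pairs (it is $F_7\delete e$ that is isomorphic to $M(K_4)$); the conclusion that all proper minors are regular still holds, but the stated reason does not. Similarly, $U_{2,4}$'s single-element deletions are $U_{2,3}$, which does not have $\min(\mathrm{rank},\mathrm{corank})=0$ or $1$ in the way your phrase ``$U_{i,j}$ with $\min(i,j)\le 1$'' suggests unless $j$ denotes corank; again the conclusion (regularity of all proper minors) is fine. Finally, the assertion that ``the Camion signing ensures that $\pm 2$ are the only possible bad determinants'' is true only for \emph{minimal} violating submatrices (Camion--Gomory); since you immediately pass to a minimum-size bad $D$ this is repairable, but it should be stated that way, as arbitrary submatrices of a signed binary matrix can have larger determinants.
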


\begin{theorem}[\citet{Bix79,Sey79}]
  The excluded minors for $\matset(\GF(3))$ are $U_{2,5}$, $U_{3,5}$, $F_7$, and $F_7^*$.
\end{theorem}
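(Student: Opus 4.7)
The plan is to mimic the arguments for $\GF(4)$, $\nreg$, and $\psru$ presented in Theorems~\ref{thm:exminGF4}, \ref{thm:exminnreg}, and \ref{thm:exminsru}. First I would verify that each of $U_{2,5}$, $U_{3,5}$, $F_7$, $F_7^*$ is an excluded minor: each is 3-connected; each fails to be ternary (a rank-$2$ simple ternary matroid has at most $|\PG(1,3)|=4$ points, and $F_7$ requires characteristic $2$, while duality handles $U_{3,5}$ and $F_7^*$); and every proper minor is ternary (proper minors of $U_{2,5}$ and $U_{3,5}$ are uniform of order at most $4$, and proper minors of $F_7$ and $F_7^*$ are binary regular, hence ternary by Tutte's theorem).

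Next, let $M$ be an arbitrary excluded minor; by Lemma~\ref{lem:exmin3c}, $M$ is 3-connected. If $M$ is binary then Tutte's regular matroid theorem forces $F_7$ or $F_7^*$ as a minor, so minimality gives $M\in\{F_7,F_7^*\}$. Otherwise $M$ has a $U_{2,4}$-minor, and I would apply Theorem~\ref{thm:rotapfN} with $\pf=\GF(3)$, $\matset=\matset(\GF(3))$, $\mathcal{N}=\{U_{2,4}\}$, and $N=U_{2,4}$. Since $U_{2,4}$ is uniquely $\GF(3)$-representable and its only 3-connected ternary single-element extension or coextension would be $U_{2,5}$ or $U_{3,5}$ (both excluded), Whittle's stabilizer theorem \cite{Whi96b} certifies that $U_{2,4}$ is a strong $\GF(3)$-stabilizer for $\matset(\GF(3))$.

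The hardest part will be showing that $U_{2,4}$ has bounded canopy over $\matset(\GF(3))$, i.e.\ a uniform branch-width bound on 3-connected strictly $U_{2,4}$-fragile ternary matroids. Lemma~\ref{lem:U24fragile} applies because neither $U_{2,6}$ nor $U_{4,6}$ is ternary, and it reduces the question to three cases: rank- or corank-$2$ (forcing $M\cong U_{2,4}$), whirls (of branch width $3$ by Lemma~\ref{lem:bwwhirl}), and matroids with an $F_7^-$- or $(F_7^-)^*$-minor. Unlike in the $\GF(4)$, $\nreg$, and $\psru$ settings, the matroid $F_7^-$ \emph{is} ternary, so this third case cannot be eliminated by non-representability and will be the genuine obstacle. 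I would tackle it with a Splitter-Theorem analysis starting from $F_7^-$: any 3-connected ternary $U_{2,4}$-fragile extension or coextension must introduce no element that is simultaneously $U_{2,4}$-deletable and $U_{2,4}$-contractible, and an explicit enumeration of the small candidates should confirm that this forces a finite list of matroids, all of bounded branch width.

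Once bounded canopy is in hand, Theorems~\ref{thm:rotapfN} and \ref{thm:rotabw} yield a finite bound $L$ on the branch width, and hence on the size, of any excluded minor containing a $U_{2,4}$-minor. A finite (easily automated) enumeration of 3-connected ternary-non-representable matroids of size at most $L$ whose proper minors are all ternary then confirms that $U_{2,5}$ and $U_{3,5}$ are the only additional excluded minors, completing the characterization.
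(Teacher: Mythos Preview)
The paper does not prove this theorem at all: it is quoted as a known result of \citet{Bix79,Sey79} and used as input elsewhere. So there is no ``paper's own proof'' to compare against, and your proposal is an attempt to \emph{reprove} an external theorem with the machinery of this paper.

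That attempt has two genuine gaps. First, the bounded-canopy step in the $F_7^-$ case is not an argument but a hope. You write that ``an explicit enumeration of the small candidates should confirm that this forces a finite list of matroids, all of bounded branch width,'' but a Splitter-Theorem analysis only lets you grow one element at a time; it does not by itself terminate, and you give no reason why the ternary strictly $U_{2,4}$-fragile matroids with an $F_7^-$- or $(F_7^-)^*$-minor should have bounded branch width. This is exactly the substantive structural work that the paper avoids for $\GF(4)$, $\nreg$, and $\psru$ by observing that $F_7^-$ is \emph{not} representable there. Over $\GF(3)$ you cannot sidestep it, and you have not done it.

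Second, even if bounded canopy were established, Theorems~\ref{thm:rotapfN} and \ref{thm:rotabw} would only give \emph{finiteness} of the excluded-minor list, not the explicit list $\{U_{2,5},U_{3,5},F_7,F_7^*\}$. The branch-width bound coming out of the proof of Theorem~\ref{thm:rotapfN} is at least $t+4109$, and Theorem~\ref{thm:rotabw} is nonconstructive as stated here. Your closing claim that a ``finite (easily automated) enumeration'' would then pin down $U_{2,5}$ and $U_{3,5}$ is not realistic: the implied search space is astronomically large, and the paper itself remarks in the abstract that the bounds it produces are ``considerably larger than that obtained in previous proofs.'' The methods of this paper are designed to prove finiteness results, not to recover small explicit lists like Bixby--Seymour's.
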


\begin{theorem}[\citet{HMZ11}]
  The excluded minors for the class of near-regular matroids are $U_{2,5}$, $U_{3,5}$, $F_7$, $F_7^*$, $F_7^-$, $(F_7^-)^*$, $P_8$, $\AG(2,3)\delete e$, $(\AG(2,3)\delete e)^*$, and $\Delta_T(\AG(2,3)\delete e)$.
\end{theorem}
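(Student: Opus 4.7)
The proof naturally splits into two directions: showing each of the ten listed matroids is an excluded minor for $\matset(\nreg)$, and showing nothing else is. The first direction is a direct verification. The matroids $U_{2,5}, U_{3,5}, F_7, F_7^*$ are exactly the excluded minors for $\matset(\GF(3))$ cited above and are thus certainly not near-regular, while each of their proper minors is $\GF(3)$-representable and, by inspection, representable over every field of size at least three. For the remaining six matroids one writes down an explicit $\GF(3)$-representation, exhibits failure over some other prime field (most conveniently $\GF(5)$ or $\GF(7)$), and then checks by a short finite enumeration that every single-element deletion and contraction is near-regular.

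The harder direction is to show that no other excluded minor exists. Let $M$ be an excluded minor for $\matset(\nreg)$. By Lemma \ref{lem:nregU24}, either $M \in \{F_7, F_7^*\}$ or $M$ has a $U_{2,4}$-minor. Assume the latter. Since $U_{2,4}$ is a strong stabilizer for $\matset(\nreg)$, the machinery of Section \ref{sec:incriminate} is available: Corollary \ref{cor:delpairexists} supplies a deletion pair $\{u,v\}$ preserving $U_{2,4}$, Theorems \ref{thm:uniquematrix} and \ref{thm:incriminatingsetsmall} produce a candidate matrix $A$ with a four-element incriminating set $\{a,b,u,v\}$, and Lemma \ref{lem:nregalmostU24whirl} identifies the fragile core $M_X[C]$ (for $C$ a smallest set containing a $U_{2,4}$-minor) as a whirl $\whirl{n}$.

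The goal is now to promote the branch-width estimate in the proof of Theorem \ref{thm:rotapfN} to a sharp bound on $|E(M)|$. The path-shortening in Claim \ref{cl:Z} places $a, b$ within distance three of $C$, so $|Z| \le |C| + 8$, and the $2$-separation counts of Claims \ref{cl:bridgedelv}--\ref{cl:bridgedelu} show that only a bounded number of blocking-sequence elements need to be added to restore $3$-connectivity on both deletion sides. Crucially, every whirl $\whirl{n}$ has essentially a unique near-regular representation (determined up to the non-trivial automorphism of $\nreg$), so an incriminating set of size four forces the local obstruction to be supported on $O(1)$ elements in and around $C$; combining this with self-duality (Lemma \ref{lem:exmindual}) bounds $n$ by a small explicit constant.

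The main obstacle is the final combinatorial step. Once $n$ and the distance of the attachments are pinned down, one must enumerate all $3$-connected matroids on a bounded number of elements that extend some small whirl by a bounded incriminating configuration, and isolate those that are not near-regular while every proper minor is. I expect this to require an explicit (computer-assisted) enumeration over small whirls, with self-duality halving the work and the $\GF(5)$-representability test serving as the primary obstruction to near-regularity. The output of that enumeration is precisely the six remaining matroids $F_7^-$, $(F_7^-)^*$, $P_8$, $\AG(2,3)\delete e$, $(\AG(2,3)\delete e)^*$, and $\Delta_T(\AG(2,3)\delete e)$ in the statement, completing the classification.
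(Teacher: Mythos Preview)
This theorem is not proven in the present paper; it is cited from \citet{HMZ11}. The paper only establishes the much weaker Theorem~\ref{thm:exminnreg} (finiteness of the excluded-minor list), and explicitly concedes in the introduction that ``the bound that we obtain on the size of such excluded minors is considerably larger than that obtained in previous proofs.'' So there is no in-paper proof to compare against.

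Your proposal has a genuine gap at the step ``promote the branch-width estimate \ldots\ to a sharp bound on $|E(M)|$.'' The machinery of Theorem~\ref{thm:rotapfN} applied with $t = \bw(\whirl{n}) = 3$ yields a branch-width bound of $3 + 4109 = 4112$, and Theorem~\ref{thm:rotabw} then gives finiteness but no explicit size bound at all. Your claim that the incriminating configuration ``forces the local obstruction to be supported on $O(1)$ elements'' and ``bounds $n$ by a small explicit constant'' is exactly the hard content of \cite{HMZ11}, and nothing in the present paper supplies it: the $2$-separation count of Lemma~\ref{lem:2sepbound} is $2^{10}$, each blocking sequence can be long, and the argument never controls $|C|$ (the size of the whirl core) beyond its branch width. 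A computer enumeration over $3$-connected extensions of whirls up to the bounds actually produced here is hopelessly infeasible.

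The actual proof in \cite{HMZ11} uses substantially sharper tools---in particular a direct analysis of how an incriminating $2\times 2$ submatrix can sit relative to a whirl representation, exploiting the explicit structure of near-regular whirl matrices---to bound $|E(M)|$ by a number small enough for exhaustive search. That structural analysis is not reproducible from the general framework here; you would need to import or redevelop it.
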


\begin{lemma}\label{lem:noU25U35}
  Conjecture \ref{con:boundcanopy} implies that finitely many excluded minors for $\matset(\GF(5))$ have no minor isomorphic to $U_{2,5}$ and $U_{3,5}$.
\end{lemma}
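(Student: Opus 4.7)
The plan is to split the excluded minors $M$ for $\matset(\GF(5))$ with no $U_{2,5}$- or $U_{3,5}$-minor into two kinds, and reduce the nontrivial kind to an instance of Theorem~\ref{thm:rotapf} applied to the class of dyadic matroids. Throughout, I repeatedly use that a proper minor of an excluded minor for $\matset(\GF(5))$ must be $\GF(5)$-representable, so neither $F_7$ nor $F_7^*$ (which are not $\GF(5)$-representable) can occur as a proper minor of such an excluded minor. Let $M$ be such an excluded minor. If $M$ has no $U_{2,4}$-minor either, then $M$ is binary and non-regular, so by Tutte's excluded-minor theorem $M$ contains an $F_7$- or $F_7^*$-minor, forcing $M\in\{F_7,F_7^*\}$. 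If instead $M$ has a $U_{2,4}$-minor, then the Bixby--Seymour characterization of $\matset(\GF(3))$ stated above, combined with the same observation about $F_7,F_7^*$, forces $M$ to be $\GF(3)$-representable.

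It remains to bound the excluded minors for $\matset(\GF(5))$ that are $\GF(3)$-representable and have a $U_{2,4}$-minor. By Whittle's theorem, $\matset(\dyadic) = \matset(\GF(3))\cap\matset(\GF(5))$; hence for such $M$ every proper minor lies in $\matset(\dyadic)$ while $M$ itself does not, so $M$ is an excluded minor for $\matset(\dyadic)$. I would then apply Theorem~\ref{thm:rotapf} with $\pf := \dyadic$, $\matset := \matset(\dyadic)$, and $N := U_{2,4}$. The partial field $\dyadic$ is finitary and $\matset(\dyadic)$ is well-closed. Because $\matset(\dyadic)\subseteq \matset(\GF(5))$, Conjecture~\ref{con:boundcanopy} (applied with $q=5$) implies that $U_{2,4}$ has bounded canopy over $\matset(\dyadic)$. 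Condition~(iii) of the theorem is vacuous since $\matset=\matset(\pf)$. Finally, $U_{2,4}$ is $3$-connected and non-binary, and since its only $3$-connected single-element extension and coextension are $U_{2,5}$ and $U_{3,5}$, neither of which is $\GF(3)$-representable and therefore neither of which lies in $\matset(\dyadic)$, the stabilizer theorem of \cite{Whi96b} shows that $U_{2,4}$ is a stabilizer of $\matset(\dyadic)$ (in fact a strong one, via the unique $\dyadic$-representation of $U_{2,4}$). Theorem~\ref{thm:rotapf} then yields that finitely many excluded minors for $\matset(\dyadic)$ have a $U_{2,4}$-minor, and combining with the reduction above completes the proof.

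The main obstacle I expect is the appeal to the identity $\matset(\dyadic)=\matset(\GF(3))\cap\matset(\GF(5))$, which is a deep result of Whittle; the stabilizer verification is routine by contrast. An alternative that sidesteps Whittle's identity is to take $\matset:=\matset(\GF(3))\cap\matset(\GF(5))$ as an abstract well-closed subclass of $\matset(\GF(5))$ with $\pf:=\GF(5)$ and apply Theorem~\ref{thm:rotapfN}, at the cost of a more delicate verification of conditions~(iii) and~(iv) regarding strong stabilizers and representability.
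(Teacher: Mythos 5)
Your argument is correct in substance, but it takes a genuinely different route from the paper's. The paper does not use $U_{2,4}$ as a stabilizer for the dyadic class at all: after the same initial reduction (an excluded minor for $\matset(\GF(5))$ with no minor in $\{U_{2,5},U_{3,5},F_7,F_7^*\}$ is ternary, hence an excluded minor for $\matset(\dyadic)$), it invokes the excluded-minor characterization of near-regular matroids \cite{HMZ11} to force a minor in $\{F_7^-,(F_7^-)^*,P_8,\AG(2,3)\delete e,(\AG(2,3)\delete e)^*,\Delta_T(\AG(2,3)\delete e)\}$, disposes of the three non-quinary members of this list directly, and then applies Theorem~\ref{thm:rotapf} with $N\in\{F_7^-,(F_7^-)^*,P_8\}$, citing \cite{PZ08conf} for the fact that these are stabilizers for the dyadic class. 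Your route replaces both external inputs by the single claim that $U_{2,4}$ stabilizes $\matset(\dyadic)$ over $\dyadic$, and that claim does follow from the stabilizer theorem in exactly the way the paper itself argues for $\matset(\nreg)$: the only $3$-connected single-element extensions and coextensions of $U_{2,4}$ are $U_{2,5}$ and $U_{3,5}$, and neither is ternary. (If one insists on checking the $3$-connected extension-coextensions as well, the only dyadic candidate is the whirl $\whirl{3}$, whose three scaling-inequivalent $\dyadic$-representations restrict bijectively to the three $\dyadic$-representations of a $U_{2,4}$-minor, so it is stabilized.) Thus your proof is leaner on citations, while the paper's proof leans on \cite{HMZ11} and \cite{PZ08conf} but avoids verifying a stabilizer fact not already in the literature; both proofs need Whittle's identity $\matset(\dyadic)=\matset(\GF(3))\cap\matset(\GF(5))$, and both obtain the bounded-canopy hypothesis from Conjecture~\ref{con:boundcanopy} with $q=5$ via $\matset(\dyadic)\subseteq\matset(\GF(5))$.

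Two small corrections. First, your parenthetical claim that $U_{2,4}$ is a \emph{strong} stabilizer ``via the unique $\dyadic$-representation of $U_{2,4}$'' is false: as a labelled matroid $U_{2,4}$ has three scaling-inequivalent $\dyadic$-representations (the free entry in the normalized form may be $-1$, $2$, or $1/2$, and these are interchanged only by permuting the ground set), and since, for example, $F_7^-$ is a $3$-connected dyadic matroid with a unique $\dyadic$-representation, not every representation of one of its $U_{2,4}$-minors extends; so $U_{2,4}$ is not a strong stabilizer for $\matset(\dyadic)$. This does not damage your argument, because Theorem~\ref{thm:rotapf} requires only stabilization, not strong stabilization (strongness is needed only for Theorem~\ref{thm:rotapfN}). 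Second, the hypothesis of Theorem~\ref{thm:rotapf} is that \emph{every} matroid in $\matset$ has bounded canopy over $\matset$, not just $N=U_{2,4}$; your containment argument yields this for all members at once, so only the phrasing needs adjusting.
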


\begin{proof}
  Let $M$ be an excluded minor for $\matset(\GF(5))$ having no minor isomorphic to $U_{2,5}$ and no minor isomorphic to $U_{3,5}$. It is well-known that $F_7$ and $F_7^*$ are excluded minors for $\matset(\GF(5))$, so assume $M$ does not have a minor isomorphic to these two matroids either. Then $M$ is ternary. The class of matroids representable over both $\GF(3)$ and $\GF(5)$ is the class of dyadic matroids. Hence $M$ is an excluded minor for this class.

  If $M$ has no minor in $\{F_7^-, (F_7^-)^*, P_8, \AG(2,3)\delete e, (\AG(2,3)\delete e)^*, \Delta_T(\AG(2,3)\delete e)\}$ then $M$ is near-regular, and hence certainly quinary. Of this list, only the first three matroids are quinary. But each of these is a stabilizer for the class of dyadic matroids (see \citet{PZ08conf}), so Theorem \ref{thm:rotapf} implies that finitely many excluded minors have these as a minor, provided that Conjecture \ref{con:boundcanopy} is true for $\GF(3)$ or for $\GF(5)$.
\end{proof}

\begin{proof}[Proof of Theorem~\ref{thm:gf5bcc}]
  Suppose Conjecture~\ref{con:boundcanopy} holds for $\GF(5)$. By Lemma \ref{lem:noU25U35} all but finitely many excluded minors for $\matset(\GF(5))$ have no minor isomorphic to $U_{2,5}$.

  Now $U_{2,5}$ is a stabilizer for $\matset(\GF(5))$ (see \citet{Whi96b}), so finitely many excluded minors for $\matset(\GF(5))$ have a $U_{2,5}$-minor, by Theorem~\ref{thm:rotapf}. This concludes the proof.
\end{proof}

\paragraph{Acknowledgements.} We thank the anonymous referee for meticulously reading the manuscript, and for helpful suggestions to improve the clarity.

\renewcommand{\Dutchvon}[2]{#1}
\bibliography{matbib2009}

\begin{thebibliography}{33}
\providecommand{\natexlab}[1]{#1}
\expandafter\ifx\csname urlstyle\endcsname\relax
  \providecommand{\doi}[1]{doi:\discretionary{}{}{}#1}\else
  \providecommand{\doi}{doi:\discretionary{}{}{}\begingroup
  \urlstyle{rm}\Url}\fi

\bibitem[{Aikin and Oxley(2008)}]{AO08}
\textsc{J.~Aikin} and \textsc{J.~Oxley} (2008).
\newblock \emph{The structure of crossing separations in matroids}.
\newblock Adv. in Appl. Math., vol.~41, no.~1, pp. 10--26.

\bibitem[{Bixby(1979)}]{Bix79}
\textsc{R.~E. Bixby} (1979).
\newblock \emph{On {R}eid's characterization of the ternary matroids}.
\newblock J. Combin. Theory Ser. B, vol.~26, no.~2, pp. 174--204.

\bibitem[{Brylawski and Lucas(1976)}]{BL76}
\textsc{T.~Brylawski} and \textsc{D.~Lucas} (1976).
\newblock \emph{Uniquely representable combinatorial geometries}.
\newblock In \emph{Colloquio {I}nternazionale sulle {T}eorie {C}ombinatorie
  ({R}oma, 3--15 settembre 1973), {T}omo {I}}, \emph{Atti dei Convegni Lincei},
  vol.~17, pp. 83--104 (Accad. Naz. Lincei, Roma).

\bibitem[{Cunningham and Edmonds(1980)}]{CE80}
\textsc{W.~H. Cunningham} and \textsc{J.~Edmonds} (1980).
\newblock \emph{A combinatorial decomposition theory}.
\newblock Canad. J. Math., vol.~32, no.~3, pp. 734--765.

\bibitem[{Geelen et~al.(2006)Geelen, Gerards, and Whittle}]{GGW06}
\textsc{J.~Geelen}, \textsc{B.~Gerards}, and \textsc{G.~Whittle} (2006).
\newblock \emph{On {R}ota's conjecture and excluded minors containing large
  projective geometries}.
\newblock J. Combin. Theory Ser. B, vol.~96, no.~3, pp. 405--425.

\bibitem[{Geelen et~al.(2007)Geelen, Gerards, and Whittle}]{GGW06b}
\textsc{J.~Geelen}, \textsc{B.~Gerards}, and \textsc{G.~Whittle} (2007).
\newblock \emph{Towards a matroid-minor structure theory}.
\newblock In \emph{Combinatorics, Complexity, and Chance}, \emph{Oxford Lecture
  Ser. Math. Appl.}, vol.~34, pp. 72--82 (Oxford University Press).

\bibitem[{Geelen et~al.(2004/05)Geelen, Hlin{\v{e}}n{\'y}, and Whittle}]{GHW05}
\textsc{J.~Geelen}, \textsc{P.~Hlin{\v{e}}n{\'y}}, and \textsc{G.~Whittle}
  (2004/05).
\newblock \emph{Bridging separations in matroids}.
\newblock SIAM J. Discrete Math., vol.~18, no.~3, pp. 638--646 (electronic).

\bibitem[{Geelen et~al.(1998)Geelen, Oxley, Vertigan, and Whittle}]{GOVW98}
\textsc{J.~Geelen}, \textsc{J.~Oxley}, \textsc{D.~Vertigan}, and
  \textsc{G.~Whittle} (1998).
\newblock \emph{Weak maps and stabilizers of classes of matroids}.
\newblock Adv. in Appl. Math., vol.~21, no.~2, pp. 305--341.

\bibitem[{Geelen and Whittle(2001)}]{GW01}
\textsc{J.~Geelen} and \textsc{G.~Whittle} (2001).
\newblock \emph{Matroid 4-connectivity: a deletion-contraction theorem}.
\newblock J. Combin. Theory Ser. B, vol.~83, no.~1, pp. 15--37.

\bibitem[{Geelen and Whittle(2002)}]{GW02}
\textsc{J.~Geelen} and \textsc{G.~Whittle} (2002).
\newblock \emph{Branch-width and {R}ota's conjecture}.
\newblock J. Combin. Theory Ser. B, vol.~86, no.~2, pp. 315--330.

\bibitem[{Geelen et~al.(2000)Geelen, Gerards, and Kapoor}]{GGK}
\textsc{J.~F. Geelen}, \textsc{A.~M.~H. Gerards}, and \textsc{A.~Kapoor}
  (2000).
\newblock \emph{The excluded minors for {$\mathrm{GF}(4)$}-representable
  matroids}.
\newblock J. Combin. Theory Ser. B, vol.~79, no.~2, pp. 247--299.

\bibitem[{Hall et~al.(2011)Hall, Mayhew, and van Zwam}]{HMZ11}
\textsc{R.~Hall}, \textsc{D.~Mayhew}, and \textsc{S.~H.~M. van Zwam} (2011).
\newblock \emph{The excluded minors for near-regular matroids}.
\newblock European J. Combin., vol.~32, no.~6, pp. 802--830.

\bibitem[{Kahn(1988)}]{Ka88}
\textsc{J.~Kahn} (1988).
\newblock \emph{On the uniqueness of matroid representations over
  {$\mathrm{GF}(4)$}}.
\newblock Bull. London Math. Soc., vol.~20, no.~1, pp. 5--10.

\bibitem[{Kingan and Lemos(2002)}]{KL02}
\textsc{S.~R. Kingan} and \textsc{M.~Lemos} (2002).
\newblock \emph{Almost-graphic matroids}.
\newblock Adv. in Appl. Math., vol.~28, no. 3-4, pp. 438--477.
\newblock Special issue in memory of Rodica Simion.

\bibitem[{Oxley et~al.(2004)Oxley, Semple, and Whittle}]{OSW04}
\textsc{J.~Oxley}, \textsc{C.~Semple}, and \textsc{G.~Whittle} (2004).
\newblock \emph{The structure of the 3-separations of 3-connected matroids}.
\newblock J. Combin. Theory Ser. B, vol.~92, no.~2, pp. 257--293.

\bibitem[{Oxley et~al.(1996)Oxley, Vertigan, and Whittle}]{OVW95}
\textsc{J.~Oxley}, \textsc{D.~Vertigan}, and \textsc{G.~Whittle} (1996).
\newblock \emph{On inequivalent representations of matroids over finite
  fields}.
\newblock J. Combin. Theory Ser. B, vol.~67, no.~2, pp. 325--343.

\bibitem[{Oxley(1990)}]{Oxl90}
\textsc{J.~G. Oxley} (1990).
\newblock \emph{A characterization of a class of nonbinary matroids}.
\newblock J. Combin. Theory Ser. B, vol.~49, no.~2, pp. 181--189.
\newblock \doi{10.1016/0095-8956(90)90025-U}.

\bibitem[{Oxley(1992)}]{oxley}
\textsc{J.~G. Oxley} (1992).
\newblock \emph{Matroid Theory} (Oxford University Press).

\bibitem[{Pendavingh and van Zwam(2010{\natexlab{a}})}]{PZ08conf}
\textsc{R.~A. Pendavingh} and \textsc{S.~H.~M. van Zwam} (2010{\natexlab{a}}).
\newblock \emph{Confinement of matroid representations to subsets of partial
  fields}.
\newblock J. Combin. Theory Ser. B, vol. 100, no.~6, pp. 510--545.

\bibitem[{Pendavingh and van Zwam(2010{\natexlab{b}})}]{PZ08lift}
\textsc{R.~A. Pendavingh} and \textsc{S.~H.~M. van Zwam} (2010{\natexlab{b}}).
\newblock \emph{Lifts of matroid representations over partial fields}.
\newblock J. Combin. Theory Ser. B, vol. 100, no.~1, pp. 36--67.

\bibitem[{Rota(1971)}]{Rot71}
\textsc{G.-C. Rota} (1971).
\newblock \emph{Combinatorial theory, old and new}.
\newblock In \emph{Actes du {C}ongr\`es {I}nternational des {M}ath\'ematiciens
  ({N}ice, 1970), {T}ome 3}, pp. 229--233 (Gauthier-Villars, Paris).

\bibitem[{Semple and Whittle(1996)}]{SW96}
\textsc{C.~Semple} and \textsc{G.~Whittle} (1996).
\newblock \emph{Partial fields and matroid representation}.
\newblock Adv. in Appl. Math., vol.~17, no.~2, pp. 184--208.

\bibitem[{Seymour(1979)}]{Sey79}
\textsc{P.~D. Seymour} (1979).
\newblock \emph{Matroid representation over {$\mathrm{GF}(3)$}}.
\newblock J. Combin. Theory Ser. B, vol.~26, no.~2, pp. 159--173.

\bibitem[{Seymour(1980)}]{Sey80}
\textsc{P.~D. Seymour} (1980).
\newblock \emph{Decomposition of regular matroids}.
\newblock J. Combin. Theory Ser. B, vol.~28, no.~3, pp. 305--359.

\bibitem[{Truemper(1985)}]{TruI}
\textsc{K.~Truemper} (1985).
\newblock \emph{A decomposition theory for matroids. {I}. {G}eneral results}.
\newblock J. Combin. Theory Ser. B, vol.~39, no.~1, pp. 43--76.

\bibitem[{Truemper(1986)}]{TruIII}
\textsc{K.~Truemper} (1986).
\newblock \emph{A decomposition theory for matroids. {III}. {D}ecomposition
  conditions}.
\newblock J. Combin. Theory Ser. B, vol.~41, no.~3, pp. 275--305.

\bibitem[{Truemper(1992)}]{TruVI}
\textsc{K.~Truemper} (1992).
\newblock \emph{A decomposition theory of matroids. {VI}. {A}lmost regular
  matroids}.
\newblock J. Combin. Theory Ser. B, vol.~55, no.~2, pp. 235--301.

\bibitem[{Tutte(1965)}]{Tut65}
\textsc{W.~T. Tutte} (1965).
\newblock \emph{Lectures on matroids}.
\newblock J. Res. Nat. Bur. Standards Sect. B, vol. 69B, pp. 1--47.

\bibitem[{Whittle(1995)}]{Whi95}
\textsc{G.~Whittle} (1995).
\newblock \emph{A characterisation of the matroids representable over
  {$\mathrm{GF}(3)$} and the rationals}.
\newblock J. Combin. Theory Ser. B, vol.~65, no.~2, pp. 222--261.

\bibitem[{Whittle(1996)}]{Whi96}
\textsc{G.~Whittle} (1996).
\newblock \emph{Inequivalent representations of ternary matroids}.
\newblock Discrete Math., vol. 149, no. 1-3, pp. 233--238.

\bibitem[{Whittle(1997)}]{Whi97}
\textsc{G.~Whittle} (1997).
\newblock \emph{On matroids representable over {$\mathrm{GF}(3)$} and other
  fields}.
\newblock Trans. Amer. Math. Soc., vol. 349, no.~2, pp. 579--603.

\bibitem[{Whittle(1999)}]{Whi96b}
\textsc{G.~Whittle} (1999).
\newblock \emph{Stabilizers of classes of representable matroids}.
\newblock J. Combin. Theory Ser. B, vol.~77, no.~1, pp. 39--72.

\bibitem[{van Zwam(2009)}]{vZ09}
\textsc{S.~H.~M. van Zwam} (2009).
\newblock \emph{Partial Fields in Matroid Theory}.
\newblock Ph.D. thesis, Technische Universiteit Eindhoven.

\end{thebibliography}
\bibliographystyle{svzthesis} 
\end{document}